\newenvironment{psm}
{\left(\begin{smallmatrix}}
{\end{smallmatrix}\right)}
\numberwithin{equation}{subsection}
\newtheorem{thm}{Theorem}[subsection]
\newtheorem{prop}[thm]{Proposition}
\newtheorem{lemma}[thm]{Lemma}
\newtheorem{defn}[thm]{Definition}
\newtheorem{conj}[thm]{Conjecture}
\theoremstyle{remark}
\newtheorem{rmk}[thm]{Remark}
\newcommand{\mr}[1]{\mathrm{#1}}
\newcommand{\bA}{\mathbb{A}}
\newcommand{\bC}{\mathbb{C}}
\newcommand{\bF}{\mathbb{F}}
\newcommand{\bG}{\mathbb{G}}
\newcommand{\bQ}{\mathbb{Q}}
\newcommand{\bR}{\mathbb{R}}
\newcommand{\bT}{\mathbb{T}}
\newcommand{\bU}{\mathbb{U}}
\newcommand{\bV}{\mathbb{V}}
\newcommand{\bW}{\mathbb{W}}
\newcommand{\bZ}{\mathbb{Z}}
\newcommand{\xx}{{\tt x}}
\newcommand{\yy}{{\tt y}}
\newcommand{\ww}{{\tt w}}
\newcommand{\cA}{\mathcal{A}}
\newcommand{\cC}{\mathcal{C}}
\newcommand{\cD}{\mathcal{D}}
\newcommand{\cE}{\mathcal{E}}
\newcommand{\cF}{\mathcal{F}}
\newcommand{\cG}{\mathcal{G}}
\newcommand{\cI}{\mathcal{I}}
\newcommand{\cK}{\mathcal{K}}
\newcommand{\cL}{\mathcal{L}}
\newcommand{\cM}{\mathcal{M}}
\newcommand{\cO}{\mathcal{O}}
\newcommand{\cP}{\mathcal{P}}
\newcommand{\cS}{\mathcal{S}}
\newcommand{\cT}{\mathcal{T}}
\newcommand{\cU}{\mathcal{U}}
\newcommand{\fd}{\mathfrak{d}}
\newcommand{\fe}{\mathfrak{e}}
\newcommand{\ff}{\mathfrak{f}}
\newcommand{\fg}{\mathfrak{g}}
\newcommand{\ffi}{\mathfrak{i}}
\newcommand{\fm}{\mathfrak{m}}
\newcommand{\fp}{\mathfrak{p}}
\newcommand{\fq}{\mathfrak{q}}
\newcommand{\fr}{\mathfrak{r}}
\newcommand{\fs}{\mathfrak{s}}
\newcommand{\fv}{\mathfrak{v}}
\newcommand{\fS}{\mathfrak{S}}
\newcommand{\fX}{\mathfrak{X}}
\newcommand{\fY}{\mathfrak{Y}}
\newcommand{\sA}{\mathscr{A}}
\newcommand{\sC}{\mathscr{C}}
\newcommand{\sG}{\mathscr{G}}
\newcommand{\sL}{\mathscr{L}}
\newcommand{\sS}{\mathscr{S}}
\newcommand{\sT}{\mathscr{T}}
\newcommand{\sY}{\mathscr{Y}}
\newcommand{\sZ}{\mathscr{Z}}
\DeclareMathAlphabet{\mathpzc}{OT1}{pzc}{m}{it}
\newcommand{\pP}{\mathpzc{P}}
\newcommand{\pV}{\mathpzc{V}}
\newcommand{\pzS}{\mathpzc{S}}
\newcommand{\be}{\mathbf{e}}
\newcommand{\ut}{\underline{t}}
\DeclareMathOperator{\GL}{GL}
\DeclareMathOperator{\GU}{GU}
\DeclareMathOperator{\U}{U}
\DeclareMathOperator{\Sp}{Sp}
\newcommand{\sk}{\vspace{0.1in}}
\newcommand{\alg}{\mathrm{alg}}
\newcommand{\can}{\mathrm{can}}
\newcommand{\cond}{\mathrm{cond}}
\newcommand{\cont}{\mathrm{cont}}
\newcommand{\diag}{\mathrm{diag}}
\newcommand{\id}{\mathrm{id}}
\newcommand{\im}{\mathrm{Im}}
\newcommand{\Nm}{\mathrm{Nm}}
\newcommand{\ord}{\mathrm{ord}}
\newcommand{\rmf}{\mathrm{f}}
\newcommand{\sgn}{\mathrm{sgn}}
\newcommand{\Tr}{\mathrm{Tr}}
\newcommand{\triv}{\mathrm{triv}}
\newcommand{\tor}{\mathrm{tor}}
\newcommand{\ur}{\mathrm{ur}}
\newcommand{\val}{\mathrm{val}}
\newcommand{\vol}{\mathrm{vol}}
\DeclareMathOperator{\Gal}{Gal}
\DeclareMathOperator{\Her}{Her}
\DeclareMathOperator{\Hom}{Hom}
\DeclareMathOperator{\Ind}{Ind}
\DeclareMathOperator{\Isom}{Isom}
\DeclareMathOperator{\Lie}{Lie}
\DeclareMathOperator{\Spec}{Spec}
\newcommand{\adic}{\text{-}\mathrm{adic}}
\newcommand{\lhra}{\ensuremath{\lhook\joinrel\longrightarrow}}
\newcommand{\lra}{\longrightarrow}
\newcommand{\ra}{\rightarrow}
\newcommand{\hra}{\hookrightarrow}
\newcommand{\ol}{\overline}
\newcommand{\wh}{\widehat}
\newcommand{\wt}{\widetilde}
\newcommand{\llb}{\llbracket}
\newcommand{\rrb}{\rrbracket}
\newcommand{\bid}{\mathbf{1}}
\def\l@section{\@tocline{1}{0pt}{1pc}{}{}}
\def\l@subsection{\@tocline{2}{0pt}{1pc}{4.6em}{}}
\def\l@subsubsection{\@tocline{3}{0pt}{1pc}{7.6em}{}}
\renewcommand{\tocsection}[3]{%
  \indentlabel{\@ifnotempty{#2}{\makebox[2.3em][l]{%
    \ignorespaces#1 #2.\hfill}}}#3}
\renewcommand{\tocsubsection}[3]{%
  \indentlabel{\@ifnotempty{#2}{\hspace*{2.3em}\makebox[2.3em][l]{%
    \ignorespaces#1 #2.\hfill}}}#3}
\renewcommand{\tocsubsubsection}[3]{%
  \indentlabel{\@ifnotempty{#2}{\hspace*{4.6em}\makebox[3em][l]{%
    \ignorespaces#1 #2.\hfill}}}#3}
\newcommand{\clabel}{g}
\newcommand{\so}{\mathrm{so}}
\newcommand{\Ig}{\mathscr{T}}
\newcommand{\mult}{\mathrm{mult}}
\newcommand{\Lsub}{C}
\newcommand{\Meas}{\cM eas}
\newcommand{\Sieg}{\mr{Sieg}}
\newcommand{\Kling}{\mr{Kling}}
\newcommand{\FJ}{\mathrm{FJ}}
\newcommand{\bfE}{\bm{E}}
\newcommand{\bfh}{\bm{h}}
\newcommand{\bmtheta}{\protect\bm{\theta}}
\newcommand{\sph}{\mathrm{sph}}
\newcommand{\wtp}{{\tt N}}
\newcommand{\ns}{\mathrm{ns}}
\newcommand{\rms}{\mathrm{s}}
\newcommand{\Sel}{\mr{Sel}}
\newcommand*\bdot{\mathpalette\bdot@{.5}}
\newcommand*\bdot@[2]{\mathbin{\vcenter{\hbox{\scalebox{#2}{$\,\,\m@th#1\bullet\,\,$}}}}}
\newcommand{\pzy}{\mathpzc{y}}
\newcommand{\pzR}{\mathpzc{R}}
\newcommand{\pzT}{\mathpzc{T}}
\newglossaryentry{G}
{
  name={\ensuremath{G,G'}},
  description={},
  user1={\ensuremath{G}},
  user2={\ensuremath{G'}},
  sort=G
}
\newglossaryentry{xi}
{
  name={\ensuremath{\xi,\xi_0,\xi_{p\adic}}},
  description={},
  user1={\ensuremath{\xi_0}},
  user2={\ensuremath{\xi}},
  user3={\ensuremath{\xi_{p\adic}}},
  sort=xi
}
\newglossaryentry{eta}
{
  name={\ensuremath{\eta_{\cK/\bQ}}},
  description={},
  sort=eta
}
\newglossaryentry{k0}
{
  name={\ensuremath{k_0}},
  description={},
  sort=k0
}
\newglossaryentry{Disc}
{
  name={\ensuremath{D_{\cK/\bQ}}},
  description={},
  sort=Disc
}
\newglossaryentry{L1}
{
  name={\ensuremath{\mathcal{L}_1,\mathcal{L}_2}},
  description={},
  user1={\ensuremath{\mathcal{L}_1}},
  user2={\ensuremath{\mathcal{L}_2}},
  sort=L1,
}
\newglossaryentry{L3}
{
  name={\ensuremath{\mathcal{L}_3,\mathcal{L}_4}},
  description={},
  user1={\ensuremath{\mathcal{L}_3}},
  user2={\ensuremath{\mathcal{L}_4}},
  sort=L3
}
\newglossaryentry{L5}
{
  name={\ensuremath{\mathcal{L}_5,\mathcal{L}_6}},
  description={},
  user1={\ensuremath{\mathcal{L}_5}},
  user2={\ensuremath{\cL_6}},
  sort=L5
}
\newglossaryentry{iota}
{
  name={\ensuremath{\iota_\infty,\iota_p}},
  description={},
  sort=iota,
  user1={\ensuremath{\iota_\infty}},
  user2={\ensuremath{\iota_p}}
}
\newglossaryentry{rho}
{
  name={\ensuremath{\varrho_\fp,\varrho_{\bar{\fp}}}},
  description={},
  user1={\ensuremath{\varrho_\fp}},
  user2={\ensuremath{\varrho_{\bar{\fp}}}},
  sort=rho
}
\newglossaryentry{ev}
{
  name={\ensuremath{\be_v}},
  description={},
  sort=ev
}
\newglossaryentry{sS}
{
  name={\ensuremath{\sS}},
  description={},
  sort=sS
}
\newglossaryentry{sStor}
{
  name={\ensuremath{\sS^\tor}},
  description={},
  sort=sStor
}
\newglossaryentry{IS}
{
  name={\ensuremath{\cI_{\sS^\tor}}},
  description={},
  sort=IS
}
\newglossaryentry{sSmin}
{
  name={\ensuremath{\sS^{\min}}},
  description={},
  sort=sSmin
}
\newglossaryentry{omega}
{
  name={\ensuremath{\underline{\omega},\omega}},
  description={},
  user1={\ensuremath{\underline{\omega}}},
  user2={\ensuremath{\omega}},
  sort=omega
}
\newglossaryentry{Ha}
{
  name={\ensuremath{\mr{Ha}}},
  description={},
  sort=Hext2
}
\newglossaryentry{E}
{
  name={\ensuremath{E}},
  description={},
  sort=E
}
\newglossaryentry{ttD}
{
  name={\ensuremath{{\tt D},{\tt D}^1,{\tt D}^0,{\tt D}^{-1}}},
  description={},
  user1={\ensuremath{{\tt D}}},
  user2={\ensuremath{{\tt D}^1}},
  user3={\ensuremath{{\tt D}^0}},
  user4={\ensuremath{{\tt D}^{-1}}},
  sort=D
}
\newglossaryentry{PD}
{
  name={\ensuremath{P_{\tt D}}},
  description={},
  sort=PD
}
\newglossaryentry{Kpn}
{
  name={\ensuremath{K^1_{p,n},K^0_{p,n}}},
  description={},
  user1={\ensuremath{K^1_{p,n}}},
  user2={\ensuremath{K^0_{p,n}}},
  sort=Kpn
}
\newglossaryentry{Ign}
{
  name={\ensuremath{\Ig_n,\Ig^\tor_n}},
  description={},
  user1={\ensuremath{\Ig_n}},
  user2={\ensuremath{\Ig^\tor_n}},
  sort=Tn
}
\newglossaryentry{Ignm}
{
  name={\ensuremath{\Ig_{n,m},\Ig^\tor_{n,m}}},
  description={},
  user1={\ensuremath{\Ig_{n,m}}},
  user2={\ensuremath{\Ig^\tor_{n,m}}},
  sort=Tnm
}
\newglossaryentry{Sm}
{
  name={\ensuremath{\sS^\tor_{m}}},
  description={},
  sort=Sm
}
\newglossaryentry{F}
{
  name={\ensuremath{{\tt F^\pm}}},
  description={},
  sort=F
}
\newglossaryentry{delta+-}
{
  name={\ensuremath{\delta^+_3,\delta^-_1}},
  description={},
  user1={\ensuremath{\delta^+_3}},
  user2={\ensuremath{\delta^-_1}},
  sort=delta+-
}
\newglossaryentry{Vnm}
{
  name={\ensuremath{V_{n,m},V^0_{n,m}}},
  description={},
  user1={\ensuremath{V_{n,m}}},
  user2={\ensuremath{V^0_{n,m}}},
  sort=Vnm
}
\newglossaryentry{pV}
{
  name={\ensuremath{\pV,\pV^0}},
  description={},
  user1={\ensuremath{\pV}},
  user2={\ensuremath{\pV^0}},
  sort=VV
}
\newglossaryentry{Tso}
{
  name={\ensuremath{T_{\so}}},
  description={},
  sort=Tso
}
\newglossaryentry{ut}
{
  name={\ensuremath{\ut}},
  description={},
  sort=t
}
\newglossaryentry{Wut}
{
  name={\ensuremath{W_{\ut}}},
  description={},
  sort=Wut
}
\newglossaryentry{omegaut}
{
  name={\ensuremath{\omega_{\ut}}},
  description={},
  sort=omegaut
}
\newglossaryentry{Mt}
{
  name={\ensuremath{M_{(0,0,t^+;t^-)}\left(K^p_fK^1_{p,n},\epsilon^+,\epsilon^-;F\right)}},
  description={},
  sort=Mt
}
\newglossaryentry{Mt0}
{
  name={\ensuremath{M^0_{(0,0,t^+;t^-)}\left(K^p_fK^1_{p,n},\epsilon^+,\epsilon^-;F\right)}},
  description={},
  sort=Mt0
}
\newglossaryentry{Cj}
{
  name={\ensuremath{\sC^{1+}_{2,n,m},\sC^{1+}_{2,n,m},\sC^{1-}_{1,n,m}}},
  description={},
  user1={\ensuremath{\sC^{1+}_{j,n,m}}},
  user2={\ensuremath{\sC^{1-}_{1,n,m}}},
  sort=Cj
}
\newglossaryentry{Up}
{
  name={\ensuremath{U^+_{p,2},U^+_{p,3},U^-_{p,1}}},
  description={},
  user1={\ensuremath{U^+_{p,j}}},
  user2={\ensuremath{U^-_{p,1}}},
  user3={\ensuremath{U^+_{p,2}}},
  user4={\ensuremath{U^+_{p,3}}},
  sort=Up
}
\newglossaryentry{Lambdaso}
{
  name={\ensuremath{\Lambda_\so}},
  description={},
  sort=Lambda
}
\newglossaryentry{eso}
{
  name={\ensuremath{e_\so}},
  description={},
  sort=eso
}
\newglossaryentry{cM}
{
  name={\ensuremath{\cM^0_\so,\cM_\so}},
  user1={\ensuremath{\cM^0_\so}},
  description={},
  sort=M
}
\newglossaryentry{clabel}
{
  name={\ensuremath{({\tt Z},\Phi,\delta)}},
  description={},
  sort=Z
}
\newglossaryentry{clabel1}
{
  name={\ensuremath{({\tt Z}^{(1)},\Phi^{(1)},\delta^{1)})}},
  description={},
  sort=Z
}
\newglossaryentry{clabelg}
{
  name={\ensuremath{({\tt Z}^{(\clabel)},\Phi^{(\clabel)},\delta^{\clabel)})}},
  description={},
  sort=Z
}
\newglossaryentry{P}
{
  name={\ensuremath{P,P'}},
  user1={\ensuremath{P}},
  user2={\ensuremath{P'}},
  description={},
  sort=P
}
\newglossaryentry{C(K)}
{
  name={\ensuremath{C(K^p_f)}},
  description={},
  sort=CK
}
\newglossaryentry{C(K)ord}
{
  name={\ensuremath{C(K^p_fK^1_{p,n})_\ord}},
  description={},
  sort=CKord
}
\newglossaryentry{Xgn}
{
  name={\ensuremath{\fX_{\clabel,n}}},
  description={},
  sort=Xgn
}
\newglossaryentry{PflatD}
{
  name={\ensuremath{P^{\flat}_{\tt D}(\bQ_p)}},
  description={},
  sort=PflatD
}
\newglossaryentry{Vflatnm}
{
  name={\ensuremath{V^\flat_{n,m}}},
  description={},
  sort=Vflat
}
\newglossaryentry{Zg}
{
  name={\ensuremath{\sZ_{\clabel,n},\sY_{\clabel,n}}},
  description={},
  user1={\ensuremath{\sZ_{\clabel,n}}},
  user2={\ensuremath{\sY_{\clabel,n}}},
  sort=Zg
}
\newglossaryentry{Xi}
{
  name={\ensuremath{\Xi^\ord_{\clabel,n},\ol{\Xi}^\ord_{\clabel,n}}},
  user1={\ensuremath{\Xi^\ord_{\clabel,n}}},
  user2={\ensuremath{\ol{\Xi}^\ord_{\clabel,n}}},
  description={},
  sort=Xi
}
\newglossaryentry{Cgn}
{
  name={\ensuremath{\sC^\ord_{\clabel,n}}},
  description={},
  sort=Cgn
}
\newglossaryentry{K'pg}
{
  name={\ensuremath{K^{\prime}_{p,\clabel}}},
  description={},
  sort=Kg
}
\newglossaryentry{K'fgn}
{
  name={\ensuremath{K^{\prime}_{f,\clabel,n}}},
  description={},
  sort=Kfgn
}
\newglossaryentry{Phig}
{
  name={\ensuremath{\Phi_\clabel}},
  description={},
  sort=Phig
}
\newglossaryentry{Cab}
{
  name={\ensuremath{C^{\rmf},C^{\mr{ab}}}},
  user1={\ensuremath{C^{\rmf}}},
  user2={\ensuremath{C^{\mr{ab}}}},
  description={},
  sort=Cab
}
\newglossaryentry{VJ}
{
  name={\ensuremath{V^{J}_{\GU(2)}}},
  description={},
  sort=VJ
}
\newglossaryentry{VJbeta}
{
  name={\ensuremath{V^{J,\beta}_{\GU(2)}}},
  description={},
  sort=VJ
}
\newglossaryentry{FJ}
{
  name={\ensuremath{\mr{FJ}_\beta,\FJ_{\clabel,\beta}}},
  user1={\ensuremath{\mr{FJ}_\beta}},
  user2={\ensuremath{\FJ_{\clabel,\beta}}},
  description={},
  sort=FJ
}
\newglossaryentry{Vflat}
{
  name={\ensuremath{\pV^\flat}},
  description={},
  sort=Vflat
}
\newglossaryentry{sSG'}
{
  name={\ensuremath{\sS_{G',K'_{f,\clabel,n}}}},
  description={},
  sort=sSzG'
}
\newglossaryentry{Gray}
{
  name={\ensuremath{\cG^\natural}},
  description={},
  sort=Gray
}
\newglossaryentry{cT}
{
  name={\ensuremath{\cT}},
  description={},
  sort=T
}
\newglossaryentry{cGmu}
{
  name={\ensuremath{(\cG[N])^\mu,(\cG[N])^\rmf,(\cG[N])^{\mr{ab}}}},
  user1={\ensuremath{(\cG[N])^\mu}},
  user2={\ensuremath{(\cG[N])^{\rmf}}},
  user3={\ensuremath{(\cG[N])^{\mr{ab}}}},
  description={},
  sort=Gmu
}
\newglossaryentry{etapt}
{
  name={\ensuremath{\eta,\bar{\eta}}},
  user1={\ensuremath{\eta}},
  user2={\ensuremath{\bar{\eta}}},
  description={},
  sort=etapt
}
\newglossaryentry{cK}
{
  name={\ensuremath{\cK}},
  description={},
  sort=K1
}
\newglossaryentry{Kinfty}
{
  name={\ensuremath{\cK_\infty}},
  description={},
  sort=K2
}
\newglossaryentry{GammaK}
{
  name={\ensuremath{\Gamma_\cK}},
  description={},
  sort=GammaK
}
\newglossaryentry{Omega}
{
  name={\ensuremath{\Omega_\infty,\Omega_p}},
  user1={\ensuremath{\Omega_\infty}},
  user2={\ensuremath{\Omega_p}},
  description={},
  sort=Omega
}
\newglossaryentry{pi}
{
  name={\ensuremath{\pi}},
  description={},
  sort=pi
}
\newglossaryentry{BCpi}
{
  name={\ensuremath{\mr{BC}(\pi)}},
  description={},
  sort=BCpi
}
\newglossaryentry{L}
{
  name={\ensuremath{L}},
  description={},
  sort=L
}
\newglossaryentry{rhopi}
{
  name={\ensuremath{\rho_\pi}},
  description={},
  sort=rhopi
}
\newglossaryentry{PsiK}
{
  name={\ensuremath{\Psi_\cK}},
  description={},
  sort=PsiK
}
\newglossaryentry{Sel}
{
  name={\ensuremath{\Sel_{\pi,\cK,\xi}}},
  description={},
  sort=Sel
}
\newglossaryentry{Xpi}
{
  name={\ensuremath{X_{\pi,\cK,\xi}}},
  description={},
  sort=Xpi
}
\newglossaryentry{char}
{
  name={\ensuremath{\mr{char}_A(M)}},
  user1={},
  user2={},
  description={},
  sort=char
}
\newglossaryentry{Lpi}
{
  name={\ensuremath{\cL_{\pi,\cK,\xi}}},
  description={},
  sort=Lpi
}
\newglossaryentry{Lpipartial}
{
  name={\ensuremath{\cL^{\Sigma\,\cup\{\ell,\ell'\}}_{\pi,\cK,\xi}}},
  description={},
  sort=Lpi
}
\newglossaryentry{q}
{
  name={\ensuremath{q}},
  description={},
  sort=q
}
\newglossaryentry{D}
{
  name={\ensuremath{D}},
  description={},
  sort=D
}
\newglossaryentry{zeta}
{
  name={\ensuremath{\zeta_0,\zeta}},
  user1={\ensuremath{\zeta_0}},
  user2={\ensuremath{\zeta}},
  description={},
  sort=zeta
}
\newglossaryentry{delta}
{
  name={\ensuremath{\delta}},
  description={},
  sort=delta
}
\newglossaryentry{fs}
{
  name={\ensuremath{\fs}},
  description={},
  sort=s
}
\newglossaryentry{piD}
{
  name={\ensuremath{\pi^D}},
  description={},
  sort=piD
}
\newglossaryentry{piU}
{
  name={\ensuremath{\pi^{\GU(2)}}},
  description={},
  sort=piU
}
\newglossaryentry{f}
{
  name={\ensuremath{f,f^D,f^{\GU(2)}}},
  user1={\ensuremath{f}},
  user2={\ensuremath{f^D}},
  user3={\ensuremath{f^{\GU(2)}}},
  description={},
  sort=
}
\newglossaryentry{PU}
{
  name={\ensuremath{P_{\GU(3,1)}}},
  description={},
  sort=PU
}
\newglossaryentry{tau}
{
  name={\ensuremath{\tau,\tau_o,\tau_{p\adic}}},
  user1={\ensuremath{\tau}},
  user2={\ensuremath{\tau_0}},
  user3={\ensuremath{\tau_{p\adic}}},
  description={},
  sort=
}
\newglossaryentry{EklingF}
{
  name={\ensuremath{E^\Kling(-;F(s,\xi_0\tau_0))}},
  user1={\ensuremath{E^\Kling(g;F(s,\xi_0\tau_0))}},
  description={},
  sort=EKingF
}
\newglossaryentry{imath}
{
  name={\ensuremath{\imath}},
  description={},
  sort=imath
}
\newglossaryentry{c}
{
  name={\ensuremath{c}},
  description={},
  sort=c
}
\newglossaryentry{cS}
{
  name={\ensuremath{\cS}},
  description={},
  sort=S
}
\newglossaryentry{QU}
{
  name={\ensuremath{Q_{\GU(3,3)}}},
  description={},
  sort=QU
}
\newglossaryentry{ESiegf}
{
  name={\ensuremath{E^{\Sieg}(-;f(s,\xi_0\tau_0))}},
  user1={\ensuremath{E^{\Sieg}(g;f(s,\xi_0\tau_0))}},
  description={},
  sort=ESiegf
}
\newglossaryentry{Fvarphi}
{
  name={\ensuremath{F(f(s,\xi_0\tau_0),\varphi)}},
  description={},
  sort=Fphi
}
\newglossaryentry{ell}
{
  name={\ensuremath{\ell,\ell'}},
  user1={\ensuremath{\ell}},
  user2={\ensuremath{\ell'}},
  description={},
  sort=l
}
\newglossaryentry{Sigma}
{
  name={\ensuremath{\Sigma,\Sigma_{\mr{s}},\Sigma_{\mr{ns}}}},
  user1={\ensuremath{\Sigma}},
  user2={\ensuremath{\Sigma_{\mr{s}}}},
  user3={\ensuremath{\Sigma_{\mr{ns}}}},
  description={},
  sort=Sigma
}
\newglossaryentry{chi}
{
  name={\ensuremath{\chi_{\theta},\chi_h}},
  description={},
  sort=chi
}
\newglossaryentry{cv}
{
  name={\ensuremath{c_v}},
  description={},
  sort=cv
}
\newglossaryentry{Kv}
{
  name={\ensuremath{K_v}},
  description={},
  sort=Kv
}
\newglossaryentry{Kpf}
{
  name={\ensuremath{K^p_f}},
  description={},
  sort=Kpf
}
\newglossaryentry{ftwisted}
{
  name={\ensuremath{f^{\GU(2)}_{\chi_h}}},
  description={},
  sort=ftwisted
}
\newglossaryentry{xiQ}
{
  name={\ensuremath{\xi^\bQ_0}},
  description={},
  sort=xiQ
}
\newglossaryentry{tauQ}
{
  name={\ensuremath{\tau^\bQ_0}},
  description={},
  sort=tauQ
}
\newglossaryentry{Upsilonp}
{
  name={\ensuremath{\Upsilon_p}},
  description={},
  sort=Upsilonp
}
\newglossaryentry{MeasYM}
{
  name={\ensuremath{\Meas(Y,M)^\natural}},
  description={},
  sort=MeasYM
}
\newglossaryentry{VGU31}
{
  name={\ensuremath{V_{\GU(3,1)}}},
  description={},
  sort=VGU31
}
\newglossaryentry{VGU33}
{
  name={\ensuremath{V_{\GU(3,3)}}},
  description={},
  sort=VGU33
}
\newglossaryentry{VU2}
{
  name={\ensuremath{V_{\U(2)},V'_{\U(2)}}},
  user1={\ensuremath{V_{\U(2)}}},
  user2={\ensuremath{V'_{\U(2)}}},
  user3={\ensuremath{V_{\U(2),\,\xi}}},
  user4={\ensuremath{V'_{\U(2),\,\xi^{-1}}}},
  description={},
  sort=VU2
}
\newglossaryentry{eord}
{
  name={\ensuremath{e_\ord}},
  description={},
  sort=esord
}
\newglossaryentry{theta}
{
  name={\ensuremath{\bmtheta,\tilde{\bmtheta}_3}},
  user1={\ensuremath{\bmtheta}},
  user2={\ensuremath{\tilde{\bmtheta}_3}},
  description={},
  sort=theta
}
\newglossaryentry{h}
{
  name={\ensuremath{\bfh,\tilde{\bfh}_3}},
  user1={\ensuremath{\bfh}},
  user2={\ensuremath{\tilde{\bfh}_3}},
  description={},
  sort=h
}
\newglossaryentry{h0}
{
  name={\ensuremath{\bfh_0,\bfh'_0}},
  user1={\ensuremath{\bfh_0}},
  user2={\ensuremath{\bfh'_0}},
  description={},
  sort=h0
}
\newglossaryentry{Enorl}
{
  name={\ensuremath{E^\Sieg_{\xi\tau}}},
  description={},
  sort=ESiegxitau
}
\newglossaryentry{d2v}
{
  name={\ensuremath{d_{2,v}\left(s,\xi_0\tau_0\right)}},
  description={},
  sort=d2v
}
\newglossaryentry{d3v}
{
  name={\ensuremath{d_{3,v}(s,\xi_0\tau_0)}},
  description={},
  sort=d3v
}
\newglossaryentry{bESieg}
{
  name={\ensuremath{\bfE^\Sieg}},
  description={},
  sort=ESiegf
}
\newglossaryentry{Kp0}
{
  name={\ensuremath{K_{p,0}}},
  description={},
  sort=Kp0
}
\newglossaryentry{M00}
{
  name={\ensuremath{M_{(0,0)}(K^{\prime }_{f,\clabel};\bZ_p)}},
  description={},
  sort=MU21
}
\newglossaryentry{M00tor}
{
  name={\ensuremath{M_{(0,0)}(K'_{f,\clabel};\bZ/p^m\bZ)}},
  description={},
  sort=MU22
}
\newglossaryentry{MU2}
{
  name={\ensuremath{M_{\GU(2)}\left(K^p_fK_p;R\right)}},
  description={},
  sort=MU23
}
\newglossaryentry{bEKling}
{
  name={\ensuremath{\bfE^\Kling_\varphi}},
  description={},
  sort=EKling
}
\newglossaryentry{Lxi}
{
  name={\ensuremath{\cL^{\Sigma\,\cup\{\ell,\ell'\}}_{\xi,\bQ}}},
  description={},
  sort=Lxi
}
\newglossaryentry{OLur}
{
  name={\ensuremath{\hat{\cO}^\ur_L}},
  description={},
  sort=OLur
}
\newglossaryentry{Ebeta}
{
  name={\ensuremath{\bfE^\Kling_{\varphi,\beta,u}}},
  description={},
  sort=EKlingbeta
}
\newglossaryentry{lambda}
{
  name={\ensuremath{\lambda}},
  description={},
  sort=lambda
}
\newglossaryentry{HV}
{
  name={\ensuremath{H(V)}},
  description={},
  sort=HV
}
\newglossaryentry{HVV}
{
  name={\ensuremath{H(\bV)}},
  description={},
  sort=HVV
}
\newglossaryentry{FJbetav}
{
  name={\ensuremath{\FJ_{\beta,v}\big(-,-;f_v(s,\xi_0\tau_{0})\big)}},
  user1={\ensuremath{\FJ_{\beta,v}\big(g_v,x_v;f_v(s,\xi_0\tau_{0})\big)}},
  description={},
  sort=FJbetav
}
\newglossaryentry{thetaJ1}
{
  name={\ensuremath{\theta^J_1}},
  description={},
  sort=thetaJ1
}
\newglossaryentry{lthetaJ}
{
  name={\ensuremath{l_{\theta^J}}},
  description={},
  sort=lthetaJ
}
\newglossaryentry{UKp}
{
  name={\ensuremath{U_{\cK,p},U_{\cK,\fp},U_{\cK,\bar{\fp}}}},
  user1={\ensuremath{U_{\cK,p}}},
  user2={\ensuremath{U_{\cK,\fp}}},
  user3={\ensuremath{U_{\cK,\bar{\fp}}}},
  description={},
  sort=UKp
}
\newglossaryentry{PN}
{
  name={\ensuremath{\pP_{\wtp}}},
  description={},
  sort=PN
}
\newglossaryentry{tauP}
{
  name={\ensuremath{\tau_{\fp,\pP_\wtp}}},
  description={},
  sort=taupPN
}
\newglossaryentry{PNast}
{
  name={\ensuremath{\pP_{\wtp,\ast}}},
  description={},
  sort=PNast
}
\newglossaryentry{wtp}
{
  name={\ensuremath{{\wtp}}},
  description={},
  sort=N
}
\newglossaryentry{Petersson}
{
  name={\ensuremath{\left<-,-\right>_{p\adic}}},
  user1={\ensuremath{\left<\phi',\phi\right>_{p\adic}}},
  description={},
  sort=zzz1
}
\newglossaryentry{hext}
{
  name={\ensuremath{\breve{\bfh},\breve{\tilde{\bfh}}_3}},
  user1={\ensuremath{\breve{\bfh}}},
  user2={\ensuremath{\breve{\tilde{\bfh}}_3}},
  description={},
  sort=hext
}
\newglossaryentry{thetaext}
{
  name={\ensuremath{\breve{\bmtheta},\breve{\tilde{\bmtheta}}_3}},
  user1={\ensuremath{\breve{\bmtheta}}},
  user2={\ensuremath{\breve{\tilde{\bmtheta}}_3}},
  description={},
  sort=thetaext
}
\newglossaryentry{pVxi}
{
  name={\ensuremath{\pV_{\so,\,\xi},\pV^0_{\so,\,\xi}}},
  user1={\ensuremath{\pV_{\so,\,\xi}}},
  user2={\ensuremath{\pV^0_{\so,\,\xi}}},
  description={},
  sort=Vsoxi
}
\newglossaryentry{Mxi}
{
  name={\ensuremath{\cM_{\so,\,\xi},\cM^0_{\so,\,\xi}}},
  user1={\ensuremath{\cM_{\so,\,\xi}}},
  user2={\ensuremath{\cM^0_{\so,\,\xi}}},
  description={},
  sort=Mxi
}
\newglossaryentry{MxiGamma}
{
  name={\ensuremath{\cM_{\so,\,\xi,\Gamma_\cK},\cM^0_{\so,\,\xi,\Gamma_\cK}}},
  user1={\ensuremath{\cM_{\so,\,\xi,\Gamma_\cK}}},
  user2={\ensuremath{\cM^0_{\so,\,\xi,\Gamma_\cK}}},
  description={},
  sort=MxiGamma
}
\newglossaryentry{bT}
{
  name={\ensuremath{\bT^0_{\so,\,\xi,\Gamma_\cK}}},
  description={},
  sort=Tso
}
\newglossaryentry{IEis}
{
  name={\ensuremath{\cI_{\mr{Eis},\pi,\xi}}},
  description={},
  sort=IEis
}
\newglossaryentry{cEpi}
{
  name={\ensuremath{\cE_{\pi,\xi}}},
  description={},
  sort=Epi
}
\newglossaryentry{Selpartial}
{
  name={\ensuremath{\Sel^{\Sigma\,\cup\{\ell,\ell'\}}_{\pi,\cK,\xi}}},
  description={},
  sort=Selpartial
}
\newglossaryentry{Xpartial}
{
  name={\ensuremath{X^{\Sigma\,\cup\{\ell,\ell'\}}_{\pi,\cK,\xi}}},
  description={},
  sort=Xpartial
}
\newglossaryentry{LpiHida}
{
  name={\ensuremath{\cL^{\mr{Hida}}_{\pi,\cK,\xi}}},
  description={},
  sort=Lpiz
}
\newglossaryentry{diffideal}
{
  name={\ensuremath{\fd_{\cK/\bQ}}},
  description={},
  sort=dKQ
}
\newglossaryentry{Igzmin}
{
  name={\ensuremath{\Ig^{\min}_n}},
  description={},
  sort=Igzmin
}
\newglossaryentry{varrho-1}
{
  name={\ensuremath{\varrho^{-1}_\fp}},
  description={},
  sort=rho-1p
}
\title{Iwasawa--Greenberg main conjecture for non-ordinary modular forms and Eisenstein congruences on GU(3,1)}
\author{Francesc Castella}
\address{F. C.: University of California, Santa Barbara, CA, United States}
\email{\href{mailto:castella@math.ucsb.edu}{castella@math.ucsb.edu}}
\author{Zheng Liu}
\address{Z. L.:University of California, Santa Barbara, CA, United States}
\email{\href{mailto:zliu@math.ucsb.edu}{zliu@math.ucsb.edu}}
\author{Xin Wan}
\address{X. W.: Academy of Mathematics and Systems Science, Chinese Academy of Sciences and University of Chinese Academy of Sciences, Haidian District, Beijing, China}
\email{\href{mailto:xwan@math.ac.cn}{xwan@math.ac.cn}}
\begin{document}

\maketitle

\begin{abstract}
In this paper we prove one side divisibility of the Iwasawa-Greenberg main conjecture for Rankin-Selberg product of a weight two cusp form and an ordinary CM form of higher weight, using congruences between Klingen Eisenstein series and cusp forms on $\mathrm{GU}(3,1)$, generalizing earlier result of the third-named author to allow non-ordinary cusp forms. The main result is a key input in the third author's proof for Kobayashi's $\pm$-main conjecture for supersingular elliptic curves. The new ingredient here is developing a semi-ordinary Hida theory along an appropriate smaller weight space, and a study of the semi-ordinary Eisenstein family. 
\end{abstract}

\tableofcontents 
\numberwithin{equation}{subsection}

\section{Introduction}\label{sec:intro}

Let $p$ be an odd prime number. In this paper, under some assumptions, we prove one divisibility of a two-variable Greenberg type main conjecture for a weight $2$ newform form unramified at $p$. The result is a key ingredient in the third author's proof \cite{Wan-super} of the Iwasawa Main Conjecture for elliptic curves with supersingular reduction at $p$ and $a_p=0$ formulated by Kobayashi \cite{Kobayashi}. 

\vspace{.5em}

Let $\pi$ be an irreducible cuspidal automorphic representation of $\GL_2(\bA_\bQ)$ generated by a newform of weight $2$. Associated to $\pi$ is a continuous two-dimensional $p$-adic Galois representation $\gls{rhopi}$ of $\Gal(\ol{\bQ}/\bQ)$ over $L$, a finite extension of $\bQ_p$, for which we can fix a $\Gal(\ol{\bQ}/\bQ)$-stable $\cO_L$-lattice $T_\pi$. (We use the geometric convention for Galois representations. The determinant of $\rho_\pi$ is $\epsilon^{-1}_{\mr{cyc}}$.) 

Let $\cK$ be an imaginary quadratic field in which $p$ splits as $\fp\bar{\fp}$. Denote by $\cK_\infty$ the maximal abelian pro-$p$ extension of $\cK$ unramified outside $p$. Then the Galois group $\Gal(\cK_\infty/\cK)$ is isomorphic to $\bZ^2_p$ and we denote it by $\Gamma_\cK$. We have the tautological character
\[
   \gls{PsiK}:\Gal(\ol{\bQ}/\cK)\ra\Gamma_\cK\hra \bZ_p\llb \Gamma_\cK\rrb^\times.
\]
Let $\xi:\cK^\times\backslash\bA^\times_\cK\ra\bC^\times$ be an algebraic Hecke character associated to which is a character $\Gal(\ol{\bQ}/\cK)\ra \cO^\times_L$. We denote this Galois character also by $\xi$.

We consider the $\Gal(\ol{\bQ}/\cK)$-module 
\[
    T_{\pi,\cK,\xi}:=T_\pi(\epsilon^2_{\mr{cyc}})|_{\Gal(\ol{\bQ}/\cK)}(\xi^{-1})\otimes\bZ_p\llb\Gamma_\cK\rrb(\Psi^{-1}_\cK).
\]
Define the Selmer group
\begin{equation}\label{eq:Sel}
   \gls{Sel}=\ker\left\{H^1\big(\cK,T_{\pi,\cK,\xi}\otimes_{\cO_L\llb\Gamma_\cK\rrb}\cO_L\llb\Gamma_\cK\rrb^*\big)
   \lra \prod_{\fv\neq \fp}H^1\big(I_\fv,T_{\pi,\cK,\xi}\otimes_{\cO_L\llb\Gamma_\cK\rrb}\cO_L\llb\Gamma_\cK\rrb^*\big)\right\},
\end{equation}
with $\cO_L\llb\Gamma_\cK\rrb^*=\Hom_{\bZ_p}\left(\cO_L\llb\Gamma_\cK\rrb,\bQ_p/\bZ_p\right)$, the Pontryagin dual of $\cO_L\llb\Gamma_\cK\rrb$. (This Selmer group has relaxed condition at $\fp$ and unramified condition at $\bar{\fp}$.) Let
\begin{equation}\label{eq:Xsel}
    \gls{Xpi}:=\Hom_{\bZ_p}\big(\Sel_{\pi,\cK,\xi},\bQ_p/\bZ_p\big)
\end{equation}
which is well-known to be a finitely generated $\cO_L\llb\Gamma_\cK\rrb$-module. We recall the following definition of characteristic ideals.

\begin{defn}
For a Noetherian normal domain $A$ and a finitely generated $A$-module $M$, we define the characteristic ideal of $M$ as
\[
   \gls{char}=\left\{x\in A:\ord_P(x)\geq \mr{length}_{A_P}(M_P) \text{ for all height one prime ideal $P\subset A$}\right\}.
\]
\end{defn}
The Iwasawa--Greenberg main conjectures \cite{Greenberg55} predict that  the characteristic ideal of $X_{\pi,\cK,\xi}$ is generated by the following $p$-adic $L$-function.

\vspace{.5em}
Denote by $\hat{\cO}^\ur_L$ the completion of the maximal unramified extension of $\cO_L$. It follows from the construction in \cite{EisWan} that there is a $p$-adic $L$-function $\gls{Lpi}\in\mr{Frac}\big(\hat{\cO}^\ur_L\llb\Gamma_\cK\rrb\big)$ satisfying the interpolation property: for all algebraic Hecke characters $\tau:\cK^\times\backslash\bA^\times_\cK\ra\bC^\times$ such that its $p$-adic avatar $\tau_{p\adic}$  factors through $\Gamma_\cK$ and $\xi\tau$ has $\infty$-type $\left(k_1,k_2\right)$ with $k_1,k_2\in\bZ$, $k_1\leq 0$, $k_2\geq 2-k_1$, 
\begin{equation}\label{eq:intp} 
\begin{aligned}
   \cL_{\pi,\cK,\xi}(\tau_{p\adic})
   =&\, \left(\frac{\Omega_p}{\Omega_\infty}\right)^{2(k_2-k_1)}\frac{\Gamma(k_2)\Gamma(k_2-1)}{(2\pi i)^{2k_2-1}}
   \cdot \gamma_p\left(\frac{3-(k_1+k_2)}{2},\pi^\vee_{p}\times (\xi_0\tau_0)^{-1}_{\bar{\fp}}\right)\\
    &\times L^{\{\infty,p\}}\left(\frac{k_1+k_2-1}{2},\mr{BC}(\pi)\times\xi_0\tau_0\right),
\end{aligned}
\end{equation}
where $\xi_0\tau_0=\xi\tau|\cdot|^{-\frac{k_1+k_2}{2}}_{\bA_\cK}$ and $\mr{BC}(\pi)$ denotes the unitary automorphic representation of $\GL_2(\bA_\cK)$ obtained as the base change of $\pi$. 

\begin{rmk}
More precisely, by using the doubling method, an imprimitive $p$-adic $L$-function is constructed in \cite{EisWan} as an element in $\hat{\cO}^\ur_L\llb\Gamma_\cK\rrb$, interpolating the $L$-values in (\ref{eq:intp}) with certain local $L$-factors away from $p\infty$ removed. The inverses of these local $L$-factors are easily seen to be $p$-adically interpolated by elements in $\cO_L\llb\Gamma_\cK\rrb$. Multiplying those local factors to the imprimitive $p$-adic $L$-function, one obtains the complete $p$-adic $L$-function $\cL_{\pi,\cK,\xi}$. Note also that the interpolation formulas in \cite{EisWan} are completely computed for $k_1=0$, and one can get the general case by using the results on the computation of the doubling archimedean zeta integrals in \cite{AZIU}.
\end{rmk}

 
We are interested in the following (two-variable) Greenberg type main conjecture. 

\begin{conj}\label{conj}
\[
      \mr{char}_{\cO^{\ur}_L\llb\Gamma_\cK\rrb} \big(X_{\pi,\cK,\xi}\big)= \big( \cL_{\pi,\cK,\xi}\big).
\]
\end{conj}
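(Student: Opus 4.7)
The plan is to prove the equality by establishing two opposite divisibilities, following the standard paradigm for Iwasawa--Greenberg main conjectures: an upper bound on the Selmer group coming from Eisenstein congruences on $\mathrm{GU}(3,1)$, and a matching lower bound coming from an Euler system attached to the Rankin--Selberg convolution. I would then match the two bounds up to units.

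For the divisibility $\cL_{\pi,\cK,\xi} \mid \mathrm{char}_{\cO^\ur_L\llb\Gamma_\cK\rrb}(X_{\pi,\cK,\xi})$, I would adapt the Skinner--Urban Eisenstein-congruence strategy on $\mathrm{GU}(3,1)$, with the key new input being a semi-ordinary Hida theory tailored to the non-ordinary weight-two input. Starting from $\pi$ and $\xi$, I would build a Klingen Eisenstein series on $\mathrm{GU}(3,1)$ whose constant term along the Klingen parabolic is, up to explicit local factors, the $p$-adic $L$-function $\cL_{\pi,\cK,\xi}$ interpolating the values in (\ref{eq:intp}). Since $\pi$ is unramified but not ordinary at $p$, I would apply a semi-ordinary idempotent along a smaller weight subspace on which such Eisenstein series still interpolate into a $p$-adic family. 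Using the semi-ordinary Hida theory, I would then produce a cuspidal family semi-ordinarily congruent to this Eisenstein family modulo $\cL_{\pi,\cK,\xi}$, extract its Galois representation, and apply a lattice/Ribet-style argument to manufacture enough classes in $\Sel_{\pi,\cK,\xi}$ to force $\cL_{\pi,\cK,\xi}$ into $\mathrm{char}_{\cO^\ur_L\llb\Gamma_\cK\rrb}(X_{\pi,\cK,\xi})$.

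For the reverse divisibility $\mathrm{char}_{\cO^\ur_L\llb\Gamma_\cK\rrb}(X_{\pi,\cK,\xi}) \mid \cL_{\pi,\cK,\xi}$, I would invoke the Beilinson--Flach Euler system for $\mathrm{BC}(\pi) \otimes \xi$ constructed by Lei--Loeffler--Zerbes and refined by Kings--Loeffler--Zerbes, specialized to vary $\xi$ over the two-variable Iwasawa algebra $\cO_L\llb\Gamma_\cK\rrb$. The norm-compatible system lives in $H^1(\cK, T_{\pi,\cK,\xi})$, and its image at $\fp$ under Perrin-Riou's big logarithm is controlled by an explicit reciprocity law that matches $\cL_{\pi,\cK,\xi}$ up to standard Euler factors. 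Feeding this into Rubin's general Euler-system-to-Selmer-bound machinery over $\cO^\ur_L\llb\Gamma_\cK\rrb$ — the required big-image hypotheses on $T_\pi$ being automatic since $\mathrm{BC}(\pi)$ is cuspidal and the residual representation has large image — yields the desired divisibility. After extending scalars to $\cO^\ur_L\llb\Gamma_\cK\rrb$ and checking that $\cL_{\pi,\cK,\xi}$ is not a zero-divisor (via its interpolation of nonvanishing complex $L$-values), combining the two divisibilities gives the equality of ideals.

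The main obstacle lies entirely in the Eisenstein side, specifically in making the semi-ordinary theory work. In the non-ordinary regime the usual ordinary Hida projector on $\mathrm{GU}(3,1)$ is unavailable, so one must identify the correct smaller weight space, redevelop enough of the geometry on the relevant part of the Igusa tower (Hasse invariants, fundamental exact sequences for Klingen boundary strata, a $q$-expansion principle along the cusps $\gls{clabel}$), and show that the semi-ordinary projector neither kills the Klingen Eisenstein class $\gls{EklingF}$ nor destroys the crucial Fourier--Jacobi coefficient computations $\gls{FJbetav}$ that link the constant term to $\cL_{\pi,\cK,\xi}$. Verifying that the resulting semi-ordinary Eisenstein family is coprime to the ideal generated by $\cL_{\pi,\cK,\xi}$ (and hence that the lattice construction is nontrivial) is the single most delicate step, and is precisely where the bulk of the paper's technical work must go.
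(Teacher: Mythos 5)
This statement is labeled as a \emph{conjecture} in the paper, and the paper does not prove it. The authors explicitly state that the main result is Theorem~\ref{thm:main}, ``a partial result towards this conjecture'': namely one divisibility, $\left(\cL_{\pi,\cK,\xi}\right)\supset \mr{char}\big(X_{\pi,\cK,\xi}\big)$, and even this only in the localized ring $\pzR=\cO^\ur_L\llb\Gamma_\cK\rrb\otimes_{\cO_L\llb\Gamma^+_\cK\rrb}\mr{Frac}\big(\cO_L\llb\Gamma^+_\cK\rrb\big)$ (thus discarding height-one primes lying over $\cO_L\llb\Gamma^+_\cK\rrb$), with the full $\cO^\ur_L\llb\Gamma_\cK\rrb\otimes\bQ$ version requiring the extra hypotheses of Theorem~\ref{thm:main}(2). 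Your proposal sets out to prove both divisibilities and the equality of ideals over the whole Iwasawa algebra; that is a strictly stronger claim than anything in the paper, and presenting it as a proof of the conjecture is a genuine overreach.

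On the half that the paper actually does: your description of the Klingen Eisenstein/Hida-theory strategy is broadly in line with \S\S\ref{sec:Hidathy}--\ref{sec:mcpf} (semi-ordinary Hida theory on a two-dimensional weight subspace, construction of $\bfE^\Kling_\varphi$, degenerate and non-degenerate Fourier--Jacobi analysis, and then a lattice-construction bound on the congruence ideal $\cE_{\pi,\xi}$). One small correction: the delicate point is not that the Eisenstein family ``is coprime to $(\cL_{\pi,\cK,\xi})$,'' but rather that some non-degenerate Fourier--Jacobi coefficient of $\bfE^\Kling_\varphi$ is a unit (Proposition~\ref{prop:E-Kling-nv}), so that congruences between $\bfE^\Kling_\varphi$ and cusp forms are governed by $\cL^{\Sigma\cup\{\ell,\ell'\}}_{\xi,\bQ}\cL^{\Sigma\cup\{\ell,\ell'\}}_{\pi,\cK,\xi}$ divisibility of the degenerate coefficients, not by coprimality of the family itself.

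The Euler-system half of your proposal is where the genuine gap lies. The paper contains no Euler-system input, and the reverse divisibility you sketch is not known in the generality you need. Invoking Beilinson--Flach classes plus Perrin-Riou's big logarithm plus Rubin's machinery glosses over exactly the difficulty this paper and its companion \cite{Wan-super} are designed to circumvent: $\pi$ is non-ordinary at $p$ (the case $a_p=0$ is the intended application), so the na\"{i}ve Perrin-Riou/Coleman map does not yield an integral reciprocity law matching $\cL_{\pi,\cK,\xi}$ without a signed (plus/minus) modification, and the Selmer condition here is the asymmetric ``relaxed at $\fp$, unramified at $\bar{\fp}$'' one, which is not the self-dual Bloch--Kato setup that Rubin's axioms directly handle. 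The ``big image hypotheses\ldots being automatic'' is also not a throwaway: one must verify nontriviality of the image on both Galois factors simultaneously over the two-variable algebra. None of this is addressed, and until it is, the reverse divisibility is an open problem rather than a step in a proof. In short: your Eisenstein half tracks the paper; your Euler-system half asserts a result that neither the paper nor (to present knowledge) the literature supplies for this non-ordinary two-variable setting.
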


The main result of this paper is Theorem~\ref{thm:main}, which is a partial result towards this conjecture. Like the previous works \cite{Ur01,Ur04,SU,WanU31} on proving Greenberg type main conjecture for modular forms, the proof uses the congruences between the Klingen Eisenstein series and cuspidal holomorphic automorphic forms. The $L$-values in our case here are the same as those in \cite{WanU31} and we also use Klingen Eisenstein series on $\GU(3,1)$ as in {\it loc.cit}. The main difference is that the modular form is not assumed to be ordinary at $p$, so the standard Hida theory is not applicable. 

\vspace{.5em}

The key idea is to introduce the notion of \emph{semi-ordinary} automorphic forms on $\GU(3,1)$. In \S\S\ref{sec:Hidathy}-\ref{sec:noncusp}, we develop a Hida theory for $p$-adic families of (cuspidal and non-cuspidal) semi-ordinary forms on $\GU(3,1)$ along an appropriate two-dimensional subspace of its three-dimensional weight space. The main results are stated in Theorem~\ref{prop:main}. In \S\ref{sec:construct}, by using the doubling method, we construct a  Klingen Eisenstein family $\bfE^\Kling_\varphi$ and prove its semi-ordinarity. In \S\ref{sec:degFJ}, we study the degenerate Fourier--Jacobi coefficients of $\bfE^\Kling_\varphi$. The analogus computations in \cite{WanU31} assume a sufficient ramification condition (see Definition 6.30 in {\it op.\,cit}) which is not available in our case here, so we need a better way to do the computation at $p$ by using the functional equations for doubling zeta integrals. In \S\ref{sec:nondegFJ}, we study the non-degenerate Fourier--Jacobi coefficients of $\bfE^\Kling_\varphi$. This part is very similar to \cite{WanU31} and we cite many results there, but the presentation is slightly rearranged. For example, the auxiliary data for constructing the Klingen Eisenstein family are chosen at the beginning of the construction (\S\ref{sec:aux}) instead of till the end of the analysis of the non-degenerate Fourier--Jacobi coefficients, and a explanation on the strategy for analyzing the non-degenerate Fourier--Jacobi coefficients is included in \S\ref{sec:strategy}.  In \S\ref{sec:mcpf}, combining the results in \S\ref{sec:degFJ} and \S\ref{sec:nondegFJ}, we deduce a result on the Klingen Eisenstein congruence ideal, and use it as an input for the lattice construction to deduce the results on Selmer groups.

\vspace{1em}
\noindent{\bf Notation.}
We fix a prime $p\geq 3$ and an imaginary quadratic field $\gls{cK}$ in which $p$ splits in $\cK$ as $\fp\bar{\fp}$. Denote by $\gls{Disc}$ the Discriminant of $\cK/\bQ$ and by $\gls{diffideal}$ the different ideal of $\cK/\bQ$. Denote by $\gls{eta}$ the quadratic character of $\bQ^\times\backslash\bA^\times_\bQ$ associated to $\cK/\bQ$. Let $\gls{c}$ be the nontrivial element in $\Gal(\cK/\bQ)$. For $x\in \cK$, denote by $\bar{x}$ its image of under $c$. For a finite place $v$ of $\bQ$, we put $\cK_v=\cK\otimes_{\bQ}\bQ_v$ and $\cO_{\cK,v}=\cO_\cK\otimes_\bZ\bZ_v$. 

Fix embeddings
\begin{equation}\label{eq:Kembd} 
\begin{aligned}
   \glsuseri{iota}:\cK\lhra\bC,
   &&\glsuserii{iota}:\cK\lhra\bQ_p
\end{aligned}
\end{equation} 
such that the valuation of $\bQ_p$ and $\iota_p$ induce the valuation of $\cK$ given by $\fp$.  The embedding $\iota_p:\cK\hra\bQ_p$ induces a homomorphism $\glsuseri{rho}:\cK_p=\cK\otimes_\bQ\bQ_p\ra\bQ_p$. We denote by $\glsuserii{rho}:\cK\otimes_\bQ\bQ_p\ra\bQ_p$ the composition of $\varrho_p$ and the non-trivial element $c\in\Gal(\cK/\bQ)$. Then 
\begin{align*}
   &(\varrho_\fp,\varrho_{\bar{\fp}}):\cK\otimes_\bQ\bQ_p\lra\bQ_p\times\bQ_p,
   &&a\longmapsto \left(\varrho_\fp(a),\varrho_{\bar{\fp}}(a)\right)
\end{align*}  
is an isomorphism. We also fix a totally imaginary element $\gls{delta}\in\cK$ such that $\mr{Nm}(\delta)=\delta\bar{\delta}$ is a $p$-adic unit.


Fix the standard additive character $\be_{\bA_\bQ}=\bigotimes_v\gls{ev}:\bQ\backslash\bA\ra\bC^\times$ with 
\begin{equation}\label{eq:bev}
   \be_v(x)=\left\{\begin{array}{ll} e^{-2\pi i\{x\}_v}, &v\neq\infty,\\ e^{2\pi i x},&v=\infty\end{array}\right.
\end{equation} 
where $\{x\}_v$ is the fractional part of $x$.

\vspace{1em}
\noindent{\bf Acknowledgments.} We would like to thank Eric Urban for pointing out to us the possibility to develop the semi-ordinary Hida theory needed in this paper. We also thank Ming-Lun Hsieh, Kai-Wen Lan and Christopher Skinner for useful communications. During the preparation of this paper, the first author was partially supported by the NSF grant DMS-1946136 and DMS-2101458, the second author was partially supported by the NSF grant DMS-2001527, the third author was partially supported by NSFC grants 11688101, 11621061 and National Key R and D Program of China 2020YFA0712600.

\section{Hida theory for semi-ordinary forms on $\GU(3,1)$}\label{sec:Hidathy}

We define semi-ordinary forms on $\GU(3,1)$ and state the control theorem for semi-ordinary families (Theorem~\ref{prop:main}). The proof of Theorem~\ref{prop:main} is given in the following two sections.

\subsection{Some notation}
Let $L_0$ be a free $\cO_\cK$-module of rank $2$ with basis $\ww_1, \ww_2$, and we equip $L_0\otimes_\bZ\bQ$ with a skew-Hermitian form $\left<\,,\,\right>_{L_0}$ whose matrix with respect to the basis $\ww_1, \ww_2$ is given by a matrix $\glsuseri{zeta}\in\Her_2(\cO_\cK)$ with $\delta\zeta_0$ positive definite. Let $X, Y$ be free $\cO_\cK$-modules of rank $1$ with bases $\xx_1, \yy_1$. Let $X^\vee=\fd^{-1}_{\cK/\bQ}\cdot {\tt x}_1$ and $L=X^\vee\oplus L_0\oplus Y$. Equip $L\otimes_\bZ\bQ$ with the skew-Hermitian form $\left<\,,\,\right>_{L}$ whose matrix with respect to the basis  $\xx_1, \ww_1, \ww_2, \yy_1$ is given by $\glsuserii{zeta}=\begin{pmatrix}&&1\\&\zeta_0\\-1\end{pmatrix}.$

Define the similitude unitary groups $\glsuserii{G}=\GU(2)$ and $\glsuseri{G}=\GU(3,1)$ (over $\bZ)$) as: for all $\bZ$-algebra $R$ 
\begin{equation}\label{eq:ugp}
\begin{aligned}
   \GU(2)(R)&=\{(g,\nu)\in \GL_{\cO_\cK\otimes_{\bZ} R}(L_0\otimes_{\bZ} R)\times R^\times\,:\,\left<gv_1,gv_2\right>_{L_0}=\nu\left<v_1,v_2\right>_{L_0}\},\\
   \GU(3,1)(R)&=\{(g,\nu)\in \GL_{\cO_\cK\otimes_{\bZ} R}(L\otimes_{\bZ} R)\times R^\times\,:\,\left<gv_1,gv_2\right>_{L}=\nu\left<v_1,v_2\right>_{L}\},
\end{aligned}
\end{equation} 
and the unitary groups $\U(2)$ (resp. $\U(3,1)$) as the subgroup of $\GU(2)$ (resp. $\GU(3,1)$)  consisting of elements with $\nu=1$.

\subsection{Shimura variety}\label{subsec:Sh}
We fix an open compact subgroup $K^p_f\subset G(\bA^p_{\bQ,f})$, assumed throughout to be neat. Let $2\pi i\left<\,,\,\right>:L\times L\ra \bZ(1)$ be the alternating pairing $2\pi i\cdot \Tr_{\cK/\bQ}\circ\left<\,,\,\right>_L$ and $h:\bC\ra\mr{End}_{\cO_\cK\otimes_{\bZ}\bR}(L\otimes_{\bZ}\bR)$ be the homomorphism given by
\[
   h(u+iv):\left(\xx_1,\ww_1,\ww_2,\yy_1\right)\mapsto \left(\xx_1,\ww_1,\ww_2,\yy_1\right)
   \begin{pmatrix}
   1\otimes u&&&-1\otimes v\\
   &1\otimes u+\delta\otimes\frac{v}{\sqrt{\delta\bar{\delta}}}\\
   &&1\otimes u+\delta\otimes\frac{v}{\sqrt{\delta\bar{\delta}}}\\
   1\otimes v&&& 1\otimes u
   \end{pmatrix}.
\]
Then the tuple $\left(\cO_\cK,c,L,2\pi i\left<\,,\,\right>,h\right)$ defines a Shimura datum of PEL type with reflex field $\cK$.

Consider the moduli problem sending every locally noetherian connected $\cO_{\cK,(p)}$-scheme $S$ to the set of isomorphism classes of tuples $(A,\lambda,i,\alpha^p)$ with: 
\begin{itemize}[leftmargin=*]
\item $A$ an abelian scheme of relative dimension $4$ over $S$;
\item $\lambda:A\ra A^\vee$ a $\bZ^\times_{(p)}$-polarization;
\item $i:\cO_\cK\hra \mr{End}_S A$ an embedding such that the induced $\cO_\cK$-action on $\Lie A_{/S}$ satisfies the  \emph{determinant condition} defined by $h$, \emph{i.e.}, 
{\begin{align*}
  \det\left(X-i(b)|\Lie A_{/S}\right)&=(X-b)^3(X-\bar{b}) 
\end{align*}}
for all $b\in \cO_\cK$ (where on the right-hand side $b$ is viewed as an element in $\mathscr{O}_S$ under the morphism $\cO_\cK\rightarrow\mathscr{O}_S$);
\item $\alpha^p$ an (integral) $K^p_f$-level structure on $(A, \lambda, i)$ of type $\left(L\otimes\wh{\bZ}^{(p)},\Tr_{\cK/\bQ}\circ\left<\,,\,\right>_L\right)$; that is, a $K^p_f$-orbit of $\cO_\cK$-module isomorphisms $L\otimes\wh{\bZ}^{(p)}\ra T^{(p)}A_{\bar{s}}$, where $T^{(p)}A_{\bar{s}}$ is the prime-to-$p$ Tate module of $A_{\bar{s}}$, together with an isomorphism $\wh{\bZ}^{(p)}(1)\stackrel{\sim}{\ra}\bG_{m,\bar{s}}$ making the following diagram commute: 
\[
   \xymatrix@C+2em{
    (L\otimes\wh{\bZ}^{(p)})\times (L\otimes\wh{\bZ}^{(p)}) \ar[r]^-{\Tr_{\cK/\bQ}\circ\left<\,,\,\right>_L} \ar[d] &\wh{\bZ}^{(p)}(1)\ar[d]\\
    T^{(p)}A_{\bar{s}}\times  T^{(p)}A_{\bar{s}}\ar[r]^-{\lambda\text{-Weil}}& \bG_{m,\ol{s}}.
   }
\]
\end{itemize}
(See \cite[Def.~1.4.1.4]{Thesis}.) 
\sk

Since $K^p_f$ is neat, the above moduli problem is represented by a smooth quasi-projective scheme $\gls{sS}$ over $\cO_{\cK,(p)}$ (see \cite[Thm.~1.4.1.2, Cor.~7.2.3.10]{Thesis}).


Denote by $\gls{sStor}$ the toroidal compactification of $\sS$, which is a proper smooth scheme over $\cO_{\cK,(p)}$ containing $\sS$ as an open dense subscheme with complement being a relative Cartier divisor with normal crossings. (In our special case $\GU(3,1)$ here, there is a unique choice of polyhedral cone decomposition for the toroidal compactification.) We denote by $\gls{IS}$ the ideal sheaf of the boundary of $\sS^\tor$. 
By \cite[Thm.~6.4.1.1]{Thesis}, the universal family $(\sA,\lambda,i,\alpha^p)$ over $\sS$ extends to a degenerating family $(\sG,\lambda,i,\alpha^p)$ over $\sS^\tor$. Moreover, the base change of $\sS$ (resp. $\sS^\tor$) to $\cK$ agrees with the Shimura variety over $\cK$ (resp. its toroidal compactification) representing the moduli problem with full level structure at $p$ (see \cite[(A.4.17), (A.4.18)]{App}).


\subsection{Hasse invariant}

Set $\glsuseri{omega}:=e^*\Omega^1_{\sG/\sS^\tor}$ where $e:\sS^\tor\ra \sG$ is the zero section of the semi-abelian scheme $\sG$ over $\sS^\tor$. Let $\glsuserii{omega}$ be the line bundle $\det\underline{\omega}=\wedge^{\mr{top}}\underline{\omega}$. The minimal compactification of $\sS$ is defined by 
\[
  \gls{sSmin}=\mr{Proj}\Biggl(\bigoplus\limits_{k\geqslant 0}H^0(\sS^\tor,\omega^k)\Biggr).
\]
Let $\pi:\sS^{\tor}\rightarrow\sS^{\min}$ be the canonical projection. The push-forward $\pi_*\omega$ is an ample line bundle, and $\pi^*\pi_*\omega\cong \omega$ (see \cite[Thm.~7.2.4.1]{Thesis}). 

In the following, with a slight abuse of notation, we also denote by $\sS^{\tor}$ and $\sS^{\min}$ their base change to $\bZ_p$ via the map $\cO_{\cK,(p)}\ra \bZ_p$ induced by our fixed embedding $\iota_p$, and let $\sS^\tor_{/\bF_p}$ and $\sS^{\min}_{/\bF_p}$ be their corresponding special fibers. 

Let $\gls{Ha}\in H^0\left(\sS^\tor_{/\bF_p},\omega^{p-1}\right)$ be the Hasse invariant defined as in \cite[\S6.3.1]{Kuga}. In particular, for each geometric point $\bar{s}$ of $\sS^\tor_{/\bF_p}$, the Hasse invariant of the corresponding semi-abelian scheme $\sG_{\bar{s}}$ is nonzero if and only if the abelian part of $\sG_{\bar{s}}$ is ordinary. Because $\pi_*\omega$ is ample, for some $t_E> 0$, there exists an element in $H^0\left(\sS^{\min},(\pi_*\omega)^{t_E(p-1)}\right)$ lifting the $t_E(p-1)$-th power of the push-forward of $\mr{Ha}$; we denote by $\gls{E}$ the pullback under $\pi$ of any such lift, which (because $\pi^*\pi_*\omega\cong \omega$)  
defines an element $E\in H^0\left(\sS^{\tor},\omega^{t_E(p-1)}\right)$. 

\subsection{Some groups}

Before moving on, we need to introduce some more group-theoretic notations. Given a matrix $a\in\GL_4(\cK\otimes_\bQ \bQ_p)$, we define $a^+,a^-\in \GL_4(\bQ_p)$ as
\begin{align*}
 a^+=\varrho_\fp(a),\quad a^-=\varrho_{\bar{\fp}}(a),
\end{align*}
where $\varrho_\fp,\varrho_{\bar{\fp}}:\cK\otimes_\bQ\bQ_p$ are defined as in  {\bf Notation}, and we view an element $g\in G(\bQ_p)$ as a matrix inside $\GL_4(\cK\otimes_\bQ \bQ_p)$ via the basis $(\xx_1,\ww_1,\ww_2,\yy_1)$. Then the corresponding matrices $g^+,g^-\in\GL_4(\bQ_p)$ satisfy
\begin{align*}
   &\ltrans{g}^-\varrho_\fp(\zeta)\,g^+=\nu(g)\cdot \varrho_\fp(\zeta).
\end{align*} 
In the following, we will often write $g\in G(\bQ_p)$ as $(g^+,g^-)$. Note that the map
\begin{equation}\label{eq:Gisom}
\begin{aligned}
   G(\bQ_p)&\ra \GL_4(\bQ_p)\times\bQ^\times_p\\
    g&\mapsto (g^+,\nu(g))
\end{aligned}
\end{equation}
is an isomorphism. 

There is a filtration $\glsuseri{ttD}=\{{\tt D}^i\}_i$  
of $L\otimes_{\bZ}\bZ_p$ given by
\begin{equation}\label{eq:fil-D}
\glsuserii{ttD}=0\;\subset\;\glsuseriii{ttD}=X^+_{p}\oplus L^+_{0,p}\oplus X^-_{p}\;\subset\; \glsuseriv{ttD}=L\otimes_{\bZ}\bZ_p,
\end{equation}
where $X^+_p\subset X^\vee\otimes_{\bZ}\bZ_p=X\otimes_{\bZ}\bZ_p$ (resp. $X^-_p\subset X\otimes_{\bZ}\bZ_p$,  $L^+_{0,p}\subset L_0\otimes_{\bZ}\bZ_p$) is the subspace on which $b\in\cO_\cK$ acts by $\iota_p(b)$ (resp. $\iota_p(\bar{b})$,  $\iota_p(b)$). Note that 
${\tt D}^0$ is isotropic with respect to $\Tr_{\cK/\bQ}\circ\left<\,,\,\right>_L$. For $R=\bZ_p$ or $\bQ_p$, we put
\[
   \gls{PD}(R)=\{g\in G(R)\,:\,g({\tt D}^0)={\tt D}^0\};
\]
\emph{i.e.}, $P_{\tt D}(R)$ is the subgroup of $G(R)$ stabilizing the filtration ${\tt D}$ in $(\ref{eq:fil-D})$. We see that $P_{\tt D}^\pm(R):=\{g^\pm\colon g\in P_{\tt D}(R)\}$ are the subgroups of $\GL_4(R)$ given by
\[
P_{\tt D}^+=\left\{\begin{psm}\ast&\ast&\ast&\ast\\ \ast&\ast&\ast&\ast\\ \ast&\ast&\ast&\ast\\&&&\ast\end{psm}\right\},\quad
P_{\tt D}^-=\left\{\begin{psm}\ast&\ast&\ast&\ast\\ &\ast&\ast&\ast\\ &\ast&\ast&\ast\\&\ast&\ast&\ast\end{psm}\right\}.
\]
Because $P_{\tt D}$ preserves both $X^+_p\oplus L^+_{0,p}$ and $X^-_p$, there is a natural projection $P_{\tt D}(R)\ra \GL_3(R)\times \GL_1(R)$ which in terms of matrices is given by
\begin{align*}
   g=(g^+,g^-)=\left(\begin{psm}a_{11}&a_{12}&a_{13}&a_{14}\\ a_{21}&a_{22}&a_{23}&a_{24}\\ a_{31}&a_{32}&a_{33}&a_{34}\\ &&&a_{44}\end{psm},\begin{psm}b_{11}&b_{12}&b_{13}&b_{14}\\ &b_{22}&b_{23}&b_{24}\\ &b_{32}&b_{33}&b_{34}\\ &b_{42}&a_{43}&b_{44}\end{psm}\right)
   &\mapsto \left(\begin{psm}a_{11}&a_{12}&a_{13}\\a_{21}&a_{22}&a_{23}\\a_{31}&a_{32}&a_{33}\end{psm},b_{11}\right).
\end{align*}

We will consider the $p$-level subgroups given by
\begin{align*}
   \glsuseri{Kpn}&:=\left\{g\in G(\bZ_p)\colon g^+\equiv\begin{psm}\ast&\ast&\ast&\ast\\p\ast&\ast&\ast&\ast\\ &&1&\ast\\ && &1\end{psm}\mod p^n\right\},\\
   \glsuserii{Kpn}&:=\left\{g\in G(\bZ_p)\colon g^+\equiv\begin{psm}\ast&\ast&\ast&\ast\\p\ast&\ast&\ast&\ast\\ &&\ast&\ast\\&& &\ast\end{psm}\mod p^n\right\},
\end{align*}
where $n$ is a positive integer.



\subsection{Igusa towers} 

Let \glsuseri{Ign} be the ordinary locus of level $K^1_{p,n}$ attached to the filtration ${\tt D}$ in $(\ref{eq:fil-D})$ and the PEL type Shimura data introduced in $\S\ref{subsec:Sh}$, constructed in \cite[Theorem~3.4.2.5]{Kuga}. By \cite[Prop.~3.4.6.3]{Kuga}, 
$\Ig_{n}$ is a smooth quasi-projective scheme over $\bZ_p$ representing a moduli problem for tuples $(A,\lambda,i,\alpha^p,\alpha_p)$, where $(A,\lambda,i,\alpha^p)$ is as in $\S\ref{subsec:Sh}$, and $\alpha_p$ is an ordinary $K^1_{p,n}$-level structure of $A$, \emph{i.e.}, a $K^1_{p,n}$-orbit of group scheme embeddings $\mu_{p^n}\otimes{\tt D}^0\hra A[p^n]$ with image isotropic for the $\lambda$-Weil pairing, compatible with the $\cO_\cK$-actions on ${\tt D}^0$ and   $A[p^n]$ through $i$, as described in \cite[\S3.1.1]{HLTT}. 

\begin{rmk} 
In \cite[Chapter~3]{Kuga}, the ordinary locus is defined as a normalization of the naive moduli problem introduced in [\emph{loc.\,cit.}, Def.~3.4.1.1]. In our case, because $p$ is a good prime in the sense of \cite[Def.~1.1.1.6]{Kuga} and $\nu(K^1_{p,n})=\bZ^\times_p$, this construction of the ordinary locus agrees with the moduli problem. (See \cite[B.10]{HLTT} for more details).
\end{rmk}
 
Let \glsuserii{Ign} be the partial toroidal compactification of the ordinary locus $\Ig_{n}$ (\cite[Thm.~5.2.1.1]{Kuga}); it is obtained by gluing to $\Ig_{n}$ the toroidal boundary charts parameterizing degenerating families defined in \cite[Def.~3.4.2.0]{Kuga} (including an extensibility condition on the ordinary level structure). We note that, even though the generic fibre of $\Ig_{n}$ agrees with the Shimura variety of level $K^p_fK^1_{p,n}$ over $\bQ_p$, the generic fibre of $\Ig^\tor_{n}$ is in general just an open subscheme of a toroidal compactification of the Shimura variety of level $K^p_fK^1_{p,n}$ over $\bQ_p$. 


Let \glsuserii{Ignm} (resp. \gls{Sm}) be the base change of $\sT_n^\tor$ (resp. $\sS^\tor_{}$) to $\bZ/p^m\bZ$. By \cite[Lem.~6.3.2.7]{Kuga}, $\sS^\tor_m[1/E]$ agrees with the ordinary locus in \cite[Thm.~5.2.1.1]{Kuga} for full level at $p$, and  by \cite[Cor.~5.2.2.3]{Kuga} the map $ \Ig^\tor_{n,m}\ra \sS^\tor_m[1/E]$ that forgets the ordinary $K^1_{p,n}$-level structure is finite {\'e}tale. 
Concretely, we can describe $\sT^\tor_{n,m}$ as
\begin{equation}\label{eq:tower}
\underline{\Isom}_{\sS^{\tor}_m[1/E]}\left(\mu_{p^n}\otimes {\tt D}^0,\sG[p^n]^\mult\right)/K^1_{p,n}\cap P_{\tt D}(\bZ_p),
\end{equation}
where $P_{\tt D}(\bZ_p)$ acts on an element $\alpha_p\in \underline{\Isom}_{\sS^{\tor}_m[1/E]}\left(\mu_{p^n}\otimes {\tt D}^0,\sG[p^n]^\mult\right)$ by $(g\cdot\alpha_p)(v)=\alpha_p(g^{-1}v)$ for $v\in {\tt D}^0$. The fiber at a geometric point $\bar{s}\in\sS^\tor_{m}$ parameterizes tuples $({\tt F}^\pm,\delta^+_3,\delta^-_1,{\tt F}^+_1[p])$, where $\gls{F}$ are filtrations
\begin{equation}\label{eq:F}
\begin{aligned}
  &{\tt F}^+:0={\tt F}^+_0\subset {\tt F}^+_2\subset{\tt F}^+_3=\sG_{\bar{s}}[p^n]^{\mult+}\\
  &{\tt F}^-:0={\tt F}^-_0\subset{\tt F}^-_1= \sG_{\bar{s}}[p^n]^{\mult -}
\end{aligned}
\end{equation}
compatible with the Weil pairing; $\delta^+_3$ and $\delta^-_1$ are isomorphisms
\begin{align*}
   &\glsuseri{delta+-}: \mu_{p^n}\cong \mr{Gr}^{\tt F^+}_3,
   &&\glsuserii{delta+-}:\mu_{p^n}\cong \mr{Gr}^{\tt F^-}_1;
\end{align*}
and ${\tt F}^+_1[p]$ is a two-step filtration of ${\tt F}^+_2[p]$. 



\subsection{$p$-adic forms forms}\label{subsec:p-adicforms}

Define the space of mod~$p^m$ automorphic forms on $G$ of level $n$ by 
\[
   \glsuseri{Vnm}:=H^0\left(\Ig^\tor_{n,m},\cO_{\sT^\tor_{n,m}}\right).
\]
Letting $\cI_{\Ig^\tor_{n,m}}:=(\Ig^\tor_{n,m}\ra \sS^\tor[1/E])^*\cI_{\sS^\tor}$, we similarly define the space of mod~$p^m$  cuspidal automorphic forms on $G$ of level $n$ by 
\[
   \glsuserii{Vnm}:=H^0\left(\sT^\tor_{n,m},\cI_{\sT^\tor_{n,m}}\right).
\]

Passing to the limit, we obtain corresponding spaces of $p$-adic automorphic forms (with $p$-power torsion coefficients) by 
\[
  \glsuseri{pV}:=\varinjlim\limits_{m}\varinjlim\limits_{n} V_{n,m},\quad
  \glsuserii{pV}:=\varinjlim\limits_{m}\varinjlim\limits_{n} V_{n,m}^0.
\]
Let $\gls{Tso}(\bZ_p)=\bZ_p^\times\times\bZ_p^\times$. The map sending $(a_1,a_2)$ to $g\in G(\bZ_p)$ with $g^+=\begin{psm}1\\&1\\&&a_1\\&&&a^{-1}_2\end{psm}$ and $\nu(g)=1$ identifies $T_\so(\bZ/p^n)=(\bZ/p^n\bZ)^\times\times(\bZ/p^n\bZ)^\times$ with $K^n_{p,0}/K^1_{p,n}$. Hence the group $T_{\so}(\bZ_p)$ naturally acts on $V_{n,m}, V^0_{n,m}, \pV, \pV^0$ making these spaces into $\bZ_p\llb T_{\so}(\bZ_p)\rrb$-modules.




By a \emph{$p$-adic weight} (for a semi-ordinary form) we mean a $\ol{\bQ}_p$-valued character of $T_{\so}(\bZ_p)$, \emph{i.e.}, a pair $(\tau^+,\tau^-)$, where $\tau^\pm:\bZ^\times_p\ra \ol{\bQ}^\times_p$ are continuous characters, and we say that a $p$-adic weight is \emph{arithmetic} if it is of the form $(x,y)\mapsto \epsilon^+(x)x^{t^+}\cdot\epsilon^-(y)y^{t^-}$, where $\epsilon^\pm$ is of finite order and $t^\pm\in\bZ$. If $(\tau^+,\tau^-)$ is arithmetic, we put $\tau_{\rmf}^\pm:=\epsilon^\pm$ and $\tau_{\alg}^\pm=t^\pm$. Given a $p$-adic weight $(\tau^+,\tau^-)$, we denote by $V_{n,m}[\tau^+,\tau^-]$ the subspace of $V_{n,m}\otimes_{\bZ_p}\cO_{\bQ_p(\tau^+,\tau^-)}$ on which $T_{\so}(\bZ_p)$ acts by the inverse of the character $(\tau^+,\tau^-)$. Similarly, we define the eigenspaces $V^0_{n,m}[\tau^+,\tau^-]$, $\pV[\tau^+,\tau^-]$, $\pV^0[\tau^+,\tau^-]$.

\subsection{Classical automorphic forms}

The weights of classical holomorphic automorphic forms on $G$ are indexed by tuples of integers $\gls{ut}=(t^+_1,t^+_2,t^+_3;t^-_1)$ with $t^+_1\geq t^+_2\geq t^+_3$. (When $t^+_3\geq -t^-_1+4$, the archimedean component of the corresponding automorphic representation is isomorphic to a holomorphic discrete series.)

Let $\gls{Wut}$ be the algebraic representation of $\GL_3\times\GL_1$ given by
\begin{equation}\label{eq:W}
W_{\ut}:=W_{(t^+_1,t^+_2,t^+_3)}\boxtimes W_{t^-_4}.\nonumber
\end{equation}
Here, for any algebra $R$, letting $R[\underline{x},\det\underline{x}^{-1}]$ be the polynomial ring in the variables $x_{ij}$ ($1\leq i,j\leq 3$) and $\left(\det(x_{ij})_{1\leq i,j\leq 3}\right)^{-1}$, $W_{(t^+_1,t^+_2,t^+_3)}(R)$ is the $R$-submodule of $R[\underline{x},(\det\underline{x})^{-1}]$ spanned by
\[
x^{a_1}_{11}x^{a_2}_{12}x^{a_3}_{13}\det\begin{pmatrix}x_{11}&x_{12}\\x_{21}&x_{22}\end{pmatrix}^{b_1}\det\begin{pmatrix}x_{11}&x_{13}\\x_{21}&x_{23}\end{pmatrix}^{b_2}\det\begin{pmatrix}x_{12}&x_{13}\\x_{22}&x_{23}\end{pmatrix}^{b_3}
   \det\begin{pmatrix}x_{11}&x_{12}&x_{13}\\ x_{21}&x_{22}&x_{23}\\ x_{31}&x_{32}&x_{33}\end{pmatrix}^{t^+_3},
\]
where  $a_i,b_i\geq 0,\,a_1+a_2+a_3=t^+_1-t^+_2,\,b_1+b_2+b_3=t^+_2-t^+_3$, and $W_{t^-_4}(R)$ is the $R$-submodule of $R[x,x^{-1}]$ spanned by $x^{t^-_4}$. 

The groups $\GL_3$ and $\GL_1$ act on $W_{\ut}$ by right translation. One can check that the left translation of $\begin{psm}a_1\\ \ast&a_2\\ \ast&\ast&a_3\end{psm}$ on $W_{(t^+_1,t^+_2,t^+_3)}$ is by the scalar $a^{t^+_1}_1 a^{t^+_2}_2 a^{t^+_3}_3$; when $R=\bC$, it is the irreducible algebraic representation of $\GL_3(\bC)$ of highest weight $(t^+_1,t^+_2,t^+_3)$. Let $\fe_{\can}:W_{\ut}\ra\bA^1$ be the linear functional defined by the evaluation at $(\bid_3,\bid_1)$.

Let $\underline{\omega}^+$ (resp. $\underline{\omega}^-$) be the subsheaf of $\underline{\omega}$ on which $i(b)$ acts by $b$ (resp. $\bar{b}$) for all $b\in\cO_\cK$. Because $p$ is unramified in $\cK$, $\underline{\omega}^+$ (resp. $\underline{\omega}^-$) is locally free of rank $3$ (resp. rank $1$) and $\underline{\omega}=\underline{\omega}^+\oplus\underline{\omega}^-$. 

Set \begin{align*}
\omega^+_{(t^+_1,t^+_2,t^+_3)}&=\underline{\Isom}_{\sS^\tor}(\cO^{\oplus{3}}_{\sS^\tor},\underline{\omega}^+)\times^{\GL_3} W_{(t^+_1,t^+_2,t^+_3)},\\
\omega^+_{t^-_1}&=\underline{\Isom}_{\sS^\tor}(\cO_{\sS^\tor},\underline{\omega}^+)\times^{\GL_1} W_{t^-_1}\cong (\underline{\omega}^-)^{\otimes t^-_1},
\end{align*}
and put $\gls{omegaut}=\omega^+_{(t^+_1,t^+_2,t^+_3)}\otimes\omega^-_{t^-_1}$.

Letting $F/\bQ_p$ be a finite extension containing the values of $\epsilon^\pm$, and $S^\tor_{K^p_fK^1_{p,n}}$ be the toroidal compactification of the Shimura variety of level $K^p_fK^1_{p,n}$  defined over $\cK$, 
we have 
\[
   \gls{Mt}=
\left(H^0\Big(S^\tor_{K^p_fK^1_{p,n}},\underline{\omega}_{(0,0;t^+;t^-)}\Big)\otimes_{\cK}F\right)[\epsilon^+,\epsilon^-],
\]
which is the space of classical automorphic forms on $G$ of weight $(0,0,t^+;t^-)$, level $K^p_fK^1_{p,n}$, and nebentypus $(\epsilon^+,\epsilon^-)$ for the action of $K^0_{p,n}/K^1_{p,n}$. Here $F$ is viewed as a $\cK$-algebra via $\cK\stackrel{\iota_p}{\ra} \bQ_p\ra F$. Similarly, we have the space of classical cuspidal automorphic forms
\[ 
  \gls{Mt0}
  =\left(H^0\Big(S^\tor_{K^p_fK^1_{p,n}},\underline{\omega}_{(0,0;t^+;t^-)}\otimes\cI_{\sS^\tor}\Big)\otimes_{\cK}F\right)[\epsilon^+,\epsilon^-].
\]

There are classical embeddings
\begin{equation}\label{eq:HtoV}
\begin{aligned}
   H^0\left(\sS^\tor,\underline{\omega}_{(0,0,t^+;t^-)}\right)&\lhra \varprojlim_m\varinjlim_n V_{n,m}[t^+,t^-],\\
   H^0\left(\sS^\tor,\underline{\omega}_{(0,0,t^+;t^-)}\otimes\cI_{\sS^\tor}\right)&\lhra \varprojlim_m\varinjlim_n V^0_{n,m}[t^+,t^-].
\end{aligned}
\end{equation}
induced by the trivialization of $\underline{\omega}$ over the Igusa tower and the canonical functional $\fe_\can$. More precisely, the trivialization of $\underline{\omega}$ arises from the  
the Hodge--Tate map $(\cG_{/S}[p^n]^\mult)^D\otimes_{\bZ}\cO_S\stackrel{\sim}{\ra} e^*\Omega^1_{\cG/S}\otimes_\bZ\bZ/p^n\bZ$, where $\cG^0_{/S}$ is an ordinary semi-abelian variety over $S$, the superscript $^D$ denotes the Cartier dual, and $e:S\ra \cG$ is the zero section.

Similarly, we have embeddings
\begin{equation}\label{eq:MtoV}
\begin{aligned}
   M_{(0,0,t^+;t^-)}\left(K^p_fK^1_{p,n},\epsilon^+,\epsilon^-;F\right)&\lhra \left(\varprojlim_m\varinjlim_n V_{n,m}\otimes_{\bZ_p}F\right)[(t^+,\epsilon^+),(t^-,\epsilon^-)],\\
   M^0_{(0,0,t^+;t^-)}\left(K^p_fK^1_{p,n},\epsilon^+,\epsilon^-;F\right)&\lhra \left(\varprojlim_m\varinjlim_n V^0_{n,m}\otimes_{\bZ_p}F\right)[(t^+,\epsilon^+),(t^-,\epsilon^-)].
\end{aligned}
\end{equation}

\subsection{The $\bU_p$-operator}\label{sec:Up-def}
Given a tuple $(A,\lambda,i,\alpha^p,\alpha_p)$ parameterized by $\Ig_{n,m}$, where $\alpha_p:\mu_{p^n}\otimes {\tt D}^0\ra A[p^n]^\mult$ is an isomorphism up to $K^1_{p,n}\cap P_{\tt D}(\bZ_p)$, the corresponding filtration ${\tt F^\pm}$ of $A[p^n]^{\mult\pm}$ is
\begin{align*}
  {\tt F}^+&\colon\; {\tt F}^+_2=\{e^+_1,e^+_2\}\subset{\tt F}^+_3=\{e^+_1,e^+_2,e^+_3\}=\mr{can}^+\subset {\tt F}^+_4=\{e^+_1,e^+_2,e^+_3,e^+_4\},\\
  {\tt F}^-&\colon\; {\tt F}^-_1=\{e^-_1\}=\mr{\can}^-\subset{\tt F}^-_2=\{e^-_1,e^-_3\}\subset{\tt F}^-_4=\{e^-_1,e^-_2,e^-_3,e^-_4\},
\end{align*}
where $(e^+_1,e^+_2,e^+_3;e^-_1)=\alpha_p(\xx^+_1,\ww^+_1,\ww^+_2;\xx^-_1)$.

Now we define three $\bU_p$-operators $U^+_{p,2},U^+_{p,3},U^-_{p,1}$.

\subsubsection{$\bU_p$-operators on $V_{n,m}$}

For $j=2,3$, let $\glsuseri{Cj}$ denote the solution to the moduli problem classifying tuples $(A,\lambda,i,\alpha^p,\alpha_p,\Lsub)$ with $\Lsub\subset A[p^2]$ a Lagrange subgroup (\emph{i.e.}, maximal isotropic for the $\lambda$-Weil pairing) stable under the $\cO_\cK$-action through $i$ such that $A[p]=\Lsub[p]\oplus {\tt F}^+_j[p]$. With $e^\pm_1,e^\pm_2,e^\pm_3,e^\pm_4$ a basis of $A[p^n]$ as above, such a $\Lsub$ is spanned by $(e^\pm_1,e^\pm_2,e^\pm_3,e^\pm_4)\cdot p^n\gamma_{\Lsub,p}^\pm$, where for $j=2$ 
\begin{equation}\label{eq:gamma+2}
\begin{aligned}
   \gamma^+_{\Lsub,p}&=
   \begin{pmatrix}1&&u_1&\ast\\&1&u_2&\ast\\&&1&\\&&&1\end{pmatrix}\begin{pmatrix}1&&&\\&1&&\\&&\frac{1}{p}&\\&&&\frac{1}{p}\end{pmatrix},\\
   \gamma^-_{\Lsub,p}&=
   \begin{pmatrix}1\\&\iota_p(\bar{\zeta}_0)^{-1}\\&&1\end{pmatrix}\begin{pmatrix}1&\ast&&\ast\\&1\\&-u_2&1&u_1\\&&&1\end{pmatrix}\begin{pmatrix}\frac{1}{p}&&&\\&\frac{1}{p^2}&&\\&&\frac{1}{p}&\\&&&\frac{1}{p^2}\end{pmatrix}\begin{pmatrix}1\\&\iota_p(\bar{\zeta}_0)\\&&1\end{pmatrix}
\end{aligned}
\end{equation}
with $u_1,u_2,\ast\in\bZ_p$, and for $j=3$, 
\begin{equation}\label{eq:gamma+3}
\begin{aligned}
   \gamma^+_{C,p}&=\begin{pmatrix}1&&&\ast\\&1&&\ast\\&&1&\ast\\&&&1\end{pmatrix}\begin{pmatrix}1&&&\\&1&&\\&&1&\\&&&\frac{1}{p}\end{pmatrix},
   &\gamma^-_{C,p}&=
   \begin{pmatrix}1&&&\ast\\&1&&\ast\\&&1&\ast\\&&&1\end{pmatrix}\begin{pmatrix}\frac{1}{p}&&\\&\frac{1}{p^2}\\&&\frac{1}{p^2}\\&&&\frac{1}{p^2}\end{pmatrix}
\end{aligned}
\end{equation}
with $\ast\in\bZ_p$. 

Similarly, let $\glsuserii{Cj}$ be the moduli space classifying tuples $(A,\lambda,i,\alpha^p,\alpha_p,\Lsub)$ with $\Lsub\subset A[p^2]$ a Lagrange subgroup stable under the $\cO_\cK$-action through $i$ such that $A[p]=\Lsub[p]\oplus{\tt F}^-_1[p]$. Then such a $\Lsub$ is spanned by $(e^\pm_1,e^\pm_2,e^\pm_3,e^\pm_4)\cdot p^n\gamma^\pm_{C,p}$ with
\begin{equation}\label{eq:gamma-}
\begin{aligned}
   \gamma^+_{C,p}&=\begin{pmatrix}1&&&\ast\\&1&&\ast\\&&1&\ast\\&&&1\end{pmatrix}\begin{pmatrix}\frac{1}{p}&&&\\&\frac{1}{p}&&\\&&\frac{1}{p}&\\&&&\frac{1}{p^2}\end{pmatrix},
   &\gamma^-_{C,p}&=
   \begin{pmatrix}1&&&\ast\\&1&&\ast\\&&1&\ast\\&&&1\end{pmatrix}\begin{pmatrix}1&&\\&\frac{1}{p}\\&&\frac{1}{p}\\&&&\frac{1}{p}\end{pmatrix}.
\end{aligned}
\end{equation}

For $(\bullet,j)=(+,2),(+,3),(-,1)$, we consider the diagram
\begin{equation}\label{eq:Up-graph}
\xymatrix{
   &\sC^{1\bullet}_{j,n,m}\ar[dl]_-{p_1}\ar[dr]^-{p_2}\\
   \Ig_{n,m}&&\Ig_{n,m}
}
\end{equation}
with $p_1,p_2$ the projections given by
\begin{align*}
  & p_1:(A,\lambda,i,\alpha^p,\alpha_p,C)\mapsto(A,\lambda,i,\alpha^p,\alpha_p),
  && p_2:(A,\lambda,i,\alpha^p,\alpha_p,C)\mapsto(A',\lambda',i',(\alpha^p)',\alpha_p'),
\end{align*}
where $A'=A/\Lsub$, $\lambda'$ is such that $\pi^\vee\circ\lambda\circ\pi=p^2\lambda$ with $\pi:A\ra A'$ the natural projection, $i'$ is the $\cO_{\cK}$-action $i$ on $A$ descended to $A'$, $(\alpha^{p})'=\pi\circ \alpha^p$, and $\alpha_p'$ given by $(e^{+\prime}_1,e^{+\prime}_2,e^{+\prime}_3;e^{-\prime}_1)$ defined as
\[
   (e^{+\prime}_1,e^{+\prime}_2,e^{+\prime}_3;e^{-\prime}_1)=\begin{cases}
   \pi\left((e^+_1,e^+_2,e^+_3;e^-_1)\begin{psm}1&&u_1\\&1&u_2\\&&1&\\&&&1\end{psm}\begin{psm}1\\&1\\&&p^{-1}\\&&&p^{-1}\end{psm}\right), &\bullet=+,\, j=2,\\[9pt]
   \pi\left((e^+_1,e^+_2, e^+_3;e^-_1)\begin{psm}1\\&1\\&&1\\&&&p^{-1}\end{psm}\right),&\bullet=+,\,j=3,\\[9pt] 
   \pi\left(e^+_1,e^+_2, e^+_3;e^-_1)\begin{psm}p^{-1}\\&p^{-1}\\&&p^{-1}\\&&&1\end{psm}\right), &\bullet=-,\,j=1.
   \end{cases}
\]
One can check that if $(e^{+}_1,e^{+}_2,e^{+}_3;e^{-}_1)$ is up to $K^1_{p,n}\cap P_{\tt D}(\bZ_p)$, then $(e^{+\prime}_1,e^{+\prime}_2,e^{+\prime}_3;e^{-\prime}_1)$ is well-defined up to $K^1_{p,n}\cap P_{\tt D}(\bZ_p)$.

Define
\begin{equation}\label{eq:open}
    \glsuseri{Up}\colon\; H^0(\Ig_{n,m},\cO_{\Ig_{n,m}})
   \stackrel{p^*_2}{\lra} H^0(\sC^+_{j,n,m},\cO_{\Ig_{n,m}})
   \xrightarrow{p^{-j}\Tr\,p_1} H^0(\Ig_{n,m},\cO_{\Ig_{n,m}}),
\quad(j=2,3)   
   \end{equation}
\begin{equation}\label{eq:open2}
    \glsuserii{Up}\colon\; H^0(\Ig_{n,m},\cO_{\Ig_{n,m}})
   \stackrel{p^*_2}{\lra} H^0(\sC^-_{1,n,m},\cO_{\Ig_{n,m}})
   \xrightarrow{p^{-3}\Tr\,p_1} H^0(\Ig_{n,m},\cO_{\Ig_{n,m}}).
\end{equation}
The normalization factors $p^{-j}$ and $p^{-3}$ are the inverse of the pure inseparability degree of the corresponding projection $p_1$; they are the optimal normalization to preserve integrality \cite[p.71]{hida-coherent}. By computing the effect of \eqref{eq:open} and \eqref{eq:open2} on Fourier--Jacobi expansions, one can check that the operators preserve $V_{n,m}$ and $V^0_{n,m}$.

\subsubsection{$\bU_p$-operators on $H^0\left(\Ig^{0,\tor}_{n,m},\omega_{\ut}\right)$}\label{sec:UpH}\hfill

Let $\Ig^{0,\tor}_{n,m}$ be the quotient of $\Ig^\tor_{n,m}$ by $K^0_{p,n}$, which can be described as 
\[
\underline{\Isom}_{\sS^{\tor}_m[1/E]}\left(\mu_{p^n}\otimes {\tt D}^0,\sG[p^n]^\mult\right)/K^0_{p,n}.
\]
Compared to $\Ig^\tor_{n,m}$, the level structure at $p$ for $\Ig^{0,\tor}_{n,m}$  forgets $\delta^+_3,\delta^-_1$ and parameterizes $({\tt F}^\pm,{\tt F}^+_1[p])$. We can define $\bU_p$-operators on $H^0\left(\Ig^{0,\tor}_{n,m},\omega_{\ut}\right)$. In order to see that they increase the level, we need to introduce Igusa towers of more general level structure at $p$.

Given $n_1,n_2\leq n_3\leq n_1+n_2$, define the level group 
\[
    K^0_{p,n_1,n_2,n_3}=\left\{g\in G(\bZ_p): g^+=\begin{psm}\ast&\ast&\ast&\ast\\p\ast&\ast&\ast&\ast\\ p^{n_1}\ast&p^{n_1}\ast&\ast&\ast\\p^{n_3}\ast&p^{n_3}\ast&p^{n_2}\ast&\ast\end{psm}\right\}.
\]
Define $\Ig^0_{n_1,n_2,n_3,m}$ as the quotient of $\Ig_{n,m}$, $n\geq n_1,n_2,n_3$, by $K^0_{p,n_1,n_2,n_3}\cap P_{\tt D}(\bZ_p)$, {\it i.e.} the corresponding level structure at $p$ parameterizes isomorphism $\alpha_p:\mu_{p^n}\otimes {\tt D}^0\ra A[p^{\infty}]^\mult$ up to $K^0_{p,n_1,n_2,n_3}\cap P_{\tt D}(\bZ_p)$. 

Like above for $(\bullet,j)=(+,2),(+,3),(-,1)$, let $\sC^{0\bullet}_{j,n_1,n_2,n_3,m}$ be the moduli space parameterizing tuples $(A,\lambda,i,\alpha^p,\alpha_p,\Lsub)$ with $\Lsub\subset A[p^2]$ a Lagrange subgroup stable under the $\cO_\cK$-action through $i$ such that $A[p]=C[p]\oplus {\tt F}^\bullet_j$.

Consider the diagrams
\[\xymatrix{
   &\sC^{0+}_{2,n_1,n_2,n_3,m}\ar[dl]_-{p_1}\ar[dr]^-{p_2}\\
   \Ig^0_{n_1,n_2,n_3,m}&&\Ig^0_{n_1+1,n_2,n_3+1,m}
}
\]

\[\xymatrix{
   &\sC^{0+}_{3,n_1,n_2,n_3,m}\ar[dl]_-{p_1}\ar[dr]^-{p_2}\\
   \Ig^0_{n_1,n_2,n_3,m}&&\Ig^0_{n_1,n_2+1,n_3+1,m}
}
\]
\[\xymatrix{
   &\sC^{0-}_{1,n_1,n_2,n_3,m}\ar[dl]_-{p_1}\ar[dr]^-{p_2}\\
   \Ig^0_{n_1,n_2,n_3,m}&&\Ig^0_{n_1,n_2+1,n_3+1,m}
}
\]
where $p_1,p_2$ are defined in the same way as in the diagram \eqref{eq:Up-graph}. This time for $(e^{+}_1,e^{+}_2,e^{+}_3;,e^{-}_1)$ up to $K^0_{p,n_1,n_2,n_3}\cap P_{\tt D}(\bZ_p)$, the $(e^{+\prime}_1,e^{+\prime}_2,e^{+\prime}_3;,e^{-\prime}_1)$ is well defined up to $K^0_{p,n_1+1,n_2,n_3+1}\cap P_{\tt D}(\bZ_p)$ in case $\sC^{0+}_{2,n,m}$, and well defined up to $K^0_{p,n_1,n_2+1,n_3+1}\cap P_{\tt D}(\bZ_p)$ in case $\sC^{0+}_{3,n,m}$ and case  $\sC^{0-}_{1,n_1,n_2,n_3,m}$.

In order to define the $\bU_p$-operators on the global sections of vector bundles, we also need maps
\begin{align*}
    \pi^*&:H^0\left(\sC^{0-}_{1,n_1+1,n_2,n_3+1,m},p^*_2\omega_{\ut}\right)\lra  H^0\left(\sC^{0-}_{1,n_1,n_2,n_3,m},p^*_1\omega_{\ut}\right)\\
    \pi^*&:H^0\left(\sC^{0-}_{1,n_1,n_2+1,n_3+1,m},p^*_2\omega_{\ut}\right)\lra  H^0\left(\sC^{0-}_{1,n_1,n_2,n_3,m},p^*_1\omega_{\ut}\right).
\end{align*}
Suppose that $\underline{\varepsilon}_A$ (resp. $\underline{\varepsilon}_{A/\Lsub}$) is a basis of $\underline{\omega}_A$ (resp. $\underline{\omega}_{A/\Lsub}$) compatible with $\alpha_p$ (resp. $\alpha'_p$). We have $\underline{\varepsilon}_A=(\pi^*\underline{\omega}_{A/\Lsub})g$ with
\[
   g=\begin{cases}
    h\begin{psm}1\\&1\\&&p^{-1}\\&&&p^{-1}\end{psm}h', &\bullet=+,\, j=2,\\
   h\begin{psm}1\\&1\\&&1\\&&&p^{-1}\end{psm}h', &\bullet=+,\, j=3,\\
   h\begin{psm}p^{-1}\\&p^{-1}\\&&p^{-1}\\&&&1\end{psm}h', &\bullet=-,\, j=1,
   \end{cases}
\]
for some $h,h'=\begin{psm}\ast&p^n\ast&p^n\ast\\\ast&\ast&p\ast\\ \ast&\ast&\ast\\ &&&\ast\end{psm}\in\GL_4(\bZ_p)$. In all the cases, $g^{-1}$ belongs to the semi-group
 \[
    \Delta_+=\left\{\left(h\begin{psm}a_1\\&a_2\\&&a_3\end{psm}h',a_4\right)\,:\,h,h'=\begin{psm}\ast&p\ast&p\ast\\\ast&\ast&p\ast\\\ast&\ast&\ast\end{psm}\in\GL_3(\bZ_p),\,a^{-1}_1a_2,a^{-1}_2a_3\in\bZ_p \right\}.
\]
We make $\Delta_+$ act on $W_{\ut}$ by 
\[
   \left(h\begin{psm}a_1\\&a_2\\&&a_3\end{psm}h',a_4\right)\cdot q(\underline{x},y)=q\left(\begin{psm}p^{-v_p(a_1)}\\&p^{-v_p(a_2)}\\&&p^{-v_p(a_3)}\end{psm}\underline{x}h\begin{psm}a_1\\&a_2\\&&a_3\end{psm}h',a_4p^{-v_p(a_4)y} \right),
\]
and define $\pi^*$ as
\[
    \pi^*\vec{f}(A,\lambda,i,\alpha^p,\alpha_p,\Lsub,\underline{\varepsilon}_A)=\vec{f}(A,\lambda,i,\alpha^p,\alpha_p,\Lsub,\underline{\varepsilon}_A,(\pi^*\underline{\varepsilon}_{A/\Lsub})g)=g^{-1}\cdot \vec{f}(A,\lambda,i,\alpha^p,\alpha_p,\Lsub,\underline{\varepsilon}_{A/\Lsub}).
\]
The $\bU_p$-operators are defined as
{\footnotesize
\begin{equation*}
\begin{aligned}
   \glsuseriii{Up}&:H^0\left(\Ig^0_{n_1+1,n_2,n_3+1,m},\omega_{\ut}\right)
   \stackrel{p^*_2}{\lra} H^0\left(\sC^{0+}_{2,n_1,n_2,n_3,m},p^*_2\omega_{\ut}\right)
   \stackrel{\pi^*}{\lra}  H^0\left(\sC^{0+}_{2,n_1,n_2,n_3,m},p^*_1\omega_{\ut}\right)
   \xrightarrow{p^{-2}\Tr\,p_1} H^0(\Ig^0_{n_1,n_2,n_3,m},\omega_{\ut}),\\
    \glsuseriv{Up}&:H^0\left(\Ig^0_{n_1,n_2+1,n_3+1,m},\omega_{\ut}\right)
   \stackrel{p^*_2}{\lra} H^0\left(\sC^{0+}_{3,n_1,n_2,n_3,m},p^*_2\omega_{\ut}\right)
   \stackrel{\pi^*}{\lra}  H^0\left(\sC^{0+}_{3,n_1,n_2,n_3,m},p^*_1\omega_{\ut}\right)
   \xrightarrow{p^{-3}\Tr\,p_1} H^0(\Ig^0_{n_1,n_2,n_3,m},\omega_{\ut}),\\
   \glsuserii{Up}&:H^0\left(\Ig^0_{n_1,n_2+1,n_3+1,m},\omega_{\ut}\right)
   \stackrel{p^*_2}{\lra} H^0\left(\sC^{0-}_{1,n_1,n_2,n_3,m},p^*_2\omega_{\ut}\right)
   \stackrel{\pi^*}{\lra}  H^0\left(\sC^{0-}_{1,n_1,n_2,n_3,m},p^*_1\omega_{\ut}\right)
   \xrightarrow{p^{-2}\Tr\,p_1} H^0(\Ig^0_{n_1,n_2,n_3,m},\omega_{\ut}).
\end{aligned}
\end{equation*}
}

Similarly as $V_{n,m}$, one can check that these $\bU_p$-operators preserve the spaces $H^0\left(\Ig^{0,\tor}_{n,m},\omega_{\ut}\right)$ and $H^0\left(\Ig^{0,\tor}_{n,m},\omega_{\ut}\otimes\cI_{\Ig^{0,\tor}_{n,m}}\right)$. It is also easy to see that $U^+_{p,2}U^+_{p,3}U^-_{p,1}$ maps $H^0\left(\Ig^{0,\tor}_{n,m},\omega_{\ut}\right)$ (resp. $H^0\left(\Ig^{0,\tor}_{n,m},\omega_{\ut}\otimes\cI_{\Ig^{0,\tor}_{n,m}}\right)$) into $H^0\left(\Ig^{0,\tor}_{n-1,m},\omega_{\ut}\right)$ (resp. $H^0\left(\Ig^{0,\tor}_{n-1,m},\omega_{\ut}\otimes\cI_{\Ig^{0,\tor}_{n-1,m}}\right)$), {\it i.e.} the operator $U^+_{p,2}U^+_{p,3}U^-_{p,1}$ increases the level.

\subsubsection{Adelic}
If we identify $G(\bQ_p)$ with $\GL_4(\bQ_p)\times\GL_1(\bQ_p)$ via \eqref{eq:Gisom}, the $\bU_p$-operators defined above acting on classical automorphic forms on $G$ of weight $(t^+_1,t^+_2.t^+_3;t^-_1)$ corresponds to the following adelic operators (up to the action of the center of $G(\bQ_p)$):
\begin{equation}\label{eq:adeleUp}
\begin{aligned}
   \glsuseriii{Up}:&\quad p^{\left<(t^+_1,t^+_2.t^+_3;-t^-_1)+2\rho_{\mr{c}},(1,1,0;0)\right>}\int_{N(\bZ_p)}R\left(u\begin{psm}p\\&p\\&&1\\&&&1\end{psm},1\right)\,du;
   && \\
   \glsuseriv{Up}:&\quad p^{\left<(t^+_1,t^+_2.t^+_3;-t^-_1)+2\rho_{\mr{c}},(1,1,1;0)\right>}\int_{N(\bZ_p)}R\left(u\begin{psm}p\\&p\\&&p\\&&&1\end{psm},1\right)\,du;
   && \\
   \glsuserii{Up}:&\quad p^{\left<(t^+_1,t^+_2.t^+_3;-t^-_1)+2\rho_{\mr{c}},(0,0,0;-1)\right>}\int_{N(\bZ_p)}R\left(u\begin{psm}1\\&1\\&&1\\&&&p^{-1}\end{psm},1\right)\,du;
   && 
\end{aligned}
\end{equation}
where $N(\bZ_p)=\left\{\begin{psm}1&\ast&\ast&\ast\\&1&\ast&\ast\\&&1&\ast\\&&&1\end{psm}\in\GL_4(\bZ_p)\right\}$, $R(-)$ denotes the right translation, and $2\rho_{\mr{c}}=(2,0,-2;0)$ is the sum of the compact positive roots of $G$.


\subsection{Statement of main theorem}\label{sec:statement-main}

Let $T_{\so}(\bZ_p)^\circ$ be the connected component of $T_\so(\bZ_p)$ containing $\bid_2$, \emph{i.e.}, $T_{\so}(1+p\bZ_p)$, and put $\gls{Lambdaso}=\bZ_p\llb T_\so(\bZ_p)^\circ\rrb$. 

\begin{thm}\label{prop:main}
The following hold:
\begin{enumerate}
\item Let $U_p=U^+_{p,2}U^+_{p,3}U^-_{p,1}$. Then for each $f\in\pV$, $\lim\limits_{r\to\infty}(U_p)^{r!}f$ converges, and we can define the semi-ordinary projector as $\gls{eso}=\lim\limits_{r\to\infty}(U_p)^{r!}$. The $\bZ_p\llb T_{\so}(\bZ_p)\rrb$-modules 
\[
\pV^{0,\ast}_\so=\Hom_{\bZ_p}\left(e_\so\pV^0,\bQ_p/\bZ_p\right),\quad
\pV^{\ast}_\so=\Hom_{\bZ_p}\left(e_\so\pV,\bQ_p/\bZ_p\right)
\]
are both free of finite rank over $\Lambda_\so$.

\item The spaces of $\Lambda_{\so}$-families of tame level $K^p_f$ are defined as
\[
    \glsuseri{cM}=\Hom_{\Lambda_\so}\left(\pV^{0,\ast}_\so,\Lambda_\so\right),
    \quad\cM_\so=\Hom_{\Lambda_\so}\left(\pV^{\ast}_\so,\Lambda_\so\right).
\]
For a given weight $(\tau^+,\tau^-)\in \Hom_\cont\left(T_{\so}(\bZ_p),F^\times\right)$, let $\cP_{\tau^+,\tau^-}$ be the ideal in $\cO_F\llb T_{\so}(\bZ_p)\rrb$ generated by $\{(x,y)-\tau^+(x)\tau^-(y)\colon (x,y)\in T_\so(\bZ_p)\}$. Then for $0\geq t^+\geq -t^-+4$, we have
\begin{equation}\label{eq:specialization}
\begin{aligned}
  \cM^0_\so\otimes_{\bZ_p\llb T_\so(\bZ_p)\rrb}\cO_F\llb T_\so(\bZ_p)\rrb/\cP_{\tau^+,\tau^-}&\stackrel{\sim}{\lra}  \left(\varprojlim_{m}\varinjlim_n e_\so V^0_{n,m}\otimes_{\bZ_p}\cO_F\right)[\tau^+,\tau^-],\\
   \cM_\so\otimes_{\bZ_p\llb T_\so(\bZ_p)\rrb}\cO_F\llb T_\so(\bZ_p)\rrb/\cP_{\tau^+,\tau^-}&\stackrel{\sim}{\lra}  \left(\varprojlim_{m}\varinjlim_n e_\so V_{n,m}\otimes_{\bZ_p}\cO_F\right)[\tau^+,\tau^-].
\end{aligned}
\end{equation}
The semi-ordinary projector preserves the spaces of classical forms, and by combining \eqref{eq:specialization} with \eqref{eq:MtoV}, we have the embeddings
\begin{align*}
   e_\so M^0_{(0,0,t^+;t^-)}\left(K_f^pK^1_{p,n},\epsilon^+,\epsilon^-;F\right)&\lhra \left(\cM^0_\so\otimes_{\bZ_p\llb T_\so(\bZ_p)\rrb}\cO_F\llb T_\so(\bZ_p)\rrb/\cP_{\tau^+,\tau^-}\right)[1/p],\\
   e_\so M_{(0,0,t^+;t^-)}\left(K_f^pK^1_{p,n},\epsilon^+,\epsilon^-;F\right)&\lhra \left(\cM_\so\otimes_{\bZ_p\llb T_\so(\bZ_p)\rrb}\cO_F\llb T_\so(\bZ_p)\rrb/\cP_{\tau^+,\tau^-}\right)[1/p],
\end{align*}
where $t^\pm=\tau_{\alg}^\pm$ and $\epsilon^\pm=\tau_{\rm f}^\pm$.

\item Given $0\geq t^+$, there exists $A\geq -t^++4$ 
such that the above embedding for the cuspidal forms is surjective if $t^-\geq A$.

\item There is the following so-called \emph{fundamental exact sequence} (in the study of Klingen Eisenstein congruences),
\[
   0\ra\cM^0_\so\ra \cM_\so\ra \bigoplus_{\clabel\in 
   (G'_1(\bQ)\ltimes P'_1(\bA^p_{\bQ,f}))\backslash G(\bA^p_{\bQ,f})/K^p_f} \gls{M00}\otimes_{\bZ_p}\bZ_p\llb T_{\so}(\bZ_p)\rrb \ra 0,
\]
where $K^{\prime}_{f,\clabel}\subset G'(\bA^p_{\bQ,f})$ is defined in \eqref{eq:K'}, and $M_{(0,0)}(K^{\prime}_{f,\clabel};\bZ_p)$ denotes the space of classical automorphic forms on $G'$ of weight $(0,0)$ and level $K^{\prime}_{f,\clabel}$.

\end{enumerate}
\end{thm}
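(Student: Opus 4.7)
The plan is to adapt Hida's classical ordinary theory to the semi-ordinary setting, with the factorisation $U_p = U^+_{p,2} U^+_{p,3} U^-_{p,1}$ playing the role of the usual ordinary $\bU_p$-operator.

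For part (1), I would first show via the explicit correspondences \eqref{eq:gamma+2}--\eqref{eq:gamma-} that each of the three operators contributes a strictly contracting factor on the direction transverse to the semi-ordinary locus, so that, modulo any fixed $p^m$, the sequence $(U_p)^{r!}$ stabilises on each $V_{n,m}$ for $r \gg 0$. This produces the projector $e_\so$. The next step is to show that $e_\so V_{n,m}$ is finite and independent of $n$ once $n$ is large compared to $m$; this should follow because the residual action of $K^0_{p,n}/K^1_{p,n} \cong T_\so(\bZ_p)/T_\so(1+p^n\bZ_p)$ is absorbed into the projector in the style of \cite{hida-coherent}. Taking Pontryagin duals and using the continuity of the $T_\so(\bZ_p)$-action, a standard Nakayama-type argument then yields freeness of $\pV^{0,\ast}_\so$ and $\pV^{\ast}_\so$ over $\Lambda_\so$ of finite rank.

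Part (2) will follow formally from (1) by taking $\Hom_{\Lambda_\so}(-,\Lambda_\so)$ of free $\Lambda_\so$-modules of finite rank and applying double duality: the quotient by $\cP_{\tau^+,\tau^-}$ computes the $(\tau^+,\tau^-)$-eigenspace of the bidual, matching the displayed right-hand side. The embedding of classical forms is then obtained by composing \eqref{eq:MtoV} with $e_\so$; what must be checked is that $e_\so$ commutes with the trivialisation of $\underline{\omega}$ via the Hodge--Tate map, which reduces to an equivariance check on the correspondences \eqref{eq:Up-graph}.

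For part (3), I would use a classicality criterion of Pilloni--Stroh type: a $p$-adic semi-ordinary form of classical weight $(0,0,t^+;t^-)$ is classical provided $t^-$ is sufficiently positive relative to $t^+$. The constant $A$ should arise as the threshold beyond which all non-classical contributions to the higher coherent cohomology of $\underline{\omega}_{(0,0,t^+;t^-)}$ on a suitable formal neighbourhood of the ordinary locus carry a strictly positive semi-ordinary slope, and are thus annihilated by $e_\so$. Concretely, one writes the dual BGG complex for $\underline{\omega}_{(0,0,t^+;t^-)}$ and checks that the $\bU_p$-eigenvalues on the contributions from non-trivial Weyl chamber twists are divisible by $p$ to a degree exceeding the semi-ordinary bound. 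For part (4), the exact sequence arises by applying $e_\so$ to the short exact sequence $0 \to \cI_{\Ig^\tor_{n,m}} \to \cO_{\Ig^\tor_{n,m}} \to \cO_{\Ig^\tor_{n,m}}/\cI_{\Ig^\tor_{n,m}} \to 0$ and then passing to the Hida family. The boundary decomposes into components indexed by the double coset $(G'_1(\bQ) \ltimes P'_1(\bA^p_{\bQ,f})) \backslash G(\bA^p_{\bQ,f}) / K^p_f$, each fibred over a Shimura variety for the Levi of $P'_1$, and the semi-ordinary projector acts as the identity on these boundary terms because the $\bU_p$-operators act trivially on the constant-term piece along the unipotent radical of $P'_1$. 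The main obstacle will be part (3): the weight $(0,0,t^+;t^-)$ is highly non-regular, so classical Kodaira--Nakano vanishing does not apply directly, and the argument must exploit the slope filtration under $U_p$ in a delicate way.
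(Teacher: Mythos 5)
Your proposal reproduces the paper's broad architecture (projector via $U_p^{r!}$, Nakayama, double duality for part (2)), but it has genuine gaps at the points where the actual work lies. First, the convergence of $(U_p)^{r!}$ on $V_{n,m}$ does not follow from a "contracting factor transverse to the semi-ordinary locus": $V_{n,m}$ is not finitely generated over $\bZ/p^m\bZ$, so stabilisation is not automatic. The paper establishes a finiteness property (the $\bZ$-span of $\{(U_p)^r f\}$ is finitely generated) by two separate mechanisms: for cuspidal forms, density of classical forms in $\varprojlim_m\varinjlim_n\tilde V^0_{n,m}$ together with finite-dimensionality of the classical spaces; for non-cuspidal forms, the entire boundary analysis of \S\ref{sec:noncusp}. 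Relatedly, "finite rank over $\Lambda_\so$" requires a uniform bound on $\dim e_\so M^0_{\ut}$ as the weight varies, which the paper extracts from a Jacquet-module analysis of $\Pi_p$ plus the theory of Coleman families (eigenvarieties) for unitary groups; your proposal supplies no substitute for this input. For part (3), your Pilloni--Stroh/dual-BGG classicality route is a genuinely different strategy, and it is precisely the one the paper avoids: there is no known classicality criterion of that type for the semi-ordinary condition in the highly non-dominant weights $(0,0,t^+;t^-)$. The paper instead multiplies by powers of a twisted lift $E'$ of the Hasse invariant to get injections $e_\so M^0_{(0,0,t^+;t^-)}\hra e_\so M^0_{(0,0,t^+;t^-+2t_E(p-1))}$ and squeezes these against the ($t^-$-independent, uniformly bounded) dimension of the $p$-adic space.

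The most serious error is in part (4). The boundary strata of $\Ig^\tor_n$ are indexed by \emph{ordinary} cusp labels $C(K^p_fK^1_{p,n})_\ord$, whose number grows with $n$, and the $T_\so(\bZ_p)$-action on the resulting quotient $V_{n,m}/V^0_{n,m}$ has more and more orbits as $n\to\infty$, so this quotient is \emph{not} of the form $M_{(0,0)}(K'_{f,\clabel};\bZ_p)\otimes\bZ_p\llb T_\so(\bZ_p)\rrb$. Moreover, your claim that the $\bU_p$-operators "act trivially on the constant-term piece" is false: they permute the boundary strata nontrivially (e.g.\ $\Phi_\clabel(U^+_{p,3}f)=(\left<p\right>^-)^2\Phi_{\clabel'}(f)$ with $\clabel'\neq\clabel$), and only a suitable power $(U^\bullet_{p,j})^N$ fixes the Siegel operators. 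The fundamental exact sequence requires introducing the subspace $\pV^\flat$ of forms vanishing on the strata outside $C(K^p_fK^1_{p,n})^\flat_\ord$ (on which the $T_\so$-action is free and transitive), proving that $\pV^\flat$ is $\bU_p$-stable, and — crucially — proving that $(U^+_{p,2})^m$ maps $V_{n,m}$ into $V^\flat_{n,m}$ by showing the non-$\flat$ boundary restrictions of $U^+_{p,2}f$ are divisible by $p$. Without this step both the identification $e_\so\pV=e_\so\pV^\flat$ and the finiteness of $\pV^\ast_\so$ fail, so your argument for parts (1) and (4) does not close.
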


\begin{rmk}
In general, one can consider semi-ordinary families whose members have weight $\ut=(t^+_1,t^+_2,t^+_3;t^-_1)$ with  $t^+_1,t^+_2$ fixed and $t^+_3,t^-_1$ varying in the family. We only consider the case $t^+_1=t^+_2=0$ because it suffices for proving Theorem~\ref{thm:main}, and we want to avoid having the main idea obscured by the extra complications of working with vector bundles. For the general case, instead of considering the global sections of the structure sheaf over the Igusa tower, one considers the global sections of a vector bundle $\omega_{\ut}$. For defining $\pV^\flat$ (cf. \S\ref{sec:Cflat}) such that the quotient $\pV/\pV^\flat$ has a nice structure, besides the requirement of vanishing outside the strata labeled by cusp labels in $C(K^p_fK^1_{p,n})^\flat_\ord$, one also requires that the elements are global sections of $\omega^\flat_{\ut}\subset \omega_{\ut}$ with $\omega^\flat_{\ut}$ a subsheaf defined as in \cite[Section 4.1]{HsiehMC}.

\end{rmk}


\section{The proof of Theorem~\ref{prop:main} for the  cuspidal part}
In this section, we prove Theorem~\ref{prop:main} for cuspidal families. The results for cuspidal families will be used to deduce the results for non-cuspidal families in \S\ref{sec:noncusp}
\begin{prop}[Base change property]\label{prop:basechange}
Let $\Ig^{0,\tor}_n[1/E]$ the open subscheme of $\Ig^{0,\tor}_n$ where $E$, our fixed lift of Hasse invariant viewed as a section over $\Ig^{0,\tor}_n$, is nonvanishing. For any classical dominant weight $\ut$, the natural map
\[
H^0\left(\Ig^{0,\tor}_n[1/E],\omega_{\ut}\otimes\cI_{\Ig^{0,\tor}_n}\right)\otimes\bZ/p^m\bZ\hra H^0\left(\Ig^{0,\tor}_{n,m},\omega_{\ut}\otimes\cI_{\Ig^{0,\tor}_{n,m}}\right)
\]
is an isomorphism. 
\end{prop}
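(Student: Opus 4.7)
The plan is a standard base-change-via-vanishing argument. Consider the short exact sequence of sheaves on $\Ig^{0,\tor}_n[1/E]$
\[
0 \lra \omega_{\ut} \otimes \cI_{\Ig^{0,\tor}_n} \xrightarrow{\;p^m\;} \omega_{\ut} \otimes \cI_{\Ig^{0,\tor}_n} \lra \omega_{\ut} \otimes \cI_{\Ig^{0,\tor}_{n,m}} \lra 0,
\]
where the identification of the cokernel uses that $\Ig^{0,\tor}_{n,m}$ is by definition the base change of $\Ig^{0,\tor}_n[1/E]$ to $\bZ/p^m\bZ$. Taking global sections, the proposition is equivalent to the assertion that $H^1\bigl(\Ig^{0,\tor}_n[1/E],\, \omega_{\ut}\otimes\cI_{\Ig^{0,\tor}_n}\bigr)$ is $p$-torsion-free, and I would in fact aim for the stronger statement that this $H^1$ vanishes.

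The proof of the vanishing I envision proceeds in three steps. First, the forgetful map $\Ig^{0,\tor}_n[1/E] \to \sS^\tor[1/E]$ is finite étale (as the quotient of $\Ig^\tor_n[1/E]\to\sS^\tor[1/E]$ by the free action of $K^0_{p,n}/K^1_{p,n}$), hence affine; so by the Leray spectral sequence it suffices to show $H^i\bigl(\sS^\tor[1/E],\, \omega_{\ut}\otimes \cI_{\sS^\tor}\bigr)=0$ for $i>0$. Second, Lan's results on higher direct images of cuspidal coefficient sheaves under the projection $\pi:\sS^\tor\to\sS^{\min}$ give $R^i\pi_*(\omega_{\ut}\otimes\cI_{\sS^\tor})=0$ for $i>0$; the cuspidal twist $\cI_{\sS^\tor}$ is essential here, providing the Koecher-type vanishing along the toroidal boundary that makes this hold for any classical dominant $\ut$ without additional positivity hypothesis. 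Third, $\sS^{\min}[1/E]$ is affine: by construction $E$ is the pullback of a section of an ample line bundle on $\sS^{\min}$, exhibiting $\sS^{\min}[1/E]$ as the complement of an ample divisor in a projective scheme. Combining the last two facts via Leray for $\pi$ yields the desired vanishing over $\sS^\tor[1/E]$, which then transfers to $\Ig^{0,\tor}_n[1/E]$ by the first step.

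The main obstacle is invoking the precise statement and applicability of the higher direct image vanishing $R^i\pi_*(\omega_{\ut}\otimes\cI_{\sS^\tor})=0$, essentially the cuspidal Koecher principle in mixed characteristic; this is the content of Lan's work on toroidal compactifications of PEL-type Shimura varieties and must be applied in the setting of our specific $\GU(3,1)$ datum. Once this and the affineness of the ordinary locus in the minimal compactification are in hand, the proposition follows by the formal diagram chase above using the $p^m$-multiplication short exact sequence, and the injectivity of the natural map is a byproduct of the same long exact sequence (together with the $p$-torsion-freeness of $H^0$ coming from flatness of $\Ig^{0,\tor}_n[1/E]$ over $\bZ_p$).
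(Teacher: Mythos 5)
Your overall strategy coincides with the paper's: the multiplication-by-$p^m$ short exact sequence of sheaves, reduction of the proposition to the $p$-torsion-freeness (in fact vanishing) of the relevant $H^1$, and the combination of a cuspidal higher-direct-image vanishing theorem with the affineness of the ordinary locus in a minimal compactification. The paper implements this by pushing the exact sequence forward along $\pi_n\colon \Ig^{\tor}_n\to\Ig^{\min}_n$, using $R^1\pi_{n,*}(\omega_{\ut}\otimes\cI_{\Ig^\tor_n})=0$ from \cite[Thm.~8.2.1]{Kuga}, and then taking global sections over the affine scheme $\Ig^{\min}_n[1/E]$.

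However, your middle reduction contains a genuine gap. For the finite \'etale forgetful map $f\colon\Ig^{0,\tor}_n[1/E]\to\sS^\tor[1/E]$, the Leray spectral sequence identifies $H^i\bigl(\Ig^{0,\tor}_n[1/E],\omega_{\ut}\otimes\cI_{\Ig^{0,\tor}_n}\bigr)$ with $H^i\bigl(\sS^\tor[1/E],\,f_*(\omega_{\ut}\otimes\cI_{\Ig^{0,\tor}_n})\bigr)$, and since the sheaf is pulled back along $f$, the projection formula gives $f_*(\omega_{\ut}\otimes\cI_{\Ig^{0,\tor}_n})\cong(\omega_{\ut}\otimes\cI_{\sS^\tor})\otimes f_*\cO_{\Ig^{0,\tor}_n[1/E]}$. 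This is a twist by a locally free sheaf of rank $\deg f>1$ which lives only over $\sS^\tor[1/E]$ and is not pulled back from $\sS^{\min}[1/E]$, so it is \emph{not} enough to prove $H^i\bigl(\sS^\tor[1/E],\omega_{\ut}\otimes\cI_{\sS^\tor}\bigr)=0$; you would need $R^i\pi_*$ of the twisted sheaf to vanish over $\sS^{\min}[1/E]$, and neither Lan's vanishing for automorphic bundles on $\sS^\tor$ nor the projection formula for $\pi$ delivers that. The statement you actually need is the vanishing theorem applied to the partial compactifications of the Igusa variety itself, namely $R^1\pi_{n,*}(\omega_{\ut}\otimes\cI_{\Ig^\tor_n})=0$ for $\pi_n\colon\Ig^\tor_n\to\Ig^{\min}_n$ (precisely \cite[Thm.~8.2.1]{Kuga}), together with the affineness of $\Ig^{\min}_n[1/E]$; with that substitution for your second and third steps, the diagram chase goes through as you describe.
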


\begin{proof}
Over $\Ig^\tor_n$, we have the exact sequence of sheaves 
\[
0\ra \omega_{\ut}\otimes\cI_{\Ig^\tor_n}\stackrel{p^m}{\ra} \omega_{\ut}\otimes \cI_{\Ig^\tor_{n}}\ra \omega_{\ut}\otimes\cI_{\Ig^\tor_{n,m}}\ra 0.
\] 
By \cite[Thm.~8.2.1]{Kuga}, $R^1_{\pi_n,*}(\omega_{\ut}\otimes\cI_{\Ig^\tor_n})=0$ (where $\pi_n$ is the projection $\pi_n:\Ig^\tor_{n}\ra \Ig^{\min}_{n}$), 
so we have the exact sequence of sheaves
\[
0\ra\pi_{n,*}(\omega_{\ut}\otimes\cI_{\Ig^\tor_n})\stackrel{p^{m}}{\ra}\pi_{n,*}(\omega_{\ut}\otimes\cI_{\Ig^\tor_n})\ra \pi_{n,*}(\omega_{\ut}\otimes\cI_{\Ig^\tor_{n,m}})\ra 0
\]
over $\Ig^{\min}_{n}$. Since $\Ig^{\min}_{n}[1/E]$ is affine by definition, taking global sections gives the result.
\end{proof}

\begin{prop}\label{prop:V0-converge}
Let $U_p=U^+_{p,2}U^+_{p,3}U^-_{p,1}$. Let $f\in \pV^0$ and $\vec{f}\in H^0\left(\Ig^{0,\tor}_{n,m},\omega_{\ut}\otimes\cI_{\Ig^{0,\tor}_{n,m}}\right)$. Then both limits  $\lim\limits_{r\ra\infty}(U_p)^{r!}f$ and $\lim
\limits_{r\ra\infty}(U_p)^{r!}\vec{f}$ converge.
\end{prop}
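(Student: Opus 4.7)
The plan is to reduce both convergence statements to a standard linear-algebra fact: for an endomorphism $T$ of a finitely generated $\bZ/p^m$-module $N$, the sequence $T^{r!}$ stabilizes as $r\to\infty$. Indeed, by Fitting's lemma one has $N = N^{\mathrm{nil}} \oplus N^{\mathrm{inv}}$ with $T|_{N^{\mathrm{nil}}}$ nilpotent and $T|_{N^{\mathrm{inv}}}$ an automorphism of a finite abelian group; hence $T^{r!} \to 0$ on $N^{\mathrm{nil}}$ and $T^{r!} = \mathrm{id}$ on $N^{\mathrm{inv}}$ as soon as $r!$ is divisible by the order of $T|_{N^{\mathrm{inv}}}$. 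It therefore suffices to show that the $\bZ/p^m[U_p]$-submodule generated by $f$ (resp.~by $\vec{f}$) is finitely generated as a $\bZ/p^m$-module.

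\textbf{Reduction to a fixed $\bZ/p^m$-module.} For $f\in\pV^0 = \varinjlim_{m,n} V^0_{n,m}$, choose $(n_0,m_0)$ with $f \in V^0_{n_0,m_0}$. The preservation property stated at the end of \S\ref{sec:Up-def} shows that each of $U^+_{p,2}$, $U^+_{p,3}$, $U^-_{p,1}$, and hence $U_p$, restricts to an endomorphism of $V^0_{n_0,m_0}$, so the orbit $\{U_p^r f\}_{r\geq 0}$ lies in this $\bZ/p^{m_0}$-module. For $\vec{f}\in H^0(\Ig^{0,\tor}_{n,m},\omega_{\ut}\otimes\cI_{\Ig^{0,\tor}_{n,m}})$, the orbit similarly lies in the direct limit $\varinjlim_{n'}H^0(\Ig^{0,\tor}_{n',m},\omega_{\ut}\otimes\cI)$, a $\bZ/p^m$-module, by the level behavior of $U_p$ recorded at the end of \S\ref{sec:UpH}.

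\textbf{Local finiteness via Fourier--Jacobi expansions.} The key step is to show finite generation of the $U_p$-orbit. For this I would use the Fourier--Jacobi expansion of cuspidal $p$-adic forms along the (finitely many) cusps of the Igusa tower, together with Proposition~\ref{prop:basechange} (and its analogue for the $K^1_{p,n}$-tower) to lift mod-$p^m$ computations to characteristic zero. The adelic description \eqref{eq:adeleUp} shows that each of the three $\bU_p$-operators acts on Fourier--Jacobi expansions by averaging over the compact subgroup $N(\bZ_p)$ followed by right translation by a fixed diagonal matrix at $p$. A direct computation (using the explicit matrices in \eqref{eq:gamma+2}--\eqref{eq:gamma-}) then shows that the FJ coefficients of $U_p^r f$ are supported on a bounded subset of indices, controlled by the support of the FJ expansion of $f$ together with the cuspidality condition, and take values in a fixed finitely generated submodule of the FJ coefficient ring. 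The Fourier--Jacobi expansion principle (injectivity of the FJ map on cuspidal forms) then places the orbit inside a finitely generated $\bZ/p^m$-submodule, and Fitting's lemma delivers the convergence.

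\textbf{Main obstacle.} The main technical difficulty lies in the Fourier--Jacobi bookkeeping: one must track the effect of each of $U^+_{p,2}$, $U^+_{p,3}$, $U^-_{p,1}$ on the FJ expansions along all cusps and verify that their joint iteration keeps the coefficient support bounded and the coefficient values in a fixed finitely generated submodule. This parallels Hida's classical argument for $\GL_2$ but requires care in the $\GU(3,1)$ setting because of the three distinct $\bU_p$-operators with different behavior on the filtration ${\tt D}$. It is also here that cuspidality is genuinely used---the noncuspidal analogue will require a separate analysis of boundary terms, to be handled in \S\ref{sec:noncusp}.
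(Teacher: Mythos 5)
Your reduction step already contains the essential gap. Lying in $V^0_{n_0,m_0}$ buys you nothing by itself: $V^0_{n,m}=H^0\bigl(\Ig^\tor_{n,m},\cI_{\Ig^\tor_{n,m}}\bigr)$ is an infinite-rank $\bZ/p^m$-module (the ordinary Igusa variety has affine minimal compactification), so the whole content of the proposition is the finite generation of the $U_p$-orbit of a single element, and your proof of that finiteness does not work as described. The claim that the Fourier--Jacobi coefficients of $U_p^r f$ are ``supported on a bounded subset of indices'' and ``take values in a fixed finitely generated submodule'' is unjustified and, as stated, false: the $\bU_p$-operators act on FJ expansions by dilating the index $\beta$ (the coefficient of $U_p^r f$ at $\beta$ involves coefficients of $f$ at indices growing with $r$), and the coefficient modules $H^0\bigl(\sC^\ord_{\clabel,n},\cL(\beta)\bigr)$ are spaces of theta functions on abelian-scheme torsors, not a fixed finitely generated ring, so no bounded-support argument closes the orbit into a finite module. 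The paper itself warns (opening of \S\ref{sec:noncusp}) that $q$-expansion-style arguments for the $\bU_p$-action are unavailable for $\GU(3,1)$ and works with semi-abelian schemes over the boundary instead (\S\ref{sec:three-prop}); an ``FJ expansion principle'' strong enough for your purpose is nowhere established. Your Fitting-lemma endgame is fine, but it is the step feeding into it that is missing.

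The paper's actual route avoids FJ expansions entirely. For $\vec{f}\in H^0\bigl(\Ig^{0,\tor}_{n,m},\omega_{\ut}\otimes\cI_{\Ig^{0,\tor}_{n,m}}\bigr)$ one lifts to characteristic zero by Proposition~\ref{prop:basechange}, multiplies by a large power of the Hasse-invariant lift $E$ to land (via the Koecher principle) in the finite-dimensional classical space $M^0_{\ut+lt_E(p-1)}(K^p_fK^0_{p,n};\bQ_p)$, where the limit $\lim_r (U_p)^{r!}$ exists by linear algebra, and then uses $E\equiv 1\bmod p$ to transfer convergence back mod $p^m$. For $f\in\pV^0$ one passes to the principal-level Igusa tower and invokes Hida's results on coherent cohomology: the classical spaces $H^0\bigl(\sS^\tor,\underline{\omega}_{\ut}\otimes\cI_{\sS^\tor}\bigr)$ are finite free $\bZ_p$-modules stable under $U_p$, and their union is dense in $\varprojlim_m\varinjlim_n\tilde V^0_{n,m}$, which yields convergence on all of $\pV^0$. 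If you want to salvage your approach, you would need to replace the FJ bookkeeping by one of these two mechanisms (density of classical forms, or the Hasse-invariant lifting trick), both of which reduce the problem to finite-dimensional $U_p$-stable spaces of classical forms.
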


\begin{proof}
Consider $\tilde{\Ig}_n$, the ordinary locus of level 
\[
\tilde{K}^n_{p,1}=\left\{g\subset G(\bZ_p)\mid g^+\equiv \begin{psm}1&\ast&\ast&\ast\\ &1&\ast&\ast\\&&1&\ast\\&&&1\end{psm}\mod p^n\right\}.
\]

Let $\tilde{\Ig}^\tor_n$ be the partial toroidal compactification of $\tilde{\Ig}_n$, and put $\tilde{V}_{n,m}^0=H^0(\tilde{\Ig^\tor_{n,m}},\omega_{\ut}\otimes\tilde{\cI}_{\tilde{\Ig}^\tor_{n,m}})$. The  definition of the $\bU_p$-operators in \S\ref{sec:Up-def} can be naturally extended to $\tilde{V}^0_{n,m}$, so it suffices to show that   $\lim\limits_{r\ra\infty}(U_p)^{r!}f$ converges for every $f\in \tilde{V}^0_{n,m}$. As explained in \cite{hida-coherent}, $\varprojlim\limits_m\varinjlim\limits_n \tilde{V}^0_{n,m}$ is $p$-torsion free and there is an embedding
\[
   \bigoplus_{\ut \text{ dominant}} H^0\left(\sS^\tor,\underline{\omega}_{\ut}\otimes\cI_{\sS^\tor}\right)\otimes\bQ \hra \left(\varprojlim_m\varinjlim_n \tilde{V}^0_{n,m}\right)[1/p].
\] 
The $\bZ_p$-modules $H^0\left(\sS^\tor,\underline{\omega}_{\ut}\otimes\cI_{\sS^\tor}\right)$ are free of finite rank and stable under the action of $U_p$, so the limit $\lim\limits_{r\ra\infty} (U_p)^{r!}$ exists on them. Since
\begin{equation}\label{eq:dense-clas}
   \left(\bigoplus_{\ut \text{ dominant}} H^0\left(\sS^\tor,\underline{\omega}_{\ut}\otimes\cI_{\sS^\tor}\right)\otimes\bQ\right) \cap \varprojlim_m\varinjlim_n \tilde{V}^0_{n,m}
\end{equation}
is dense in $\varprojlim\limits_m\varinjlim\limits_n \tilde{V}^0_{n,m}$ by \cite{hida-coherent}, the convergence of  $\lim\limits_{r\ra\infty}(U_p)^{r!}f$ for every $f\in \tilde{V}^0_{n,m}$ follows.

On the other hand, let $\vec{f}\in H^0\left(\Ig^{0,\tor}_{n,m},\omega_{\ut}\otimes\cI_{\Ig^{0,\tor}_{n,m}}\right)$. By Proposition~\ref{prop:basechange}, $\vec{f}$ lifts to 
\[
\vec{\ff}\in  H^0\left(\Ig^{0,\tor}_{n}[1/E],\omega_{\ut}\otimes\cI_{\Ig^{0,\tor}_n}\right).
\] 
For $l\gg 0$, we have $\vec{\ff} E^l\in H^0\left(\Ig^{0,\tor}_{n},\omega_{\ut}\otimes\cI_{\Ig^{0,\tor}_n}\right)$, which by the Koecher principle can be viewed as an element in the finite dimensional space $M^0_{\ut+lt_E(p-1)}(K^p_fK^0_{p,n};\bQ_p)$. Thus the limit $\lim\limits_{r\ra\infty}(U_p)^{r!}(\vec{\ff} E^l)$ converges.  Since $E\equiv 1\mod p$, we have $\vec{\ff} E^l\equiv \vec{f}\mod p^m$, and the convergence of $\lim\limits_{r\ra\infty}(U_p)^{r!}\vec{f}$ follows.
\end{proof}

Thanks to the convergence of $\lim\limits_{r\ra\infty}(U_p)^{r!}$ on $\pV^0$ and $H^0\left(\Ig^{0,\tor}_{n,m},\omega_{\ut}\otimes\cI_{\Ig^{0,\tor}_{n,m}}\right)$, we can define the semi-ordinary projector on them as $e_\so=\lim\limits_{r\ra\infty}(U_p)^{r!}$.

\begin{prop}\label{prop:VH}
For $n\geq m$ and a dominant weight $(0,0,t^+;t^-)$, the map
\[
e_\so H^0\left(\Ig^{0,\tor}_{n,m},\omega_{(0,0,t^+;t^-)}\otimes\cI_{\Ig^\tor_{n,m}}\right)\ra e_\so V^0_{n,m}[t^+,t^-]
\]
induced by \eqref{eq:HtoV} is an isomorphism.
\end{prop}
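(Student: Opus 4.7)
The plan is to exhibit an explicit inverse to the given map on the semi-ordinary part, using the weight decomposition coming from the $T_\so(\bZ_p)$-torsor $\Ig^\tor_{n,m}\to\Ig^{0,\tor}_{n,m}$. First I would describe the map concretely: for $n\geq m$, the Hodge--Tate isomorphism attached to the ordinary level structure trivializes $\underline{\omega}$ modulo $p^m$ over $\Ig^\tor_{n,m}$. Pulling back a section $\vec{f}$ of $\omega_{(0,0,t^+;t^-)}\otimes\cI_{\Ig^\tor_{n,m}}$ from $\Ig^{0,\tor}_{n,m}$ to $\Ig^\tor_{n,m}$ and using this trivialization yields a $T_\so(\bZ_p)$-equivariant function $\vec{f}_\tor\colon\Ig^\tor_{n,m}\to W_{(0,0,t^+;t^-)}$, and composition with $\fe_\can$ produces a scalar function lying in $V^0_{n,m}[t^+,t^-]$.

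Second, I would analyze the relevant weight structure. Decomposing $\vec{f}_\tor=\sum_\mu w_\mu\cdot f_\mu$ along the $T_\so(\bZ_p)$-weight spaces of $W_{(0,0,t^+;t^-)}$, each $f_\mu$ belongs to a specific weight subspace of $V^0_{n,m}$. Inspection of the monomial basis in \S\ref{subsec:p-adicforms} shows that $\fe_\can$ (evaluation at $(\bid_3,\bid_1)$) is nonzero on precisely one monomial $v_\ast$---namely the $(-t^+)$-th power of the upper-left $2\times 2$ minor times $\det^{t^+}$---since every other monomial involves one of the $2\times 2$ minors supported on columns $\{1,3\}$ or $\{2,3\}$, and these vanish at $\bid_3$. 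Tensored with the obvious generator of $W_{t^-}$ one obtains a distinguished weight vector $\tilde v_\ast\in W_{(0,0,t^+;t^-)}$ whose $T_\so(\bZ_p)$-weight matches $V^0_{n,m}[t^+,t^-]$. Hence the forward map is (up to a nonzero scalar) the projection $\vec{f}\mapsto f_{\tilde v_\ast}$, and the candidate inverse is $f\mapsto \tilde v_\ast\cdot f$, interpreted as a $W_{(0,0,t^+;t^-)}$-valued function on $\Ig^\tor_{n,m}$ which descends to a section over $\Ig^{0,\tor}_{n,m}$ by the weight matching.

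Third, the main obstacle is the semi-ordinary vanishing assertion: for every $T_\so(\bZ_p)$-weight $\mu$ different from the weight of $\tilde v_\ast$, the component $f_\mu$ of any section is annihilated by $e_\so$. Concretely, one must check that under the combined geometric--algebraic action of $U^+_{p,2}$, $U^+_{p,3}$, and $U^-_{p,1}$ on monomials of $W_{(0,0,t^+;t^-)}$, the normalization factors $p^{\langle\ut+2\rho_{\mr{c}},\,\cdot\,\rangle}$ in (\ref{eq:adeleUp}) make $\tilde v_\ast$ the unique slope-zero line, while all other weight vectors acquire strictly positive $p$-adic slope and are killed by $e_\so=\lim_{r\to\infty}(U^+_{p,2}U^+_{p,3}U^-_{p,1})^{r!}$. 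Granting this, injectivity on $e_\so H^0$ follows because $\fe_\can(\vec{f}_\tor)=0$ forces $f_{\tilde v_\ast}=0$ while $e_\so\vec{f}$ lies entirely on the $\tilde v_\ast$-line; surjectivity follows since the inverse $f\mapsto \tilde v_\ast f$ produces a well-defined section that is $\bU_p$-equivariant modulo the semi-ordinary vanishing, and recovers $f$ under the forward map.
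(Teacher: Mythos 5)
Your identification of the distinguished monomial is correct ($\fe_\can$ is nonzero only on $\left(\det\begin{psm}x_{11}&x_{12}\\x_{21}&x_{22}\end{psm}\right)^{-t^+}\det(\underline{x})^{t^+}$, since the minors on columns $\{1,3\}$ and $\{2,3\}$ vanish at $\bid_3$), and your overall skeleton --- forward map $=$ extract the $\fe_\can$-component, everything else irrelevant on the semi-ordinary part, inverse $=$ put the distinguished vector back --- is the right one. But two steps have genuine gaps.

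First, the candidate inverse $f\mapsto\tilde v_\ast\cdot f$ is not a well-defined section of $\omega_{(0,0,t^+;t^-)}$ over $\Ig^{0,\tor}_{n,m}$. Descent from $\Ig^\tor_{n,m}$ requires equivariance under all of $(K^0_{p,n}\cap P_{\tt D}(\bZ_p))/(K^1_{p,n}\cap P_{\tt D}(\bZ_p))$, i.e.\ under elements acting through $\left(\begin{psm}\ast&p\ast&\\ \ast&\ast\\ u_1&u_2&a_1\end{psm},a_2\right)$. Your ``weight matching'' only handles the torus part $(a_1,a_2)$; right translation by the unipotent part replaces the upper-left minor by a combination of all three $2\times 2$ minors, so the line spanned by $\tilde v_\ast$ is \emph{not} preserved and $\tilde v_\ast\cdot f$ fails the required transformation law in the $(u_1,u_2)$-directions. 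The paper's inverse \eqref{eq:fvecf} repairs exactly this by averaging over the Lagrangian subgroups $C$ (which averages over $(u_1,u_2)$) with the compensating twist $\left(\ltrans{u}^{-1}_C,1\right)$; the price is that both composites equal $U^n_p$ rather than the identity, and one then inverts $U^n_p$ on the semi-ordinary part.

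Second, your step 3 is the crux and is left as ``one must check''; moreover both the statement and the proposed mechanism are off. A semi-ordinary section does \emph{not} take values on the $\tilde v_\ast$-line (the unipotent twists $\left(\ltrans{u}^{-1}_C,1\right)$ above already produce non-distinguished components in the values of any semi-ordinary form), so ``$e_\so$ kills all $f_\mu$ with $\mu\neq\mu(\tilde v_\ast)$'' cannot be the right formulation, and a slope computation with the archimedean normalizations $p^{\langle\ut+2\rho_{\mr{c}},\,\cdot\,\rangle}$ of \eqref{eq:adeleUp} does not apply in the mod $p^m$ coherent setting. What is true, and what the paper proves via the explicit formula \eqref{eq:Up-vecf}, is that $U^n_p$ \emph{factors through} the $\fe_\can$-projection of the input: the $\Delta_+$-action of $\mathrm{diag}(p^n,p^n,p^{2n})$ fixes $\tilde v_\ast$ and multiplies every other spanning monomial by $p^{n(b_2+b_3)}$ with $b_2+b_3\geq 1$, hence kills it modulo $p^m$ precisely because $n\geq m$. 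This is where the hypothesis $n\geq m$ enters; your argument never uses it, which signals that the key congruence is missing. Once that congruence is in place, injectivity and surjectivity both follow from invertibility of $U^n_p$ on the semi-ordinary parts, as in the paper.
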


\begin{proof}
Given $\vec{f}$, we have
\begin{equation}\label{eq:Up-vecf}
(U^n_p\vec{f})(A,\lambda,i,\alpha^p,\alpha_p,\underline{\varepsilon}_{\alpha_p})=p^{-(2+3+3)n}\sum_C \left(\ltrans{u}^{-1}_C\begin{psm}p^n\\ &p^n\\&&p^{2n}\end{psm},p^{2n}\right)\cdot \vec{f}(A/C,\lambda',i',(\alpha^p)^\prime,\alpha'_p,\underline{\varepsilon}_{\alpha'_p}).
\end{equation}
Here $\Lsub$ runs over the subgroups of $A[p^{4n}]$ which can be spanned as
\begin{align*}
   &(e^+_1,e^+_2,e^+_3,e^+_4)\cdot p^n\begin{psm}1&&u_1&\ast\\&1&u_2&\ast\\&&1&\ast\\&&&1\end{psm}\begin{psm}p^{-n}&&&\\& p^{-n}&&\\&& p^{-2n}&\\&&& p^{-4n}\end{psm},\\
   &(e^-_1,e^-_2,e^-_3,e^-_4)\cdot p^n\begin{psm}1&\ast&\ast&\ast\\&1\\&-u_2&1&-u_1\\&&&1\end{psm}\begin{psm}p^{-2n}&&\\& p^{-5n}\\&& p^{-4n}\\&&& p^{-5n}\end{psm},
\end{align*}
with $e^\pm_1,e^\pm_2,e^\pm_3,e^\pm_4$ a basis of $A[p^{n}]$ compatible with $\alpha_p$. For such a $\Lsub$, $u_\Lsub=\begin{psm}1&&u_1\\&1&u_2\\&&1\end{psm}$. By our definition of the $\Delta_+$-action on $W_{\ut}$, we know that if $w\in W_{(0,0,t^+;t^-)}$ is a vector of non-highest weight, then 
\begin{align*}
   \left(\begin{psm}p^n\\ &p^n\\&&p^{2n}\end{psm},p^{2n}\right)\cdot w\equiv 0\mod p^n.
\end{align*}
Therefore, from \eqref{eq:Up-vecf} we see that $U^n_p\vec{f}$ is actually determined by the projection of its values to the highest weight space.

On one hand, this shows that the map in the statement is injective. On the other hand, 
given $f\in e_\so V^0_{n,m}[t^+,t^-]$, we can  define $\vec{f}$ by the rule 
\begin{equation}\label{eq:fvecf}
   \vec{f}(A,\lambda,i,\alpha^p,\alpha_p,\underline{\varepsilon}_{\alpha_p})=p^{-(2+3+3)n}\sum_C \left(\ltrans{u}^{-1}_C,1\right)\cdot w_{(0,0,t^+;t^-)}f(A/C,\lambda',i',(\alpha^p)^\prime,\alpha'_p),
\end{equation}
where $C$ runs over the same range as above. One can check that $\vec{f}$ satisfies
\[
\vec{f}(A,\lambda,i,\alpha^p,\alpha_p\circ g,\underline{\varepsilon}_{\alpha_p\circ g})=\left(\begin{psm}\ast&p\ast&\\ \ast&\ast\\ u_1&u_2&a_1\end{psm},a_2\right)^{-1}\cdot   \vec{f}(A,\lambda,i,\alpha^p,\alpha_p,\underline{\varepsilon}_{\alpha_p})
\]
for $g^+\equiv\begin{psm}\ast&\ast&u_1&\ast\\ p\ast&\ast&u_2&\ast\\ &&a_1&\ast\\&&&a^{-1}_2\end{psm}\mod p^n$, so \eqref{eq:fvecf}  defines an element in  $H^0\left(\Ig^{0,\tor}_{n,m},\omega_{(0,0,t^+;t^-)}\otimes\cI_{\Ig^\tor_{n,m}}\right)$, and it is easy to check its semi-ordinarity from the semi-ordinarity of $f$. The composition of \eqref{eq:fvecf} and the map in the statement is $U^n_p$. Hence, the map is a bijection.
\end{proof}

\begin{prop}\label{prop:n1}
$e_\so H^0\left(\Ig^{0,\tor}_{n,m},\omega_{\ut}\otimes\cI_{\Ig^\tor_{n,m}}\right)=e_\so H^0\left(\Ig^{0,\tor}_{1,m},\omega_{\ut}\otimes\cI_{\Ig^\tor_{1,m}}\right)$.
\end{prop}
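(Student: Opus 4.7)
The strategy is the standard Hida-type argument that semi-ordinarity forces the level at $p$ down to the base of the Igusa tower. Let $\pi_n\colon \Ig^{0,\tor}_{n,m}\to\Ig^{0,\tor}_{1,m}$ be the natural projection forgetting the finer level structure at $p$, and let
\[
\pi_n^*\colon H^0\bigl(\Ig^{0,\tor}_{1,m},\omega_{\ut}\otimes\cI_{\Ig^{0,\tor}_{1,m}}\bigr)\hra H^0\bigl(\Ig^{0,\tor}_{n,m},\omega_{\ut}\otimes\cI_{\Ig^{0,\tor}_{n,m}}\bigr)
\]
denote the associated (injective) pullback. The goal is to show that $\pi_n^*$ restricts to an isomorphism onto the $e_\so$-eigenspaces.

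The first step is to check that $\pi_n^*$ intertwines the $U_p$-operators on the two sides. Unpacking the definitions in \S\ref{sec:UpH} of $U^+_{p,2}$, $U^+_{p,3}$, $U^-_{p,1}$, the Lagrangian subgroup $\Lsub\subset A[p^2]$ and the level structure induced on $A/\Lsub$ depend only on the filtrations ${\tt F^\pm}$, which are common to both $\Ig^{0,\tor}_{n,m}$ and $\Ig^{0,\tor}_{1,m}$. Hence the Hecke correspondences $\sC^{0\bullet}_{j,n,n,n,m}$ fit into commuting diagrams with $\pi_n$, yielding $\pi_n^*\circ U_p=U_p\circ\pi_n^*$ and therefore $\pi_n^*\circ e_\so=e_\so\circ\pi_n^*$. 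In particular, $\pi_n^*$ carries $e_\so H^0(\Ig^{0,\tor}_{1,m},\omega_{\ut}\otimes\cI_{\Ig^{0,\tor}_{1,m}})$ into $e_\so H^0(\Ig^{0,\tor}_{n,m},\omega_{\ut}\otimes\cI_{\Ig^{0,\tor}_{n,m}})$.

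For the reverse containment I would invoke the key level-lowering property stated just before the proposition: $U_p=U^+_{p,2}U^+_{p,3}U^-_{p,1}$ sends $H^0(\Ig^{0,\tor}_{n,m},\omega_{\ut}\otimes\cI_{\Ig^{0,\tor}_{n,m}})$ into $H^0(\Ig^{0,\tor}_{n-1,m},\omega_{\ut}\otimes\cI_{\Ig^{0,\tor}_{n-1,m}})$. Iterating this $n-1$ times places $U_p^{n-1}f$ inside the image of $\pi_n^*$. Given $f\in e_\so H^0(\Ig^{0,\tor}_{n,m},\omega_{\ut}\otimes\cI_{\Ig^{0,\tor}_{n,m}})$, Proposition~\ref{prop:V0-converge} gives $f=e_\so f=\lim_{r\to\infty}U_p^{r!}f$; for $r!\geq n-1$ every tail term already lies in the finitely generated---and therefore discrete---$\bZ/p^m$-submodule $\pi_n^*H^0(\Ig^{0,\tor}_{1,m},\omega_{\ut}\otimes\cI_{\Ig^{0,\tor}_{1,m}})$, so $f$ itself lies there. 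Combined with the commutation from the previous step, the unique preimage of $f$ under $\pi_n^*$ is semi-ordinary, completing the argument.

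The main obstacle is cleanly establishing the commutation $\pi_n^*\circ U_p=U_p\circ\pi_n^*$. This requires unpacking the explicit parameterizations in \eqref{eq:gamma+2}--\eqref{eq:gamma-} of the Lagrangian subgroups in terms of the basis attached to $\alpha_p$, and verifying that both the projections $p_1,p_2$ and the twisting map $\pi^*$ on sections of $\omega_{\ut}$ descend coherently to level $(1,1,1)$ under the forgetful projection $\pi_n$. Once that bookkeeping is in place, the remainder of the proof is essentially formal.
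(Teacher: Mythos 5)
Your proof is correct and follows essentially the same route as the paper: the paper's argument is precisely the one-line observation that $U_p=U^+_{p,2}U^+_{p,3}U^-_{p,1}$ maps level-$n$ sections into level-$(n-1)$ sections (\S\ref{sec:UpH}), combined with $e_\so=\lim_r (U_p)^{r!}$, which is exactly the mechanism you spell out. The extra bookkeeping you supply (commutation of $U_p$ with the pullback $\pi_n^*$, and the eventual-constancy of the limit in a discrete $\bZ/p^m$-module) is left implicit in the paper but is the right way to make the one-line proof rigorous.
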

\begin{proof}
This follows immediately from the fact that $U_p$ maps the space $H^0\left(\Ig^{0,\tor}_{n,m},\omega_{\ut}\otimes\cI_{\Ig^\tor_{n,m}}\right)$ into $H^0\left(\Ig^{0,\tor}_{n-1,m},\omega_{\ut}\otimes\cI_{\Ig^\tor_{n,m}}\right)$. (See \S\ref{sec:UpH}.)
\end{proof}

\begin{prop}\label{prop:clas-bound}
For a fixed tame level $K^p_f$ and a fixed integer $B\geq 0$, the dimension of $e_\so M^0_{\ut}(K^p_fK^0_{p,1};\bQ_p)$ is uniformly bounded for all $t^+_1\geq t^+_2\geq t^+_3\geq -t^-_1+4$ with $t^+_1-t^+_2\leq B$.
\end{prop}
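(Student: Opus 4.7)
The plan is to use the highest-weight mechanism from Proposition~\ref{prop:VH} to reduce the proposition to the subfamily $t^+_1 = t^+_2 = 0$, with the bound $t^+_1 - t^+_2 \leq B$ controlling the multiplicity of the reduction, and then to handle that two-parameter case by establishing finite generation of $e_\so\pV^{0,\ast}_\so$ over $\Lambda_\so$.

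\textbf{Reduction step.} A direct computation using the $\Delta_+$-action defined in \S\ref{sec:UpH} shows that $\bigl(\diag(p^n,p^n,p^{2n}),p^{2n}\bigr)$ scales the basis monomial of $W_\ut$ with exponents $(a_1,a_2,a_3,b_1,b_2,b_3)$ by exactly $p^{n(a_3+b_2+b_3)}$. As in the proof of Proposition~\ref{prop:VH}, this implies that $(U_p)^n\vec{f}$ depends modulo $p^n$ only on the projection of $\vec{f}$ onto the subspace $W_\ut^\flat\subset W_\ut$ spanned by monomials with $a_3=b_2=b_3=0$. Counting gives $\dim W_\ut^\flat=t^+_1-t^+_2+1\leq B+1$, and one checks that $T_\so(\bZ_p)$ acts on every vector of $W_\ut^\flat$ by the character $(a_1,a_2)\mapsto a_1^{t^+_3}a_2^{-t^-_1}$, independent of $(t^+_1,t^+_2)$. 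The Hodge--Tate trivialization followed by evaluation at a basis of $W_\ut^\flat$ therefore yields an injection
\[
e_\so H^0\bigl(\Ig^{0,\tor}_{n,m},\omega_\ut\otimes\cI_{\Ig^{0,\tor}_{n,m}}\bigr)\;\lhra\;\bigl(e_\so V^0_{n,m}[t^+_3,-t^-_1]\bigr)^{\oplus(B+1)}.
\]
Combining with Propositions~\ref{prop:n1} and \ref{prop:VH} (the latter applied to the weight $(0,0,t^+_3;t^-_1)$ to identify the right-hand eigenspace with $e_\so M^0_{(0,0,t^+_3;t^-_1)}$) and inverting $p$, this gives
\[
\dim_{\bQ_p} e_\so M^0_\ut(K^p_fK^0_{p,1};\bQ_p)\;\leq\;(B+1)\cdot\dim_{\bQ_p} e_\so M^0_{(0,0,t^+_3;t^-_1)}(K^p_fK^0_{p,1};\bQ_p),
\]
reducing the proposition to the uniform bound over the subfamily $t^+_1=t^+_2=0$.

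\textbf{Uniform bound in the reduced case.} For $\ut=(0,0,t^+_3;t^-_1)$, the spaces $e_\so M^0_\ut(K^p_fK^0_{p,1};\bQ_p)$ embed (via Propositions~\ref{prop:V0-converge}, \ref{prop:basechange}, and \ref{prop:VH}) into the $T_\so(\bZ_p)$-eigenspaces of $e_\so\pV^0$, which is a $\Lambda_\so$-module through the action of $T_\so(\bZ_p)^\circ$. It therefore suffices to prove that $e_\so\pV^{0,\ast}_\so$ is finitely generated over $\Lambda_\so$, for then its $\Lambda_\so$-rank caps every specialization simultaneously. The route I envision is Nakayama's lemma: reduce modulo the maximal ideal of $\Lambda_\so$ and use the Hasse-invariant lifting of Proposition~\ref{prop:basechange} to compare the mod-$p$ fiber with a finite-dimensional space of classical coherent sections at a single reference weight.

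\textbf{Main obstacle.} The delicate point is executing this Nakayama argument despite the two-dimensional weight space of the semi-ordinary family: multiplication by the Hasse invariant $E$ shifts weights only along the single direction $(1,1,1;1)$, whereas $T_\so(\bZ_p)^\circ\cong\bZ_p\oplus\bZ_p$ has two independent directions. Covering the transverse direction will require a second shift device --- for example, an explicit analysis of the $T_\so$-action on the Hodge--Tate trivialization of $\underline{\omega}^-$, or multiplication by an appropriate mod-$p$ section realizing a nontrivial shift in the $t^-_1$-coordinate --- and making the reduction rigorous without circularly invoking Theorem~\ref{prop:main}(1) is where I expect the bulk of the technical work to lie.
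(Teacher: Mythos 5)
Your proposal takes a genuinely different route from the paper, but it has a gap that you yourself flag without closing, and it is the heart of the matter. The reduction step is a reasonable observation (the $\Delta_+$-computation showing that only the $(B+1)$-dimensional subspace $W_\ut^\flat$ with $a_3=b_2=b_3=0$ survives $U_p^n$ modulo $p^n$ is correct, and does give a way to trade the three-parameter family for the two-parameter family $(0,0,t^+_3;t^-_1)$ at the cost of a factor $B+1$). But after the reduction you are still facing an \emph{infinite} family of classical weights, and your only proposed mechanism for a uniform bound over that family is finite generation of $\pV^{0,\ast}_\so$ over $\Lambda_\so$ via Nakayama. That is circular in this paper's logical order: the Nakayama step requires knowing that the mod-$\fm$ fiber $e_\so H^0\bigl(\Ig^{0,\tor}_{1,1},\omega_{(0,0,t^+;t^-)}\otimes\cI\bigr)$ is finite-dimensional over $\bF_p$, and that finiteness (Proposition~\ref{prop:bound-clas}(1)) is itself deduced by lifting mod-$p$ sections to characteristic zero, multiplying by powers of the Hasse lift $E$, and invoking \emph{precisely the uniform classical bound of Proposition~\ref{prop:clas-bound}} over the resulting infinite sequence of weights $\ut+lt_E(p-1)(1,1,1;1)$. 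The "second shift device" you mention addresses only the direction of the weight space not covered by $E$; it does not address why the fiber at any one maximal ideal is finite-dimensional in the first place, since sections over the ordinary Igusa variety mod $p$ form an infinite-dimensional space before applying $e_\so$.

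The missing independent input is supplied in the paper by an entirely different argument: one shows via Jacquet modules that a semi-ordinary $\Pi_p$ is a subquotient of a principal series, computes the $\bU_p$-eigenvalues in terms of Satake parameters, and uses the $p$-integrality of the auxiliary operator $U^+_{p,1}$ together with $t^+_1-t^+_2\leq B$ to conclude that every semi-ordinary form in the family has slope $\leq B$ for $U^+_{p,1}U^+_{p,2}U^+_{p,3}U^-_{p,1}$. The uniform dimension bound then comes from the theory of Coleman families for unitary groups (finite projective modules of bounded-slope overconvergent forms over affinoid neighborhoods in weight space, plus compactness of the set of algebraic weights). Nothing in your proposal substitutes for this slope-plus-eigenvariety mechanism, so as written the argument does not go through.
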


\begin{proof}
Suppose that $\Pi$ is an irreducible cuspidal automorphic representation of $G(\bA_\bQ)$ generated by a semi-ordinary form   $\varphi\in e_{\so}M^0_{\ut}\left(K^p_fK^0_{p,n};\bQ_p\right)$ whose (generalized) eigenvalue of $U^+_{p,2}$ (resp. $U^+_{p,3}$, $U^-_{p,1}$) is $\lambda^+_2$ (resp. $\lambda^+_3$, $\lambda^-_1$). The semi-ordinarity condition implies that $\lambda^+_2,\lambda^+_3,\lambda^-_1$ are all $p$-adic units. We view $\Pi_p$ as an irreducible representation of $\GL_4(\bQ_p)$ via \eqref{eq:Gisom}. Let $Q$ be the parabolic subgroup $\left\lbrace\begin{psm}\ast&\ast&\ast&\ast\\\ast&\ast&\ast&\ast\\&&\ast&\ast\\&&&\ast\end{psm}\right\rbrace\subset\GL_4$ with Levi factorization $Q=M_QN_Q$. Let 
\[
   \Pi_{p,N_Q}=\Pi_p/\{\Pi_p(u)v-v\,:\,v\in\Pi_p,\,u\in N_Q(\bQ_p)\},
\]
the Jacquet module of $\Pi_p$ with respect to the parabolic $Q$ acted on by $M_Q(\bQ_p)\cong \GL_2(\bQ_p)\times \GL_1(\bQ_p)\times\GL_1(\bQ_p)$. It follows from Jacquet's Lemma (see \cite[Theorem~4.1.2, Proposition~4.1.4]{Cas95}) that $\Pi^{M_Q(\bQ_p)\cap K^0_{p,n}}_{p,N_Q}$ equals the image under the natural projection of 
\begin{align*}
  &\bigcap_{r\geq 1}\left(U_{p,\mr{loc}}\right)^r\Pi^{K^0_{p,n}}_p,
  &U_{p,\mr{loc}}=\int_{N_Q(\bZ_p)}\Pi_p\left(u\begin{psm}p^2\\&p^2\\&&p^{\phantom{2}}\\&&&p^{-1}\end{psm}\right)\,du.
\end{align*}
The semi-ordinary form $\varphi\in \Pi$ is fixed by $K^0_{p,n}$ and belongs to a generalized eigenspace of $U_{p,\mr{loc}}$ with a nonzero eigenvalue. Thus, we deduce that $\Pi^{M_Q(\bQ_p)\cap K^0_{p,n}}_{p,N_Q}\neq 0$ and $\Pi_p$ is isomorphic to a subquotient of
\[
   \Ind^{\GL_4(\bQ_p)}_{Q(\bQ_p)} \sigma\boxtimes \chi\boxtimes\chi',
\]
where $\sigma$ is an irreducible smooth admissible representation of $\GL_2(\bQ_p)$, and $\chi,\chi'$ are smooth characters of $\bQ^\times_p$. We have $M_Q(\bQ_p)\cap K^0_{p,1}\cong K'_{p}\times (1+p^n\bZ_p)\times (1+p^n\bZ_p)$ with $K'_{p}=\left\lbrace\begin{pmatrix}\ast&\ast\\ p\ast&\ast\end{pmatrix}\right\rbrace\subset\GL_2(\bZ_p)$. If $\sigma$ is supercuspidal, $\Pi^{M_Q(\bQ_p)\cap K^0_{p,n}}_{p,N_Q}\neq 0$ implies that $\sigma^{K'_{p}}\neq 0$ (\cite[Theorem~6.3.5]{Cas95}). However, a supercuspidal representation of $\GL_2(\bQ_p)$ does not contain any nonzero vector fixed by $K'_{p}$. Therefore, we see that $\sigma$ is not supercuspidal, and $\Pi_p$ is isomorphic to a subquotient of a principal series. 
Denote by $\alpha_1,\alpha_2,\alpha_3,\alpha_4$ the evaluations at $p$ of the corresponding characters of $\bQ^\times_p$. Then, up to reordering, we have
\begin{align}
  \label{eq:lambda2}\lambda^+_{2}&=p^{t^+_1+\frac{1}{2}}\alpha_1\cdot p^{t^+_2-\frac{1}{2}}\alpha_2,\\
  \lambda^+_{3}&=p^{t^+_1+\frac{1}{2}}\alpha_1\cdot p^{t^+_2-\frac{1}{2}}\alpha_2\cdot p^{t^+_3-\frac{3}{2}}\alpha_3,\\
  \lambda^-_{1}&=p^{t^-_1-\frac{3}{2}}\alpha^{-1}_4.
\end{align}

Now, in addition to $U^+_{p,2},U^+_{p,3},U^-_{p,1}$, we also consider the action of the operator
\begin{equation}\label{eq:Up+1}
U^+_{p,1}=p^{\left<(t^+_1,t^+_2,t^+_3;-t^-_1)+2\rho_{\mr{c}},(1,0,0;0)\right>}\int_{N(\bZ_p)}R\left(u\begin{psm}p\\&1\\&&1\\&&&1\end{psm},1\right)\,du
\end{equation}
on $M^0_{\ut}\left(K^p_fK^0_{p,n};\bQ_p\right)$. The operator $U^+_{p,1}$ has a geometric interpretation analogous to \eqref{eq:open}, and the above normalization makes all its eigenvalues $p$-integral. 
If $\alpha^{-1}_1\alpha_2\neq p^{\pm 1}$, then $p^{t^+_1+\frac{1}{2}}\alpha_1$ and $p^{t^+_1+\frac{1}{2}}\alpha_2$ are both eigenvalues for the action of $U^+_{p,1}$ on the holomorphic forms in $\Pi$ of level $K^p_fK^0_{p,n}$, so 
\begin{equation}\label{eq:v1v2}
\begin{aligned}
   v_\fp(\alpha_1)+t^+_1+\frac{1}{2}&\geq 0,
   &v_\fp(\alpha_2)+t^+_1+\frac{1}{2}&\geq 0.
\end{aligned}
\end{equation}
By using that \eqref{eq:lambda2} is a $p$-adic unit and our condition $0\leq t^+_1-t^+_2\leq B$,
\begin{equation}\label{eq:v1+v2}
\begin{aligned}
    (v_\fp(\alpha_1)+t^+_1+\frac{1}{2})+(v_\fp(\alpha_2)+t^+_1+\frac{1}{2})&=t^+_1-t^+_2+1
    \leq B+1.
\end{aligned}
\end{equation}
Combining \eqref{eq:v1v2} and \eqref{eq:v1+v2}, we get
\begin{equation}\label{eq:v1}
   0\leq v_\fp(\alpha_j+t^+_1+\frac{1}{2})\leq B+1, \quad j=1,2.
\end{equation}
If $\alpha^{-1}_1\alpha_2=p^{\pm 1}$, then \eqref{eq:lambda2} being a $p$-adic unit implies that
\[
\frac{t^+_1-t^+_2}{2}\leq v_\fp(\alpha_j)\leq \frac{t^+_1-t^+_2}{2}+1,\quad j=1,2.
\]
Combining this with our condition $0\leq t^+_1-t^+_2\leq B$, we get
\begin{equation}\label{eq:v1'}
   0\leq v_\fp(\alpha_j)\leq \frac{B}{2}+1,\quad j=1,2.
\end{equation}
Therefore, all the semi-ordinary forms in $e_\so M^0_{\ut}(K^p_fK^0_{p,1};\bQ_p)$, $t^+_1\geq t^+_2\geq t^+_3\geq -t^-_1+4$, $t^+_1-t^+_2\leq B$, have slope $\leq B$ for the $\bU_p$-operator $U^+_{p,1}U^+_{p,2}U^+_{p,3}U^-_{p,1}$.

Recall that the theory of Coleman families for unitray groups (developed in \cite{brasca} as a generalization of \cite{AIP-siegel}) 
shows that for every point in the weight space $\Hom_\cont((\bZ^\times_p)^4,\bC^\times_p)$, there exists a neighborhood $\cU$ of that point and a projective $\cA_\cU$-module of finite rank interpolating all the cuspidal overconvergent forms of weights in $\cU$ and $U^+_{p,1}U^+_{p,2}U^+_{p,3}U^-_{p,1}$-slope $\leq B$ and a fixed tame level. Since all the algebraic weights $\ut$ are contained in a compact subset of $\Hom_\cont((\bZ^\times_p)^4,\bC_p)$, when $\ut$ varies among all the algebraic weights, there is a uniform bound on the dimension of the space of cuspidal overconvergent forms of weight $\ut$ and slopes $\leq B$ and tame level $K^p_f$.  The Proposition follows.
\end{proof}


\begin{prop}\label{prop:bound-clas}\leavevmode
\begin{enumerate}
\item $\dim_{\bF_p}\,e_\so H^0\left(\Ig^{0,\tor}_{1,1},\omega_{\ut}\otimes\cI_{\Ig^{0,\tor}_{1,1}}\right)<\infty$.
\item (Classicity) There is a canonical embedding 
\[
e_\so M^0_{(0,0,t^+;t^-)}\left(K_f^pK^1_{p,0};\bQ_p\right) \lhra \left(\varprojlim_m\varinjlim_n e_{\so}V^0_{n,m}[t^+,t^-]\right)\otimes\bQ_p,
\]
and given $0\geq t^+$, there exists $A(t^+)\geq -t^++4$ such that if $t^-\geq A(t^+)$ then the embedding is an isomorphism.
\end{enumerate}

\end{prop}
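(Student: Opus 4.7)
\textbf{Part (1).} My strategy is to reduce the mod-$p$ finiteness to the uniform classical bound in Proposition~\ref{prop:clas-bound} by multiplying by a large power of the Hasse-invariant lift $E$. By Proposition~\ref{prop:n1} it suffices to treat $n=1$. Proposition~\ref{prop:basechange} gives
\[
e_\so H^0\!\left(\Ig^{0,\tor}_1[1/E],\omega_{\ut}\otimes\cI_{\Ig^{0,\tor}_1}\right)\otimes\bF_p \;\stackrel{\sim}{\lra}\; e_\so H^0\!\left(\Ig^{0,\tor}_{1,1},\omega_{\ut}\otimes\cI_{\Ig^{0,\tor}_{1,1}}\right).
\]
Given $\bF_p$-linearly independent elements $f_1,\dots,f_r$ in the right-hand side, I would lift them to $\bZ_p$-linearly independent $F_1,\dots,F_r$ in the left-hand side. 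Each $F_i$ has finite pole order along $\{E=0\}$, so for some common $l\gg 0$ each $E^l F_i$ extends to a section of $\omega_{\ut'}\otimes\cI_{\Ig^{0,\tor}_1}$ over $\Ig^{0,\tor}_1$, where $\ut'=\ut+lt_E(p-1)(1,1,1;1)$. By Koecher's principle these are classical cuspidal forms in $M^0_{\ut'}(K^p_f K^0_{p,1};\bQ_p)$, and after applying $e_\so$ they are semi-ordinary. Because $E\equiv 1\pmod{p}$ after the canonical Igusa-tower trivialization (as used in Proposition~\ref{prop:V0-converge}), one has $e_\so(E^l F_i)\equiv f_i\pmod{p}$, so the $e_\so(E^l F_i)$ remain linearly independent over $\bZ_p$. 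Since $t'^+_1-t'^+_2=t^+_1-t^+_2=:B$ is preserved and the dominance condition $t'^+_3\geq -t'^-_1+4$ is satisfied for $l$ large, Proposition~\ref{prop:clas-bound} forces $r\leq C(K^p_f,B)$, giving the desired finiteness.

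\textbf{Part (2), embedding.} The map \eqref{eq:MtoV} is $U_p$-equivariant, so applying $e_\so$ (which on both sides is the projector cut out by iterating $U_p$ and taking the limit, legitimate by Proposition~\ref{prop:V0-converge}) yields the claimed embedding; injectivity is inherited from that of \eqref{eq:MtoV}.

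\textbf{Part (2), surjectivity for $t^-\geq A(t^+)$.} The plan is a dimension count. On the $p$-adic side, by Proposition~\ref{prop:VH} and part~(1), $\dim_{\bF_p}e_\so V^0_{1,1}[t^+,t^-]$ is bounded by the constant $C_0=C(K^p_f,0)$ from Proposition~\ref{prop:clas-bound}. By Proposition~\ref{prop:basechange} combined with a standard Nakayama argument on the Pontryagin dual $\pV^{0,\ast}_\so$ (as will be used in Theorem~\ref{prop:main}(1)), this controls $\dim_{\bQ_p}\bigl(\varprojlim_m\varinjlim_n e_\so V^0_{n,m}[t^+,t^-]\bigr)\otimes\bQ_p\leq C_0$. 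On the classical side, Proposition~\ref{prop:clas-bound} also gives $\dim_{\bQ_p}e_\so M^0_{(0,0,t^+;t^-)}(K^p_f K^1_{p,0};\bQ_p)\leq C_0$. To force equality, I would argue as follows: consider the Coleman--Brasca family $\cA_\cU$ of slope-$\leq B$ overconvergent cuspidal forms around the weight $(0,0,t^+;t^-)$ produced in the proof of Proposition~\ref{prop:clas-bound}. Its classical specializations fill a Zariski-dense set of weights, and on these the embedding in~(2) is surjective. Semi-continuity of fiber dimensions in the family then shows that equality holds on an open neighborhood, and a straightforward slope/weight bookkeeping identifies the size of this neighborhood in the $t^-$-direction, yielding the explicit $A(t^+)$.

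\textbf{Main obstacle.} Part~(1) is essentially routine given Propositions~\ref{prop:basechange}, \ref{prop:n1}, and~\ref{prop:clas-bound}; the delicate point is only to verify that the semi-ordinary projector and multiplication by $E$ are compatible mod~$p$. The real technical heart is part~(2), specifically producing the explicit threshold $A(t^+)$: this requires showing that every $p$-adic semi-ordinary form of weight $(0,0,t^+;t^-)$ is classical once $t^-$ is large, which is a small-slope-type classicality statement. The cleanest route is to use the Coleman--Brasca overconvergent-family machinery (cited in Proposition~\ref{prop:clas-bound}) to bootstrap from generic weights where classicality is automatic to all $t^-\geq A(t^+)$.
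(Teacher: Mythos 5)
Your part (1) and the construction of the embedding in part (2) are correct and follow essentially the same route as the paper: lift via Proposition~\ref{prop:basechange}, multiply by $E^l$ to clear the pole along $\{E=0\}$, use $E\equiv 1\bmod p$ together with $e_\so$, and invoke the uniform bound of Proposition~\ref{prop:clas-bound}. The genuine gap is in the surjectivity half of part (2). Bounding both sides by the same constant $C_0$ gives nothing, and the mechanism you propose to ``force equality'' does not stand up. The assertion that the embedding is surjective at the classical specializations of a Coleman--Brasca family is precisely the classicality statement you are trying to prove, so the argument is circular unless you import an independent small-slope classicality theorem \emph{and} a comparison between Hida-style $p$-adic forms (sections over the ordinary Igusa tower mod $p^m$) and overconvergent forms, neither of which you set up. Moreover, semicontinuity of fiber dimension goes the wrong way: it permits the dimension to jump up at special weights, so it cannot promote equality on a Zariski-dense set to equality on a neighborhood, and ``slope/weight bookkeeping'' is not a mechanism that produces an explicit threshold $A(t^+)$.

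The paper's actual argument is purely coherent and avoids overconvergent machinery entirely. Using Propositions~\ref{prop:VH}, \ref{prop:n1} and \ref{prop:basechange}, one shows that $V^0_\so[t^+,t^-]:=\varprojlim_m\varinjlim_n e_\so V^0_{n,m}[t^+,t^-]$ is a free $\bZ_p$-module of rank
$d=\dim_{\bQ_p}\, e_\so H^0\bigl(\Ig^{0,\tor}_1[1/E],\omega_{(0,0,t^+;t^-)}\otimes\cI_{\Ig^{0,\tor}_1}\bigr)\otimes\bQ_p$,
so it suffices to prove $\dim_{\bQ_p} e_\so M^0_{(0,0,t^+;t^-)}\geq d$ for $t^-$ large. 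The key device is to twist $E$ by an unramified Hecke character of $\infty$-type $(-t_E(p-1),0)$, producing a holomorphic form $E'$ of weight $(0,0,0;2t_E(p-1))$ with the same vanishing locus as $E$. Multiplication by $(E')^l$ raises only $t^-$, so $t^-\mapsto\dim_{\bQ_p} e_\so M^0_{(0,0,t^+;t^-)}$ is non-decreasing along each arithmetic progression of step $2t_E(p-1)$; and since every section over $\Ig^{0,\tor}_1[1/E]$ becomes classical after multiplication by a sufficiently high power of $E'$, each progression eventually attains dimension $\geq d$. Monotonicity then yields the explicit threshold $A(t^+)=-t^++4+2t_E(p-1)(\max_j\{l_j(t^+)\}+1)$. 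This is the step your proposal is missing.
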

\begin{proof}
(1) Suppose that $\ol{f}_1,\dots,\ol{f}_d\in e_\so H^0\left(\Ig^{0,\tor}_{1,1},\omega_{\ut}\otimes\cI_{\Ig^{0,\tor}_{1,1}}\right)$ are linearly independent. By Proposition~\ref{prop:basechange}, they lift to $f_1,\dots,f_d\in H^0\left(\Ig^{0,\tor}_n[1/E],\omega_{\ut}\otimes\cI_{\Ig^{0,\tor}_n}\right)$. Recall that $E$ has scalar weight $t_E(p-1)$ 
and let $\ut+lk_E(p-1)=(t^+_1+lt_E(p-1),t^+_2+lt_E(p-1),t^+_3+lt_E(p-1);t^-_1+lt_E(p-1))$. For $l\gg 0$, we have 
\[
f_1E^l,\dots,f_dE^l\in H^0\left(\Ig^{0,\tor}_n,\omega_{\ut+lt_E(p-1)}\otimes\cI_{\Ig^{0,\tor}_n}\right).
\]
Because $E\equiv 1\mod p$, we have $\ol{e_\so(f_jE^l)}=\ol{f}_j$, and therefore $e_\so(f_1E^l),\dots,e_\so(f_dE^l)$ 
are linearly independent. Thus $d$ can be at most the bound in Proposition~\ref{prop:clas-bound}.

(2) We have $M^0_{(0,0,t^+;t^-)}\left(K_f^pK^0_{p,1};\bQ_p\right)=H^0\left(\Ig^{0,\tor}_1,\omega_{(0,0,t^+;t^-)}\otimes\cI_{\Ig^{0,\tor}_1}\right)\otimes_{\iota_p}\bQ_p$. (As mentioned above, the base change of $\Ig^{0,\tor}_1$ to $\cK$ is an open subscheme of $S^\tor_{K^p_fK^0_{p,1}}$; here we use the Koecher  principle to get the equality of global sections.) The map \eqref{eq:HtoV} induces
\[
    H^0\left(\Ig^{0,\tor}_1,\omega_{(0,0,t^+;t^-)}\otimes\cI_{\Ig^{0,\tor}_1}\right)\ra \left(\varprojlim_m\varinjlim_n e_{\so}V^0_{n,m}[t^+,t^-]\right),
\]
and this gives the canonical embedding in the statement.

Put $V^0_\so[t^+,t^-]=\varprojlim\limits_m\varinjlim\limits_n e_{\so}V^0_{n,m}[t^+,t^-]$. Note that from Propositions~\ref{prop:VH} and \ref{prop:n1}, for $n\geq m$, we have
\[ 
e_\so V^0_{n,m}[t^+,t^-]\simeq e_\so H^0\left(\sT^{0,\tor}_{1,m},\omega_{(0,0,t^+;t^-)}\otimes \cI_{\Ig^{0,\tor}_{1,m}}\right).
\] 
It follows that $V_\so^0[t^+,t^-]$ is $p$-torsion free, and together with Proposition~\ref{prop:basechange} we get
\begin{align*}
   \dim_{\bF_p}V^0_\so[t^+,t^-]/pV^0_\so[t^+,t^-]
   &=\dim_{\bF_p} e_\so H^0\left(\sT^{0,\tor}_{1,1},\omega_{(0,0,t^+;t^-)}\otimes \cI_{\Ig^{0,\tor}_{1,1}}\right)\\
   &=\dim_{\bQ_p}  e_\so H^0\left(\sT^{0,\tor}_{1}[1/E],\omega_{(0,0,t^+;t^-)}\otimes \cI_{\Ig^{0,\tor}_{1}}\right)\otimes\bQ_p.
\end{align*}
Denote this dimension by $d$. The above shows that $V^0_\so[t^+,t^-]$ is a free $\bZ_p$-module of rank $d$. Thus it suffices to show that there exists $A(t^+)\geq -t^+$ such that $\dim_{\bQ_p} e_\so M^0_{(0,0,t^+;t^-)}\left(K_f^pK^0_{p,1};\bQ_p\right) \geq d$ for $t^-\geq A(t^+)$. 
Pick an unramified Hecke character $\chi$ of $\cK^\times\backslash\bA^\times_\cK$ of $\infty$-type $(-t_E(p-1),0)$. Twisting $E$ by the character obtained by composing $\chi$ with $\det:G(\bQ)\backslash G(\bA_\bQ)\ra \cK^\times\backslash\bA^\times_\cK$, we get a holomorphic form $E'$ of weight $(0,0,0;2t_E(p-1))$ with the same vanishing locus as $E$. Multiplying by $(E')^l$ and applying $e_\so$ gives an injection 
\[
   e_\so M^0_{(0,0,t^+;t^-)}\left(K_f^pK^0_{p,1};\bQ_p\right)\lhra e_\so M^0_{(0,0,t^+;t^-+2t_E(p-1))}\left(K_f^pK^0_{p,1};\bQ_p\right),
\]
so
\[
\dim_{\bQ_p}e_\so M^0_{(0,0,t^+;t^-)}\left(K_f^pK^0_{p,1};\bQ_p\right)\leq  \dim_{\bQ_p}e_\so M^0_{(0,0,t^+;t^-+2t_E(p-1))}\left(K_f^pK^0_{p,1};\bQ_p\right).
\]
By the definition of $d$, we thus see that for each $0\leq j\leq 2t_E(p-1)-1$ there exists $l_j(t^+)\geq 0$ such that
\[
    \dim_{\bQ_p}e_\so M^0_{(0,0,t^+;-t^++4+j+2t_E(p-1)l_j(t^+))}\left(K_f^pK^0_{p,1};\bQ_p\right)\geq d.
\]
Therefore, $A(t^+)=-t^++4+2t_E(p-1) (\max_j\{l_j(t^+)\}+1)$ has the required property.
\end{proof}

\begin{thm}[Vertical Control Theorem]\leavevmode
\label{thm:cuspidal}
$\pV^{0,\ast}_\so$ is a free $\Lambda_\so$-module of finite rank.
\end{thm}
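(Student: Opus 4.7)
The plan is to apply the standard Pontryagin-duality machinery of Hida theory, using the inputs provided by the preceding propositions. Set $M:=e_\so\pV^0$, so $M^\vee:=\pV^{0,\ast}_\so=\Hom_{\bZ_p}(M,\bQ_p/\bZ_p)$ is a compact $\Lambda_\so$-module. By Proposition~\ref{prop:basechange}, $\varinjlim_m V^0_{n,m}\cong H^0\bigl(\Ig^{0,\tor}_n[1/E],\cI_{\Ig^{0,\tor}_n}\bigr)\otimes_{\bZ_p}(\bQ_p/\bZ_p)$, which is $p$-divisible; this is preserved by passage to the limit in $n$ and by the projector $e_\so$, so $M$ is $p$-divisible, equivalently $M^\vee$ is $p$-torsion-free.

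For finite generation, I would invoke topological Nakayama for the compact local $\Lambda_\so$-module $M^\vee$. Writing $\Lambda_\so\cong\bZ_p\llb X_1,X_2\rrb$ with $X_i=\gamma_i-1$ for topological generators $\gamma_1,\gamma_2$ of $T_\so(\bZ_p)^\circ=(1+p\bZ_p)^2$, and $\fm=(p,X_1,X_2)$, it suffices to show that $M^\vee/\fm M^\vee$ is finite over $\bF_p$. Using $p$-divisibility of $M$, Pontryagin duality gives
\[
    M^\vee/\fm M^\vee \;\cong\; \Hom_{\bZ_p}\bigl(M[p]^{T_\so(\bZ_p)^\circ},\,\bF_p\bigr),
\]
so the task reduces to bounding $\dim_{\bF_p}M[p]^{T_\so(\bZ_p)^\circ}=\varinjlim_n(e_\so V^0_{n,1})^{T_\so(\bZ_p)^\circ}$. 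Decomposing under the finite quotient $T_\so(\bZ_p)/T_\so(\bZ_p)^\circ\cong(\bF_p^\times)^2$ into $(p-1)^2$ eigenspaces, each eigenspace is of the form $\varinjlim_n e_\so V^0_{n,1}[\epsilon^+,\epsilon^-]$ for a character $(\epsilon^+,\epsilon^-)$ of $(\bF_p^\times)^2$; realizing $(\epsilon^+,\epsilon^-)$ as the finite-order part of an arithmetic weight $(0,0,t^+;t^-)$ via Teichm\"uller lift, Proposition~\ref{prop:VH} identifies each eigenspace with $e_\so H^0\bigl(\Ig^{0,\tor}_{n,1},\omega_{(0,0,t^+;t^-)}\otimes\cI_{\Ig^{0,\tor}_{n,1}}\bigr)$, which by Proposition~\ref{prop:n1} is independent of $n$ and by Proposition~\ref{prop:bound-clas}(1) is finite-dimensional over $\bF_p$. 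Summing over the $(p-1)^2$ eigenspaces yields the finiteness of $M^\vee/\fm M^\vee$, hence finite generation of $M^\vee$.

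To establish freeness, let $r:=\dim_{\bF_p}M^\vee/\fm M^\vee$ and choose a minimal surjection $\Lambda_\so^{\,r}\twoheadrightarrow M^\vee$ with kernel $K\subseteq\Lambda_\so^{\,r}$; in particular, $K$ is $p$-torsion-free. I would next use the classical specializations provided by Proposition~\ref{prop:bound-clas}(2): for each arithmetic weight $(0,0,0;t^-)$ with $t^-\ge A(0)$, this proposition combined with Propositions~\ref{prop:VH} and~\ref{prop:n1} identifies $\bigl(M^\vee/\cP_{0,t^-}M^\vee\bigr)\otimes_{\bZ_p}\bQ_p$ with the $\bQ_p$-linear dual of the classical space $e_\so M^0_{(0,0,0;t^-)}(K^p_fK^1_{p,0};\bQ_p)$. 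Proposition~\ref{prop:clas-bound} bounds these dimensions uniformly, and an $E'$-twist argument as in the proof of Proposition~\ref{prop:bound-clas}(2) produces $r$ linearly independent classical forms in large enough weight, showing the classical dimension reaches $r$ for infinitely many $t^-$. This exhibits a Zariski-dense family of height-two primes in $\Spec\Lambda_\so[1/p]$ at which the fiber of $M^\vee$ has dimension exactly $r$, so by upper semi-continuity the generic rank of $M^\vee$ over $\Lambda_\so$ equals $r$. Consequently $K$ has generic rank $0$; being a $p$-torsion-free (hence torsion-free) submodule of $\Lambda_\so^{\,r}$, it is zero.

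The main obstacle I anticipate is the last leg of the freeness argument: since $\Lambda_\so$ is three-dimensional (rather than one-dimensional as in the classical Hida setting for $\GL_2$) and classical arithmetic specializations lie only in a cone $t^-\ge A(t^+)$ of the two-dimensional weight space, one must work carefully with upper semi-continuity of fiber dimensions to conclude that the generic rank equals~$r$. The delicate point is producing enough linearly independent classical forms at arithmetic specializations to force the fiber dimension to reach~$r$ on a set dense enough to control the generic point; this requires marrying the classicity of Proposition~\ref{prop:bound-clas}(2) with the uniform bound of Proposition~\ref{prop:clas-bound} in just the right way.
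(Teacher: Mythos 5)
Your overall architecture (Nakayama for finite generation, then specialization at classical weights to kill relations) is the same as the paper's, and your finite-generation step is essentially the paper's argument: the paper localizes at each maximal ideal $\fm$ of $\bZ_p\llb T_\so(\bZ_p)\rrb$, which is exactly your decomposition into nebentypus eigenspaces for the torsion quotient of $T_\so(\bZ_p)$, and then invokes Propositions~\ref{prop:VH}, \ref{prop:n1} and \ref{prop:bound-clas}(1) just as you do. The $p$-divisibility of $e_\so\pV^0$ is only established in the paper eigenspace-by-eigenspace (via Propositions~\ref{prop:basechange} and \ref{prop:VH}), not for all of $\pV^0$ at once, but that is where you actually use it, so this is harmless.

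The freeness step, however, has a genuine gap. You specialize only at the weights $(0,0,0;t^-)$, and every one of the corresponding ideals $\cP^\circ_{0,t^-}\subset\Lambda_\so\cong\bZ_p\llb T^+,T^-\rrb$ contains $T^+$ (the algebraic character $x\mapsto x^{0}$ is trivial on the first $1+p\bZ_p$ factor). Hence your family of height-two primes lies entirely in the hypersurface $V(T^+)$ and is \emph{not} Zariski-dense in $\Spec\Lambda_\so[1/p]$; upper semi-continuity along this family controls the generic point of $V(T^+)$ only, not the generic rank over $\Lambda_\so$. Concretely, the non-free module $\Lambda_\so^{r-1}\oplus\Lambda_\so/(T^+)$ has fiber dimension exactly $r$ at every $\cP^\circ_{0,t^-}$ and passes all of your tests. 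The fix is to let $t^+$ vary over all of $0\geq t^+\geq -t^-+4$: then $\bigcap_{t^+,t^-}\cP^\circ_{t^+,t^-}=0$ and any relation among the $r$ generators dies. This is precisely what the paper does, and it also sidesteps your classicity detour: rather than identifying the fiber with a classical space via Proposition~\ref{prop:bound-clas}(2) (which forces $t^-\geq A(t^+)$ and requires matching classical dimensions), the paper shows directly that $\pV^{0,\ast}_{\so,\fm}\otimes_{\Lambda_\so}\Lambda_\so/\cP^\circ_{t^+,t^-}=(e_\so\pV^0[t^+,t^-])^\ast$ is $p$-torsion-free (by $p$-divisibility, from Propositions~\ref{prop:basechange} and \ref{prop:VH}) and has mod-$p$ dimension $d$ (since $\cP^\circ_{t^+,t^-}+(p)=(p,T^+,T^-)$), hence is free of rank $d$ over $\bZ_p$ for \emph{every} such weight. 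Classicity is only needed later, for part (3) of Theorem~\ref{prop:main}, not for freeness.
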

\begin{proof}
Let $\fm$ be a maximal ideal of $\bZ_p\llb T_\so(\bZ_p)\rrb$. (We know that $\fm\cap \Lambda_\so=(p,T^+,T^-)$, where we identify $\Lambda_\so$ with $\bZ_p\llb T^+,T^-\rrb$ by identifying $(1+p,1)\in T_\so(\bZ_p)$ (resp. $(1,1+p)\in T_\so(\bZ_p)$ with $T^+$ (resp. $T^-$).) We show that $\pV^{0,\ast}_{\so,\fm}$, the localization of $\pV^{0,\ast}_\so$ at $\fm$, is a free  $\Lambda_\so$-module of finite rank. 

First, we consider the quotient $\pV^{0,\ast}_{\so,\fm}\otimes_{\Lambda_\so}\Lambda_\so/(p,T^+,T^-)$, which by definition, equals $(e_\so\pV[\fm])^*$. Take $0\geq t^+\geq -t^-+4$ with $\cP_{t^+,t^-}\subset \fm$. Then 
\begin{align*}
 e_\so\pV[\fm]
   &\stackrel{\phantom{\text{Prop~\ref{prop:VH}}}}{=} \text{$p$-torsion of }\varinjlim_m \varinjlim_n e_\so V^0_{n,m}[t^+,t^-]\\
   &\stackrel{\text{(Prop~\ref{prop:VH})}}{=}\text{$p$-torsion of }\varinjlim_m \varinjlim_n e_\so H^0\left(\Ig^{0,\tor}_{n,m},\omega_{(0,0,t^+;t^-)}\otimes\cI_{\Ig^{0,\tor}_{n,m}}\right)\\
   &\stackrel{\text{(Prop~\ref{prop:n1})}}{=}\text{$p$-torsion of }\varinjlim_m e_\so H^0\left(\Ig^{0,\tor}_{1,m},\omega_{(0,0,t^+;t^-)}\otimes\cI_{\Ig^{0,\tor}_{1,m}}\right)\\
   &\stackrel{\phantom{\text{Prop~\ref{prop:VH}}}}{=} e_\so \left[\text{$p$-torsion of }\varinjlim_m H^0\left(\Ig^{0,\tor}_{1,m},\omega_{(0,0,t^+;t^-)}\otimes\cI_{\Ig^{0,\tor}_{1,m}}\right)\right].
\end{align*}   
Also, Proposition~\ref{prop:basechange} implies
\begin{align*}
   \varinjlim_m H^0\left(\Ig^{0,\tor}_{1,m},\omega_{(0,0,t^+;t^-)}\otimes\cI_{\Ig^{0,\tor}_{1,m}}\right) &=\varinjlim_m  H^0\left(\Ig^{0,\tor}_{1}[1/E],\omega_{(0,0,t^+;t^-)}\otimes\cI_{\Ig^{0,\tor}_{1}}\right)\otimes\bZ/p^m\bZ.
\end{align*}
It follows that 
\begin{align*}
   \text{$p$-torsion of }\varinjlim_m H^0\left(\Ig^{0,\tor}_{1,m},\omega_{(0,0,t^+;t^-)}\otimes\cI_{\Ig^{0,\tor}_{1,m}}\right)&= H^0\left(\Ig^{0,\tor}_{1}[1/E],\omega_{(0,0,t^+;t^-)}\otimes\cI_{\Ig^{0,\tor}_{1}}\right)\otimes\bZ/p\bZ\\
   &= H^0\left(\Ig^{0,\tor}_{1,1},\omega_{(0,0,t^+;t^-)}\otimes\cI_{\Ig^{0,\tor}_{1,1}}\right),
\end{align*}
where Proposition~\ref{prop:basechange} is used again for the second equality. Thus,
\[
e_\so\pV[\fm]=e_\so H^0\left(\Ig^{0,\tor}_{1,1},\omega_{(0,0,t^+;t^-)}\otimes\cI_{\Ig^{0,\tor}_{1,1}}\right).
\]
By Proposition~\ref{prop:bound-clas}, we know this is finite dimensional over $\bF_p$. Hence, $\pV^{0,\ast}_{\so,\fm}\otimes_{\Lambda_\so}\Lambda_\so/(p,T^+,T^-)=(e_\so\pV[\fm])^*$ is finite dimensional over $\bF_p$. Let
\[
d=\dim_{\bF_p} \pV^{0,\ast}_{\so,\fm}\otimes_{\Lambda_\so}\Lambda_\so/(p,T^+,T^-).
\]
By Nakayama's Lemma, there exist $F_1,\dots,F_d\in \pV^{0,\ast}_{\so,\fm}$ such that 
\[  
\pV^{0,\ast}_{\so,\fm}=\Lambda_\so F_1+\cdots +\Lambda_\so F_d.
\]  

Next, we show that $F_1,\dots,F_d$ are $\Lambda_\so$-linearly independent. Suppose that $a_1,\dots,a_d\in \Lambda_\so$ are such that $a_1F_1+\cdots+a_dF_d=0$. Given any $0\geq t^+\geq -t^-+4$ with $\cP_{t^+,t^-}\subset \fm$, we put 
\[
\cP^\circ_{t^+,t^-}=\Lambda_\so\cap \cP_{t^+,t^-}.
\] 
In view of Proposition~\ref{prop:basechange}, the $\bZ_p$-module $\varinjlim
\limits_m\varinjlim\limits_n H^0\left(\Ig^{0,\tor}_{n,m},\omega_{\ut}\otimes\cI_{\Ig^{0,\tor}_{n,m}}\right)$ is $p$-divisible, and by Proposition~\ref{prop:VH} so is $e_\so \pV^0[t^+,t^-]$.
Therefore,
\[
\pV^{0,\ast}_{\so,\fm}\otimes_{\Lambda_\so}\Lambda_\so/\cP^\circ_{t^+,t^-}=\left( e_\so \pV^0[t^+,t^-]\right)^*
\] 
is $p$-torsion free. On the other hand,
\[
  \dim_{\bF_p} \left(\pV^{0,\ast}_{\so,\fm}\otimes_{\Lambda_\so}\Lambda_\so/\cP^\circ_{t^+,t^-}\right)\otimes_{\bZ}\bZ/p\bZ=\dim_{\bF_p}\pV^{0,\ast}_{\so,\fm}\otimes_{\Lambda_\so}\Lambda_\so/(p,T^+,T^-)=d.
\]
It follows that the $\bZ_p$-module $\pV^{0,\ast}_{\so,\fm}\otimes_{\Lambda_\so}\Lambda_\so/\cP^\circ_{t^+,t^-}$ is free of rank $d$, and therefore $a_1,\dots,a_d\in \cP^\circ_{t^+,t^-}$. Then from $\bigcap\limits_{0\geq t^+\geq -t^-+4}\cP^\circ_{t^+,t^-}=0$ we conclude that $a_1=\cdots=a_d=0$, and hence $\pV^{0,\ast}_{\so,\fm}$ is a free $\Lambda_\so$-module of rank $d$.
\end{proof}

This completes the proof of part (1) of Theorem~\ref{prop:main} for cuspidal forms. For part (2), because $\pV^{0,\ast}_\so$ is free over $\Lambda_\so$, letting $V^0_{m,\so}[\tau^+,\tau^-]=e_\so \varinjlim\limits_n V^0_{n,m}\otimes_{\bZ_p}\cO_F[\tau^+,\tau^-]$ we have 
\begin{equation}\label{eq:MV}
\begin{aligned}
   \cM^0_\so\otimes_{\bZ_p\llb T_\so(\bZ_p)\rrb} \cO_F\llb T_\so(\bZ_p)\rrb /\cP_{\tau^+,\tau^-}
   &\cong \Hom_{\bZ_p}\left(\pV^{0,\ast}_\so\otimes_{\bZ_p} \cO_F /\cP_{\tau^+,\tau^-},\bZ_p\right)\\
   &=\Hom_{\bZ_p}\left((e_\so\pV^0[\tau^+,\tau^-])^*,\bZ_p\right)\\
   &=\Hom_{\bZ_p}\left(\left(\varinjlim_m V^0_{m,\so}[\tau^+,\tau^-]\right)^*,\bZ_p\right).
\end{aligned}
\end{equation}
Since the left hand side is a free $\bZ_p$-module (because $\cM^0_\so$ is a free $\Lambda_\so$-module), we see that  
$\varinjlim\limits_m  V^0_{m,\so}[\tau^+,\tau^-])$ is $p$-divisible, and $V^0_{m+1,\so}[\tau^+,\tau^-]\ra V^0_{m,\so}[\tau^+,\tau^-]$ is surjective, so
\[
   \Hom_{\bZ_p}\left(\varinjlim_m V^0_{m,\so}[\tau^+,\tau^-],\bQ_p/\bZ_p\right)\cong   \Hom_{\bZ_p}\left(\varprojlim_m V^0_{m,\so}[\tau^+,\tau^-],\bZ_p\right)
\]
and hence
\begin{align*}
   \eqref{eq:MV}=\Hom_{\bZ_p}\left(\Hom_{\bZ_p}\left(\varprojlim_m V^0_{m,\so}[\tau^+,\tau^-],\bZ_p\right),\bZ_p\right)&=\varprojlim_m V^0_{m,\so}[\tau^+,\tau^-]\\
   &=\left(\varprojlim_m \varinjlim_n e_\so V^0_{n,m}\otimes_{\bZ_p}\cO_F\right)[\tau^+,\tau^-],
\end{align*}
concluding the proof of part (2) of Theorem~\ref{prop:main} for cuspidal forms. Part (3)  follows from Proposition~\ref{prop:bound-clas}.

\section{The proof of Theorem~\ref{prop:main} for the non-cuspidal part}\label{sec:noncusp}

We apply the approach in \cite{LRtz} to prove the vertical control theorem for semi-ordinary forms on $\GU(3,1)$ by analyzing the quotient $\pV/\pV^0$ and using the vertical control theorem for cuspidal semi-ordinary forms on $\GU(3,1)$. When studying $\pV/\pV^0$, we introduce an auxiliary space $\pV^\flat$. One difference from {\it loc.cit} is that $q$-expansions are used there to reduce proving some properties for the $\bU_p$-action on $\pV,\pV^\flat$ to matrix computations, but $q$-expansions are not available in the case of $\GU(3,1)$. The analogue of those properties in our case are proved in \S\ref{sec:three-prop} by working with semi-abelian schemes over the boundary of the partial toroidal compactification. 

\subsection{Cusp labels} Following \cite{Thesis}, a \emph{cusp label} is a $K_f$-orbit of triples $\gls{clabel}$, with:
\begin{itemize}[leftmargin=2.5em]
\item{} ${\tt Z}:0\subset{\tt Z}_{-2}\subset{\tt Z}_{-1}={\tt Z}_{-2}^\bot\subset L\otimes\hat{\bZ}$ a {\it fully symplectic admissible} filtration.  
\item{} $\Phi=({\tt X},{\tt Y},\phi,\varphi_{-2},\varphi_0)$ a {\it torus argument}, where 
$\phi:{\tt Y}\ra{\tt X}$ is an $\cO_\cK$-linear embedding of locally free $\cO_{\cK}$-modules with finite cokernel, and where, writing $\mr{Gr}^{\tt Z}_{-i}={\tt Z}_{-i}/{\tt Z}_{-i-1}$,  $\varphi_{-2}:\mr{Gr}^{\tt Z}_{-2}\stackrel{\sim}{\ra}\Hom\left({\tt X}\otimes\hat{\bZ},\hat{\bZ}(1)\right)$ and $\varphi_0:\mr{Gr}^{\tt Z}_0\stackrel{\sim}{\ra}{\tt Y}\otimes\hat{\bZ}$ are isomorphisms such that $\left<v,w\right>=\varphi_0(v)\Big(\phi\big(\varphi_{-2}(w)\big)\Big)$. 
\item $\delta:\mr{Gr}^{\tt Z}=\mr{Gr}^{\tt Z}_{-2}\oplus\mr{Gr}^{\tt Z}_{-1}\oplus\mr{Gr}^{\tt Z}_{0}\stackrel{\sim}{\ra} L\otimes\hat{\bZ}$ is an $\cO_\cK\otimes\hat{\bZ}$-equivariant splitting.
\end{itemize}

Following \cite[Def.~3.2.3.1]{Kuga}, an \emph{ordinary cusp label} of tame level $K_f^p$ is a $K^p_fP_{\tt D}(\bZ_p)$-orbit of triples $({\tt Z},\Phi,\delta)$ as above compatible with ${\tt D}$, in the sense that 
\begin{align*}
   {\tt Z}_{-2}\otimes_{\hat{\bZ}}\bZ_p\subset {\tt D}\subset {\tt Z}_{-1}\otimes_{\hat{\bZ}}\bZ_p.
\end{align*}

There is a unique cusp label for which ${\tt Z}_{-2}=0$, and in our case, all other cusp labels have ${\tt Z}_{-2}$ with rank $1$. As explained in \cite[B.2, B.3, B.11]{HLTT}, the latter are parametrized by a certain double coset space. To recall this, we have the filtration 
\begin{equation}\label{eq:L-fil}
   X^\vee\subset X^\vee\oplus L_0\subset L=X^\vee\oplus L_0\oplus Y.
\end{equation}
Let $\glsuseri{P}\subset G$ be the parabolic subgroup preserving this filtration, and $\glsuserii{P}\subset P$ be the kernel of the natural projection $P\ra\GL(Y)$. Define the triple $\gls{clabel1}$ as
\begin{itemize}[leftmargin=2.5em]
\item{} ${\tt Z}^{(1)}$: ${\tt Z}_{-2}=X^\vee\otimes\hat{\bZ}$, ${\tt Z}_{-1}=(X^\vee\oplus L_0)\otimes\hat{\bZ}$.
\item{} $\Phi^{(1)}=({\tt X}^{(1)},{\tt Y}^{(1)},\phi^{(1)},\varphi^{(1)}_{-2},\varphi^{(1)}_0)$ with ${\tt X}^{(1)}=\Hom_{\cO_\cK}\left(X^\vee,\cO_\cK(1)\right)$, ${\tt Y}^{(1)}=Y$, $\phi^{(1)}:Y\ra\Hom_{\cO_\cK}\left(\fd^{-1}_{\cK/\bQ}\cdot{\tt x}_1,\cO_\cK(1)\right)$ induced by our fixed basis of ${\tt x}_1$ and ${\tt y}_1$ of $X$ and $Y$ and the pairing $2\pi i\cdot \Tr_{\cK/\bQ}\circ\left<\,,\,\right>_L$, $\varphi^{(1)}_{-2}=2\pi\sqrt{-1}\cdot\id$ and $\varphi^{(1)}_{0}=\id$.
\item{} $\delta^{(1)}=\id$.
\end{itemize}
Denote by  ${\tt V}\colon 0\subset {\tt V}_{-2}\subset {\tt V}_{-1}\subset L\otimes\bQ$ the filtration obtained as \eqref{eq:L-fil} tensored with $\bQ$. For every $\clabel\in G(\bA_{\bQ,f})$ define $\gls{clabelg}$ by
\begin{itemize}[leftmargin=2.5em]
\item{} ${\tt Z}^{(\clabel)}_j=\left(\clabel^{-1}({\tt V}_j\otimes\bA_{\bQ,f})\right)\cap\left(L\otimes\hat{\bZ}\right)$.
\item{} $\Phi^{(\clabel)}=({\tt X}^{(\clabel)},{\tt Y}^{(\clabel)},\phi^{(\clabel)},\varphi^{(\clabel)}_{-2},\varphi^{(\clabel)}_0)$ with $({\tt X}^{(\clabel)},{\tt Y}^{(\clabel)},\phi^{(\clabel)})=({\tt X}^{(1)},{\tt Y}^{(1)},\phi^{(1)})$, $\varphi^{(\clabel)}_{-2}$ (resp. $\varphi^{(\clabel)}_{0}$) is the composition of $\varphi^{(1)}_{-2}$ (resp. $\varphi^{(1)}_{0}$) with $\clabel:\mr{Gr}^{{\tt Z}^{(\clabel)}}_{-2}\ra \mr{Gr}^{\tt Z}_{-2}$ (resp.  $\clabel:\mr{Gr}^{{\tt Z}^{(\clabel)}}_{0}\ra \mr{Gr}^{\tt Z}_{0}$).
\item{} $\delta^{(\clabel)}$ is $\mr{Gr}^{{\tt Z}^{(\clabel)}}\stackrel{\mr{Gr}(\clabel)}{\ra}\mr{Gr}^{\tt Z^{(1)}}\stackrel{\delta^{(1)}}{\ra} L\otimes\hat{\bZ}\stackrel{\clabel^{-1}}{\ra}L\otimes\hat{\bZ}$.
\end{itemize}
Then the map that sends $\clabel$ to (the cusp label represented by) $({\tt Z}^{(\clabel)},\Phi^{(\clabel)},\delta^{(\clabel)})$ sets up a bijection between the set of cusp labels with ${\tt Z}_{-2}$ rank $1$ and the double coset space
\begin{align*}
   \gls{C(K)}&=\GL(X\otimes\bQ)\ltimes P'(\bA^p_{\bQ,f})\backslash G(\bA^p_{\bQ,f})/K^p_f.
\end{align*}
(See \cite[Prop.~A.5.9]{App}.)

Similarly, the ordinary cusp labels with ${\tt Z}_{-2}$ of rank $1$ are parameterized by
\begin{equation}\label{eq:ordC_1}
  \gls{C(K)ord}=\im\left( G(\bA^p_{\bQ,f}) P_{\tt D}(\bQ_p)\lra\GL(X\otimes\bQ)\ltimes P'(\bA_{\bQ,f})\backslash G(\bA_{\bQ,f})/K^p_fK^1_{p,n}\right),
\end{equation}
which is in a natural bijection with
\begin{equation}\label{eq:ordC-2} 
\begin{aligned}
    &\left.\left(\GL(X_{(p)})\ltimes (P'(\bA^p_{\bQ,f})\times P'(\bZ_p))\right)\middle\backslash G(\bA^p_{\bQ,f})\times P_{\tt D}(\bZ_p)\right/K^p_f(K^1_{p,n}\cap P_{\tt D}(\bZ_p))\\
    \cong &
    \,\,C(K^p_f)\times \left(U_{\cK,K^p_f}\ltimes P'(\bZ_p)\backslash P_{\tt D}(\bZ_p)/K^1_{p,n}\cap P_{\tt D}(\bZ_p)\right),
\end{aligned}
\end{equation}
where $U_{\cK,K^p_f}=\left\{a\in\cO^\times_\cK:\begin{psm}a\\&\bid_2\\ &&\ltrans{\bar{a}}^{-1}\end{psm}\in K^p_f \right\}$.
%
%
By abuse of notation, we shall denote by $\clabel$ both elements in $G(\bA^p_{\bQ,f})P_{\tt D}(\bQ_p)$ and in its quotient $C(K^p_fK^1_{p,n})_\ord$. The context should eliminate any possible confusions.

\subsection{The formal completion along boundary strata}\label{sec:bd-strata} 
Let $\glsuseri{Zg}$ be the stratum associated to $\clabel\in C(K^p_fK^1_{p,n})_\ord$ in the partial toroidal compactification $\Ig^\tor_n$. Then we have
\[ 
   \Ig^{\tor}_n=\sT_{n}\sqcup\bigsqcup\limits_{\clabel\in C(K^p_fK^1_{p,n})_\ord} \sZ_{\clabel,n}.
\]
(The choice of a polyhedral cone decomposition is unique in our special case $\GU(3,1)$ because with $\Big({\bf S}_{\Phi^{(\clabel)}_{K^p_fK^1_{p,n}}}\Big)$ defined as below and ${\bf P}_{\Phi^{(\clabel)}_{K^p_fK^1_{p,n}}}$ the subset of $\Hom_{\bZ}\Big({\bf S}_{\Phi^{(\clabel)}_{K^p_fK^1_{p,n}}},\bZ\Big)\otimes\bR$ consisting of positive semi-definite Hermitian forms, we have  ${\bf P}_{\Phi^{(\clabel)}_{K^p_fK^1_{p,n}}}=\bR_{\geq 0}$.) Let $\gls{Igzmin}$ be the partial minimal compactification of $\Ig_n$ (constructed in \cite[Theorem~6.2.1.1]{Kuga}). Denoting by $\glsuserii{Zg}$ the stratum associated to $\clabel\in C(K^p_fK^1_{p,n})_\ord$ in $\Ig^{\min}_n$, we similarly have
\[
   \Ig^{\min}_n=\sT_{n}\sqcup\bigsqcup\limits_{\clabel\in C(K^p_fK^1_{p,n})_\ord} \sY_{\clabel,n}.
\] 
The stratum $\sY_{\clabel,n}$ can be identified with the $0$-dimensional Shimura variety $\gls{sSG'}$, where
\begin{align}
   \label{eq:K'} \gls{K'fgn}&= \im \left(P(\bA_{\bQ,f})\cap \clabel K_f \clabel^{-1}\ra G'(\bA^p_{\bQ,f})\right).
\end{align} 

We have the following diagram:
\begin{equation}\label{eq:Xi}
\xymatrix{
  \glsuseri{Xi}\, \ar@{^{(}->}[r] \ar[d]&    \glsuserii{Xi} \ar[ld]\\
\sC^\ord_{\clabel,n}  \ar[d]_-{h}\\
\sS_{G',K'_{f,\clabel,n}},
}
\end{equation}
where $\gls{Cgn}\ra \sS_{G',K'_{f,\clabel,n}}$ is a torsor of an abelian scheme quasi-isogenous to $\underline{\Hom}_{\cO_\cK}({\tt X}^{(\clabel)},\cA)$ with $\cA$ the universal abelian scheme over $\sS_{G',K'_{f,\clabel,n}}$; $\Xi^\ord_{\clabel,n}\ra \sC^\ord_{\clabel,n}$ is a torsor of the torus with character group ${\bf S}_{\Phi^{(\clabel)}_{K^p_fK^1_{p,n}}}$, which is a finite index subgroup of the free quotient of
\[
   \left.\left(\frac{1}{N} {\tt Y}^{(\clabel)}\otimes_{\bZ} {\tt X}^{(\clabel)}\right)\middle/
   \begin{pmatrix}
     y\otimes\phi^{(\clabel)}(y')-y'\otimes\phi^{(\clabel)}(y)\\
     (b\frac{1}{N}y)\otimes x-(\frac{1}{N}y\otimes (b^cx)
   \end{pmatrix}_{y,y'\in {\tt Y}^{(\clabel)},x\in {\tt X}^{(\clabel)},b\in\cO_\cK}\right.
\]
where $N$ is sufficiently large with respect to the level $K^p_fK^1_{p,n}$; and  $\Xi^\ord_{\clabel,n}\hra \ol{\Xi}^\ord_{\clabel,n}$ is the torus embedding with respect to the unique cone decomposition.

Denote by $\gls{Xgn}$ the formal completion of $\Ig^\tor_n$ along the stratum $\sZ_{\clabel,n}$ . Then $\fX_{\clabel,n}$ is canonically isomorphic to the quotient by $\Gamma_\clabel$ of the formal completion of $\ol{\Xi}^\ord_{\clabel,n}$ along the boundary $\ol{\Xi}^\ord_{\clabel,n}-\Xi^\ord_{\clabel,n}$. Here $\Gamma_\clabel$ is the (finite, in our case) subgroup of $\GL({\tt X}^{(\clabel)})\times \GL({\tt Y}^{(\clabel)})$ preserving $\varphi^{(\clabel)}_{-2}$ and $\varphi^{(\clabel)}_{0}$ (as orbits).


\subsection{The subset $C(K^p_fK^1_{p,n})^\flat_\ord$ of $C(K^p_fK^1_{p,n})_\ord$ and the subspace $\pV^\flat$ of $\pV$}\label{sec:Cflat}

By a generic point, denoted by \glsuseri{etapt}, of the formal completion $\fX_{\clabel,n}$, we mean a generic point of $\Spec(\Gamma(U,\cO_U))$ with $U$ an affine open subscheme of $\fX_{\clabel,n}$. Let $(\cG,\lambda,i,\alpha^p,\alpha_p)$ be the pullback to $\Spec(\Gamma(U,\cO_U))$ of the family of semi-abelian varieties over $\Ig^\tor_n$, which is a degenerating family obtained by Mumford's construction. Let
\[ 
   0\lra\gls{cT}\lra \gls{Gray}\lra\cA\ra 0
\]
be the Raynaud extension of $\cG$. Then, for any positive integer $N$, $\cT[N]$ (resp. $\cG^\natural[N]$) is canonically isomorphic to a subgroup $\glsuseri{cGmu}\subset \cG[N]$ (resp. $\glsuserii{cGmu} \subset \cG[N]$), and $(\cG[N])^\mu\subset (\cG[N])^\rmf$. The quotient $\glsuseriii{cGmu}=(\cG[N])^\rmf/(\cG[N])^\mu$ is canonically isomorphic to $\cA[N]$. (See also \cite[\S3.4.2]{Kuga}.)

Let $\glsuserii{etapt}$ be a geometric point over $\eta$. The ordinary level structure $\alpha_{p,\ol{\eta}}$ of $\cG_{\ol{\eta}}$ gives rise to a filtration 
\[
{\tt F}^+_{\bar{\eta}}\colon 0\subset {\tt F}^+_{2,\bar{\eta}}\subset {\tt F}^+_{3,\ol{\eta}}\subset \cG_{\bar{\eta}}[p^n]^{\mult +}
\] 
(refer to text around \eqref{eq:F}). We define a subset $C(K^p_fK^1_{p,n})^\flat_\ord\subset C(K^p_fK^1_{p,n})_\ord$ in terms of the relative positions of ${\tt F}^+_{2,\ol{\eta}}$ and $(\cG_{\bar{\eta}}[p])^\mu$.

\begin{defn}
Define the subset $C(K^p_fK^1_{p,n})^\flat_\ord$ of $C(K^p_fK^1_{p,n})_\ord$ consisting of $\clabel$ such that 
\[  
  {\tt F}^+_{2,\bar{\eta}}\cap (\cG_{\bar{\eta}}[p])^\mu=0
\] 
for a geometric point $\bar{\eta}$ over a generic point $\eta$ of $\fX_{\clabel,n}$. 
\end{defn}

One can check that the preceding definition does not depend on the choice of $\bar{\eta}$.

\begin{prop}
We have
\begin{equation}\label{eq:flatC-1}
    C(K^p_fK^1_{p,n})^\flat_\ord= \im\left( G(\bA^p_{\bQ,f}) P^\flat_{\tt D}(\bQ_p)\lra\GL(X\otimes\bQ)\ltimes P'(\bA_{\bQ,f})\backslash G(\bA_{\bQ,f})/K^p_fK^1_{p,n}\right),
\end{equation}
which is in a natural bijection with
\begin{equation}\label{eq:flatC-2}
   C(K^p_f)\times \left(U_{\cK,K^p_f}\ltimes P'(\bZ_p)\backslash P^\flat_{\tt D}(\bZ_p)/K^1_{p,n}\cap P_{\tt D}(\bZ_p)\right),
\end{equation}
where
\begin{align*}
   \gls{PflatD}&=\left\{g\in P_{\tt D}(\bQ_p)\,\middle|\, g^+=\begin{psm}a_{11}&a_{12}&a_{13}&a_{14}\\ a_{21}&a_{22}&a_{23}&a_{24}\\ a_{31}&a_{32}&a_{33}&a_{34}\\ &&&a_{44}\end{psm} \text{ with }\begin{psm}a_{21}&a_{22}\\a_{31}&a_{32}\end{psm}^{-1}\begin{psm}a_{23}\\a_{33}\end{psm}\in \bZ^2_p \right\}
\end{align*}
and
\begin{align*}
   P^{\flat}_{\tt D}(\bZ_p)&=\left\{\clabel_p\in P_{\tt D}(\bZ_p)\,\middle|\, \clabel^+_p=\begin{psm}a_{11}&a_{12}&a_{13}&a_{14}\\ a_{21}&a_{22}&a_{23}&a_{24}\\ a_{31}&a_{32}&a_{33}&a_{34}\\ &&&a_{44}\end{psm} \text{ with }\begin{psm}a_{21}&a_{22}\\a_{31}&a_{32}\end{psm}\in \GL_2(\bZ_p) \right\}\\
  &=P'(\bZ_p) \begin{psm}&&1\\1\\&1\\&&&1\end{psm}\left(K^0_{p,n} \cap P_{\tt D}(\bZ_p)\right).
\end{align*}
\end{prop}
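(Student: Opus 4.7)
The plan is to translate the geometric definition of $C(K^p_fK^1_{p,n})^\flat_\ord$ into an explicit matrix condition at $p$ on a representative $\clabel_p\in P_{\tt D}(\bZ_p)$, and then to identify the resulting subgroup of $P_{\tt D}(\bZ_p)$ with $P^\flat_{\tt D}(\bZ_p)$ as described.

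The geometric heart of the argument is the following identification at the cusp. For a generic point $\eta$ of $\fX_{\clabel,n}$, the degenerating semi-abelian family $\cG$ has a Raynaud extension $0\to\cT\to\cG^\natural\to\cA\to 0$ with $(\cG[p^n])^\mu=\cT[p^n]$. By the Lan--Kuga construction of the ordinary locus at the boundary in \cite[Thm.~5.2.1.1]{Kuga}, combined with the compatibility ${\tt Z}^{(\clabel)}_{-2}\otimes\bZ_p\subset{\tt D}^0$ imposed in the definition of ordinary cusp label, the extended ordinary level structure $\alpha_p\colon\mu_{p^n}\otimes{\tt D}^0\xrightarrow{\sim}\cG[p^n]^\mult$ carries $\mu_{p^n}\otimes({\tt Z}^{(\clabel)}_{-2}\otimes\bZ_p)$ isomorphically onto $(\cG[p^n])^\mu$. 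Restricting to the $+$-parts under $\varrho_\fp$, the condition ${\tt F}^+_{2,\bar\eta}\cap(\cG_{\bar\eta}[p])^\mu=0$ becomes the linear-algebra condition
\[
\bigl(\bF_p\cdot\bar{\tt x}^+_1\oplus\bF_p\cdot\bar{\tt w}^+_1\bigr)\cap\bigl(({\tt Z}^{(\clabel)}_{-2}\otimes\bZ_p)^+\otimes\bF_p\bigr)=0
\]
inside the rank-$3$ space $({\tt D}^0)^+\otimes\bF_p$; equivalently, the rank-one sublattice $({\tt Z}^{(\clabel)}_{-2}\otimes\bZ_p)^+$ must have a $p$-adic unit component along ${\tt w}^+_2$.

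For the matrix translation, pick $\clabel_p\in P_{\tt D}(\bZ_p)$; since $\clabel_p^+\in\GL_4(\bZ_p)$, we have $({\tt Z}^{(\clabel)}_{-2}\otimes\bZ_p)^+=(\clabel_p^+)^{-1}(X^{\vee,+}\otimes\bQ_p)\cap L^+=\bZ_p\cdot(\clabel_p^+)^{-1}\vec{e}_1$, the first column of $(\clabel_p^+)^{-1}$. Writing $\clabel_p^+=\begin{psm}A' & *\\ 0 & a_{44}\end{psm}$ with $A'=(a_{ij})_{1\le i,j\le 3}\in\GL_3(\bZ_p)$, Cramer's rule gives that the ${\tt w}^+_2$-coordinate of this column equals $\det\begin{psm}a_{21}&a_{22}\\ a_{31}&a_{32}\end{psm}/\det(A')$. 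Since $\det(A')\in\bZ_p^\times$, the flat condition is equivalent to $\begin{psm}a_{21}&a_{22}\\ a_{31}&a_{32}\end{psm}\in\GL_2(\bZ_p)$, i.e., $\clabel_p\in P^\flat_{\tt D}(\bZ_p)$. Combined with the parametrization \eqref{eq:ordC-2} of ordinary cusp labels, this establishes the bijection with \eqref{eq:flatC-2}.

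The $\bQ_p$-description \eqref{eq:flatC-1} then follows by checking that the $P'(\bQ_p)$-orbit of $P^\flat_{\tt D}(\bZ_p)$ (modulo the right action of $K^1_{p,n}\cap P_{\tt D}(\bZ_p)$) equals $P^\flat_{\tt D}(\bQ_p)$: the defining integrality condition $\begin{psm}a_{21}&a_{22}\\ a_{31}&a_{32}\end{psm}^{-1}\begin{psm}a_{23}\\ a_{33}\end{psm}\in\bZ_p^2$ is precisely what is needed in order to clear the third column of $g\in P^\flat_{\tt D}(\bQ_p)$ into integrality by a suitable left multiplication by an element of $P'(\bQ_p)$. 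Finally, the explicit decomposition $P^\flat_{\tt D}(\bZ_p)=P'(\bZ_p)\sigma\bigl(K^0_{p,n}\cap P_{\tt D}(\bZ_p)\bigr)$, with $\sigma=\begin{psm}&&1\\1\\&1\\&&&1\end{psm}$, is a direct Iwahori-style verification: $\sigma\in P^\flat_{\tt D}(\bZ_p)$ since its $(2,1),(2,2),(3,1),(3,2)$-block is $\bid_2$, and any element of $P^\flat_{\tt D}(\bZ_p)$ admits such a factorization via standard row/column operations on the upper $3\times 3$ block.

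The main obstacle is the geometric identification at the cusp, namely matching $(\cG[p^n])^\mu$ with $\alpha_p\bigl(\mu_{p^n}\otimes({\tt Z}^{(\clabel)}_{-2}\otimes\bZ_p)\bigr)$. This requires carefully unpacking the Lan--Kuga extension of the ordinary level structure across the toroidal boundary of the Igusa tower and its interaction with the torus part of the Raynaud extension. Once this is granted, the remaining steps reduce to the explicit matrix computations above via the isomorphism \eqref{eq:Gisom}.
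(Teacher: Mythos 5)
Your proposal is correct and follows essentially the same route as the paper: the paper likewise identifies $\cT_{\bar\eta}[p]$ with the image under $\alpha_{p,\bar\eta}$ of the first column of $(\clabel_p^{-1})^+$ (its formula \eqref{eq:Tspan}), translates the condition ${\tt F}^+_{2,\bar\eta}\cap(\cG_{\bar\eta}[p])^\mu=0$ into $c_{31}\in\bZ_p^\times$, and observes this is equivalent to $\begin{psm}a_{21}&a_{22}\\a_{31}&a_{32}\end{psm}\in\GL_2(\bZ_p)$ — exactly your Cramer's-rule computation, since $c_{31}=\det\begin{psm}a_{21}&a_{22}\\a_{31}&a_{32}\end{psm}/\det(A')$. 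The remaining identifications between \eqref{eq:flatC-1} and \eqref{eq:flatC-2} are treated as routine in the paper as well.
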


\begin{proof}
Given $\clabel_p\in P_{\tt D}(\bZ_p)$ with
\begin{align*}
   (\clabel^{-1}_p)^+&=\begin{psm}c_{11}&c_{12}&c_{13}&\ast\\c_{21}&c_{22}&c_{23}&\ast\\ c_{31}&c_{32}&c_{33}&\ast\\&&&\ast\end{psm}, 
   &(\clabel^{-1}_p)^-&=\begin{psm}d_{11}&\ast&\ast&\ast\\ &\ast&\ast&\ast\\&\ast&\ast&\ast\\&\ast&\ast&\ast\end{psm},
\end{align*}
the corresponding ${\tt F}_{2,\ol{\eta}}$ is spanned by the image of $x^+_1,w^+_1$ under $\alpha_{p,\bar{\eta}}$, and $\cT_{\bar{\eta}}[p]$ is spanned by the image under $\alpha_{p,\ol{\eta}}$ of 
\begin{equation}\label{eq:Tspan}
   ({\tt x}^+_1,{\tt w}^+_1,{\tt w}^+_2;{\tt x}^-_1)\begin{pmatrix}c_{11}&c_{12}&c_{13}\\c_{21}&c_{22}&c_{23}\\ c_{31}&c_{32}&c_{33}\\&&&d_{11}\end{pmatrix}\begin{pmatrix}1&1\\0&0\\0&0\\0&1\end{pmatrix},
\end{equation}
Therefore, the condition ${\tt F}^+_{2,\ol{\eta}}\cap (\cG_{\bar{\eta}}[p])^\mu=0$ is equivalent to the condition that $c_{11}\xx^+_1+c_{21}\ww^+_1+c_{31}\ww^+_2$ does not belong to $\mr{Span}_{\bF_p}\{\xx^+_1,\ww^+_1\}$, which is equivalent to $c_{31}\in \bZ^\times_p$. The condition $c_{31}\in\bZ^\times_p$ for $(\clabel^{-1}_p)^+$ is equivalent to the condition $\begin{psm}a_{21}&a_{22}\\a_{31}&a_{32}\end{psm}\in \GL_2(\bZ_p)$ for $\clabel^+_p$. This shows that $C(K^p_fK^1_{p,n})^\flat_\ord$ equals \eqref{eq:flatC-2}. It is also easy to see that there is a natural bijection between \eqref{eq:flatC-1} and \eqref{eq:flatC-2}.
\end{proof}


\begin{defn} Let
\[
   \gls{Vflatnm}=\left\{f\in V_{n,m}\;\colon\; f\vert_{\sZ_{\clabel,n}}= 0\quad \text{for all $\clabel\in C(K^p_fK^1_{p,n})_\ord- C(K^p_fK^1_{p,n})^\flat_\ord$}\right\},
\]
{\it i.e.} the space of forms in $V_{n,m}$ vanishing along all the boundary strata of $\Ig_n^{\tor}$ indexed by ordinary cusp labels outside  $C(K^p_fK^1_{p,n})^\flat_\ord$, and
\[
   \gls{Vflat}=\varinjlim_m\varinjlim_n  V^\flat_{n,m}.
\] 
\end{defn}

\subsection{The exact sequence for $V^\flat_{n,m}$} 
When $\clabel\in C(K^p_fK^1_{p,n})^\flat_\ord$, the level $K'_{f,\clabel,n}$ is independent of $n$, and we denote it by $K'_{f,\clabel}$ with
\begin{equation}\label{eq:K''}
\begin{aligned}
   K^{\prime p}_{f,g}&= \im \left(P(\bA^p_{\bQ,f})\cap \clabel K^p_f\clabel^{-1}\ra G'(\bA^p_{\bQ,f})\right),\\
   \gls{K'pg}&=\left\{h\in G'(\bZ_p):h^+\equiv \begin{pmatrix}\ast &\ast\\ &\ast\end{pmatrix}\mod p\right\}.
\end{aligned} 
\end{equation}
Let $\gls{M00tor}$ be the space of classical automorphic forms on $G'$ valued in $\bZ/p^m\bZ$ of weight $(0,0)$ and level $K^{\prime }_{f,\clabel}$ (which are functions valued in $\bZ/p^m\bZ$ on the finite set $G'(\bQ)\backslash G'(\bA_{\bQ,f})/K'_{f,\clabel}$), and  $M_{(0,0)}(K'_{f,\clabel};\bQ_p/\bZ_p)=\varinjlim\limits_mM_{(0,0)}(K'_{f,\clabel};\bZ/p^m\bZ)$.

\begin{prop}\label{prop:exV}
We have the exact sequences
\begin{equation}\label{eq:exVnm}
   0\lra V^0_{n,m} \lra V^\flat_{n,m}\stackrel{\oplus\Phi_{\clabel}}{\lra} \bigoplus_{\clabel\in C(K^p_f)}  M_{(0,0)}(K'_{f,\clabel};\bZ/p^m\bZ)\otimes\bZ_p\llb T_\so(\bZ_p)/U_{\cK,K^p_f}\rrb \lra 0
\end{equation}
and
\begin{equation}\label{eq:exV}
   0\lra \pV^0 \lra \pV^\flat\stackrel{\oplus\Phi_{\clabel}}{\lra} \bigoplus_{\clabel\in C(K^p_f)}  M_{(0,0)}(K'_{f,\clabel};\bQ_p/\bZ_p)\otimes\bZ_p\llb T_\so(\bZ_p)/U_{\cK,K^p_f}\rrb \lra 0,
\end{equation}
where $\gls{Phig}$ is obtained by restriction to the stratum $\sZ_{\clabel,n}$ (whose global sections are the same as the global sections of $\sY_{\clabel,n}$, which can be identified with the $0$-dimensional Shimura variety $\sS_{G',K'_{f,\clabel}}$), and is sometimes called a Siegel operator.
\end{prop}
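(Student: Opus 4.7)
The plan is to interpret $\Phi_\clabel$ geometrically as restriction to the boundary stratum $\sZ_{\clabel,n}$, and to deduce the sequence from a short exact sequence of ideal sheaves on $\Ig^\tor_{n,m}$. Write
\[
Z^\flat := \bigsqcup\limits_{\clabel \in C(K^p_fK^1_{p,n})^\flat_\ord} \sZ_{\clabel,n}, \qquad Z^{\mr{nf}} := \bigsqcup\limits_{\clabel \in C(K^p_fK^1_{p,n})_\ord\setminus C(K^p_fK^1_{p,n})^\flat_\ord} \sZ_{\clabel,n};
\]
these are disjoint closed subschemes whose union is the whole boundary $\Ig^\tor_{n,m}\setminus\Ig_{n,m}$. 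By construction $V^\flat_{n,m} = H^0(\Ig^\tor_{n,m},\cI_{Z^{\mr{nf}}})$ while $V^0_{n,m} = H^0(\Ig^\tor_{n,m},\cI_{\Ig^\tor_{n,m}}) = H^0(\Ig^\tor_{n,m},\cI_{Z^\flat}\cdot\cI_{Z^{\mr{nf}}})$, so the inclusion $V^0_{n,m}\hra V^\flat_{n,m}$ and the identification of $V^0_{n,m}$ with the kernel of $\oplus_\clabel\Phi_\clabel$ are tautological.

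For the codomain, I use that in our setup ${\bf P}_{\Phi^{(\clabel)}} = \bR_{\geq 0}$ admits a unique polyhedral cone decomposition, so the complement $\ol{\Xi}^\ord_{\clabel,n}\setminus\Xi^\ord_{\clabel,n}$ is the zero section of the torus torsor and $\sZ_{\clabel,n} \cong \Gamma_\clabel\backslash\sC^\ord_{\clabel,n}$. Since $h\colon \sC^\ord_{\clabel,n}\to \sS_{G',K'_{f,\clabel,n}}$ is a torsor for an abelian scheme with geometrically connected proper fibres, $h_*\cO = \cO$, hence
\[
H^0(\sZ_{\clabel,n},\cO) = H^0(\sS_{G',K'_{f,\clabel,n}},\cO) = M_{(0,0)}(K'_{f,\clabel,n};\bZ/p^m\bZ).
\]
For flat cusp labels one has $K'_{f,\clabel,n} = K'_{f,\clabel}$ by \eqref{eq:K''}, and \eqref{eq:flatC-2} gives a bijection $C(K^p_fK^1_{p,n})^\flat_\ord \cong C(K^p_f)\times T_{n,K^p_f}$ with $T_{n,K^p_f}$ a finite quotient of $T_\so(\bZ_p)/U_{\cK,K^p_f}$. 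Summing over $\clabel$ produces the map in the statement; passing to $\varinjlim_n$ in \eqref{eq:exV} promotes $\bZ_p[T_{n,K^p_f}]$ to the completed group ring $\bZ_p\llb T_\so(\bZ_p)/U_{\cK,K^p_f}\rrb$.

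The main obstacle is surjectivity of $\oplus_\clabel\Phi_\clabel$. Because $Z^\flat$ and $Z^{\mr{nf}}$ are disjoint closed subschemes, one has the exact sequence of coherent sheaves
\[
0\lra \cI_{\Ig^\tor_{n,m}}\lra \cI_{Z^{\mr{nf}}}\lra \iota_*\cO_{Z^\flat}\lra 0
\]
with $\iota\colon Z^\flat\hra\Ig^\tor_{n,m}$, so surjectivity reduces to the vanishing of $H^1(\Ig^\tor_{n,m},\cI_{\Ig^\tor_{n,m}})$. The plan is to invoke the PEL vanishing theorem \cite[Thm.~8.2.1]{Kuga} for the projection $\pi\colon \Ig^\tor_n\to\Ig^\min_n$ to obtain $R^i\pi_*\cI_{\Ig^\tor_n} = 0$ for $i\geq 1$, whence by the Leray spectral sequence (after reduction mod $p^m$) $H^1(\Ig^\tor_{n,m},\cI_{\Ig^\tor_{n,m}}) = H^1(\Ig^\min_n\otimes\bZ/p^m\bZ,\pi_*\cI)$. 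The latter vanishes: over $\bZ/p^m\bZ$, since $E\equiv 1\pmod p$, $E$ is a unit, so $\Ig^\min_n\otimes\bZ/p^m\bZ = \Ig^\min_n[1/E]\otimes\bZ/p^m\bZ$, which is affine because $\Ig^\min_n[1/E]$ is finite over the affine $\sS^\min_m[1/E]$. This cohomological vanishing step is the main technical input; it crucially uses the neatness of $K^p_f$, the unique-cone-decomposition feature ${\bf P}_{\Phi^{(\clabel)}} = \bR_{\geq 0}$, and the affineness of the ordinary minimal compactification mod~$p^m$. The $p$-divisible sequence \eqref{eq:exV} then follows from \eqref{eq:exVnm} by $\varinjlim_m\varinjlim_n$, the transition maps on the right interpolating the group-ring structure through \eqref{eq:flatC-2}.
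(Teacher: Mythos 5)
Your proof is correct and follows essentially the same route as the paper: identify $V^\flat_{n,m}/V^0_{n,m}$ with sections of the structure sheaf over the flat boundary strata, and then read off the $\bZ_p\llb T_\so(\bZ_p)/U_{\cK,K^p_f}\rrb$-structure from the free, transitive $T_\so(\bZ_p)$-action on $C(K^p_fK^1_{p,n})^\flat_{\ord,\clabel}$ provided by \eqref{eq:flatC-2}. The only (cosmetic) difference is that you get surjectivity from $H^1\left(\Ig^\tor_{n,m},\cI_{\Ig^\tor_{n,m}}\right)=0$ via Leray on the toroidal compactification, whereas the paper pushes forward to the affine $\Ig^{\min}_{n,m}$ and identifies the quotient sheaf $\pi_{n,*}\cO/\pi_{n,*}\cI$ directly via \cite[Lem.~8.2.2.10]{Kuga}; both steps rest on the same vanishing and affineness inputs from \emph{loc.\,cit.}
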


\begin{proof}
Let $\pi_n:\Ig^\tor_{n}\ra \Ig^{\min}_{n}$ be the natural map. By \cite[Lem.~8.2.2.10]{Kuga}, we have
\begin{align*}
\left(\pi_{n,*}\cO_{\Ig^\tor_{n,m}}/\pi_{n,*}\cI_{\Ig^\tor_{n,m}}\right)^\wedge_{\sY_{\clabel,n}}\simeq  h_*\cO_{\sC^\ord_{\clabel,n}}\otimes\bZ/p^m\bZ =  \cO_{\sS_{G',K'_{f,\clabel}}}\otimes\bZ/p^m\bZ.
\end{align*}
(Here $h$ is the map in the diagram \eqref{eq:Xi}.) Taking global sections over the affine scheme $\Ig^{\min}_{n,m}$,  we get
\begin{equation}\label{eq:V/V0}
  V_{n,m}/V^0_{n,m}=\bigoplus_{\clabel\in C(K^p_fK^1_{p,n})_\ord} M_{(0,0)}(K'_{f,\clabel,n};\bZ/p^m\bZ).
\end{equation}
By definition, an element in $V_{n,m}$ belongs to $V^\flat_{n,m}$ if and only if it vanishes along the strata associated to the cusp labels outside $C(K^p_fK^1_{p,n})^\flat_\ord$, so 
\begin{equation}\label{eq:Vquot}
   V^\flat_{n,m}/V^0_{n,m}=\bigoplus_{\clabel\in C(K^p_fK^1_{p,n})^\flat_\ord} M_{(0,0)}(K'_{f,\clabel};\bZ/p^m\bZ).
\end{equation}

The action of $T_\so(\bZ_p)$ on $V_{n,m}$ preserves $V^0_{n,m}$, so it descends to $V_{n,m}/V^0_{n,m}$. This action permutes the direct summands of the right-hand side of \eqref{eq:V/V0}. More precisely, the corresponding action of $(a_1,a_2)\in T_\so(\bZ_p)$ on the index set
\[  
    C(K^p_fK^1_{p,n})_\ord=C(K^p_f)\times\left(U_{\cK,K^p_f}\ltimes P'(\bZ_p)\backslash P_{\tt D}(\bZ_p)/K^1_p\cap P_{\tt D}(\bZ_p)\right)
\]
sends $\clabel=\clabel^p\clabel_p$ to $\clabel^p\clabel_p t_{a_1,a_2}$ where $t_{a_1,a_2}\in G(\bZ_p)$ with $t^+_{a_1,a_2}=\begin{psm}1\\&1\\&&a_1\\&&&a^{-1}_2\end{psm}$. It is easy to see that $T_\so(\bZ_p)$-action on $C(K^p_fK^1_{p,n})_\ord$ preserves $C(K^p_fK^1_{p,n})^\flat_\ord$, so the $T_\so(\bZ_p)$ on $V_{n,m}$ preserves $V^\flat_{n,m}$ and induces an action on $V^\flat_{n,m}/V^0_{n,m}$. 

For given $\clabel\in C(K^p_f)$, set
\begin{equation}
\begin{aligned}
   C(K^p_f K^1_{p,n})_{\ord,\clabel}&=\{\text{elements in $C(K^p_f K^1_{p,n})_\ord$ whose projection to $C(K^p_f)$ is $\clabel$}\},\\
   C(K^p_f K^1_{p,n})^\flat_{\ord,\clabel}&=\{\text{elements in $C(K^p_f K^1_{p,n})^\flat_{\ord}$ whose projection to $C(K^p_f)$ is $\clabel$}\}.
\end{aligned}
\end{equation}
The $\bZ_p\llb T_\so(\bZ_p)\rrb$-module structures of $V_{n,m}/V^0_{n,m}$ (resp. $V^\flat_{n,m}/V^0_{n,m}$) are determined by the  $\bZ_p\llb T_\so(\bZ_p)\rrb$-actions on  $C(K^p_fK^1_{p,n})_{\ord,\clabel}$ (resp.  $C(K^p_fK^1_{p,n})^\flat_{\ord,\clabel}$). The $T_\so(\bZ_p)$-action on $C(K^p_fK^1_{p,n})_{\ord,\clabel}$ has too many orbits, and the number of orbits grows with $n$, so $V_{n,m}/V^0_{n,m}$ is not very nice as a $\bZ_p\llb T_\so(\bZ_p)\rrb$-module. In contrast, the $T_\so(\bZ_p)$-action on $C(K^p_fK^1_{p,n})^\flat_{\ord,\clabel}$ is transitive, factoring through $\bZ_p\llb T_\so(\bZ_p/p^n\bZ)/U_{\cK,K^p_f}\rrb$, and the action of this quotient is free. Therefore, by fixing an element in $P^\flat_{\tt D}(\bZ_p)$, which we will always choose to be $\begin{psm}&&1\\1\\&1\\&&&1\end{psm}$, we get the isomorphism
\begin{equation}\label{eq:w-isom}
   \text{RHS of (\ref{eq:Vquot})}\cong M_{(0,0)}(K'_{f,\clabel};\bZ/p^m\bZ)\otimes\bZ_p\llb T_\so(\bZ_p)/U_{\cK,K^p_f}\rrb,
\end{equation}
from which the exact sequence \eqref{eq:exVnm} follows. The exact sequence \eqref{eq:exV} follows from \eqref{eq:exVnm} by taking the direct limit.
\end{proof}

\subsection{Three propositions on the $\bU_p$-action on $\pV$ and $\pV^\flat$.}\label{sec:three-prop}
We introduce some setting for the proof of the following propositions. Given $\clabel\in C(K^p_fK^1_{p,n})_\ord$, recall that $\eta$ denotes a generic point of $\fX_{\clabel,n,m}$, and $\bar{\eta}$ denotes a geometric point over $\eta$. Let $(\cG_{\bar{\eta}},i_{\bar{\eta}},\lambda_{\bar{\eta}},\alpha^p_{\bar{\eta}},\alpha_{p,\bar{\eta}})$ be the pullback to $\bar{\eta}$ of the family of semi-abelian schemes over $\Ig^\tor_{n,m}$. Recall that certain Lagrange subgroups  $\Lsub\subset\cG_{\bar{\eta}}[p^2]$ are used in the construction of $\sC^{1\bullet}ar_{j,n,m}$ for defining $U^\bullet_{p,j}$ in \S\ref{sec:Up-def}. (Here $(\bullet,j)=(+,2),\,(+,3)$ or $(-,1)$.) Let 
\begin{equation}\label{eq:tuple}
(\cG'_{\bar{\eta}}=\cG_{\bar{\eta}}/C,i'_{\bar{\eta}},\lambda'_{\bar{\eta}},(\alpha^p_{\bar{\eta}})',\alpha'_{p,\bar{\eta}})
\end{equation} 
be the tuple defined as in the construction of $\sC^{\bullet}_{j,n,m}$. If $(e^\pm_1,e^\pm_2,e^\pm_3,e^\pm_4)$ is a basis of $\cG_{\bar{\eta}}[p^n]$ compatible with $\alpha_{p,\bar{\eta}}$, then $\Lsub$ is spanned by $(e^\pm_1,e^\pm_2,e^\pm_3,e^\pm_4)\cdot p^n \gamma^\pm_{\Lsub,p}$ with $\gamma_{\Lsub,p}$ given as in equations  \eqref{eq:gamma+2}, \eqref{eq:gamma+3}, and \eqref{eq:gamma-}. From Mumford's construction of the degenerating family over $\fX_{\clabel,n,m}$, we see that the tuple (\ref{eq:tuple}),
corresponds to a geometric point $\bar{\eta}'$ over a generic point $\eta'$ of $\fX_{\clabel',n,m}$, where $\clabel'=\clabel\cdot \gamma_{\Lsub,p}$.

\begin{prop}\label{prop:closeUp}
The space $\pV^\flat$ is preserved by the $\bU_p$-operators.
\end{prop}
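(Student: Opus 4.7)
My plan is as follows. By the description in the paragraph immediately preceding the statement, the restriction $(U^\bullet_{p,j}f)\vert_{\sZ_{\clabel,n}}$ at a generic point of $\fX_{\clabel,n,m}$ is a normalized sum over Lagrangians $\Lsub$ of values $f\vert_{\sZ_{\clabel',n}}$ at the corresponding points of $\fX_{\clabel',n,m}$, with $\clabel'=\clabel\cdot\gamma_{\Lsub,p}$. Since $f\in\pV^\flat$ vanishes on every non-flat stratum, it suffices to show that $\clabel\notin C^\flat_\ord$ forces $\clabel'\notin C^\flat_\ord$ for every admissible $\Lsub$ and every $(\bullet,j)\in\{(+,2),(+,3),(-,1)\}$.

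To verify this I would use the geometric characterization $\clabel\in C^\flat_\ord\iff{\tt F}^+_{2,\bar\eta}[p]\cap(\cG_{\bar\eta}[p])^\mu=0$ from Section~\ref{sec:Cflat}, together with the explicit generator $v^+=c_{11}p^{n-1}e^+_1+c_{21}p^{n-1}e^+_2+c_{31}p^{n-1}e^+_3$ of $\cT[p]^+$ supplied by \eqref{eq:Tspan}, so the flat condition becomes $c_{31}\in\bZ_p^\times$. For each operator I would identify $\Lsub\cap\cG[p]^{\mult+}$ from the explicit Lagrangians \eqref{eq:gamma+2}--\eqref{eq:gamma-}, push $v^+$ (or a $p^2$-torsion lift of it, when $\pi$ kills $v^+$) through $\pi\colon\cG_{\bar\eta}\to\cG_{\bar\eta}/\Lsub$, and re-express the result in the basis $e^{+\prime}_j$ defining the new ordinary level structure $\alpha'_p$ in \S\ref{sec:Up-def}. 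For $U^+_{p,3}$ one has $\Lsub\cap\cG[p]^{\mult+}=0$ and $e^{+\prime}_j=\pi(e^+_j)$, so the $e^{+\prime}_3$-coordinate of $\pi(v^+)$ is $c_{31}$. For $U^-_{p,1}$ the entire $\cG[p]^{\mult+}$ sits inside $\Lsub$, so $\pi(v^+)=0$; working instead with $u^+=c_{11}p^{n-2}e^+_1+c_{21}p^{n-2}e^+_2+c_{31}p^{n-2}e^+_3$ and using $e^{+\prime}_j=\pi(p^{-1}e^+_j)$, one again finds $\pi(u^+)$ has $e^{+\prime}_3$-coordinate $c_{31}$. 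In both cases $\clabel'\in C^\flat_\ord\iff\clabel\in C^\flat_\ord$. For $U^+_{p,2}$, the relation $\pi(e^+_3)=pe^{+\prime}_3-u_1e^{+\prime}_1-u_2e^{+\prime}_2$ (which follows from $e^{+\prime}_3=\pi(p^{-1}(u_1e^+_1+u_2e^+_2+e^+_3))$) yields
\[
\pi(v^+)=(c_{11}-c_{31}u_1)p^{n-1}e^{+\prime}_1+(c_{21}-c_{31}u_2)p^{n-1}e^{+\prime}_2,
\]
whose $e^{+\prime}_3$-coordinate vanishes identically, so $\clabel'\notin C^\flat_\ord$ unconditionally.

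The main obstacle I anticipate is ruling out the degenerate possibility $\pi(v^+)=0$ in the $U^+_{p,2}$ case, which would place $v^+$ in $\Lsub\cap\cT[p]^+$ and spoil the formula above. Inspection of the generator $w^+=p^{n-1}(u_1e^+_1+u_2e^+_2+e^+_3)$ of $\Lsub\cap\cG[p]^{\mult+}$ shows this would force $c_{11}\equiv c_{31}u_1$ and $c_{21}\equiv c_{31}u_2\pmod{p}$; for $\clabel\notin C^\flat_\ord$ (so $c_{31}\equiv 0\pmod{p}$) these reduce to $c_{11}\equiv c_{21}\equiv 0\pmod{p}$, contradicting the non-vanishing mod $p$ of the first column of $(\clabel^{-1}_p)^+$ forced by $\clabel_p\in P_{\tt D}(\bZ_p)\subset G(\bZ_p)$. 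Hence $\pi(v^+)\neq 0$ automatically and produces a nonzero element of ${\tt F}^{+\prime}_2[p]\cap\cT'[p]^+$, certifying $\clabel'\notin C^\flat_\ord$ in the remaining case.
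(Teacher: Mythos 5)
Your argument is correct and establishes exactly the implication the paper needs --- that $\clabel'=\clabel\cdot\gamma_{\Lsub,p}\in C(K^p_fK^1_{p,n})^\flat_\ord$ forces $\clabel\in C(K^p_fK^1_{p,n})^\flat_\ord$ --- but by a genuinely different route. The paper works entirely on the group side: using the parametrization \eqref{eq:flatC-1} of flat ordinary cusp labels by $P^\flat_{\tt D}(\bQ_p)$, it computes $\clabel^{\prime+}_p=\clabel^+_p\cdot\gamma^+_{C,p}$ explicitly and observes (equation \eqref{eq:a'} for $U^+_{p,2}$, and similarly for the other two operators) that the defining integrality condition $\begin{psm}a_{21}&a_{22}\\a_{31}&a_{32}\end{psm}^{-1}\begin{psm}a_{23}\\a_{33}\end{psm}\in\bZ_p^2$ for $\clabel'$ is $p^{-1}$ times an integral translate of the same quantity for $\clabel$, so it can hold only if it already holds for $\clabel$. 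You instead use the intrinsic moduli-theoretic definition of flatness, ${\tt F}^+_{2,\bar\eta}\cap(\cG_{\bar\eta}[p])^\mu=0$, and push the generator of $\cT[p]^+$ through the isogeny $\cG_{\bar\eta}\to\cG_{\bar\eta}/\Lsub$, re-expressing it in the new basis $e^{+\prime}_j$; this is closer in spirit to the computations the paper performs later for Propositions~\ref{prop:Up-equiv} and \ref{prop:Uptoflat}, and it avoids relying on the double-coset description \eqref{eq:flatC-1}, at the cost of being longer than the paper's two-line matrix identity. One caveat: your assertion that for $U^+_{p,2}$ one gets $\clabel'\notin C(K^p_fK^1_{p,n})^\flat_\ord$ ``unconditionally'' is false as stated --- when $\clabel$ is flat and $u_1,u_2$ satisfy the congruence \eqref{eq:u1u2}, one has $\pi(v^+)=0$ and $\clabel'$ is again flat, which is precisely what the paper exploits in proving Proposition~\ref{prop:Up-equiv}. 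But your final paragraph only needs (and only proves) the nonvanishing of $\pi(v^+)$ under the hypothesis $\clabel\notin C(K^p_fK^1_{p,n})^\flat_\ord$, which is the only case relevant to the proposition, so the logic of your proof is unaffected.
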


\begin{proof}
We shall show that $\clabel'=\clabel\cdot\gamma_{\Lsub,p}\in C(K^p_fK^1_{p,n})^\flat_\ord$ only if $\clabel\in C(K^p_fK^1_{p,n})^\flat_\ord$.   By the definition of the $\bU_p$-operators, this will imply that if $f\in \pV$ vanishes along the strata outside  $C(K^p_fK^1_{p,n})^\flat_\ord$, then so does $U^\bullet_{p,j}(f)$. We use the description \eqref{eq:flatC-1} for $C(K^p_fK^1_{p,n})^\flat_\ord$. Consider the case $(\bullet,j)=(+,2)$. 
Write $\clabel^+_p=\begin{psm}a_{11}&a_{12}&a_{13}&a_{14}\\ a_{21}&a_{22}&a_{23}&a_{24}\\ a_{31}&a_{32}&a_{33}&a_{34}\\ &&&a_{44}\end{psm}$ and and $\clabel^{\prime+}_p=\clabel^+_p \cdot \gamma^+_{C,p}= \begin{psm}a'_{11}&a'_{12}&a'_{13}&a'_{14}\\ a'_{21}&a'_{22}&a'_{23}&a'_{24}\\ a'_{31}&a'_{32}&a'_{33}&a'_{34}\\ &&&a'_{44}\end{psm}$ with $\gamma^+_{C,p}$ as in (\ref{eq:gamma+2}). We have
\begin{equation}\label{eq:g'}
\begin{pmatrix}a'_{11}&a'_{12}&a'_{13}&a'_{14}\\ a'_{21}&a'_{22}&a'_{23}&a'_{24}\\ a'_{31}&a'_{32}&a'_{33}&a'_{34}\\ &&&a'_{44}\end{pmatrix}
= \begin{pmatrix}a_{11}&a_{12}&\frac{a_{13}+u_1a_{11}+u_2a_{12}}{p}&\ast\\ a_{21}&a_{22}&\frac{a_{23}+u_1a_{21}+u_2a_{22}}{p}&\ast\\ a_{31}&a_{32}&\frac{a_{33}+u_1a_{31}+u_2a_{32}}{p}&\ast\\ &&&\frac{a_{44}}{p}\end{pmatrix}
\end{equation}  
and
\begin{equation}\label{eq:a'}
   \begin{pmatrix}a'_{21}&a'_{22}\\a'_{31}&a'_{32}\end{pmatrix}^{-1}\begin{pmatrix}a'_{23}\\a'_{33}\end{pmatrix} =p^{-1}\left[\begin{pmatrix}a_{21}&a_{22}\\a_{31}&a_{32}\end{pmatrix}^{-1}\begin{pmatrix}a_{23}\\a_{33}\end{pmatrix}+\begin{pmatrix}u_1\\u_2\end{pmatrix}\right].
\end{equation}   
Since $u_1,u_2\in\bZ_p$, the right hand side can be integral only if $\begin{psm}a_{21}&a_{22}\\a_{31}&a_{32}\end{psm}^{-1}\begin{psm}a_{23}\\a_{33}\end{psm}\in \bZ^2_p$. Therefore, $\clabel'=\clabel\cdot\gamma_{\Lsub,p}\in  C(K^p_fK^1_{p,n})^\flat_\ord$ only if $\clabel \in C(K^p_fK^1_{p,n})^\flat_\ord$. The argument for the other two cases is similar. \end{proof}

\begin{prop}\label{prop:Up-equiv}
There exists a positive integer $N$ such that the exact sequences in Proposition~\ref{prop:exV} are $(U^{\bullet}_{p,j})^N$-equivariant for $(\bullet,j)=(+,2),(+,3),(-,1)$ in the sense that for all $\clabel\in C(K^p_fK^1_{p,n})^\flat_\ord$ and $f\in\pV^\flat$, we have 
\[
   \Phi_{\clabel}\left((U^\bullet_{p,j})^N f\right)= \Phi_{\clabel}(f).
\] 
\end{prop}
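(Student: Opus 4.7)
The strategy is to reduce the claimed equivariance to showing that $(U^\bullet_{p,j})^N$ induces the identity on the quotient $\pV^\flat/\pV^0$, and then to analyze this induced operator boundary-stratum by boundary-stratum using the moduli-theoretic description. Since $U^\bullet_{p,j}$ preserves $\pV^0$ (by the remark after \eqref{eq:open2}) and also $\pV^\flat$ (Proposition~\ref{prop:closeUp}), it descends to an endomorphism of the quotient. Via the exact sequence \eqref{eq:exV}, the claim becomes: for some $N$ independent of $n$ and $\clabel$, $(U^\bullet_{p,j})^N$ acts as the identity on
\[
\bigoplus_{\clabel \in C(K^p_f)} M_{(0,0)}(K'_{f,\clabel};\bQ_p/\bZ_p) \otimes \bZ_p\llb T_\so(\bZ_p)/U_{\cK,K^p_f}\rrb.
\]

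To compute this operator, fix $\clabel \in C(K^p_fK^1_{p,n})^\flat_\ord$ and a generic geometric point $\bar{\eta}$ of $\fX_{\clabel,n,m}$. The moduli definition yields
\[
\Phi_{\clabel}(U^\bullet_{p,j}f) = p^{-c(\bullet,j)} \sum_{\Lsub}\Phi_{\clabel\cdot\gamma_{\Lsub,p}}(f),
\]
with $c(+,2)=c(-,1)=2$, $c(+,3)=3$, and $\Lsub$ running over the subgroups of $\cG_{\bar{\eta}}[p^2]$ parameterized by the matrices \eqref{eq:gamma+2}--\eqref{eq:gamma-}. Because $f \in \pV^\flat$, only $\Lsub$ with $\clabel'=\clabel\cdot\gamma_{\Lsub,p}\in C(K^p_fK^1_{p,n})^\flat_\ord$ contribute. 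Using the integrality criterion from the proof of Proposition~\ref{prop:closeUp} (equation \eqref{eq:a'} for $(+,2)$ and the analogous conditions for $(+,3)$ and $(-,1)$), I would determine precisely which $\Lsub$ survive; this pins down the free parameters $u_1,u_2,\ast$ to a coset of $p\bZ_p$-modules whose cardinality matches the normalization $p^{c(\bullet,j)}$.

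Next I would identify, via the bijection \eqref{eq:w-isom}, each surviving $\Phi_{\clabel'}(f)$ with an element obtained from $\Phi_\clabel(f)$ by a translation on the $T_\so(\bZ_p)/U_{\cK,K^p_f}$-coefficient (together with a Hecke shift on the $\GU(2)$-factor). The $\GU(2)$-Hecke shift at weight $(0,0)$ acts on a space of locally constant functions on a finite double coset and is readily seen to have finite order; the $T_\so$-translation is by a specific $p$-power element of $T_\so(\bZ_p)$ that one reads off from the renormalization required to return $\clabel\cdot\gamma_{\Lsub,p}\in P_{\tt D}(\bQ_p)$ to the standard form $P^\flat_{\tt D}(\bZ_p)\cdot(K^1_{p,n}\cap P_{\tt D}(\bZ_p))$. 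Its order modulo $U_{\cK,K^p_f}$ is bounded by a quantity depending only on $K^p_f$ (essentially the order of a finite cyclic piece of $\cO_\cK^\times$), hence independent of $n$. Taking $N$ to be the lcm of the three resulting orders simultaneously handles all of $(+,2),(+,3),(-,1)$.

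The main obstacle will be the bookkeeping in the third step: carefully identifying the $T_\so$-translation element produced by each surviving $\Lsub$ and verifying its order modulo $U_{\cK,K^p_f}$ is bounded uniformly in the level $n$. Once this is done, the claim follows by summing the (now uniformly bounded) contributions and invoking finiteness.
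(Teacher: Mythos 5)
Your proposal follows essentially the same route as the paper's proof: restrict to the boundary strata via the moduli description, use the $\flat$-condition to determine which Lagrangian subgroups $\Lsub$ contribute, identify the result as a fixed shift of the cusp label composed with a finite-order Hecke (diamond) operator on the $\GU(2)$-factor, and take $N$ killing both. The paper makes your ``Hecke shift'' precise as $(\left<p\right>^-)^2$, induced by $\cA_{\bar{\eta}'}=\cA_{\bar{\eta}}/\cA_{\bar{\eta}}[p^2]^-$, and the key computational point your outline should not gloss over is that $\Lsub^{\mathrm{ab}}$ is independent of the remaining free parameters in $\gamma_{\Lsub,p}$, so that the sum over surviving $\Lsub$ exactly cancels the normalization factor.
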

\begin{proof}
Take $\clabel\in C(K^p_fK^1_{p,n})^\flat_\ord$ and we use the setting described at the beginning of this subsection. In order to look at the image of $U^\bullet_{p,j}f$ under the Siegel operator $\Phi_\clabel$, we consider the abelian quotients $\cA_{\bar{\eta}}$ and $\cA_{\bar{\eta}'}$ of the Raynaud extensions
\begin{align*}
   &0\lra\cT_{\bar{\eta}}\lra \cG^\natural_{\bar{\eta}}\lra\cA_{\bar{\eta}}\lra 0,
   &0\lra\cT_{\bar{\eta}'}\lra \cG^\natural_{\bar{\eta}'}\lra\cA_{\bar{\eta}'}\lra 0.
\end{align*}
Let 
\begin{equation}\label{eq:Cs}
\begin{aligned}
  \glsuseri{Cab}&=\big(C+(\cG_{\bar{\eta}}[p^n])^\mu\big)\cap (\cG_{\bar{\eta}}[p^n])^\rmf,\\
  \glsuserii{Cab}&=C^{\rmf}/(\cG_{\bar{\eta}}[p^n])^\mu.
\end{aligned}
\end{equation}
(See the beginning of \S\ref{sec:Cflat} for the definition of $(\cG_{\bar{\eta}}[p^n])^\mu$ and $(\cG_{\bar{\eta}}[p^n])^\rmf$.) Then we have
\[
    \cA_{\bar{\eta}'}=\cA_{\bar{\eta}}/C^{\mr{ab}}.
\]
We use the description \eqref{eq:ordC-2} and \eqref{eq:flatC-2} for $C(K^p_fK^1_{p,n})_\ord$ and $C(K^p_fK^1_{p,n})^\flat_\ord$. Write $\clabel=\clabel^p\clabel_p$ with $\clabel_p\in P_{\tt D}(\bZ_p)$ and 
\begin{equation}\label{eq:cij}
\begin{aligned}
   (\clabel^{-1}_p)^+&=\begin{psm}c_{11}&c_{12}&c_{13}&\ast\\c_{21}&c_{22}&c_{23}&\ast\\ c_{31}&c_{32}&c_{33}&\ast\\&&&\ast\end{psm}, 
   &(\clabel^{-1}_p)^-&=\begin{psm}d_{11}&\ast&\ast&\ast\\ &\ast&\ast&\ast\\&\ast&\ast&\ast\\&\ast&\ast&\ast\end{psm}.
\end{aligned}
\end{equation} 
The condition $\clabel\in C(K^p_fK^1_{p,n})^\flat_\ord$ implies that  $c^{-1}_{31}c_{11},c^{-1}_{31}c_{21}\in \bZ_p$. Thus we see that $(\cG_{\ol{\eta}}[p^n])^\mu$ is spanned by 
\begin{equation}\label{eq:Gmu}
  c^{-1}_{31}c_{11} e^+_1+c^{-1}_{31}c_{21}e^+_2+e^+_3,\quad e^-_1
\end{equation}
(\emph{cf.} \eqref{eq:Tspan}), and $(\cG_{\ol{\eta}}[p^n])^\rmf$ is spanned by
\begin{equation}\label{eq:Gmu-basis}
   e^+_1,\quad e^+_2,\quad e^+_3,\quad e^-_1,\quad 
   (e^-_2,e^-_3,e^-_4)\begin{pmatrix}-\iota_p(\bar{\zeta}_0)\\ &1\end{pmatrix}\begin{pmatrix}0&1\\c^{-1}_{31}c_{11}&c^{-1}_{31}c_{21}\\1 &0\end{pmatrix}.
\end{equation}

We first consider the operator $U^+_{p,3}$. By a direct computation, we see that for $\clabel\in C(K^p_fK^1_{p,n})^\flat_\ord$, the $p$-part of $\clabel'\in C(K^p_fK^1_{p,n})_\ord$ is
\begin{equation}\label{eq:g+p}
   \clabel^{\prime+}_p=\clabel^+_p\cdot\gamma^+_{\Lsub,p}=\begin{psm}1&&&\ast\\ &1&&\ast\\ &&1&\ast\\&&&p^{-1} \end{psm}\clabel^+_p,
\end{equation}
where $\ast\in \bQ_p$. This shows that the cusp label $\clabel'$ is independent of $\Lsub$ and belongs to $C(K^p_fK^1_{p,n})^\flat_\ord$. For given $\Lsub$ spanned by $(e^\pm_1,e^\pm_2,e^\pm_3,e^\pm_4)\cdot p^{n-2}\gamma^\pm_{\Lsub,p}$ with $\gamma_{\Lsub,p}^\pm$ as in (\ref{eq:gamma+3}),
from the definition \eqref{eq:Cs} and the basis \eqref{eq:Gmu-basis} of $(\cG_{\ol{\eta}}[p^n])^\rmf$, we see that  $C^\rmf$ is spanned by
\[
   c^{-1}_{31}c_{11} e^+_1+c^{-1}_{31}c_{21}e^+_2+e^+_3,\quad e^-_1, \quad
   (e^-_2,e^-_3,e^-_4)\cdot p^{n-2}\begin{pmatrix}-\iota_p(\bar{\zeta}_0)\\ &1\end{pmatrix}\begin{pmatrix}0&1\\c^{-1}_{31}c_{11}&c^{-1}_{31}c_{21}\\1 &0\end{pmatrix},
\]
its quotient $\Lsub^{\mr{ab}}$ is independent of the choice of $\Lsub$, and is spanned by $(\epsilon^-_1,\epsilon^-_2)\cdot p^{n-2}$, with $(\epsilon^-_1,\epsilon^-_2)$ the image of $(e^-_2,e^-_3,e^-_4)\begin{pmatrix}-\iota_p(\bar{\zeta}_0)\\ &1\end{pmatrix}\begin{psm}0&1\\c^{-1}_{31}c_{11}&c^{-1}_{31}c_{21}\\1 &0\end{psm}$ under the quotient map $\cG^\natural_{\bar{\eta}}\ra\cA_{\bar{\eta}}$ which is a basis of $\cA_{\bar{\eta}}[p^n]^-$. (Here we identify $(\cG_{\bar{\eta}}[p^n])^\rmf$ with $\cG^\natural_{\bar{\eta}}[p^n]$ by the canonical isomorphism between them.) Thus we get
\begin{equation}  \label{eq:Ap2-}
   \cA_{\bar{\eta}'}=\cA_{\bar{\eta}}/\cA_{\bar{\eta}}[p^2]^-.
\end{equation}
Denote by $\left<p\right>^-$ the operator on $M_{(0,0)}(K'_{f,\clabel};\bZ/p^m\bZ)$ induced by taking the quotient by $\cA[p^2]^-$. Combining \eqref{eq:g+p} and \eqref{eq:Ap2-}, we see that the trace  cancels the normalizer in \eqref{eq:open} due to the independence of the choice of $\Lsub$, and 
\[
    \Phi_\clabel(U^+_{p,3}f) =(\left<p\right>^-)^2 \Phi_{\clabel'}(f).
\]
With $K^p_f$ fixed, there exists $N_1\geq 1$, such that $\begin{psm} 1&&&\ast\\&1&&\ast\\&&1&\ast\\&&&p^{-N_1}\end{psm}_p\clabel$ and $\clabel$ represent the same element in the double coset $C(K^p_fK^1_{p,n})^\flat_\ord$, and $(\left<p\right>^-)^{2N_1}=1$ on $M_{(0,0)}(K'_{f,\clabel};\bZ/p^m\bZ)$. For such $N_1$, we deduce that 
\[
   \Phi_\clabel\left((U^+_{p,3})^{N_1}f\right) = \Phi_{g}(f).
\]
for all $f\in \pV^\flat$ and $g\in C(K^p_fK^1_{p,n})^\flat_\ord$. A similar calculation shows that that there exists $N_2\geq 1$ such that
\[
\Phi_\clabel\left((U^-_{p,1})^{N_2}f\right) = \Phi_{\clabel}(f).
\]
for all $f\in \pV^\flat$ and $\clabel\in C(K^p_fK^1_{p,n})^\flat_\ord$. 

It remains to consider  $U^+_{p,2}$. 
Write $\clabel^+_p=\begin{psm}a_{11}&a_{12}&a_{13}&\ast\\a_{21}&a_{22}&a_{32}&\ast\\ a_{31}&a_{32}&a_{33}&\ast\\ &&&\ast\end{psm}$ and $\clabel^{\prime+}_p=\clabel^+_p \cdot \gamma^+_{C,p}= \begin{psm}a'_{11}&a'_{12}&a'_{13}&a'_{14}\\ a'_{21}&a'_{22}&a'_{23}&a'_{24}\\ a'_{31}&a'_{32}&a'_{33}&a'_{34}\\ &&&a'_{44}\end{psm}$ with $\gamma_{C,p}^+$ as in (\ref{eq:gamma+2}). Then we have \eqref{eq:g'} and \eqref{eq:a'}, and
$\clabel'\in C(K^p_fK^1_{p,n})^\flat_\ord$ if and only if
\begin{equation}\label{eq:u1u2}
   \begin{pmatrix}u_1\\u_2\end{pmatrix}\equiv -\begin{pmatrix}a_{21}&a_{22}\\a_{31}&a_{32}\end{pmatrix}^{-1}\begin{pmatrix}a_{23}\\a_{33}\end{pmatrix}\mod p.
\end{equation}
Now suppose $\clabel'\in C(K^p_fK^1_{p,n})^\flat_\ord$, {\it i.e.} the entries $u_1,u_2$ in $\gamma^\pm_{C,p}$ (see (\ref{eq:gamma+2})) satisfy the congruence \eqref{eq:u1u2}. Then we have
$
    \clabel^{\prime+}_p\in\begin{psm}p^{-1}&\ast&\ast&\ast\\&1&&\ast\\&&1&\ast\\&&&p^{-1}\end{psm}\clabel^+_p K^1_{p,n},
$
and as an element in the double coset $C(K^p_fK^1_{p,n})^\flat_\ord$, and $\clabel'$ represents the same element as $\begin{psm}p^{-1}&&&\\&1&&\\&&1&\\&&&p^{-1}\end{psm}\clabel$ in $C(K^p_fK^1_{p,n})^\flat_\ord$. The congruence \eqref{eq:u1u2} implies that the $c_{ij}$'s in \eqref{eq:cij} satisfies
\begin{equation}\label{eq:u1u2'}
\begin{aligned}
   u_1&\equiv -c^{-1}_{31}c_{11} \mod p, &u_2&\equiv -c^{-1}_{31}c_{21} \mod p.
\end{aligned}
\end{equation}
From the definition ~\eqref{eq:Cs} and the relation \eqref{eq:u1u2'}, by using the basis \eqref{eq:Gmu-basis} of $(\cG_{\ol{\eta}}[p^n])^\rmf$ and  the basis of $\Lsub$ in \eqref{eq:gamma+2}, we see that  $\Lsub^\rmf$ is spanned by
\[
   c^{-1}_{31}c_{11} e^+_1+c^{-1}_{31}c_{21}e^+_2+e^+_3,\quad e^-_1, \quad
   (e^-_2,e^-_3,e^-_4)\cdot p^{n-2}\begin{pmatrix}-\iota_p(\bar{\zeta}_0)^{-1}\\ &1\end{pmatrix}\begin{pmatrix}0&1\\c^{-1}_{31}c_{11}&c^{-1}_{31}c_{21}\\1 &0\end{pmatrix}.
\]
Like in the case of $U^+_{j,3}$, we can deduce that  $C^{\mr{ab}}=\cA_{\bar{\eta}}[p^n]^-$, and 
\[
\cA_{\bar{\eta}'}=\cA_{\bar{\eta}}/\cA_{\bar{\eta}}[p^2]^-.
\]
The independence of $\Lsub^{\mr{ab}}$ on the $\ast$'s in $\gamma^+_{\Lsub,p}$ cancels the normalizer in \eqref{eq:open}, and we get
\[
    \Phi_\clabel(U^+_{p,2}f) =(\left<p\right>^-)^2 \Phi_{\clabel'}(f).
\]
For fixed $K^p_f$, take $N_3$ such that
$
\begin{psm}p^{-1}&&&\\&1&&\\&&1&\\&&&p^{-1}\end{psm}^{N_3}_p\clabel
$ 
and $\clabel$ represent the same element in the double coset $C(K^p_fK^1_{p,n})^\flat_\ord$ and $(\left<p\right>^-)^{2N_3}=1$ on $M_{(0,0)}(K'_{f,\clabel};\bZ/p^m\bZ)$. Then 
\[
   \Phi_\clabel\left((U^+_{p,3})^{N_3}f\right) = \Phi_{\clabel}(f).
\]
for all $f\in \pV^\flat$ and $\clabel\in C(K^p_fK^1_{p,n})^\flat_\ord$. By taking $N=N_1N_2N_3$, we conclude the proof.
\end{proof}

\begin{prop}\label{prop:Uptoflat}
$(U^+_{p,2})^mf\in V^\flat_{n,m}$ for all $f\in V_{n,m}$.
\end{prop}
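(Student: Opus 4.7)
The plan is to prove the stronger statement that $U^+_{p,2}$ maps $V_{n,m}$ into $V^\flat_{n,m} + p\cdot V_{n,m}$. Iterating this containment $m$ times and using that $V_{n,m}$ is annihilated by $p^m$ then yields
\[
(U^+_{p,2})^m f \;\in\; V^\flat_{n,m} + p^m\cdot V_{n,m} \;=\; V^\flat_{n,m},
\]
which is the assertion.

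To establish the containment it suffices to show, for every $f\in V_{n,m}$ and every bad cusp label $\clabel\in C(K^p_fK^1_{p,n})_\ord\setminus C(K^p_fK^1_{p,n})^\flat_\ord$, that the restriction $(U^+_{p,2}f)|_{\sZ_{\clabel,n,m}}$ is divisible by $p$ in the structure sheaf $\cO_{\sZ_{\clabel,n,m}}$. I would verify this by working in a formal neighborhood of $\sZ_\clabel$: pass to a generic geometric point $\bar\eta$ of $\fX_{\clabel,n,m}$ as in \S\ref{sec:three-prop}, pull back the universal semi-abelian scheme $\cG_{\bar\eta}$ together with its Raynaud extension $0\to\cT_{\bar\eta}\to\cG^\natural_{\bar\eta}\to\cA_{\bar\eta}\to 0$ and the filtration ${\tt F}^+_{\bar\eta}$ coming from the ordinary level structure, and analyze the Lagrange subgroups $C\subset\cG_{\bar\eta}[p^2]$ appearing in the definition of $\sC^{1+}_{2,n,m}$, as parametrized by \eqref{eq:gamma+2}.

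The key local claim is that for $\clabel\notin C^\flat$, the badness condition ${\tt F}^+_{2,\bar\eta}\cap(\cG_{\bar\eta}[p])^\mu\neq 0$ changes the inseparability structure of the trace map $p_1\colon\sC^{1+}_{2,n,m}\to\Ig_{n,m}$ over $\sZ_\clabel$ by a factor of $p$ relative to the generic case: via Mumford's construction, the intersection pattern between every admissible $C$ (i.e., satisfying $\cG_{\bar\eta}[p]=C[p]\oplus{\tt F}^+_{2,\bar\eta}[p]$) and $(\cG_{\bar\eta}[p^2])^\mu$ is forced to be non-generic, so the Tate parameters of $\cG_{\bar\eta}/C$ in the direction of $\sZ_\clabel$ pick up an extra $p$-th power. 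Since the normalization $p^{-2}$ in \eqref{eq:open} is chosen to be exactly the inverse of the generic pure inseparability degree of $p_1$, this extra factor of $p$ in the trace along the preimage of $\sZ_\clabel$ manifests as the claimed divisibility of $(U^+_{p,2}f)|_{\sZ_{\clabel,n,m}}$ by $p$.

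The hard part will be the local geometric computation: rigorously tracking, from the explicit parametrization \eqref{eq:gamma+2} and the Raynaud-extension data, how the intersection $C\cap(\cG_{\bar\eta}[p^2])^\mu$ and the Tate parameters of $\cG_{\bar\eta}/C$ are modified by the badness of $\clabel$. This is a refinement of the matrix computation \eqref{eq:a'} used in the proof of Proposition~\ref{prop:closeUp} and of the Raynaud-extension analysis appearing in the proof of Proposition~\ref{prop:Up-equiv}; once it is in place, the passage from the one-step $p$-divisibility to $(U^+_{p,2})^m f\in V^\flat_{n,m}$ is immediate from the iteration argument above.
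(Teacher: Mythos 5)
Your global skeleton coincides with the paper's: show that a single application of $U^+_{p,2}$ produces a factor of $p$ on the restriction to every non-flat boundary stratum, then iterate $m$ times and use that coefficients are in $\bZ/p^m\bZ$. Your iteration is correct (note that passing from $V^\flat_{n,m}+pV_{n,m}$ to $V^\flat_{n,m}+p^2V_{n,m}$ under a second application of $U^+_{p,2}$ also requires Proposition~\ref{prop:closeUp}, i.e.\ that $U^+_{p,2}$ preserves $V^\flat_{n,m}$, which you should cite), and your reduction to the $p$-divisibility of $\Phi_\clabel(U^+_{p,2}f)$ for bad $\clabel$ is exactly the statement \eqref{eq:p-div} that the paper proves, using the surjectivity of $V_{n,m}\ra\bigoplus_\clabel M_{(0,0)}(K'_{f,\clabel};\bZ/p^m\bZ)$ from \eqref{eq:V/V0} to lift.

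The gap is in the key local claim, which is the entire content of the proposition, which you defer as ``the hard part,'' and for which the mechanism you sketch would not produce the required divisibility. The Siegel operator $\Phi_\clabel$ factors through the abelian part $\cA_{\bar{\eta}'}=\cA_{\bar{\eta}}/\Lsub^{\mr{ab}}$ of the Raynaud extension of $\cG_{\bar{\eta}}/\Lsub$ and is insensitive to the Tate parameter; a rescaling of the Tate parameter by a $p$-th power only affects the nonzero Fourier--Jacobi coefficients, never the constant term along $\sZ_{\clabel,n}$. Likewise, the pure inseparability degree of $p_1$ is what fixes the global normalization $p^{-2}$ in \eqref{eq:open}; it is not what changes over the bad strata. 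The actual source of the factor $p$ is a collapsing of the trace sum at the level of abelian quotients: writing $(\clabel_p^{-1})^+=(c_{ij})$, badness of $\clabel$ means $c_{31}\in p\bZ_p$ with $c_{11}$ or $c_{21}$ a unit, and the explicit computation of $\Lsub^{\rmf}=\big(\Lsub+(\cG_{\bar{\eta}}[p^n])^\mu\big)\cap(\cG_{\bar{\eta}}[p^n])^{\rmf}$ shows that $\Lsub^{\mr{ab}}$ depends on the parameters $(u_1,u_2,\ast,\ast)$ of \eqref{eq:gamma+2} only through the single combination $u_2'=c_{11}u_2-c_{21}u_1$ (see \eqref{eq:Cabv}). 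Hence the $p^4$ Lagrange subgroups in the fiber of $p_1$ yield only $p$ distinct abelian quotients $\cA_{\bar{\eta}'}$, each occurring $p^3$ times: the $p^2$-fold redundancy in the $\ast$'s cancels the normalization $p^{-2}$, and the extra $p$-fold redundancy in the $u_1$-direction is precisely the divisibility you need. Over a flat stratum (Proposition~\ref{prop:Up-equiv}) the redundancy is only $p^2$ and no extra factor appears, which is why the dichotomy tracks flatness. Your proposal neither contains this computation nor points toward it; as written, the ``extra $p$-th power in the Tate parameter'' argument would prove nothing about the constant term.
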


\begin{proof}
We shall show that for all $\clabel\in C(K^p_fK^1_{p,n})_\ord- C(K^p_fK^1_{p,n})^\flat_\ord$ and $f\in V_{n,m}$, 
\begin{equation}\label{eq:p-div}
\Phi_\clabel\left(U^+_{p,2}f\right)\in pM_{(0,0)}(K'_{f,\clabel};\bZ/p^m\bZ);
\end{equation}
it is easy to see that the proposition follows from this. For a given $\clabel\in C(K^p_fK^1_{p,n})_\ord- C(K^p_fK^1_{p,n})^\flat_\ord$, we use the setting described at the beginning of this subsection. As in the proof of Proposition~\ref{prop:Up-equiv}, we consider $\cA_{\bar{\eta}'}$, the abelian quotient of the Raynaud extension associated to $\cG_{\bar{\eta}'}=\cG_{\bar{\eta}}/C$. We have 
\[
    \cA_{\bar{\eta}'}=\cA_{\bar{\eta}}/C^{\mr{ab}},
\]
where $\cA_{\bar{\eta}}$ is the abelian quotient of the Raynaud extension associated to $\cG_{\bar{\eta}}$, and $\Lsub^{\mr{ab}}$ is defined in \eqref{eq:Cs}. Since the $\bU_p$-operator considered here is $U^+_{p,2}$, the Lagrangian subgroup $\Lsub\subset\cG_{\bar{\eta}}[p^2]$ is spanned by $(e^\pm_1,e^\pm_2,e^\pm_3,e^\pm_4)\cdot p^n \gamma^\pm_{\Lsub,p}$ with $\gamma_{\Lsub,p}^\pm$  as in \eqref{eq:gamma+2}. We use the description \eqref{eq:ordC-2} and \eqref{eq:flatC-2} for $C(K^p_fK^1_{p,n})_\ord$ and $C(K^p_fK^1_{p,n})^\flat_\ord$.  Write $(\clabel^{-1}_p)^+=\begin{psm}c_{11}&c_{12}&c_{13}&\ast\\c_{21}&c_{22}&c_{23}&\ast\\ c_{31}&c_{32}&c_{33}&\ast\\&&&\ast\end{psm}$, $(\clabel^{-1}_p)^-=\begin{psm}d_{11}&\ast&\ast&\ast\\ &\ast&\ast&\ast\\&\ast&\ast&\ast\\&\ast&\ast&\ast\end{psm}$ (which are elements in $\GL_4(\bZ_p)$). The condition $\clabel\notin C(K^p_fK^1_{p,n})^\flat_\ord$ implies that $c_{31}\in p\bZ_p$ and $c_{11}$ or $c_{21}$ belongs to $\bZ^\times_p$. 

Suppose $c_{11}\in \bZ^\times_p$. The group $(\cG_{\bar{\eta}}[p^n])^\mu$ is spanned by 
\begin{equation}\label{eq:Gmu'}
  c_{11} e^+_1+c_{21}e^+_2+c_{31}e^+_3,\quad e^-_1,
\end{equation}
and $(\cG_{\ol{\eta}}[p^n])^\rmf$ is spanned by
\begin{equation}\label{eq:Gmu-basis'}
   e^+_1,\quad e^+_2,\quad e^+_3,\quad e^-_1,\quad 
   (e^-_2,e^-_3,e^-_4)\begin{pmatrix}-\iota^c_p(\zeta_0)^{-1}\\ &1\end{pmatrix}\begin{pmatrix}c_{11}&0\\0&c_{11}\\-c_{21}&c_{31} \end{pmatrix}.
\end{equation}
By the definition of $\Lsub^\rmf$ in \eqref{eq:Cs}, a direct computation shows that $\Lsub^\rmf$ is spanned by $(\cG_{\bar{\eta}}[p^n])^\mu$ plus
\begin{equation} \label{eq:Cabv}
   \left((c_{11}u_2-c_{21}u_1)e^+_2+c_{11} e^+_3\right)\cdot p^{n-1},
   \quad  (e^-_2,e^-_3,e^-_4)\begin{pmatrix}-\iota^c_p(\zeta_0)^{-1}\\ &1\end{pmatrix}\begin{pmatrix}c_{11}&0\\0&c_{11}\\-c_{21}&c_{31} \end{pmatrix} \begin{pmatrix}c_{11}&0\\c_{11}u_2-c_{21}u_1&p\end{pmatrix}.
\end{equation}
Hence, $\Lsub^{\mr{ab}}$ is spanned by the image of \eqref{eq:Cabv} modulo $(\cG_{\bar{\eta}}[p^n])^\mu$.
Let $u'_2=c_{11}u_2-c_{21}u_1$.  The trace in \eqref{eq:open} corresponds to a sum with $u_1,u'_2$ and the two $\ast$'s in $\gamma_{\Lsub,p}$ varying in $\bZ/p\bZ$. Since \eqref{eq:Cabv} depends only on $u'_2$, we see  that $C^{\mr{ab}}$, and hence $\cA_{\bar{\eta}'}$ depends only on $u'_2$. Its independence of the two $\ast$'s implies that the sum over the two $\ast$'s cancels the normalization factor in \eqref{eq:open}. Its independence of $u_1$ implies that the sum over $u_1$ contributes a factor $p$. Therefore, the evaluation of $\Phi_\clabel\left(U^+_{p,2}f\right)$ at $\bar{\eta}'$ is divisible by $p$. This shows the inclusion (\ref{eq:p-div}), hence the result in this case.  
The case $c_{21}\in\bZ^\times_p$ can be similarly treated.
\end{proof}

\subsection{The ordinary projection on $\pV$ and the fundamental exact sequence}
In this section, we use the exact sequence in Proposition~\ref{prop:exV} and the three propositions proved in \S\ref{sec:three-prop} to show that $\lim\limits_{r\ra\infty}(U_p)^{r!}f$ converges for all $f\in\pV$,  that $\pV^{\ast}_\so=\Hom_{\bZ_p}\left(e_\so\pV,\bQ_p/\bZ_p\right)$ is a free $\Lambda_\so$-module of finite rank, and deduce the fundamental exact sequence in part (4) of Theorem~\ref{prop:main}.

\begin{thm}[Vertical Control Theorem]
Let $U_p=U^+_{p,2}U^+_{p,3}U^-_{p,1}$. Then for each $f\in\pV$, $\lim\limits_{r\to\infty}(U_p)^{r!}f$ converges, and we can define the semi-ordinary projector as $e_\so=\lim\limits_{r\to\infty}(U_p)^{r!}$. The $\bZ_p\llb T_{\so}(\bZ_p)\rrb$-module 
$
\pV^{\ast}_\so=\Hom_{\bZ_p}\left(e_\so\pV,\bQ_p/\bZ_p\right)
$
is free of finite rank over $\Lambda_\so$.
\end{thm}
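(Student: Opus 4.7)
The plan is to deduce the theorem from its cuspidal counterpart (Theorem~\ref{thm:cuspidal}) by means of the auxiliary space $\pV^\flat$ and the three propositions of \S\ref{sec:three-prop}. First I would establish the existence of the semi-ordinary projector $e_\so$ on all of $\pV$. By Proposition~\ref{prop:Uptoflat}, the operator $(U^+_{p,2})^m$ sends $V_{n,m}$ into $V^\flat_{n,m}$, so it suffices to prove convergence of $(U_p)^{r!}$ on $\pV^\flat$, which is $U_p$-stable by Proposition~\ref{prop:closeUp}. Setting
\[
  Q := \bigoplus_{\clabel \in C(K^p_f)} M_{(0,0)}(K'_{f,\clabel};\bQ_p/\bZ_p) \otimes_{\bZ_p} \bZ_p\llb T_\so(\bZ_p)/U_{\cK,K^p_f} \rrb,
\]
the exact sequence
\[
  0 \lra \pV^0 \lra \pV^\flat \stackrel{\oplus \Phi_\clabel}{\lra} Q \lra 0
\]
of Proposition~\ref{prop:exV} is $U_p$-equivariant, and Proposition~\ref{prop:Up-equiv} asserts that each $U^\bullet_{p,j}$, hence $U_p$, acts on $Q$ with finite order dividing a uniform integer $N$, so $(U_p)^{r!}$ is the identity on $Q$ for $r \ge N$. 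Combined with the convergence of $(U_p)^{r!}$ on $\pV^0$ (Proposition~\ref{prop:V0-converge}), a Cauchy-sequence argument in the discrete $p$-adic topology of $\pV^\flat$ will produce the limit on $\pV^\flat$, and hence on $\pV$.

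Next I would identify $e_\so \pV$. Since a high power of $U_p$ lands in $\pV^\flat$, we have $e_\so \pV = e_\so \pV^\flat$; applying the idempotent $e_\so$ to the above exact sequence (using that $e_\so$ acts as the identity on $Q$ by what was just shown) yields
\[
  0 \lra e_\so \pV^0 \lra e_\so \pV^\flat \lra Q \lra 0.
\]
Pontryagin $\bZ_p$-dualizing produces a short exact sequence of compact $\bZ_p\llb T_\so(\bZ_p)\rrb$-modules
\[
  0 \lra Q^{\ast} \lra \pV^{\ast}_\so \lra \pV^{0,\ast}_\so \lra 0,
\]
with $Q^{\ast}$ a direct sum of copies of $M_{(0,0)}(K'_{f,\clabel};\bZ_p) \otimes_{\bZ_p} \bZ_p\llb T_\so(\bZ_p)/U_{\cK,K^p_f}\rrb$. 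Because $T_\so(\bZ_p)/U_{\cK,K^p_f}$ is a finite extension of the pro-$p$ group $T_\so(\bZ_p)^\circ$, the ring $\bZ_p\llb T_\so(\bZ_p)/U_{\cK,K^p_f}\rrb$ is finite free over $\Lambda_\so$, so $Q^{\ast}$ is a finite free $\Lambda_\so$-module. Combined with the freeness of $\pV^{0,\ast}_\so$ (Theorem~\ref{thm:cuspidal}), the projectivity of the right-hand term splits the sequence and $\pV^{\ast}_\so$ is free of finite rank over $\Lambda_\so$. The displayed dual sequence is precisely the fundamental exact sequence of Theorem~\ref{prop:main}(4) (up to the standard identification accounting for $U_{\cK,K^p_f}$-invariance).

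The main obstacle will be the convergence step: one must translate the two ``endpoint'' statements (eventual constancy of $(U_p)^{r!}$ on $Q$ and its convergence on $\pV^0$) into an honest limit on the middle term $\pV^\flat$. Concretely, for $f \in V^\flat_{n,m}$ the differences $(U_p)^{(r+1)!}(f) - (U_p)^{r!}(f)$, which for $r \ge N$ lie in $\pV^0$, must be shown to tend to $0$ in the discrete topology; this will rest on the finite-dimensionality, at fixed level and modulo $p^m$, of the classical picture underlying Proposition~\ref{prop:V0-converge}. Once $e_\so$ is constructed on $\pV$, everything else is essentially formal from the split exact sequence of Iwasawa duals.
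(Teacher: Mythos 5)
Your proposal is correct and follows essentially the same route as the paper: reduce to $\pV^\flat$ via Proposition~\ref{prop:Uptoflat}, use Proposition~\ref{prop:Up-equiv} to write $U_p^Nf=f+f'$ with $f'$ cuspidal and invoke Proposition~\ref{prop:V0-converge} for the convergence, then dualize the exact sequence of Proposition~\ref{prop:exV} and split it using freeness of both end terms. The only cosmetic difference is that the paper packages the convergence step as a finite-generation property (F) of the $\bZ/p^m$-span of the $U_p$-orbit rather than as eventual vanishing of the consecutive differences $(U_p)^{(r+1)!}f-(U_p)^{r!}f$, but both rest on exactly the finite-dimensionality input you identify.
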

\begin{proof}
Given $f\in V_{n,m}$, we define the following finiteness 
property for $f$:
\begin{equation}
   \parbox{\dimexpr\linewidth-4.5em}{%
    \strut
    The submodule generated by $\left(U_p\right)^{r}f$, $r\geq 0$, is finitely generated over $\bZ/p^m$.
    \strut
  }\tag{F}
\end{equation}
It is easy to see that the convergence of $\lim\limits_{r\ra\infty}(U_p)^{r!}f$ follows from the property (F) for $f$. By Proposition~\ref{prop:Uptoflat}, in order to show that all $f\in V_{n,m}$ satisfy (F), it suffices to show that all $f\in V^\flat_{n,m}$ satisfy (F). Given $f\in V^\flat_{n,m}$, it follows from Proposition~\ref{prop:Up-equiv} that $f'=U^N_pf-f$ belongs to $V^0_{n,m}$. Proposition~\ref{prop:V0-converge} implies that there exists $M\geq 0$ such that 
\[
   U^{M+1}_p f'\in \mr{Span}_{\bZ}\left\{f',U_pf',U^2_pf',\dots,U^M_pf'\right\}.
\]
Then
\[
   U^{M+N+1}_p f\in \mr{Span}_{\bZ}\left\{f,U_pf,U^2_pf,\dots,U^{M+N}_pf\right\},
\]
from which it follows that for all $r\geq 0$, 
\[
   U^r_p f\in \mr{Span}_{\bZ}\left\{f,U_pf,U^2_pf,\dots,U^{M+N}_pf\right\}.
\]
Hence (F) holds for $f$. Therefore, we have proved that $\lim\limits_{r\ra\infty}(U_p)^{r!}f$ converges for all $f\in\pV$, and we can define the semi-ordinary projector $e_\so$ on $\pV$ as 
\begin{equation}\label{eq:eso-noncusp}
   e_\so=\lim\limits_{r\ra\infty}(U_p)^{r!}.
\end{equation}
By Proposition~\ref{prop:Uptoflat}, we have
\begin{equation}\label{eq:eVflat}
   e_\so\pV=e_\so\pV^\flat.
\end{equation}

Applying $e_\so$ to the Pontryagin dual of the exact sequence in Proposition~\ref{prop:exV} and using \eqref{eq:eVflat}, we obtain the exact sequence
\begin{equation}\label{eq:exactV}
   0\lra M_{(0,0)}(K'_{f,\clabel};\bZ_p)\otimes\bZ_p\llb T_\so(\bZ_p)/U_{\cK,K^p_f}\rrb \lra \pV^*_\so\lra \pV^{0,*}_\so\lra 0.
\end{equation}
We know that $M_{(0,0)}(K'_{f,\clabel};\bZ_p)$ is a free $\bZ_p$-module of finite rank, so the leftmost term in \eqref{eq:exactV} is a free $\Lambda_\so$-module of finite rank. The rightmost term  $\pV^{0,*}_\so$ is also a free $\Lambda_\so$-module of finite rank by Theorem~\ref{thm:cuspidal}. Since $\mr{Ext}^1_{\Lambda_\so}(M,N)$ vanishes for free $\Lambda_\so$-modules $M$, we deduce that $\pV^*_\so$, as a $\Lambda_\so$-module, is isomorphic to the direct sum of the terms on the two ends of \eqref{eq:exactV}, and therefore is a free $\Lambda_\so$-module of finite rank. 
\end{proof}

Part (2) of Theorem~\ref{prop:main} for non-cuspidal semi-ordianry families can be proved in the same way as the cuspidal  semi-ordianry families. The freeness of $\pV^{0,*}_\so$ implies that applying $\Hom_{\Lambda_\so}(\,\cdot\,,\Lambda_\so)$ preserves the exactness of \eqref{eq:exactV}, and part (4) of Theorem~\ref{prop:main} follows.

\subsection{The Fourier--Jacobi expansion}\label{sec:FJ-def}
We introduce the Fourier--Jacobi expansions of $p$-adic forms on $\GU(3,1)$, which will be used in \S\ref{sec:nondegFJ} for analyzing the Klingen Eisenstein family on $\GU(3,1)$ constructed in  \S\ref{sec:construct}. 

In \S\ref{sec:bd-strata}, for an ordinary cusp label $\clabel\in C(K^p_f K^1_{p,n})_\ord$, we described $\fX_{\clabel,n}$, the formal completion of $\sT^\tor_n$ along the boundary stratum $\sZ_{\clabel,n}$. From the description there, we see that 
\[
   H^0\left(\fX_{\clabel,n},\cO_{\fX_{\clabel,n}}\right)=\left(\prod_{\beta} H^0\left(\sC^\ord_{\clabel,n},\cL(\beta)\right)\right)^{\Gamma_\clabel},
\]
where $\beta$ runs over ${\bf S}^\vee_{\Phi^{(\clabel)}_{K^p_fK^1_{p,n}}}\cap {\bf P}_{\Phi^{(\clabel)}_{K^p_fK^1_{p,n}}}$, which can be identified with a subset of $\Her_1(\cK)_{\geq 0}$, and $\cL(\beta)$ is the invertible sheaf over $\sC^\ord_{\clabel,n}$ of $\beta$-homogeneous functions on $\Xi^\ord_{\clabel,n}$. Therefore, given $\clabel\in  C(K^p_f K^1_{p,n})_\ord$ and $\beta\in \Her_1(\cK)_{\geq 0}$, the restriction to $\fX_{\clabel,n}$ induces a map
\begin{equation}\label{eq:FJnm}
   V_{n,m}\lra H^0\left(\sC^\ord_{\clabel,n},\cL(\beta)\otimes\bZ/p^m\bZ\right).
\end{equation}

We consider  $\clabel=\bid_4$ and $\beta\in\Her_1(\cK)_{\geq 0}$. In this case, the $p$-component of level group for the Shimura variety $\sS_{G',K'_{f,\bid_4,n}}$  in the diagram \eqref{eq:Xi} is
\[
   K'_{p,\bid_4,n}=\left\{g\in \GU(2)(\bZ_p):g^+\equiv \begin{pmatrix}\ast&\ast\\0&1\end{pmatrix}\mod p^n\right\},
\]
(depending on $n$).
Let 
\begin{equation}\label{eq:VJ}
    \gls{VJbeta}=\varprojlim_m\varinjlim_n H^0\left(\sC^\ord_{\bid_4,n},\cL(\beta)\otimes\bZ/p^m\bZ\right).
\end{equation}
The map  gives the map of taking the $\beta$-th Fourier--Jacobi coefficient of $p$-adic forms on $\GU(3,1)$ along the boundary stratum labeled by $\bid_4$:
\begin{equation}\label{eq:FJ-V}
   \glsuseri{FJ}:V_{\GU(3,1)}\lra V^{J,\beta}_{\GU(2)}.
\end{equation}


\section{The construction of the Klingen family}\label{sec:construct}

\subsection{Some notation}\label{sec:construct-notation}
Let $\gls{Kinfty}$ be the maximal abelian pro-$p$ extension of $\cK$ unramified outside $p$ and $\gls{GammaK}=\Gal(\cK_\infty/\cK)$. Then $\Gamma_\cK\cong \bZ^2_p$. Denote by $\glsuseri{Omega}\in\bC^\times$ (resp. $\glsuserii{Omega}\in \hat{\bZ}^{\ur,\times}_p$) the complex CM period ($p$-adic CM period) with respect to the embeddings in \eqref{eq:Kembd} (cf. \cite[Section 2.8]{HsiehMC}). We also fix an isomorphism $\ol{\bQ}_p\cong \bC$ compatible with the embeddings in \eqref{eq:Kembd}. Let $\gls{L}\subset\ol{\bQ}_p$ be a sufficiently large finite extension of $\bQ_p$, and denote by $\gls{OLur}$ the ring of integers of the completion of the maximal unramified extension of $L$. 

Four our later use of theta correspondence for unitary groups, we also fix a Hecke character
\begin{align*}
   \gls{lambda}:\cK^\times\backslash\bA^\times_\cK&\lra \bC^\times, 
\end{align*} 
such that
\begin{align*}
   \lambda|_{\bA^\times_\bQ}&=\eta_{\cK/\bQ},
   &\lambda_{\infty}(z)&=\frac{z}{|z\bar{z}|^{1/2}}.
\end{align*} 

\subsection{Our setup}\label{sec:setup}

We assume the following conditions on $\gls{pi}$, an irreducible cuspidal automorphic representation of $\GL_2(\bA_\bQ)$ generated by a newform $\glsuseri{f}$ of weight $2$:
\begin{itemize}[leftmargin=2.5em]
\item for all finite places $v$ of $\bQ$, $\pi_v$ is either unramified or Steinberg or  Steinberg twisted by an unramified quadratic character of $\bQ^\times_v$,
\item $\pi_p$ is unramified,
\item there exists a prime $q$ not split in $\cK$ such that $\pi$ is ramified at $q$, and if $2$ does not split in $\cK$, then $\pi$ is ramified at $2$, 
\item $\bar{\rho}_\pi|_{\Gal(\ol{\bQ}/\cK)}$ is irreducible, (which is automatically true if $\pi$ is not ordinary at $p$ because in this case $\bar{\rho}_\pi|_{G_\cK,\fp}\cong \bar{\rho}_\pi|_{G_{\bQ,p}}$ is irreducible by \cite{Edixhoven}), where $\bar{\rho}_\pi$ denotes the residual representation of the Galois representation $\rho_\pi$.
\end{itemize}

We also fix an algebraic Hecke character $\glsuserii{xi}:\cK^\times\backslash\bA^\times_\cK\ra\bC^\times$ of $\infty$-type $\left(0,k_0\right)$ with $\gls{k0}$ an even integer.

\subsection{The weight space}\label{sec:wt}

For an algebraic Hecke character $\chi:\cK^\times\backslash\bA^\times_\cK\ra\bC^\times$ of $\infty$-type $(k_1,k_2)$, we denote by $\chi_0$ its associated unitary Hecke character, {\it i.e.}
\[
   \chi_0=\chi|\cdot|^{-\frac{k_1+k_2}{2}}_{\bA_\cK},
\]
and denote by $\chi_{p\adic}$ its associated $p$-adic character, sometimes called the \emph{$p$-adic avatar} of $\chi$, as
\begin{equation}\label{eq:chip}
\begin{aligned}
   &\chi_{p\adic}:\cK^\times\backslash \bA^\times_{\cK,f}\lra\ol{\bQ}^\times_p,
   &\chi_{p\adic}(x)=\chi(x)x^{k_1}_\fp x^{k_2}_{\bar{\fp}}.
\end{aligned}
\end{equation}
Applying this convention to $\xi$, we get $\glsuseri{xi}$ and $\glsuseriii{xi}$.

The weight space we will use for constructing the semi-ordinary Klingen Eisenstein family on $\GU(3,1)$ is $\Hom_\cont\big(\Gamma_\cK,\ol{\bQ}^\times\big)$. By identifying $\Gamma_\cK$ with a quotient of $\cK^\times\backslash\bA^\times_{\cK,f}$, the arithmetic points in the weight space are $p$-adic avatars of the algebraic  Hecke characters of $\cK^\times\backslash\bA^\times_\cK$ whose $p$-adic avatars factor through $\Gamma_\cK$. We will use $\glsuseri{tau}$ to denote such a Hecke character, and  $\glsuseriii{tau}$ to denote its $p$-adic avatar. By the above convention, $\glsuserii{tau}$ denotes the associated unitary character of $\tau$.  The interpolation points we will use are the $\tau$'s satisfying that $\xi\tau$ has $\infty$-type $(0,k)$ with $k$ an even integer $\geq 6$. (One can consider more general $\infty$-types by using Masss--Shimura differential operators. For our purpose here, only considering the case of $\infty$-type $(0,k)$ suffices.)

\subsection{The groups}\label{sec:gps}

By the assumption on $\pi$ in \S\ref{sec:setup}, we can fix a prime $\gls{q}$ such that 
\[
    \left\{\begin{array}{ll}
    \text{$q$ does not split in $\cK$ and $\pi_q$ is ramified}, &\text{if $2$ splits in $\cK$},\\
    q=2, &\text{if $2$ does not split in $\cK$}. 
    \end{array}\right.
\] 
Let $\gls{D}$ be the quaternion algebra over $\bQ$ ramified exactly at $q$ and $\infty$, and let $\pi^D$ be the Jacquet--Langlands transfer of $\pi$ to $D^\times(\bA_\bQ)$.

\vspace{.5em}

We can take a square free positive integer $\gls{fs}$, coprime to $pD_{\cK/\bQ}$ and all the primes inert in $\cK$ where $D$ splits,  such that $\mr{inv}_v(D)=(-\fs,D_{\cK/\bQ})_v$, where $(\bdot,\bdot)_v$ is the Hilbert symbol. Define $\glsuseri{zeta}=\delta\begin{pmatrix}\fs\\&1\end{pmatrix}$ with $\delta$ the totally imaginary element in $\cK$ with $\delta^2$ a $p$-adic unit as fixed in {\bf Notation}. This $\zeta_0$ is a skew--Hermitian matrix. Define the unitary group $\U(2)$ (resp. similitude unitary group $\GU(2)$) over $\bZ$ as: for all $\bZ$-algebra $R$,
\begin{align}
   \label{eq:U(2)}\U(2)(R)&=\{g\in \GL_2(R\otimes_{\bZ}\cO_\cK):g\zeta_0\ltrans{\ol{g}}=\zeta_0\}\\
   \GU(2)(R)&=\{g\in \GL_2(R\otimes_{\bZ}\cO_\cK):g\zeta_0\ltrans{\ol{g}}=\nu(g)\zeta_0,\,\nu(g)\in R^\times\}.
\end{align}

We have 
\[
    D=\{g\in M_2(\cK):g\zeta_0 \ltrans{\bar{g}}=\det(g)\zeta_0\}=\left\{\begin{pmatrix}a&-\fs\bar{b}\\b&\ol{a}\end{pmatrix}:a,b\in\cK\right\},
\]
and we view both $D^\times$ and $\GU(2)$ as subgroups of $\mr{Res}_{\cK/\bQ}\GL_2(\cK)$. Then the homomorphism 
\begin{align*}
   \cK^\times\times D^\times&\lra \GU(2)(\bQ), &(a,g)&\lra ag
\end{align*} 
induces an isomorphism
\begin{equation}\label{eq:DGU}
    \GU(2)\simeq (\mr{Res}_{\cK/\bQ}\bG_m\times D^\times)/\{(a,a^{-1}\bid_2):a\in\bG_m\}.
\end{equation}

Given an automorphic form on $\phi$ on $D^\times$ with central character $\chi_1$, by picking an extension $\chi$ of $\chi_1$ to $\cK^\times\backslash\bA^\times_\cK$, one obtains an automorphic form $\phi^{\GU(2)}$ on $\GU(2)$ by \eqref{eq:DGU}.  

We denote by $\gls{piD}$ the Jacquet--Langlands transfer of $\pi$ to $D^\times$, and by $\glsuserii{f}$ the unique automorphic form on $D^\times$ which is a newform for the action of $D^\times(\bQ_v)\simeq\GL_2(\bQ_v)$ for all $v\neq q,\infty$ and takes value $1$ at $\bid_2$. By our assumption, $\pi^D$ has trivial central character. We can extend $f^D$ to a form $\glsuseriii{f}$ via \eqref{eq:DGU} and the trivial character. We denote this form by $f^{\GU(2)}$, and denote by $\gls{piU}$ the automorphic representation of $\GU(2)(\bA_\bQ)$ generated by $f^{\GU(2)}$.

\vspace{.5em}

Define the unitary group $\U(3,1)$ (resp. similitude unitary group $\GU(3,1)$) over $\bZ$ as
\begin{align*}
    \U(3,1)(R)&=\left\{g\in \GL_4(R\otimes_{\bZ}\cO_\cK):g\begin{pmatrix}&&1\\&\zeta_0\\-1\end{pmatrix}\ltrans{\ol{g}}=\begin{pmatrix}&&1\\&\zeta_0\\-1\end{pmatrix}\right\},\\
    \GU(3,1)(R)&=\left\{g\in \GL_4(R\otimes_{\bZ}\cO_\cK):g\begin{pmatrix}&&1\\&\zeta_0\\-1\end{pmatrix}\ltrans{\ol{g}}=\nu(g)\begin{pmatrix}&&1\\&\zeta_0\\-1\end{pmatrix},\,\nu(g)\in R^\times\right\},
\end{align*}
Let 
\begin{equation}\label{eq:Kling-P}
   \gls{PU}=\left\{\begin{pmatrix}x&\ast&\ast\\ &g&\ast\\&&\nu(g)\bar{x}^{-1}\end{pmatrix}\in\GU(3,1): g\in \GU(2),\,\,x\in \mr{Res}_{\cK/\bQ}\bG_m\right\},
\end{equation}
the (standard) Klingen parabolic subgroup of $\GU(3,1)$. Its Levi subgroup is 
\[
   M_{P_{\GU{3,1)}}}\simeq \mr{Res}_{\cK/\bQ}\bG_m\times \GU(2).
\] 
We consider the Klingen Eisenstein series on $\GU(3,1)$ inducing $\pi^{\GU(2)}\boxtimes\xi_0\tau_0|\cdot|^s_{\bA^\times_\cK}$ from $P_{\GU(3,1)}$.

\vspace{.5em}
Define the unitary group $\U(1)$ over as
\[
   \U(1)(R)=\left\{a\in (R\times_\bZ\cO_\cK)^\times:a\bar{a}=1\right\}.
\]

The projection $\varrho_\fp:\cK_p=\cK\otimes\bQ_p\ra\bQ_p$ induces maps
\begin{align*}
   \U(2)(\bQ_p)&\lra \GL_2(\bQ_p), 
   &\U(3,1)(\bQ_p)&\lra \GL_4(\bQ_p),
   &\U(1)(\bQ_p)&\lra \bQ^\times_p
\end{align*}
and they are all isomorphisms. We denote the inverse maps by $\gls{varrho-1}$:
\begin{equation}\label{eq:varrho-1}
\begin{aligned}
   \varrho^{-1}_\fp:\GL_2(\bQ_p)&\overset{\cong}{\lra}\U(2)(\bQ_p),
   &\varrho^{-1}_\fp:\GL_4(\bQ_p)&\overset{\cong}{\lra}\U(3,1)(\bQ_p),
   &\varrho^{-1}_\fp:\bQ_p^\times&\overset{\cong}{\lra}\U(1)(\bQ_p).
\end{aligned}
\end{equation}

\subsection{The Klingen Eisenstein series and the doubling method}

We briefly recall the definition of Klingen Eisenstein series and Garrett's (generalized) doubling method formula which expresses a Klingen Eisenstein series on $\GU(3,1)$ as an integral involving a Siegel Eisenstein series on $\GU(3,3)$.

\subsubsection{The Klingen Eisenstein series on $\GU(3,1)$}
Let $U_{P_{\GU(3,1)}}$ be the unipotent subgroup of the Klingen parabolic subgroup $P_{\GU(3,1)}$ in \eqref{eq:Kling-P}.  For a unitary character $\xi_0\tau_0:\cK^\times\backslash\bA^\times_\cK\ra\bC^\times$ and a complex number $s$, define $I_{P_{\GU(3,1)}}(s,\xi_0\tau_0)$ as the space of smooth $K$-finite functions (where $K$ is a maximal compact subgroup of $\GU(3,1)(\bA_\bQ)$)
\[
   F(s,\xi_0\tau_0):U_{P_{\GU(3,1)}}(\bA_\bQ)M_{P_{\GU{3,1)}}}(\bQ)\backslash \GU(3,1)(\bA_\bQ)\lra \bC
\]
satisfying
\begin{enumerate}[(i)]
\item $F(s,\xi_0\tau_0)\left(\begin{pmatrix}x\\&\bid_2\\&&\ol{x}^{-1}\end{pmatrix}g\right)=\xi_0\tau_0(x)|x\bar{x}|^{s+\frac{3}{2}}_{\bA_\bQ}$ for all $x\in \bA^\times_\cK$, $g\in\GU(3,1)(\bA_\bQ)$,
\item for all $g\in\GU(3,1)(\bA_\bQ)$, the function $g_1\mapsto F(s,\xi_0\tau_0)\left(\begin{pmatrix}1\\&g_1\\&&\nu(g_1)\end{pmatrix}g\right)$ is a cuspidal automorphic form on $\GU(2)(\bA_\bQ)$.
\end{enumerate}
The Klingen Eisenstein series on $\GU(3,1)$ attached to $F(s,\xi_0\tau_0)\in I_{P_{\GU(3,1)}}(s,\xi_0\tau_0)$ is defined as
\begin{align*}
   \glsuseri{EklingF}=\sum_{\gamma\in P_{\GU(3,1)}(\bQ)\backslash \GU(3,1)(\bQ)} F(s,\xi_0\tau_0)(\gamma g).
\end{align*}

If the cuspidal automorphic form in (ii) belongs to $\pi^{\GU(2)}$, the Galois representation attached to $E^\Kling(\bdot\,;F(s,\xi_0\tau_0))|_{s=\frac{k-3}{2}}$ (where we assume that $\xi\tau$ has $\infty$-type $(0,k)$) is
\[
    \xi\tau\cdot\epsilon^{-2}_{\mr{cyc}}\circ \Nm \,\oplus\, (\xi\tau)^{-c}\cdot\epsilon_{\mr{cyc}}\circ \Nm\,\oplus\, \rho_\pi|_{\Gal(\ol{\bQ}/\cK)},
\]
where the Hecke characters of $\cK^\times\backslash\bA^\times_\cK$ are viewed as characters of $\Gal(\ol{\bQ}/\cK)$. The congruences between this Klingen Eisenstein series and cuspidal forms on $\GU(3,1)$ can be used to construct elements in the Selmer group for $\rho_\pi(\epsilon^2_{\mr{cyc}})|_{\Gal(\ol{\bQ}/\cK)}\otimes\xi^{-1}\tau^{-1}$.

\subsubsection{The Siegel Eisenstein series on $\GU(3,3)$}
Let $\GU(3,3)$ be the similitude unitary group over $\bZ$ defined by
\[
    \GU(3,3)(R)=\left\{g\in\GL_6(R\otimes_{\bZ}\cO_\cK):g\begin{pmatrix}&\bid_3\\-\bid_3\end{pmatrix}\ltrans{\bar{g}}=\nu(g)\begin{pmatrix}&\bid_3\\-\bid_3\end{pmatrix},\,\nu(g)\in R^\times\right\}.
\]
The group $\GU(3,1)\times_{\bG_m} \GU(2)=\{(g_1,g_2)\in\GU(3,1)\times\GU(2):\nu(g_1)=\nu(g_2)\}$ embeds into $H$ by
\begin{equation}\label{eq:embedding}
\begin{aligned}
   \gls{imath}:\GU(3,1)\times_{\bG_m} \GU(2)&\lra \GU(3,3)\\ 
   (g_1,g_2)&\lra \cS^{-1}\begin{pmatrix}g_1\\&g_2\end{pmatrix}\cS,
\end{aligned}
\end{equation}
where $\gls{cS}=\begin{pmatrix}1&\\&\bid_2&&-\frac{\zeta_0}{2}\\&&1\\&-\bid_2&&-\frac{\zeta_0}{2}\end{pmatrix}$.

\vspace{.5em}
Let $\gls{QU}$ be the (standard) Siegel parabolic subgroup of $\GU(3,3)$. For a $\bZ$-algebra $R$,
\[
    Q_{\GU(3,3)}(R)=\left\{\begin{pmatrix}A&B\\0&D\end{pmatrix}\in \GL_6(R\otimes_{\bZ}\cO_\cK):D=\nu\ltrans{\bar{A}}^{-1},\nu\in R^\times,\,A^{-1}B\in\Her_3(R\otimes_{\bZ}\cO_\cK)\right\}.
\]
For a place $v$ of $\bQ$, a unitary character $\xi_{0,v}\tau_{0,v}$ of $\cK^\times_v$ and a complex number $s$, the degenerate principal series $I_{Q_{\GU(3,3)},v}(s,\xi_{0,v}\tau_{0,v})$ is defined as the space consisting of smooth $\GU(3,3)(\bZ_v)$-finite functions $
   f_v(s,\xi_{0,v}\tau_{0,v}):\GU(3,3)(\bQ_v)\lra \bC$ 
satisfying
\begin{align*}
    f_v(s,\xi_{0,v}\tau_{0,v})\left(\begin{pmatrix}A&B\\0&D\end{pmatrix}g\right)=\xi_{0,v} \tau_{0,v}(\det A)|\det A D^{-1}|^{s+\frac{3}{2}}_vf(g)
\end{align*}
for all $g\in \GU(3,3)(\bQ_v),\,\begin{pmatrix}A&B\\0&D\end{pmatrix}\in Q_{\GU(3,3)}(\bQ_v)$.

\vspace{.5em}

Given a unitary character $\xi_0\tau_0:\cK^\times\backslash\bA^\times_\cK\ra\bC^\times$ and a section $f(s,\xi_0\tau_{0})=\bigotimes'_v f_v(s,\xi_{0,v}\tau_{0,v})$ inside $\bigotimes'_v I_{Q_{\GU(3,3)},v}(s,\xi_{0,v}\tau_{0,v})$, the Siegel Eisenstein series on $\GU(3,3)$ attached to $f(s,\xi_{0}\tau_0)$ is defined as
\begin{align*}
    \glsuseri{ESiegf}&=\sum_{\gamma\in Q_{\GU(3,3)}(\bQ)\backslash \GU(3,3)(\bQ)} f(s,\xi_0\tau_0)(\gamma g).
\end{align*}

\subsubsection{The (generalized) doubling method formula}

\begin{thm}[\cite{GaKl}]\label{thm:doubling}
Let $\xi_0\tau_0:\cK^\times\backslash\bA^\times_\cK\ra\bC^\times$ be a unitary character. For a section $f(s,\xi_0\tau_0)\in I_{Q_{\GU(3,3)}}(s,\xi_0\tau_0)$ and  a cuspidal automorphic form $\varphi$ on $\GU(2)(\bA_\bQ)$,
\begin{align*}
   \int_{\U(2)(\bQ)\backslash\U(2)(\bA_\bQ)}E^\Sieg\big(\imath\left(g,g_1t\right);f(s,\xi_0\tau_0)\big)\varphi\left(g_1t\right)(\xi_0\tau_0)^{-1}(\det g_1t)\,dg_1
   =E^\Kling\big(g;F(f(s,\xi_0\tau_0),\varphi)\big),
\end{align*}
where $t$ is an element of $\GU(3,1)(\bA_\bQ)$ with $\nu(t)=\nu(g)$ (and it is easy to see that the left hand side does not depend on the choice of $t$), and $F(f(s,\xi_0\tau_0),\varphi)$ is the section in $I_{P_{\GU(3,1)}}(s,\xi_0\tau_0)$ defined by
\begin{align*}
   \gls{Fvarphi}(g)&=\int_{\U(2)(\bA_\bQ)}f(s,\tau_0)(\imath(g,g_1t))\,\varphi(g_1t)(\xi_0\tau_0)^{-1}(\det g_1t)\,dg_1.
\end{align*}
\end{thm}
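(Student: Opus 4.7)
The plan is to prove this as a standard application of Garrett's doubling method (and its Piatetski-Shapiro--Rallis generalization), by unfolding the Siegel Eisenstein series on $\GU(3,3)$ along the embedding $\imath$ and carrying out an orbit analysis on the double coset space
\[
   Q_{\GU(3,3)}(\bQ)\backslash \GU(3,3)(\bQ)/\imath\big(\GU(3,1)\times_{\bG_m}\GU(2)\big)(\bQ).
\]
First, I would absolutely interchange the sum defining $E^{\Sieg}$ with the integral over $\U(2)(\bQ)\backslash\U(2)(\bA_\bQ)$ (justified for $\Re(s)$ large enough that the Eisenstein series converges, with the general case obtained by meromorphic continuation), decompose the sum according to $(Q_{\GU(3,3)}(\bQ),\,\imath(\GU(3,1)\times_{\bG_m}\GU(2))(\bQ))$-orbits, and for each orbit rewrite the contribution as an integral over a stabilizer quotient. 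The element $\cS$ defining $\imath$ is engineered so that this double coset decomposition is governed by the relative positions of a maximal isotropic subspace (the one fixed by $Q_{\GU(3,3)}$) and the doubled skew-Hermitian space $(L\oplus L_0,\left<\,,\,\right>_L\oplus(-\left<\,,\,\right>_{L_0}))$.

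Second, I would identify the unique \emph{open} (relevant) double coset. As in Garrett and Shimura, it is represented by an element $\gamma_0$ (essentially $\cS$ itself, possibly adjusted by a Weyl element) whose stabilizer in $\GU(3,1)\times_{\bG_m}\GU(2)$, under the first projection, is precisely the Klingen parabolic $P_{\GU(3,1)}$, while the second projection gives an isomorphism of that stabilizer onto $\GU(2)$ (more precisely onto the Levi $\GL_1\times\GU(2)$ diagonally embedded, matching property (i) in the definition of $I_{P_{\GU(3,1)}}(s,\xi_0\tau_0)$). Consequently, after unfolding, the contribution of this open orbit becomes
\[
   \sum_{\gamma\in P_{\GU(3,1)}(\bQ)\backslash \GU(3,1)(\bQ)}\;\int_{\U(2)(\bA_\bQ)} f(s,\xi_0\tau_0)\big(\imath(\gamma g, g_1 t)\big)\,\varphi(g_1 t)\,(\xi_0\tau_0)^{-1}(\det g_1 t)\,dg_1,
\]
where the stabilizer collapses the integration domain from $\U(2)(\bQ)\backslash\U(2)(\bA_\bQ)$ to $\U(2)(\bA_\bQ)$, and the character identity on the Levi of $P_{\GU(3,1)}$ combined with the transformation law of $f(s,\xi_0\tau_0)$ along $Q_{\GU(3,3)}$ is exactly what is needed to verify that the inner integral defines a section in $I_{P_{\GU(3,1)}}(s,\xi_0\tau_0)$; this is the $F(f(s,\xi_0\tau_0),\varphi)(g)$ of the statement. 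Independence of the choice of $t$ with $\nu(t)=\nu(g)$ follows because changing $t$ amounts to right-translating $g_1$ by an element of $\U(2)(\bA_\bQ)$, under which $dg_1$ is invariant.

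Third, I would show that every non-open orbit contributes zero by invoking the cuspidality of $\varphi$. For each negligible orbit, the stabilizer in $\U(2)$ contains the unipotent radical $N$ of a proper parabolic subgroup of $\U(2)$ (there is only one conjugacy class, namely the Borel), so the inner integral produces the constant term $\int_{N(\bQ)\backslash N(\bA_\bQ)}\varphi(ng_1 t)\,dn$, which vanishes identically because $\pi^{\GU(2)}$ is cuspidal. After discarding these terms, what remains is precisely the open orbit contribution computed in the previous step, which is the Klingen Eisenstein series $E^{\Kling}(g;F(f(s,\xi_0\tau_0),\varphi))$.

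The main obstacle is the orbit analysis of the double coset space and the determination of the stabilizer of the open orbit in a form compatible with the Klingen parabolic $P_{\GU(3,1)}$; once this matching is done, everything else (convergence, swap of sum and integral, cuspidal vanishing on negligible orbits, verification of the transformation law) is routine. In the signature $(3,1)\times(1,1)$ case at hand the orbit structure is well-known from the work of Garrett, Shimura, and Harris--Li--Skinner, so I would simply cite \cite{GaKl} (or alternatively invoke the general ``second-term identity''-free version for unitary groups) to handle this technical input.
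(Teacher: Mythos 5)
The paper itself offers no proof of this theorem: it is quoted from \cite{GaKl}, so the only meaningful comparison is with the standard argument behind that citation. Your outline (unfold $E^{\Sieg}$ over $Q_{\GU(3,3)}(\bQ)\backslash\GU(3,3)(\bQ)$, classify the double cosets under $\imath(\GU(3,1)\times_{\bG_m}\GU(2))(\bQ)$, identify the open orbit whose stabilizer projects to the Klingen parabolic, and collapse the $\U(2)(\bQ)\backslash\U(2)(\bA_\bQ)$-integral to an integral over $\U(2)(\bA_\bQ)$) is exactly that standard argument, and your treatment of the open orbit, of the transformation law producing a section of $I_{P_{\GU(3,1)}}(s,\xi_0\tau_0)$, and of the independence of $t$ is fine.

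There is, however, a concrete error in your third step. In this paper $\zeta_0$ is chosen with $\delta\zeta_0$ positive definite, so $\U(2)=\U(L_0)$ is compact at infinity and \emph{anisotropic over $\bQ$}: it has no proper rational parabolic subgroups, no Borel over $\bQ$, and no unipotent radical along which to take a constant term. The mechanism you invoke for killing the non-open orbits (``the stabilizer contains the unipotent radical of the Borel of $\U(2)$, so cuspidality of $\varphi$ makes the inner integral vanish'') therefore does not exist here. What actually happens is that the lower orbits have no rational points at all: a rational maximal isotropic subspace $U$ of $L\oplus(-L_0)$ must satisfy $U\cap L_0=0$ (as $L_0$ is anisotropic) and $\dim(U\cap L)=1$ (as $L$ has Witt index one and the projection of $U$ to $L_0$ can drop rank by at most one), and Witt's theorem then shows there is a \emph{single} $\GU(3,1)(\bQ)\times\GU(2)(\bQ)$-orbit, namely the open one. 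So the identity holds exactly, with no negligible terms to discard and with no appeal to cuspidality (which is in any case automatic for an anisotropic group). Your conclusion survives, but for a reason different from, and incompatible with, the one you gave; as written, step three would not withstand scrutiny.
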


\subsection{The auxiliary data for the Klingen family}\label{sec:aux}
Let $\glsuseri{Sigma}$ be the set of finite places of $\bQ$ containing the prime $2$, the prime $q$, the finite places $v\neq p$ where $\pi$ or $\xi$ or $\cK/\bQ$ is ramified and the primes dividing $\fs$. We denote by $\glsuserii{Sigma}$ the set of primes in $\Sigma$ split in $\cK$ and $\glsuseriii{Sigma}$ the set of primes in $\Sigma$ non-split in $\cK$. 

The Klingen Eisenstein family we will construct is not of the optimal level. It depends on some auxiliary data which are chosen such that we can prove the desired properties of its non-degenerate Fourier--Jacobi coefficients. In this subsection, we fix these auxiliary data. We first fixes places $\ell,\ell'$ and positive integers $c_v$ for $v\in\Sigma_\ns\cup\{\ell'\}$. Then we choose Hecke characters $\chi_\theta,\chi_h$ satisfying a list of conditions. Then we fix  positive integers $c_v$ for $v\in\Sigma_\rms\cup\{\ell\}$. The Siegel Eisenstein family on $\GU(3,3)$ we use for constructing the Klingen Eisenstein family depends on the choice of $\ell,\ell,c_v$, $v\in\Sigma\cup\{\ell,\ell'\}$, the automorphic form on $\GU(2)$ to pair with the restriction to $\GU(3,1)\times_{\bG_m}\GU(2)$ of the Seigel Eisenstein family is chosen from the space \eqref{eq:M-varphi} defined below which depends on $\chi_{h,v}$, $v\in\Sigma_\rms\cup\{\ell\}$. The characters $\chi_\theta,\chi_h$ will show up in our analysis of non-degenerate Fourier coefficients.

\vspace{.5em}
We fix:
\begin{enumerate}
\item[--] primes $\ell,\ell'\neq 2,p$ such that $\ell$ splits in $\cK/\bQ$, $\ell'$ is inert in $\cK/\bQ$, and $\pi_\ell,\pi_{\ell'}$ are unramified,
\item[--] for each place $v\in\Sigma_{\mr{ns}}\cup\{\ell'\}$, a positive integer \[
   \gls{cv} > \max\{\ord_v(\cond(\lambda_v),\ord_v(\cond(\pi_v)),3)\}+1.
\]
\end{enumerate}

Before choosing the auxiliary Hecke characters $\chi_\theta,\chi_h$,  we need to first introduce a Proposition on certain Schwartz functions in the Schr{\"o}dinger model of Weil representations of $\U(2,1)$. Let $V,V^-$ (resp. $W_\beta$) be the two dimensional skew-Hermitian spaces (resp. one dimensional Hermitian space) over $\cK$, and $e_1,e_2\in V\otimes W_\beta$, $e^-_1,e^-_2\in V^-\otimes W_\beta$ as in \S\ref{sec:Schro}. We have $\U(V\oplus V^-)\cong \U(2,2)$ and $\U(W_\beta)\cong \U(1)$.

\begin{prop}\label{prop:nschi}
Let $v$ be a finite place of $\bQ$ non-split in $\cK/\bQ$. Denote by $\omega_{\beta,v}(\bdot\,,\bdot)$ the Weil representation of $\U(2,2)(\bQ_v)\times \U(1)(\bQ_v)$ on the Schr{\"o}dinger model $\pzS\left(\cK_v(e_1+e^-_1)\oplus \cK_v(e_2+e^-_2)\right)$ (with respect to $\lambda_{W_\beta}=\lambda$). Let
\[
    \sT:\pzS\big(\bQ_v e_1\oplus\bQ_v e_2\oplus\bQ_v e^-_1\oplus\bQ_v e^-_2\big)\lra  \pzS\big(\cK_v(e_1+e^-_1)\oplus \cK_v(e_2+e^-_2)\big)
\] be the intertwining map between Schr{\"o}dinger models.

For $\beta=1$, there exists a character $\chi_v:\U(1)(\bQ^\times_v)\ra\bC^\times$ with 
\[
    \ord_v(\cond(\chi_v))>\max\{\ord_v(\cond(\chi_v|_{\bQ^\times_v},\ord_v(\cond(\lambda_v)),\ord_v(\cond(\pi_v)),3\},
\] 
a Schwartz function $\phi_{1,v}$ on $\bQ_v e_1\oplus\bQ_v e_2$, and $u_1,\dots,u_t\in\U(1)(\bQ_v)$, $b_1,\dots,b_t\in \bC$ such that the function
\[
   y\longmapsto \sum_{j=1}^t  b_j\int_{\bQ_v e_1\oplus\bQ_v e_2}  \sT^{-1}\left(\omega_{\beta,v}\left(\begin{pmatrix}\bid_2&q^{-c_v}\cdot\bid_2\\0&\bid_2\end{pmatrix} ,u_j\right) \Phi_{0,v}\right)(x,y)\cdot \phi_{1,v}(x)\,dx
\] 
is nonzero and belongs to the $\chi_v\lambda^2_v|_{\U(1)(\bQ_v)}$-eignespace for the action of $\U(1)(\bQ_v)$, where $\Phi_{0,v}$ is the characteristic function of $\cO_{K,v}\frac{e_1+e^-_1}{2}\oplus \cO_{\cK,v}\frac{e_2+e^-_2}{2}$.
\end{prop}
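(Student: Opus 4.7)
The plan is to construct the data via a finite-sum averaging that projects onto the required $\U(1)(\bQ_v)$-eigenline, then to secure non-vanishing through a generic choice of $\phi_{1,v}$. First I would make the Weil-representation action of $n(q^{-c_v}) := \begin{psm}\bid_2 & q^{-c_v}\bid_2 \\ 0 & \bid_2\end{psm}$ explicit: since this is a Siegel-unipotent element in $\U(2,2)(\bQ_v)$, it acts on $\pzS(\cK_v(e_1+e_1^-)\oplus\cK_v(e_2+e_2^-))$ by multiplication by a quadratic character of the form $w\mapsto \be_v(q^{-c_v}\left<w,w\right>_{W_\beta})$, up to the standard normalization. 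Pulling this back through $\sT^{-1}$ (a partial Fourier transform between the two Schr\"odinger models) produces a concrete Schwartz function
\[
\Psi_v(x,y) := \sT^{-1}\bigl(\omega_{\beta,v}(n(q^{-c_v}),1)\Phi_{0,v}\bigr)(x,y)
\]
on $\bQ_v e_1\oplus \bQ_v e_2\oplus \bQ_v e_1^-\oplus \bQ_v e_2^-$, whose $x$- and $y$-dependence I would record explicitly.

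Since $v$ is non-split in $\cK/\bQ$, the group $\U(1)(\bQ_v)$ is compact and every continuous character is of finite order; hence for any finite-order character $\psi$ of $\U(1)(\bQ_v)$, the projector onto the $\psi$-eigenspace is realized as a finite sum
\[
e_\psi\,\Psi_v = \frac{1}{[U:U']}\sum_{u \in U/U'}\overline{\psi(u)}\,\omega_{\beta,v}(1,u)\Psi_v
\]
for a sufficiently small open subgroup $U'\subset U=\U(1)(\bQ_v)$ depending on the conductor of $\psi$. Taking $\psi = \chi_v\lambda_v^2|_{\U(1)(\bQ_v)}$ and reading off the $u_j, b_j$ from this sum produces the required linear combination.

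It then remains to choose $\chi_v$ and $\phi_{1,v}$ so that the resulting function of $y$ is non-zero. The key observation is that $\Psi_v$ is itself a non-zero Schwartz function; the closed $\U(1)(\bQ_v)$-invariant subspace it generates decomposes as a Hilbert sum over the characters of $\U(1)(\bQ_v)$, and the decay of Fourier coefficients forces $e_\psi\Psi_v \neq 0$ for infinitely many $\psi$, in particular for some $\psi$ whose conductor exceeds the prescribed threshold. Fixing $\chi_v$ accordingly, so that $e_\psi\Psi_v$ is non-zero as a function on $\bQ_v e_1\oplus\bQ_v e_2\oplus\bQ_v e_1^-\oplus\bQ_v e_2^-$, I would then take $\phi_{1,v}$ to be the characteristic function of a small open neighborhood of a point $x_0$ for which $(e_\psi\Psi_v)(x_0,y_0)\neq 0$ at some $y_0$, which makes the $x$-integral non-vanishing at $y_0$ while remaining equivariant under the prescribed character.

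The main obstacle is tracking the $y$-dependence of $\Psi_v$ precisely enough to make the non-vanishing step effective; while the general principle—that highly-ramified characters detect arbitrary Schwartz vectors in a unitary representation of a compact group—is classical, converting it into the sharp conductor bound $\ord_v(\cond(\chi_v)) > \max\{\ord_v(\cond(\chi_v|_{\bQ^\times_v})),\ord_v(\cond(\lambda_v)),\ord_v(\cond(\pi_v)),3\}$ requires controlling how the unipotent twist by $n(q^{-c_v})$ entangles the $x$- and $y$-variables through $\sT^{-1}$. This controls the smallest conductor at which $e_\psi\Psi_v$ can be non-zero, and fits the bound into the shape dictated by the later global matching with the ramification of $\lambda$, $\pi$, and $\chi_v|_{\bQ^\times_v}$.
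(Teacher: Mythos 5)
The paper itself gives no argument here: its ``proof'' is a citation to \cite[Lemma 6.26]{WanU31}, so your proposal has to stand on its own. It does correctly isolate the two moving parts (projection onto an eigenline by a finite average over $\U(1)(\bQ_v)$, then non-vanishing), and the averaging step is fine: since $\U(1)(\bQ_v)$ is compact and totally disconnected, the $\psi$-isotypic projector of any smooth vector is a finite sum of translates, which produces the $u_j,b_j$. But there are two genuine gaps.

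First, your non-vanishing mechanism is wrong as stated. A Schwartz function acted on smoothly by a compact totally disconnected group is fixed by an open subgroup, so its isotypic decomposition is supported on the \emph{finitely many} characters trivial on that subgroup; there is no ``decay of Fourier coefficients'' principle forcing infinitely many components to be nonzero, and hence no soft reason a component of conductor exceeding $\max\{\ord_v(\cond(\chi_v|_{\bQ_v^\times})),\ord_v(\cond(\lambda_v)),\ord_v(\cond(\pi_v)),3\}$ survives. That a deep component appears is precisely the content of the lemma, and it comes from the explicit quadratic exponential $\be_v(\beta\, X(q^{-c_v}\bid_2)\ltrans{\bar X})$ introduced by the Siegel unipotent at depth $c_v$ (chosen in \S\ref{sec:aux} to exceed the threshold), transported through the partial Fourier transform $\sT^{-1}$: one must actually compute which $\U(1)$-characters occur with nonzero coefficient and check one of them has the required conductor, also minding that the bound is self-referential (it involves $\cond(\chi_v|_{\bQ_v^\times})$, which depends on the $\chi_v$ you produce). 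You name this as ``the main obstacle'' but supply no argument for it; it is the whole lemma.

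Second, your choice of $\phi_{1,v}$ does not deliver the eigenspace property. In the product-polarization model the $\U(1)$-action is the tensor action $\omega_{V}\otimes\omega_{V^-}$ on the $x$- and $y$-variables jointly (and $\omega_{V^-}(u)$ is a genuine integral operator, not translation by $u^{-1}$, since $u$ does not preserve $\bQ_v e_i^-\subset\cK_v e_i^-$). Projecting $\Psi_v$ onto the $\chi_v\lambda_v^2$-line for the \emph{diagonal} action and then integrating against an arbitrary bump function $\phi_{1,v}(x)$ does not place the resulting function of $y$ in a single $\omega_{V^-}$-eigenspace: for that, $\phi_{1,v}$ must itself be a $\U(1)$-eigenvector under $\omega_V$ (for the complementary character), and the non-vanishing of the $x$-integral must then be verified against this constrained $\phi_{1,v}$ rather than a freely chosen characteristic function of a small neighborhood. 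Both defects require the explicit local computation that the cited lemma of \cite{WanU31} carries out.
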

\begin{proof}
\cite[Lemma 6.26]{WanU31}.
\end{proof}

Now we fix auxiliary Hecke characters 
\begin{enumerate}
\item[--]  $\gls{chi}:\cK^\times\backslash\bA^\times_\cK\ra\bC^\times$ of $\infty$-type $(0,0)$ with $\chi_h\chi^c_\theta|_{\bA^\times_\bQ}=\triv$ satisfying the following properties:
\end{enumerate}
\begin{enumerate}[(i)]
\item $\chi_\theta,\chi_h$ are unramified away from $\Sigma\cup\{p,\ell,\ell'\}$ and $\chi_h\chi^c_\theta$ is unramified at $q$.

\item $\chi_{\theta,\fp},\chi_{h,\bar{\fp}}$ are unramified, and $\chi_{h,\fp}|_{\bZ^\times_p}=\chi^{-1}_{\theta,\bar{\fp}}|_{\bZ^\times_p}=\xi_{0,\fp}|_{\bZ^\times_p}$.

\item For $v\in\Sigma_{\ns}\cup\{\ell'\}$,
\begin{align*}
    &\chi_{\theta,v}|_{\U(1)(\bQ_v)}=\text{a $\chi_v$ as in Proposition~\ref{prop:nschi}}, \\
    &\ord_v(\cond(\chi_{h,v}\chi^c_{\theta,v})),\,\ord_v(\cond((\lambda^2_v\chi_{h,v}\chi_{\theta,v}))> \ord_v(\cond(\pi_v)),\text{ if $v\neq q$},
\end{align*}

\item For $v=\fv\ol{\fv}\in\Sigma_\rms\cup\{\ell\}$, $\chi_{\theta,\fv}$ is unramified, and 
\[
   \ord_{\bar{\fv}}(\cond (\chi_{\theta,\bar{\fv}}))\geq 2\ord_v(\cond( \pi_v))+2, 
\]
and
\begin{align*}
   \ord_{\fv}\left(\cond((\chi_{h}\chi^c_{\theta})_\fv\right)
   &=\left\{\begin{array}{ll}\ord_{\fv}\left(\cond(\chi_{\theta,{\bar{\fv}} })\right)-\ord_v(\cond(\pi_v)), &\pi_v\text{ ramified,}\\
    \ord_{\fv}\left(\cond(\chi_{\theta,{\bar{\fv}} })\right)-1, &\pi_v\text{ unramified}.\end{array}\right.
\end{align*}

\item If $q=\fq^2$ is ramified in $\cK/\bQ$, 
\[
    (\chi_{h}\chi^c_{\theta})_q(\varpi_\fq)=\chi_q(q),
\]
where $\chi_q$ is the unramified quadratic character of $\cK^\times_q$ such that $\pi_q\cong \text{Steinberg}\otimes\chi_q$, and $\varpi_\fq$ is a uniformizer of $\cK_\fq$.

\item Furthermore, the value
\begin{equation}\label{eq:L-1}
   \left(\frac{\Omega_p}{\Omega_\infty}\right)^4\pi^{-3}\cdot
   \gamma_p\left(\frac{1}{2},\pi_{p}\times \left(\lambda^2\chi_h\chi_\theta\right)_{\bar{\fp}}\right)^{-1}
   L\left(\frac{1}{2},\mr{BC}(\pi)\times\lambda^2\chi_h\chi_\theta\right)
\end{equation}
is a $p$-adic unit, and
\begin{equation}\label{eq:L2}
    L^q\left(\frac{1}{2},\mr{BC}(\pi)\times\chi_h\chi^c_\theta\right)\neq 0.
\end{equation}
For some algebraic Hecke character $\tau:\cK^\times\backslash\bA^\times_\cK\ra\bC^\times$ such that $\tau_{p\adic}$ factors through $\Gamma_\cK$ and $\xi\tau$ has $\infty$-type $(0,k)$, $k\geq 2$, the values
\begin{align}
   \label{eq:L-5}&\left(\frac{\Omega_p}{\Omega_\infty}\right)^k(2\pi i)^{1-k}\Gamma(k-1)\cdot\gamma_{\bar{\fp}}\left(\frac{k-2}{2},\chi_h\chi_\theta^c\xi_0\tau_0\right)^{-1} L^{p\infty} \left(\frac{k-2}{2},\chi_h\chi_\theta^c\xi_0\tau_0\right),\\
    \label{eq:L-6}&\left(\frac{\Omega_p}{\Omega_\infty}\right)^{k-2}(2\pi i)^{2-k}\Gamma(k-2) \cdot L_\fp\left(\frac{k-2}{2},\lambda^2\chi_h\chi_\theta\xi^c_0\tau^c_0\right) L^{p\infty} \left(\frac{k-2}{2},\lambda^2\chi_h\chi_\theta\xi^c_0\tau^c_0\right),\\
   \label{eq:L-5-q}& 1-(\chi_h\chi_\theta^c\xi_0\tau_0)_q(q)q^{-\frac{k-2}{2}},
\end{align} 
are all $p$-adic units.
\end{enumerate}

The existence of the characters $\chi_\theta,\chi_h$ satisfying the above assumptions follows from the mod $p$ nonvanishing results. We can first choose $\chi_{\theta,1},\chi_{h,1}$ satisfying the conditions (1)-(5). (The inert prime $\ell'$ is introduced to ensure the existence.) Then we can apply mod $p$ nonvanishing results \cite{HsiehRankin} (for the $L$-values in \eqref{eq:L-1}) and \cite{Hsiehnonvan} (for the $L$-values in \eqref{eq:L-5}\eqref{eq:L-6}) to choose a character $\nu$ of $\ell$-power conductor such that $\chi_\theta=\chi_{\theta,1}\nu$, $\chi_h=\chi_{h,1}\nu$ satisfy (1)-(6). (The conditions in (3) on the conductors at non-split primes and  the condition (5) implies that the local root numbers are $+1$ as required for applying \cite{HsiehRankin}. Our assumption that $\xi$ has $\infty$-type $(0,k_0)$ with $k_0$ even implies that the $L$-values in \eqref{eq:L-5}\eqref{eq:L-6}) fall into the non-residually self-dual case for which \cite[Theorem B]{Hsiehnonvan} can be applied.) (The strategy explained in \S\ref{sec:strategy} explains why we need to make such a choice the auxiliary $\chi_\theta,\chi_h$.)

\vspace{.5em}

Besides the positive integers $c_v$ for $v\in\Sigma_\ns\cup\{\ell'\}$ fixed earlier in this subsection, we also fix 
\begin{enumerate}
\item[--] for each place $v\in\Sigma_\rms\cup\{\ell\}$, a positive integer
\end{enumerate}  
\begin{align}
  \label{eq:c_v} \gls{cv}>
   \max\Big\{\ord_{\bar{\fv}}(\cond(\chi_{\theta,\bar{\fv}})),\ord_{\fv}(\cond(\xi_{0,\fv})),\ord_{\bar{\fv}}(\cond(\xi_{0,\bar{\fv}})),\ord_{\bar{\fv}}(\cond(\lambda_{\bar{\fv}}))\Big\}.
\end{align} 

\vspace{1em}
With our fixed $c_v$'s for all $v\in\Sigma\cup\{\ell,\ell'\}$, define
\begin{equation}\label{eq:K_v}
    \gls{Kv}=\Big\{g\in\GU(2)(\bZ_v):g\equiv\bid_2\mod q^{2c_v+\ord_v(\delta\fs)}_v \Big\},
\end{equation}
and the tame level group $K^p_f\subset\GU(2)(\hat{\bZ}^p)$ as
\begin{equation}\label{eq:k^p_f}
    \gls{Kpf}=\bigotimes_{v\notin\Sigma\cup\{\infty,\ell,\ell'\}}\GU(2)(\bZ_v)\bigotimes_{v\in\Sigma\cup\{\ell,\ell'\}}K_v.
\end{equation}
Given an open subgroup $K_p\subset\GU(2)(\bZ_p)$ and a ring $R$, let\begin{equation}\label{eq:MGU2}
   \gls{MU2}=\left\{\text{functions on  $\GU(2)(\bQ)\backslash\GU(2)(\bA_{\bQ})/K^p_fK_{p}\U(2)(\bR)$ valued $R$}\right\},
\end{equation}
the space of $R$-valued automorphic forms on $\GU(2)$ of weight $(0,0)$ and level $K^p_fK_p$. 

\vspace{1em}
We also define the following twist of the form $f^{\GU(2)}$:
\begin{equation}\label{eq:f-twist}
   \gls{ftwisted}=\prod_{\substack{v=\fv\bar{\fv}\in\Sigma_\rms\cup\{\ell\}}} \sum_{n\in(\bZ_v/q^{t_v}_v\bZ_v)^\times}\chi_{h,\bar{\fv}}(-n)\,R\left(\begin{pmatrix}1\\n&1\end{pmatrix}_\fv\begin{pmatrix}q^{-t_v}_v\\&1\end{pmatrix}_\fv\right)f^{\GU(2)},
\end{equation}
where $t_v=\ord_{\bar{\fv}}(\cond(\chi_{h,\bar{\fv}}))$. 
Then $f^{\GU(2)}_{\chi_h}\in M_{\GU(2)}\left(K^p_f\GU(2)(\bZ_p);\cO_L\right)$. Let 
\begin{equation}\label{eq:M-varphi'}
   L\big[\GU(2)\big(\bQ_{\Sigma_\ns\cup\{\ell'\}}\big)\big]\cdot f^{\GU(2)}_{\chi_h}
\end{equation}
denote the space generated by $f^{\GU(2)}_{\chi_h}$ and the action of $\GU(\bQ_v)$, $v\in\Sigma_\ns\cup\{\ell'\}$. 

\begin{rmk}
We will use $\varphi$ inside the space
\begin{equation}\label{eq:M-varphi}
    M_{\GU(2)}\left(K^p_f\GU(2)(\bZ_p);\cO_L\right)\cap L\big[\GU(2)\big(\bQ_{\Sigma_\ns\cup\{\ell'\}}\big)\big]\cdot f^{\GU(2)}_{\chi_h}
\end{equation} 
to pair with the Siegel Eisenstein family on $\GU(3,3)$ to get the Klingen Eisenstein family on $\GU(3,1)$. The twist at the split primes is to make the nebentypus match those of the auxiliary CM families $\bmtheta$ and $\bfh$ constructed from our chosen auxiliary characters $\chi_\theta$ and $\chi_h$. The flexibility at the non-split primes is to ensure the non-vanishing of the local triple product integrals at the non-split primes.
\end{rmk}

\subsection{The choice of the local sections for the Siegel Eisenstein series on $\GU(3,3)$}\label{sec:sec-choice}

With $c_v$'s chosen in \S\ref{sec:aux}, given an algebraic Hecke character $\tau$ such that $\xi\tau$ has $\infty$-type $(0,k)$ with $k$ an even integer $\geq 6$, we make the following choices of $f(s,\xi_0\tau_0)\in I_{Q_{\GU(3,3)}}(s,\xi_0\tau_0)$, so that after a suitable normalization, $E^\Sieg(\bdot\,;f(s,\xi_0\tau_0))\big|_{s=\frac{k-3}{2}}$ can be interpolated by a two-variable $p$-adic family when $\tau$ varies.

\subsubsection{The archimedean place} 
We choose $f_\infty(s,\xi_0\tau_0)\in I_{Q_{\GU(3,3)},\infty}(s,\tau_0)$ as
\begin{align*}
   f_{\infty}(s,\xi_0\tau_0)\left(g=\begin{pmatrix}A&B\\C&D\end{pmatrix}\right)=&\,(\det g)^k|\det(g)|_{\bC}^{\frac{s}{2}-\frac{3}{4}-\frac{k}{4}}\\
   &\times\det\left(C\begin{psm}i\\&\frac{\zeta_0}{2}\end{psm}+D\right)^{-k}\left|\det\left(C\begin{psm}i\\&\frac{\zeta_0}{2}\end{psm}+D\right)\right|_{\bC}^{-s+\frac{3}{2}+\frac{k}{2}}.
\end{align*}

\subsubsection{Unramified places.}

For  $v\notin\Sigma\cup\{p,\ell,\ell'\}$, we choose $f_v(s,\xi_0\tau_0)\in I_{Q_{\GU(3,3)},v}(s,\tau_0)$ to be the standard spherical section, {\it i.e.} the section that is invariant under the right translation of $\GU(3,3)(\bZ_v)$ and takes value $1$ at $\bid_6$.

\subsubsection{Places $v\in\Sigma\cup\{\ell,\ell'\}$}
We choose $f_v(s,\xi_0\tau_0)\in I_{Q_{\GU(3,3)},v}(s,\tau_0)$ as
\begin{equation}\label{eq:fv-ram}
\begin{aligned}
   f_v(s,\xi_0\tau_0)\left(g=\begin{pmatrix}A&B\\C&D\end{pmatrix}\right)=&\,|\nu(g)|^{s+\frac{3}{2}}_v |\det C\ltrans{\bar{C}}|^{-s-\frac{3}{2}}_v 
   (\xi_0\tau_0)_v\left(\nu(g)(\det \bar{C})^{-1}\right)\\
   &\times \mathds{1}_{\Her(3,\cO_{\cK,v})}\left(C^{-1}D+\begin{pmatrix}0&0\\0&q_v^{-c_v}\cdot \bid_2\end{pmatrix}\right),
\end{aligned}
\end{equation}
where $c_v$ is the fixed positive integer in \S\ref{sec:aux}.

\subsubsection{The place $p$}

We have the isomorphisms
\begin{align*}
   \U(3,3)(\bQ_p)&\lra \GL_6(\bQ_p), & g=\big(a_{ij}\big)&\longmapsto \big(\varrho_\fp(a_{ij})\big),\\ 
   \Her_3(\cK_p)&\lra M_{3,3}(\bQ_p), &x=\big(x_{ij}\big)&\longmapsto \varrho_\fp(x)=\big(\varrho_\fp(x_{ij})\big). 
\end{align*}
and we will often use them to identify $\U(3,3)(\bQ_p)$ with $\GL_6(\bQ_p)$ and $\Her_3(\cK_p)$ with $M_{3,3}(\bQ_p)$. 

\vspace{.5em}

For $x\in \Her_3(\cK_p)$, we write $\varrho_{\fp}(x)$ as $\begin{pmatrix}x_{11}&x_{12}&x_{13}\\x_{21}&x_{22}&x_{23}\\x_{31}&x_{32}&x_{33}\end{pmatrix}$. Define the Schwartz function $\alpha_{\xi\tau,p}$ on $\Her_3(\cK_p)$ as
\[
   \alpha_{\xi\tau,p}\left(x\right)=\mathds{1}_{\Her_3(\cO_{\cK,p})}(x)\cdot \mathds{1}_{\bZ^\times_p}(x_{21})\,\mathds{1}_{\GL_2(\bZ_p)}\begin{pmatrix}x_{21}&x_{22}\\x_{31}&x_{32}\end{pmatrix}(\xi_0\tau_0)^{-1}_{\fp}\left(\det\begin{pmatrix}x_{21}&x_{22}\\x_{31}&x_{32}\end{pmatrix}\right).
\]
and let $\cF^{-1}\alpha_{\xi\tau,p}$ be the inverse Fourier transform of $\alpha_{\xi\tau,p}$, {\it i.e.}
\[
   \cF^{-1}\alpha_{\xi\tau,p}(x)=\int_{\Her_3(\cK_p)} \alpha_{\xi\tau,p}(y)\,\be_p(\Tr\, xy)\,dy,
\]
where $\be_p$ is the additive character in \eqref{eq:bev}. (A simple computation shows that  
\begin{equation}\label{eq:inv-F}
\begin{aligned}
   \cF^{-1}\alpha_{\xi\tau,p}(x)=&\,p^{-3t}\fg\left((\xi_0\tau_0)_{\fp}^{-1}\right)^2\cdot \mathds{1}_{\bZ^5_p}(x_{11},x_{21},x_{31},x_{32},x_{33})\cdot \mathds{1}_{\bZ^2_p}(x^{-1}_{23}x_{13},x^{-1}_{23}x_{22})\\
   &\times\mathds{1}_{p^{-t}\bZ^\times_p}(x_{23})\mathds{1}_{p^{-t}\bZ^\times_p}\left(x_{12}-x_{13}x^{-1}_{23}x_{22}\right)(\xi_0\tau_0)_{\fp}\left(p^{2t}\det\begin{pmatrix}x_{12}&x_{13}\\x_{22}&x_{23}\end{pmatrix}\right).
\end{aligned}
\end{equation}
if $(\xi_0\tau_0)_\fp$ has conductor $p^t$, $t\geq 1$.) In order to define the $f_p(s,\xi_0\tau_0)$ for our purpose, we first define the section
\begin{align*}
   f^{\text{big-cell}}_p\left(-s,(\xi_0\tau_0)^{-c}\right)\left(g=\begin{pmatrix}A&B\\C&D\end{pmatrix}\right)
   &=|\nu(g)|^{-s+\frac{3}{2}}_p |\det C\ltrans{\bar{C}}|^{s-\frac{3}{2}}_p 
   (\xi_0\tau_0)_{p}\left(\nu(g)(\det C)\right)\\
   &\quad\times \cF^{-1}\alpha_{\xi\tau,p}(C^{-1}D),
\end{align*}
which belongs to $I_{Q_{\GU(3,3)},p}(-s,(\xi_0\tau_0)^{-c})$ and is supported on the big cell 
\[
Q_{\GU(3,3)(\bQ_p)}\begin{pmatrix}0&-\bid_3\\\bid_3&0\end{pmatrix}Q_{\GU(3,3)(\bQ_p)}.
\]

We choose $f_p(s,\xi_0\tau_0)\in I_{Q_{\GU(3,3)},p}(s,\xi_0\tau_0)$ as
\begin{equation}\label{eq:fp}
\begin{aligned}
   f_p(s,\xi_0\tau_0)(g)=&\,\gamma_p\left(-2s,(\xi^\bQ_0\tau^\bQ_{0})^{-1}\right)\gamma_p\left(-2s-1,(\xi^\bQ_0\tau^{\bQ}_{0})^{-1}\eta_{\cK/\bQ}\right)\gamma_p\left(-2s-2,(\xi^\bQ_0\tau^{\bQ}_{0})^{-1}\right)\\
   &\times M_p\left(-s,(\xi_0\tau_0)^{-c}\right)f^{\text{big-cell}}_p\left(-s,(\xi_0\tau_0)^{-c}\right)(g\Upsilon_p),
\end{aligned}
\end{equation}
where $\gls{xiQ}$ (resp. $\gls{tauQ}$) denotes the restriction of $\xi_0$ (resp. $\tau_0$) to $\bQ^\times\backslash\bA^\times_\bQ$, and $M_p\left(-s,(\xi_0\tau_0)^{-c}\right)$ is the intertwining operator, {\it i.e.}
\begin{equation}\label{eq:intw}
\begin{aligned}
   &M_p\left(-s,(\xi_0\tau_0)^{-c}\right)f^{\text{big-cell}}_p(-s,(\xi_0\tau_0)^{-c})(g\Upsilon_p)\\
   =&\int_{\Her_3(\cK_p)} f^{\text{big-cell}}_p(-s,(\xi\tau_0)^{-c})\left(\begin{pmatrix}&-\bid_3\\\bid_3\end{pmatrix}\begin{pmatrix}\bid_3&y\\&\bid_3\end{pmatrix}g\Upsilon\right)\,dy,
\end{aligned}
\end{equation}
and $\gls{Upsilonp}$ is the element in $\U(3,3)(\bQ_p)$ such that
\[
   \varrho_\fp(\Upsilon_p)=
   \begin{pmatrix}
   1&\\&\bid_2&&-\frac{\zeta_0}{2}\\&&1\\&-\bid_2&&-\frac{\zeta_0}{2}
   \end{pmatrix}^{-1}.
\]

\begin{rmk}
When $(\xi^\bQ_0\tau^\bQ_0)_p$ is ramified with conductor $p^t$,
\begin{align*}
   &\gamma_p\left(-2s,(\xi^\bQ_0\tau^\bQ_{0})^{-1}\right)\gamma_p\left(-2s-1,(\xi^\bQ_0\tau^{\bQ}_{0})^{-1}\eta_{\cK/\bQ}\right)\gamma_p\left(-2s-2,(\xi^\bQ_0\tau^{\bQ}_{0})^{-1}\right)\\
   =&\,\fg\left((\xi^\bQ_0\tau^{\bQ}_0)_p^{-1}\right)^{-3}(\xi^\bQ_0\tau^\bQ_0)_p(p)^{-3t}p^{(6s+6)t}.
\end{align*}
Combining this with \eqref{eq:inv-F}, one can see that the formula for the section $f_p(s,\xi_0\tau_0)$ in \eqref{eq:fp} agrees with the formula in \cite[(6-5)]{WanU31}.) 
\end{rmk}

\begin{rmk}
For the unitary group attached to $\begin{pmatrix}&\bid_n\\-\bid_n\end{pmatrix}$ and $\beta\in\Her_n(\cK)$, the factor for the functional equation of the $\beta$-th local Fourier coefficients of the degenerate principal series inducing $(\xi_0\tau_{0})^{-c}_p|\cdot|_{\cK_p}^{-s}$  is 
\begin{equation}\label{eq:cns}
   c_{n,p}(-s,(\xi_0\tau_0)^{-c},\beta)=c_{\beta,p}\cdot (\xi^\bQ_0\tau^\bQ_0)_{p}(\det\beta)|\det\beta|^{2s}_{\bQ_p}\cdot\prod_{j=1}^n\gamma_p\left(-2s+1-j,(\xi^\bQ_0\tau^\bQ_{0})^{-1}\eta_{\cK/\bQ}^{n+j}\right)^{-1},
\end{equation}
where $c_{\beta,p}$ is a constant independent of $s$ and $\xi_0\tau_0$. This factor also shows up in the functional equation of the doubling zeta integrals. Note that in our choice of $f_p(s,\xi_0\tau_0)$ in \eqref{eq:fp}, the product of gamma factors is exactly the product of gamma factors in $c_{3,p}(-s,(\xi_0\tau_0)^{-c},\beta)$.
\end{rmk}


Next, we construct a $p$-adic family interpolating these $E^\Sieg(-;f(s,\xi_0\tau_0))\big|_{s=\frac{k-3}{2}}$ with $f(s,\xi_0\tau_0)$ chosen as above. We will construct $p$-adic families of automorphic forms as measures on $\Gamma_\cK$ valued in the space of $p$-adic forms. The next subsection introduces some notions about $p$-adic measures.

\subsection{$p$-adic measures and $p$-adic families of automorphic forms}\label{sec:meas}
 
For a $p$-adically complete $\cO_L$-module $M$ and a compact abelian group $Y$ with totally disconnected topology, denote by $\Meas(Y,M)$ the space of $M$-valued $p$-adic measures on $Y$. Given $\mu\in \Meas(Y,M)$ and a continuous function $\eta:Y\ra R$ with $R$ a $p$-adically complete $\cO_L$ algebra, we write
\[
    \mu(\eta)=\int_Y \eta(y)\,d\mu(y).
\]
to denote the value of $\mu$ at $\eta$, which is an element in $R\wh{\otimes}_{\cO_L}M$. 

For each $y\in Y$, we have the delta measure $\delta_y\in \Meas(Y,M)$ defined by
\begin{equation}\label{eq:delta-meas}
   \delta_y(\eta)=\eta(y).
\end{equation}
Given $M_1,M_2$ and $\mu_i\in \Meas(Y,M_i)$, $i=1,2$, the convolution $\mu_1\ast\mu_2\in \Meas(Y,M_1\wh{\otimes}M_2)$ is defined by
\begin{equation}\label{eq:clv-meas1}
   (\mu_1\ast\mu_2)(\eta)=\int_Y\int_Y \eta(yz)\,d\mu_1(y)\,d\mu_2(z),
\end{equation}
If $\chi$ is a continuous character of $Y$ valued in $\cO^\times_{\bar{L}}$, then
\begin{equation}\label{eq:clv-meas2}
   (\mu_1\ast\mu_2)(\chi)=\mu_1(\chi)\otimes\mu_2(\chi).
\end{equation}

Let $A$ be a group with a homomorphism $A\ra Y$ and an action on $M$. Then the group $A$ acts on $\Meas(Y,M)$ in two ways: 
\begin{enumerate}
\item[--] $A$ acts on $\Meas(Y,M)$ via its action on $M$,
\item[--] $A$ acts on $\Meas(Y,M)$ via the homomorphism $A\ra Y$ and the translation of $Y$ on itself.
\end{enumerate}
Define
\begin{equation}\label{eq:eqiv-meas}
   \gls{MeasYM}=\left\{\begin{array}{ll}\text{$p$-adic measures in $\Meas(Y,M)$ on which}\\ \text{the two actions of $A$ are compatible}\end{array}\right\}.
\end{equation}

In our applications, $Y$ will be taken to be $\Gamma_\cK$ or $U_{\cK,p}=1+p\cO_{\cK,p}$, $A$ will be taken to be $\bZ^\times_p\times\bZ^\times_p$, which we identify with $\cO^\times_{\cK,p}$ via $(\varrho_\fp,\varrho_{\bar{\fp}})$, and  $M$ will be taken to be either $\hat{\cO}^{\ur}_L$ or spaces of $p$-adic forms on unitary groups.  Next, we introduce the spaces of $p$-adic families of automorphic forms we will use.

\subsubsection{$p$-adic families on $\GU(3,1)$} 
Let $\gls{VGU31}=\varprojlim\limits_m\varinjlim\limits_n V_{n,m}$ (defined in \S\ref{subsec:p-adicforms}). In \eqref{eq:eqiv-meas}, put $Y=\Gamma_\cK$, $A=\bZ^\times_p\times\bZ^\times_p$ with the homomorphism 
\begin{equation}\label{eq:ft}
\begin{tikzcd}[column sep=huge]
   A\simeq \cO^\times_{\cK,p}\arrow[r,"\text{natural}","\text{embedding}"']& \cK^\times\backslash\bA^\times_\cK\arrow[r,"\text{natural}","\text{projection}"']&\Gamma_\cK=Y,
\end{tikzcd}
\end{equation}
and $M=V_{\GU(3,1),\,\xi}$, the component of $V_{\GU(3,1)}$ on which the kernel of \eqref{eq:ft}, acts through the character $\xi_{p\adic}$.  We make $(a_1,a_2)\in \bZ^\times_p\times\bZ^\times_p$ act on $V_{\GU(3,1),\,\xi}$  by the usual action of
$
    \varrho^{-1}_\fp\begin{psm}1\\&1\\&&a_1\\&&&a_2\end{psm}\in T_\so(\bZ_p)\subset \U(3,1)(\bZ_p)
$
on $V_{\GU(3,1),\,\xi}$ multiplied by the scalar $\xi^{-1}_{p\adic,p}(a_1,a_2)$. (Here $\varrho^{-1}_\fp$ is the isomorphism in \eqref{eq:varrho-1}, and this action factors through the quotient of $\bZ^\times_p\times\bZ^\times_p$ by the kernel of \eqref{eq:ft}.) Then we get
\begin{equation}\label{eq:MU31}
    \Meas\left(\Gamma_\cK,V_{\GU(3,1),\,\xi}\right)^\natural,
\end{equation}
the $\xi_{p\adic}$-component of the space of $p$-adic families on $\GU(3,1)$. The compatibility of the two actions of $\bZ^\times_p\times\bZ^\times_p$ implies that the value at $\tau_{p\adic}$ of an element in \eqref{eq:MU31} is a $p$-adic form with $p$-adic nebentypus 
\[
   \left(\triv,\triv,(\xi\tau)_{p\adic,\fp},(\xi\tau)^{-1}_{p\adic,\bar{\fp}}\right).
\] 
(By saying $p$-adic nebentypus, we refer to the usual action of  $\varrho^{-1}_\fp\begin{psm}a_1&\ast&\ast&\ast\\&a_2&\ast&\ast\\&&a_3&\ast\\&&&a_4\end{psm}\in\U(3,1)(\bZ_p)$ on $V_{\GU(3,1)}$.)

The space \eqref{eq:MU31} contains the subspace of semi-ordinary families:
\begin{equation}\label{eq:MU31-so}
    \Meas\left(\Gamma_\cK,e_\so V_{\GU(3,1),\,\xi}\wh{\otimes}\hat{\cO}^\ur_L\right)^\natural,
\end{equation}
which is naturally equipped with an $\hat{\cO}^\ur_L\llb\Gamma_\cK\rrb$-module structure. The Klingen Eisenstein family we will construct belongs to this space. (It is tautological that an element in  \eqref{eq:MU31-so} gives rise to an element in $\xi_{p\adic}$-component of $\cM_\so\wh{\otimes}_{\bZ_p\llb T_\so(\bZ_p)\rrb}\hat{\cO}^\ur_L\llb\Gamma_\cK\rrb=\Hom_{\hat{\cO}^\ur_L\llb\Gamma_\cK\rrb}\left(\pV^*_\so,\hat{\cO}^\ur_L\llb\Gamma_\cK\rrb\right)$, see \S\ref{sec:Kl-ideal}.)

\subsubsection{$p$-adic families on $\GU(3,3)$}

Let $\gls{VGU33}$ be the space of $p$-adic forms on $\GU(3,3)$ defined by considering global sections of the structure sheaf on the Igusa towers for the Shimura variety of $\GU(3,3)$. We will construct a Siegel Eisenstein family in 
\begin{equation*}
    \Meas\left(\Gamma_\cK,V_{\GU(3,3)}\right).
\end{equation*}

\subsubsection{$p$-adic families on $\U(2)$}\label{sec:U2-family}
For constructing the Klingen Eisenstein family, we use an automorphic form $\varphi$ in the space \eqref{eq:M-varphi}, which is spherical at $p$ and does not vary over the weight space. It is when analyzing the non-degenerate Fourier--Jacobi coefficients of our Klingen Eisenstein family, we need auxiliary $p$-adic families on $\U(2)$.

Let
\begin{align*}
    N_p(\bZ_p)&=\left\{g\in \U(2)(\bZ_p):\varrho_\fp(g)=\begin{pmatrix}\ast&\ast\\0&1\end{pmatrix}\right\},\\
    N'_p(\bZ_p)&=\left\{g\in \U(2)(\bZ_p):\varrho_\fp(g)=\begin{pmatrix}1&\ast\\0&\ast\end{pmatrix}\right\}.
\end{align*}
We define the following two spaces of $p$-adic forms on $\U(2)$ of tame level $K^p_f\cap\U(2)(\wh{\bZ}_p)$:
\begin{align*}
   \glsuseri{VU2}&=\left\{\text{continuous functions }\U(2)(\bQ)\backslash \U(2)(\bA_{\bQ,f})/\big(K^p_f\cap\U(2)(\wh{\bZ}_p)\big) N_p(\bZ_p)\lra \cO_L \right\},\\
  \glsuserii{VU2}&=\left\{\text{continuous functions }\U(2)(\bQ)\backslash \U(2)(\bA_{\bQ,f})/\big(K^p_f\cap\U(2)(\wh{\bZ}_p)\big) N'_p(\bZ_p)\lra \cO_L \right\},
\end{align*}
and let $\glsuseriii{VU2}$ (resp. $\glsuseriv{VU2}$) denote the subspace of $V_{\U(2)}$ (resp. $V'_{\U(2)}$) on which the kernel of \eqref{eq:ft} acts through the character $(\triv,\xi_{p\adic,\fp})$ (resp. $(\xi^{-1}_{p\adic,\fp},\triv)$).  We have the operator $U_p$ acting on ${\tt F}\in V_{\U(2),\,\xi}$ (resp. ${\tt F}\in V'_{\U(2),\,\xi^{-1}}$) as
\[
   (U_p{\tt F})(g)=\sum_{x\in\bZ/p\bZ}{\tt F}\left(g\varrho^{-1}_\fp\begin{pmatrix}p&x\\&1\end{pmatrix}\right)
\]
with $\varrho^{-1}_\fp$ the isomorphism in \eqref{eq:varrho-1}. The ordinary projection is defined as $\gls{eord}=\lim\limits_{n\ra\infty}U^{n!}_p$.

\vspace{.5em}

In \eqref{eq:eqiv-meas}, put $Y=\Gamma_\cK$, $A=\bZ^\times_p\times\bZ^\times_p$ with the homomorphism $A\ra Y$ as in \eqref{eq:ft}, and $M=V_{\U(2),\,\xi}$ on which  we make $(a_1,a_2)\in \bZ^\times_p\times\bZ^\times_p$ act by the right translation of $\varrho^{-1}_\fp\begin{pmatrix}1\\&a_1\end{pmatrix}$ multiplied by the scalar $\xi^{-1}_{p\adic,\fp}(a_1)$. (This action factors through the quotient of $\bZ^\times_p\times\bZ^\times_p$ by the kernel of \eqref{eq:ft}.) Then we get 
\begin{equation}\label{eq:MU2}
   \Meas\left(\Gamma_\cK,V_{\U(2),\,\xi}\right)^\natural.
\end{equation}
The evaluation at $\tau_{p\adic}$ of an element in \eqref{eq:MU2} is a $p$-adic form on $\U(2)$ with $p$-adic nebentypus 
\[
   \left(\triv,(\xi\tau)_{p\adic,\fp}\right).
\] 
The non-degenerate Fourier--Jacobi coefficients of our Klingen Eisenstein family are $p$-adic measures on $\Gamma_\cK$ valued in $p$-adic Jacobi forms on $\U(2)$. Pairing them with a fixed Jacobi form on   $\U(2)$ gives an element in \eqref{eq:MU2}. (See \S\ref{sec:theta_1J}.)

\vspace{1em}
We will also need to consider some auxiliary $p$-adic families which are $p$-adic measures on $U_{\cK,p}=1+p\cO_{\cK,p}$ valued in $V_{\U(2),\,\xi}$ and $V'_{\U(2),\,\xi^{-1}}$. We will fix $\wtp$, a power of $p$, such that raising to the $\wtp$-th power maps $\Gamma_\cK$ into $U_{\cK,p}$. (See \S\ref{sec:UKp} for an explanation why we need to consider $U_{\cK,p}$ and fix such an $\wtp$.)

\vspace{.5em}

Put $Y=U_{\cK,p}=1+p\cO_{\cK,p}$, $A=\bZ^\times_p\times\bZ^\times_p$ with the homomorphism
\[
\begin{tikzcd}[column sep=huge]
   A=\bZ^\times_p\times\bZ^\times_p\arrow[r,"\eqref{eq:ft}"]& \Gamma_\cK \arrow[r,"\text{$\wtp$-th power}"]& U_{\cK,p}=Y,
\end{tikzcd}
\]  
and $M=e_\ord V_{\U(2),\,\xi}$ (resp.  $e_\ord V'_{\U(2),\,\xi^{-1}}$) in \eqref{eq:eqiv-meas}, on which we make $(a_1,a_2)\in A$ act through the right translation by $\varrho^{-1}_\fp\begin{pmatrix}1\\&a_1\end{pmatrix}$ (resp.  $\varrho^{-1}_\fp\begin{pmatrix}a^{-1}_1\\&1\end{pmatrix}$) multiplied by the scalar $\xi^{-1}_{p\adic,\fp}(a_1)$ (resp. $\xi_{p\adic,\fp}(a_1)$). Then we get 
\begin{equation}\label{eq:MU2'}
   \Meas\left(U_{\cK,p},e_\ord V_{\U(2),\,\xi}\right)^\natural \quad \left(\text{resp. } \Meas\left(U_{\cK,p},e_\ord V'_{\U(2),\,\xi^{-1}}\right)^\natural\right).
\end{equation}
For $\tau_{p\adic}\in\Hom_\cont\left(\Gamma_\cK,\ol{\bQ}^\times_p\right)$, the evaluation at $\tau_{p\adic}|_{U_{\cK,p}}$ of an element in \eqref{eq:MU2} is a $p$-adic form on $\U(2)$ with nebentypus 
\[
   \left(\triv,(\xi\tau^\wtp)_{p\adic,\fp}\right)
   \quad \left(\text{resp. } \left(\xi\tau^{\wtp})^{-1}_{p\adic,\fp},\triv\right)\right).
\]

We will construct auxiliary CM families 
\[
   \glsuseri{theta},\glsuserii{h}\in \Meas\left(U_{\cK,p},e_\ord V_{\U(2),\,\xi}\right)^\natural,
   \quad \glsuserii{theta},\glsuseri{h}\in \Meas\left(U_{\cK,p},e_\ord V'_{\U(2),\,\xi^{-1}}\right)^{\natural}, 
\]
in \S\ref{sec:CM-families} and use them to study the non-degenerate Fourier--Jacobi coefficients of the Klingen Eisenstein family (which are elements in \eqref{eq:MU2} and gives rise to measures on $U_{\cK,p}$ by the map \eqref{eq:PN*}.)

\subsection{The $p$-adic family of Siegel Eisenstein series on $\GU(3,3)$}\label{sec:Sieg}
We normalize the Siegel Eisenstein series $E^\Sieg(\bdot\,;f(s,\xi_0\tau_0)\big|_{s=\frac{k-3}{2}}$ (with $f(s,\xi_0\tau_0)$ chosen as in \S\ref{sec:sec-choice}) as
\begin{equation}\label{eq:normalization}
   \gls{Enorl}=\left(\frac{2^{-6}(-2\pi i)^{3k}}{\pi^3\prod_{j=0}^2\Gamma(k-j)}\right)^{-1}d^{\,\Sigma\cup\{\infty,p,\ell,\ell'\}}_3(s,\tau_0)\cdot E^\Sieg(\bdot\,;f(s,\xi_0\tau_0)\big|_{s=\frac{k-3}{2}},
\end{equation}
where
\begin{equation}\label{eq:d3}
\begin{aligned}
   d^{\,\Sigma\cup\{\infty,p,\ell,\ell'\}}_3(s,\xi_0\tau_0)&=\prod_{v\notin \Sigma
\cup\{p,\ell,\ell'\}} d_{3,v}(s,\xi_0\tau_0), \\
   \gls{d3v}&=\prod_{j=1}^3 L_v\left(2s+j,\xi^\bQ_0\tau^\bQ_{0}\eta^{3-j}_{\cK/\bQ}\right).
\end{aligned}
\end{equation}

\begin{thm}\label{thm:Sieg-F}
There exists a $p$-adic measure $\gls{bESieg}\in\Meas\left(\Gamma_\cK,V_{\GU(3,3)}\right)$ such that
\[
   \bfE^\Sieg(\tau_{p\adic})=E^\Sieg_{\xi\tau}
\]
for all algebraic Hecke character $\tau:\cK^\times\backslash\bA^\times_\cK\ra\bC^\times$ such that its $p$-adic avatar $\tau_{p\adic}$ factors through $\Gamma_\cK$ and $\xi\tau$ has $\infty$-type $\left(0,k\right)$ with $k\geq 6$ even.
\end{thm}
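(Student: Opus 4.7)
My plan is to build $\bfE^\Sieg$ via its Fourier expansion and invoke a $q$-expansion principle on the Igusa tower for $\GU(3,3)$. For each $\beta\in\Her_3(\cK)$ with $\beta\geq 0$, the $\beta$-th Fourier coefficient of $E^\Sieg(\,\cdot\,;f(s,\xi_0\tau_0))$ factors as a product of local Fourier coefficients $W_{f_v,\beta}(g_v;s,\xi_0\tau_0)$. I would show that, after multiplying by the normalization in \eqref{eq:normalization} and specializing at $s=\tfrac{k-3}{2}$, each of these local factors depends $p$-adically continuously on $\tau_{p\adic}\in\Hom_\cont(\Gamma_\cK,\ol{\bQ}_p^\times)$, so that the full product defines, $\beta$-by-$\beta$, a $p$-adic measure on $\Gamma_\cK$ valued in $\hat{\cO}_L^\ur$ and assembling these will give the measure valued in $V_{\GU(3,3)}$.

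First I would handle the easy places. At unramified $v\notin\Sigma\cup\{p,\ell,\ell'\}$, Shimura's formula expresses $W_{f_v,\beta}(1;s,\xi_0\tau_0)$ as a polynomial in $(\xi_0\tau_0)_v(\varpi_v)$ and $q_v^{-s}$ divided by $\prod_{j=1}^{3}L_v(2s+j,\xi^\bQ_0\tau^\bQ_0\eta^{3-j}_{\cK/\bQ})$; the factor $d_3^{\Sigma\cup\{\infty,p,\ell,\ell'\}}(s,\xi_0\tau_0)$ was inserted precisely to cancel these denominators, producing an expression that is polynomial in the Frobenius-values of $\tau_{p\adic}$ outside $\Sigma$, hence $p$-adically interpolable. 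At $v=\infty$, the Fourier coefficient for positive definite $\beta$ and integer $k\geq 6$ is given by the classical formula involving $\prod_{j=0}^{2}\Gamma(k-j)$ and $(-2\pi i)^{3k}$ (times $e^{-2\pi\Tr\beta}\det(\beta)^{k-3}$ up to a $\zeta_0$-dependent constant), which is exactly cancelled by the prefactor of \eqref{eq:normalization}. At ramified finite places $v\in\Sigma\cup\{\ell,\ell'\}$, the support condition in \eqref{eq:fv-ram} reduces $W_{f_v,\beta}$ to a finite sum depending on $\tau$ only through $(\xi_0\tau_0)_v|_{\cO_{\cK,v}^\times}$, which is locally constant on the interpolation set once the conductor is bounded.

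The key step is at $p$. For $f_p(s,\xi_0\tau_0)$ as in \eqref{eq:fp}, unfolding through the intertwining operator \eqref{eq:intw} and the inverse Fourier transform \eqref{eq:inv-F} of $\alpha_{\xi\tau,p}$ shows that $W_{f_p,\beta}(1;s,\xi_0\tau_0)$ equals the three $\gamma_p$-factors in \eqref{eq:fp} times the $\beta$-th Fourier coefficient of $M_p(-s,(\xi_0\tau_0)^{-c})f_p^{\text{big-cell}}$; by the functional equation \eqref{eq:cns}, those $\gamma_p$-factors cancel the denominators produced by the intertwining operator, leaving the clean expression $(\xi^\bQ_0\tau^\bQ_0)_p(\det\beta)|\det\beta|_p^{2s}$ times $\cF^{-1}\alpha_{\xi\tau,p}(\varrho_\fp(\beta))$ (up to a twist by $\Upsilon_p$). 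I expect the main technical obstacle to lie here: verifying that this cancellation is exact and that the resulting formula is $p$-adically continuous in $\tau_{p\adic}$. Reading off \eqref{eq:inv-F}, the $\tau$-dependence enters only through $(\xi_0\tau_0)_\fp$ evaluated on finitely many entries of $\beta$ and through $(\xi^\bQ_0\tau^\bQ_0)_p(\det\beta)$, giving the required continuity, and the Gauss sum $\fg((\xi_0\tau_0)_\fp^{-1})^2$ appearing in \eqref{eq:inv-F} is the source of the $\hat{\cO}_L^\ur$-coefficients.

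Having established the $p$-adic interpolability of each $\beta$-th Fourier coefficient, I would glue them into an element of $\Meas(\Gamma_\cK,V_{\GU(3,3)})$. Fix a cusp and identify the formal completion of the toroidal compactification along the corresponding boundary stratum with a formal torus embedding of the type described in \S\ref{sec:bd-strata} for $\GU(3,1)$ (the analogous set-up for $\GU(3,3)$); the $\Gamma_{\clabel}$-invariant collection of $\beta$-coefficients then determines a $p$-adic section of the structure sheaf on the formal completion, which by the $q$-expansion principle on the ordinary locus extends to a genuine element of $V_{\GU(3,3)}$. Verifying that the construction is independent of the choice of cusp, and that the specialization at arithmetic $\tau_{p\adic}$ matches $E^\Sieg_{\xi\tau}$, reduces to the Fourier-coefficient-level computations outlined above together with the classical fact that a holomorphic automorphic form on $\GU(3,3)$ is determined by its Fourier expansion at a single cusp.
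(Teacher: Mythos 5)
Your proposal follows essentially the same route as the paper: record the local Fourier coefficients place by place (Shimura's formula at unramified places with $d_3^{\Sigma\cup\{\infty,p,\ell,\ell'\}}$ clearing the denominators, the big-cell support at $v\in\Sigma\cup\{\ell,\ell'\}$ forcing the global coefficient to factor, and the Lapid--Rallis functional equation at $p$ cancelling the $\gamma_p$-factors built into \eqref{eq:fp}), observe that each surviving factor is the value of a delta measure on $\Gamma_\cK$, and glue the resulting interpolation of $q$-expansions into an element of $\Meas(\Gamma_\cK,V_{\GU(3,3)})$ via the $q$-expansion principle. One slip at $p$: taking the $\beta$-th Fourier coefficient of the big-cell section applies a Fourier transform to the defining Schwartz function, so the surviving local factor is $\cF(\cF^{-1}\alpha_{\xi\tau,p})(\beta)=\alpha_{\xi\tau,p}(\beta)$ rather than $\cF^{-1}\alpha_{\xi\tau,p}(\varrho_\fp(\beta))$; consequently no Gauss sums appear in the final coefficient (they cancel between \eqref{eq:inv-F} and the $\gamma_p$-factors), which is consistent with the theorem asserting $\bfE^\Sieg\in\Meas(\Gamma_\cK,V_{\GU(3,3)})$ without extending scalars to $\hat{\cO}^\ur_L$. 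This does not affect the viability of your interpolation argument, since $\alpha_{\xi\tau,p}(\beta)$ is likewise a product of indicator functions and values of $(\xi_0\tau_0)_\fp$ at fixed units, hence interpolated by delta measures.
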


\begin{proof}
The measure is constructed by interpolating the $q$-expansions of the $E^\Sieg_{\xi\tau}$'s. First, we record the formulas for the Fourier coefficients in \cite[\S\S6E-6H]{WanU31}. Given $\beta\in\Her_3(\cK)$ and $g\in\GU(3,3)(\bA_\bQ)$, let 
\[
   W_{\beta,v}\big(g_v,f_v(s,\xi_0\tau_0)\big)=\int_{\Her_3(\cK_v)}f_v(s,\xi_0\tau_0)\left(\begin{pmatrix}&-\bid_3\\\bid_3\end{pmatrix}\begin{pmatrix}\bid_3&\varsigma\\0&\bid_3\end{pmatrix}g_v\right)\be_v(-\Tr\,\beta\varsigma)\,d\varsigma
\]
(where $\be_v$ is the additive character in \eqref{eq:bev}), and
\[
   E^\Sieg_{\beta}\big(g;f(s,\xi_0\tau_0)\big)
   =\int_{\Her_3(\cK\backslash\bA_\cK)}E^\Sieg\left(\begin{pmatrix}\bid_3&\varsigma\\0&\bid_3\end{pmatrix}g;f(s,\xi_0\tau_0)\right)\be_{\bA_\bQ}(-\Tr\,\beta\varsigma)\,d\varsigma.
\]
Because the sections at $v\in\Sigma\cup\{\ell,\ell'\}$ are chosen to be supported on the big cell, for $g\in Q_{\GU(3,3)}(\bA_\bQ)$, we have
\[
   E^\Sieg_{\beta}\big(g;f(s,\xi_0\tau_0)\big)=\prod_v W_{\beta,v}\big(g_v,f_v(s,\xi_0\tau_0)\big).
\]
For our choice of $f_v(s,\xi_0\tau_0)$ in \S\ref{sec:sec-choice}, the formulas for $W_{\beta,v}\big(g_v,f_v(s,\xi_0\tau_0)\big)$ are as follows.

\vspace{.5em}

\item[--]\emph{The archimedean place.}

\begin{align*}
   &\left.W_{\beta,\infty}\left(\begin{pmatrix}y&x\ltrans{\bar{y}}^{-1}\\0&\ltrans{\bar{y}}^{-1}\end{pmatrix};f_\infty(s,\xi_0\tau_0)\right)\right|_{s=\frac{k-3}{2}}\\
   =&\left\{\def\arraystretch{2}\begin{array}{ll}
   \frac{2^{-6}(-2\pi i)^{3k}}{\pi^3\prod_{j=0}^2\Gamma(k-j)}(\det\bar{y})^k \be_{\bR}\left(\Tr\,\beta(x+i y\ltrans{\bar{y}})\right)\cdot (\det\beta)^{k-3}, &\beta\in\Her_3(\cK)_{>0}\\
   0, &\text{otherwise}.\end{array}\right.
\end{align*}

\item[--] $v\notin\Sigma\cup\{p,\ell,\ell'\}$. For $\det\beta\neq 0$, 
\begin{align*}
   \left.W_{\beta,v}\left(\diag(A,D);f_v(s,\xi_0\tau_0)\right)\right|_{s=\frac{k}{2}}
   &=\xi_{0,v}\tau_{0,v}(\det D)|\det AD^{-1}|^{-\frac{k}{2}+3}_{\bQ_v}\cdot  d_{3,v}(s,\xi_0\tau_{0})^{-1}\\
   &\quad\times \mathds{1}_{\Her_3(\cO_{\cK,v})^*}(D^{-1}\beta A)\cdot h_{v,D^{-1}\beta A}\left(\xi^\bQ_{0,v}\tau^\bQ_{0,v}(q_v)q_v^{-2s-3}\right),
\end{align*}
where $h_{v,D^{-1}\beta A}\in\bZ[X]$ is a monic polynomial depending only on $v$ and $D^{-1}\beta A$, and is the constant polynomial when
 when $D^{-1}\beta A$ belongs to $\GL_3(\cO_{\cK,v})$.

\vspace{.5em}

\item[--] $v\in\Sigma\cup\{\ell,\ell'\}$.

\begin{align*}
   \left.W_{\beta,v}\left(\diag(A,D);f_v(s,\xi_0\tau_0)\right)\right|_{s=\frac{k-3}{2}}
   &=|D_{\cK/\bQ}|^{3/2}_v \cdot \xi_{0,v}\tau_{0,v}(\det D)|\det AD^{-1}|^{-\frac{k}{2}+3}_{\bQ_v}\\
   &\quad\times\mathds{1}_{\Her_3(\cO_{\cK,v})^*}(D^{-1}\beta A)\,\be_v\left(\frac{(D^{-1}\beta A)_{22}+(D^{-1}\beta A)_{33}}{q^{c_v}_v}\right).
\end{align*}

\item[--] \emph{The place $p$.} An easy computation shows that 
\begin{align*}
   \left.W_{\beta,p}\left(\bid_6;f^{\text{big-cell}}_p(-s,\xi^{-c}_0\tau^{-c}_0)\right)\right|_{s=\frac{k-3}{2}}
   =\cF(\cF^{-1}\alpha_{\xi\tau,p})(\beta)=\alpha_{\xi\tau,p}(\beta).
\end{align*}
The functional equation for $W_{\beta,p}$ in \cite[(14)]{LapidRallis} implies that
\[
   W_{\beta,p}\left(g;M_p\left(-s,\xi^{-c}_0\tau^{-c}_{0}\right)f^{\text{big-cell}}_p(-s,\xi^{-1}_0\tau^{-1}_0)\right)=c_{3,p}(-s,\xi^{-c}_0\tau^{-c}_{0},\beta)\cdot W_{\beta,p}\left(g;f^{\text{big-cell}}_p(-s,\xi^{-c}_0\tau^{-c}_0)\right)
\]
with $c_{3,p}(-s,\xi^{-c}_0\tau^{-c}_{0},\beta)$ defined as in \eqref{eq:cns}. It follows that
\begin{align*}
    \left.W_{\beta,p}\big(\bid;f_p(s,\xi_0\tau_0)\big)\right|_{s=\frac{k-3}{2}}=c_{\beta,p}\cdot(\xi^\bQ_0\tau^\bQ_0)_p(\det\beta)|\det\beta|^{2s}_{\bQ_p}\cdot \alpha_{\xi\tau,p}(\beta).
\end{align*}

\vspace{.5em}

Denote by $E^{\Sieg}_{\xi\tau,\beta}\big(\diag(A,D)\big)$ the $\beta$-th coefficient in the ($p$-adic) $q$-expansion of $E^\Sieg_{\xi\tau}$ at the ($0$-dimensional) cusp indexed by  $\diag(A,D)\in\GU(3,3)(\bA^p_{\cK,f})$. From the above formulas for the local Fourier coefficients at the archimedean place, we see that $E^{\Sieg}_{\xi\tau,\beta}\big(\diag(A,D)\big)$ is nonzero only for $\beta$ inside
\[
   \fS_3=\left\{\beta\in\Her_3(\cO_\cK)_{>0}:\varrho_{\fp}(\beta_{21})\in\bZ^\times_p,\,\begin{pmatrix}\varrho_\fp(\beta_{21})&\varrho_\fp(\beta_{22})\\\varrho_\fp(\beta_{31})&\varrho_\fp(\beta_{32})\end{pmatrix}\in\GL_2(\bZ_p)\right\},
\]
and for $\beta\in\fS_3$ we have
\begin{equation}\label{eq:Ebeta}
\begin{aligned}
  E^{\Sieg}_{\xi\tau,\beta}\big(\diag(A,D)\big)&=
  \text{a constant independent of $\tau$}\\
   &\quad\times (\xi\tau)_{p\adic}(\det D)
   \prod_{v\notin\Sigma\cup\{p,\ell,\ell'\}} h_{v,\ltrans{\bar{A}}\beta A}\left((\xi\tau)^\bQ_{p\adic,v}(q_v)\right),\quad \\
   &\quad\times (\xi\tau)_{p\adic,\fp}\left(\det\begin{pmatrix}\beta_{21}&\beta_{22}\\\beta_{31}&\beta_{32}\end{pmatrix}^{-1}\right)
    (\xi\tau)^{\bQ}_{p\adic,p}(\det\beta)
\end{aligned}
\end{equation}
(Note that thanks to the condition that $\xi\tau$ has $\infty$-type $(0,k)$, we have $(\xi\tau)_{p\adic,\fp}=(\xi\tau)_\fp$ and $(\xi\tau)_{p\adic}|_{\cO^\times_{\cK,\fp}}=\xi_0\tau_0|_{\cO^\times_{K,\fp}}$.)  

The factors on the right hand side of \eqref{eq:Ebeta} are interpolated by $\delta$-measures in $\Meas\left(\Gamma_\cK,\cO_L\right)$. The convolution of those $\delta$-measures gives an element in $\Meas\left(\Gamma_\cK,\cO_L\right)$ which interpolates the Fourier coefficient $E^{\Sieg}_{\xi\tau,\beta}\big(\diag(A,D)\big)$. (See \eqref{eq:delta-meas}\eqref{eq:clv-meas1}\eqref{eq:clv-meas2} for the definitions of $\delta$-measures and the convolution of $p$-adic measures.) We see that there exists a $p$-adic measure 
\[
   \bfE^\Sieg\in \Meas\Big(\Gamma_\cK,\bigoplus\limits_{\text{$p$-adic cusps}}\cO_L\llb\fS_3\rrb \Big)
\] 
that interpolates the $p$-adic $q$-expansions of $E^\Sieg_{\xi\tau}$. By the $q$-expansion principle and the fact that the $E^\Sieg_{\xi\tau}$'s are automorphic forms on $\GU(3,3)$, we deduce that $\bfE^\Sieg$ belongs to $\Meas\left(\Gamma_\cK,V_{\GU(3,3)}\right)$. (Here we view the space of $p$-adic measures valued in $V_{\GU(3,3)}$ as a subspace of the space of $p$-adic measures valued in copies of $\cO_L\llb\fS\rrb$ via the embedding of $V_{\GU(3,3)}$  into the $\cO_L$-module of $q$-expansions.)
\end{proof}

\subsection{The semi-ordinary family of Klingen Eisenstein series}\label{sec:constructKF}
Let $E^\Sieg_{\xi\tau}\big|_{\GU(3,1)\times\GU(2)}$ be the automorphic form obtained by the composition of the restriction from $\GU(3,3)$ to $\GU(3,1)\times_{\bG_m}\GU(2)$ via the embedding \eqref{eq:embedding} and the extension by zero from $\GU(3,1)\times_{\bG_m}\GU(2)$ to $\GU(3,1)\times\GU(2)$. Define the twsist $E^\Sieg_{\xi\tau}\big|^\star_{\GU(3,1)\times\GU(2)}$ as 
\[
   E^\Sieg_{\xi\tau}\big|^\star_{\GU(3,1)\times\GU(2)}(g_1,g_2)=E^\Sieg_{\xi\tau}\big|_{\GU(3,1)\times\GU(2)}(g_1,g_2)\cdot\xi^{-1}_{p\adic}\tau^{-1}_{p\adic}(\det g_2).
\]
Then by our choice of the section $f_p(s,\xi_0\tau_0)$, we see that
\[
   E^\Sieg_{\xi\tau}\big|^\star_{\GU(3,1)\times\GU(2)}\in V_{\GU(3,1),\,\xi}\otimes M_{\GU(2)}\left(K^p_fK_{p,0};\hat{\cO}^\ur_L\right)
\]
with $\gls{Kp0}=\left\{g\in\GU(2)(\bZ_p):\varrho_\fp(g)=\begin{pmatrix}\ast&\ast\\p\ast&\ast\end{pmatrix}\right\}$. (Note that $K_{p,0}$ equals the $K'_{p,g}$ in \eqref{eq:K''}. See \eqref{eq:MGU2}  for the definition of $M_{\GU(2)}\left(K^p_fK_{p,0};\hat{\cO}^\ur_L\right)$.)

Therefore, by applying a twist of $(\xi\tau)^{-1}_{p\adic}\circ\,\det$  to the family $\bfE^\Sieg\big|_{\GU(3,1)\times\GU(2)}$ on the second factor, we obtain the $p$-adic family
\begin{equation}\label{eq:resE}
    \bfE^\Sieg\big|^\star_{\GU(3,1)\times\GU(2)}\in \Meas\left(\Gamma_\cK, V_{\GU(3,1),\,\xi}\right)^\natural \wh{\otimes} \,M_{\GU(2)}\left(K^p_fK_{p,0};\hat{\cO}^\ur_L\right)
\end{equation}
interpolating $E^\Sieg_{\xi\tau}\big|^\star_{\GU(3,1)\times\GU(2)}$ when $\tau$ varies. (To see that it belongs to the $^\natural$-subspace, one just needs to check the nebentypus.)

Attached to a $\varphi$ in the space \eqref{eq:M-varphi} is a linear functional
\begin{align*}
   \left<\bdot,\varphi\right>: M_{\GU(2)}\left(K^p_fK_{p,0};\hat{\cO}^{\ur}_L\right)&\lra \hat{\cO}^\ur_L\\
   \varphi'&\longmapsto \left<\varphi',\varphi\right> =\int_{\GU(2)(\bQ)\backslash \GU(2)(\bA_{\bQ,f})} \varphi'(g)\varphi(g)\,dg,
\end{align*}
where the integral is understood as a sum over the finite set $\GU(2)(\bQ)\backslash \GU(2)(\bA_{\bQ,f})/K^p_fK_{p,0}$. Applying the linear functional $\left<\bdot,\varphi\right>$ to \eqref{eq:resE} gives a family
\begin{equation}\label{eq:KF}
   \gls{bEKling}= \left<  \bfE^{\Sieg}|^\star_{\GU(3,1)\times\GU(2)},\,\varphi\right>\in \Meas\left(\Gamma_\cK, V_{\GU(3,1),\,\xi}\wh{\otimes}\,\hat{\cO}^\ur_L\right)^\natural.
\end{equation}

\begin{prop}
For  $\tau_{p\adic}$ satisfying the conditions in Theorem~\ref{thm:Sieg-F}, the evaluation of the family $\bfE^\Kling_\varphi$ at $\tau_{p\adic}$ is a Klingen Eisenstein series on $\GU(3,1)$ inducing $\xi_0\tau_0|\cdot|^{\frac{k-3}{2}}\boxtimes\pi^{\GU(2)}$, more precisely, the automorphic form 
\begin{equation}\label{eq:Kling-int}
    g\longmapsto \left.\int_{\U(2)(\bQ)\backslash \U(2)(\bA_{\bQ})/\U(2)(\bR)} E^\Sieg\big(\imath(g,g_2);f(s,\xi_0\tau_0)\big)\varphi(g_2)(\xi_0\tau_0)^{-1}(\det g_2)\,dg_2\,\right|_{s=\frac{k-3}{2}}
\end{equation}
normalized by the factor in \eqref{eq:normalization}.

Moreover, $\bfE^\Kling_\varphi\in \Meas\left(\Gamma_\cK,e_\so V_{\GU(3,1),\,\xi}\wh{\otimes}\,\hat{\cO}^\ur_L\right)^\natural$,   {\it i.e.} is a semi-ordinary family.
\end{prop}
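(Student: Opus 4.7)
The proposition has two parts: identifying the value at $\tau_{p\adic}$ as the normalized classical Klingen Eisenstein series, and checking that the family lies in the semi-ordinary component of $\Meas\!\left(\Gamma_\cK, V_{\GU(3,1),\,\xi}\wh\otimes \hat{\cO}^\ur_L\right)^\natural$.

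For the first assertion, I would specialize through the definition. By Theorem~\ref{thm:Sieg-F}, $\bfE^{\Sieg}(\tau_{p\adic}) = E^{\Sieg}_{\xi\tau}$, the normalized Siegel Eisenstein series attached to $f(s,\xi_0\tau_0)$ at $s=(k-3)/2$. Restricting along the embedding $\imath$ and extending by zero from $\GU(3,1)\times_{\bG_m}\GU(2)$ to $\GU(3,1)\times\GU(2)$ yields a classical element of $V_{\GU(3,1),\,\xi}\otimes M_{\GU(2)}\!\left(K_f^pK_{p,0};\hat{\cO}_L^{\ur}\right)$. The star-twist by $\xi^{-1}_{p\adic}\tau^{-1}_{p\adic}\circ\det$ on the $\GU(2)$-factor converts the $p$-adic avatar $(\xi\tau)_{p\adic}$ back into the classical unitary character $\xi_0\tau_0$, matching the central character needed for the doubling integral. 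Pairing with $\varphi$ via $\langle\,\cdot\,,\varphi\rangle$ and invoking the generalized doubling formula (Theorem~\ref{thm:doubling}), with auxiliary $t\in\GU(3,1)(\bA_\bQ)$ chosen so that $\nu(t)=\nu(g)$, gives the Klingen Eisenstein series \eqref{eq:Kling-int} attached to $F(f(s,\xi_0\tau_0),\varphi)$ at $s=(k-3)/2$, with the normalizing prefactor of \eqref{eq:normalization} carried through from the construction of $\bfE^{\Sieg}$.

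For the semi-ordinarity assertion, since $e_{\so}=\lim_{r\to\infty}U_p^{r!}$ with $U_p=U^+_{p,2}U^+_{p,3}U^-_{p,1}$, and arithmetic characters are dense in $\Hom_{\mathrm{cont}}(\Gamma_\cK,\ol{\bQ}_p^\times)$, it suffices to show that at each arithmetic $\tau_{p\adic}$ (with $\xi\tau$ of $\infty$-type $(0,k)$, $k\geq 6$ even) the classical Klingen Eisenstein series $\bfE^{\Kling}_\varphi(\tau_{p\adic})$ is a simultaneous eigenvector for $U^+_{p,2},U^+_{p,3},U^-_{p,1}$ with $p$-adic unit eigenvalues; equality of $e_\so\bfE^{\Kling}_\varphi$ with $\bfE^{\Kling}_\varphi$ as $p$-adic measures then follows by density. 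I would compute these eigenvalues by combining the adelic formulas \eqref{eq:adeleUp} with the explicit expression \eqref{eq:fp} for $f_p(s,\xi_0\tau_0)$: the local Klingen section $F(f(s,\xi_0\tau_0),\varphi)_p$ is a specific vector in $\Ind_{P_{\GU(3,1)}(\bQ_p)}^{\GU(3,1)(\bQ_p)}\!\bigl((\xi_0\tau_0)_\fp|\cdot|^s\boxtimes\pi^{\GU(2)}_p\bigr)$, and the intertwining operator $M_p$ built into \eqref{eq:fp} selects a particular ordering of the Satake parameters along the diagonal torus. After this stabilization, $U^+_{p,3}$ and $U^-_{p,1}$ act by characters built from $(\xi_0\tau_0)_\fp$ and $(\xi_0\tau_0)^{-1}_{\bar\fp}$, which are $p$-adic units because $\tau_{p\adic}$ factors through $\Gamma_\cK$ and $\xi_{0,\fp},\xi_{0,\bar\fp}$ are units by construction.

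The main obstacle is the $U^+_{p,2}$-eigenvalue: naively it involves a Satake parameter of the non-ordinary $\pi_p$, whose positive valuation would destroy unit-ness. This is precisely where the intertwining operator together with the support condition imposed by the Schwartz function $\cF^{-1}\alpha_{\xi\tau,p}$ becomes essential: the effect of $M_p$ is to swap the role of a Satake parameter of $\pi_p$ with that of the inducing character $(\xi_0\tau_0)_\fp|\cdot|^{(k-3)/2}$, so that the non-unit contribution is exchanged for $(\xi_0\tau_0)_\fp(p)\cdot p^{(k-3)/2}$, whose valuation is cancelled by the normalizing factor $p^{\langle(0,0,k;-k)+2\rho_{\mathrm{c}},(1,1,0;0)\rangle}$ appearing in \eqref{eq:adeleUp}. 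Carrying this out rigorously requires a local Jacquet-module computation at $p$ for the Klingen induced representation, which is exactly the technical heart distinguishing the present semi-ordinary construction from the classical ordinary Hida-theoretic setup; I would expect the identity to emerge cleanly from the functional equation for the doubling zeta integral at $p$, which is the perspective taken in \S\ref{sec:degFJ}.
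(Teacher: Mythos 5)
The first half of your proposal (unwinding the construction and invoking Theorem~\ref{thm:doubling} to identify $\bfE^\Kling_\varphi(\tau_{p\adic})$ with the normalized integral \eqref{eq:Kling-int}) matches the paper, and your overall strategy for the second half — prove that each classical specialization with $(\xi\tau)_\fp$ ramified is a simultaneous $\bU_p$-eigenvector with $p$-adic unit eigenvalues, then conclude by density — is also the paper's strategy.

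The gap is in the mechanism you propose for the eigenvalue computation. You predict that the $U^+_{p,2}$-eigenvalue "naively involves a Satake parameter of the non-ordinary $\pi_p$" and that the intertwining operator $M_p$ rescues unit-ness by "swapping" that parameter for the inducing character, with the identity expected to follow from the functional equation of the doubling zeta integral. That is not what happens, and I do not see how to make it work as stated. In the paper's computation the Satake parameters of $\pi_p$ never appear: the adelic $\bU_p$-operators are applied directly to the Siegel Eisenstein section \emph{before} pairing with $\varphi$, i.e.\ one computes the unipotent integral $\int_{N(\bZ_p)} f^{\text{big-cell}}_p(\cdot\,u\,\mathrm{diag}(\ldots))\,du$ against the explicit Schwartz function $\cF^{-1}\alpha_{\xi\tau,p}$. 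The point is the Gauss-sum structure of $\cF^{-1}\alpha_{\xi\tau,p}$ in \eqref{eq:inv-F}: because $(\xi_0\tau_0)_\fp$ is ramified and is evaluated on a $2\times 2$ determinant whose entries are translated by the integration variables $u_1,u_2$, the sum over $u_1,u_2\in\bZ/p^m\bZ$ collapses to a single term times $p^{2m}$ (identity \eqref{eq:Eso-3}), which exactly cancels the normalization; the residual diagonal translation is absorbed using only that $\varphi$ is spherical at $p$ with trivial central character. The resulting eigenvalues are $1$, $(\xi\tau)_\fp(p)$, and $(\xi\tau)_{\bar\fp}(p)p^k$ for $U^+_{p,2}$, $U^+_{p,3}$, $U^-_{p,1}$ respectively — no contribution from $\pi_p$ at all, and no appeal to $M_p$ (which commutes with right translation and so plays no role here) or to the doubling functional equation (which the paper uses only in \S\ref{sec:degFJ}, for the constant term). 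A Jacquet-module analysis of the Klingen induced representation would at best constrain the possible semi-ordinary eigenvalues; it cannot by itself show that the specific section produced by the construction lies in (let alone spans) the unit-eigenvalue eigenspace. That identification is exactly the computation your proposal defers, and the tool you point to for it is the wrong one.
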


\begin{proof}
From the construction of the family $\bfE^\Kling_\varphi$, it is easy to see that its evaluation $\tau_{p\adic}$ satisfying the conditions in Theorem~\ref{thm:Sieg-F} equals the form given in \eqref{eq:Kling-int} normalized by the factor in \eqref{eq:normalization}.  By the doubling method formula (Theorem~\ref{thm:doubling}), we know that \eqref{eq:Kling-int} is a Klingen Eisenstein series inducing $\xi_0\tau_0|\cdot|^{\frac{k-3}{2}}\boxtimes\pi^{\GU(2)}$.

In order to show that  $\bfE^\Kling_\varphi$ is a semi-ordinary family, for all $\tau$ satisfying the conditions in Theorem~\ref{thm:Sieg-F} and $(\xi\tau)_\fp$ ramified, we look at the action of $\bU_p$-operators on the form \eqref{eq:Kling-int}, which has weight $(0,0,0;k)$. Given integers $m,m',m''\geq 0$, by the formula for adelic $\bU_p$-operators in \eqref{eq:adeleUp} and our choice of the local section at $p$ in \S\ref{sec:sec-choice}, the action of $(U^+_{p,2})^{m}(U^+_{p,3})^{m'}(U^-_{p,1})^{m''}$ on the form \eqref{eq:Kling-int} can be computed by considering
\begin{equation}\label{eq:Eso-1}
   p^{2m+km''}\int_{N(\bZ_p)} f^{\text{big-cell}}_p\left(\frac{3-k}{2},(\xi_0\tau_0)^{-c}\right)\left(gu\begin{psm}p^{m+m'}\\&p^{m+m'}\\&&p^{m'}\\&&&p^{-m''}\\&&&&1\\&&&&&1\end{psm}\right)\,du,
\end{equation}
where we identify $\U(3,3)(\bQ_p)$ with $\GL_6(\bQ_p)$ via $\varrho_\fp:\cK\otimes_\bQ\bQ_p\ra\bQ_p$. Writing $g=\begin{pmatrix}A&B\\C&D\end{pmatrix}$ in $3\times 3$ blocks, we have
\begin{equation}\label{eq:Eso-2}
\begin{aligned}
  \eqref{eq:Eso-1}=&\, p^{2m+km''}\cdot \left((\xi\tau)_\fp(p)p^{3}\right)^{2m+3m'}\left((\xi\tau)_{\bar{\fp}}(p)p^{3}\right)^{m''}\\
  &\times p^{-4m-3m'-3m''}\sum_{u_1,u_2\in \bZ/p^m\bZ}\sum_{\substack{{v_1,v_2\in\bZ/p^{m+m'+m''}}\\ v_3\in\bZ/p^{m'+m''}\bZ}}\\
  &\quad(\cF^{-1}\alpha_{\xi\tau,p})\left(\begin{psm}p^{-m-m'}\\&p^{-m-m'}\\&&p^{-m'}\end{psm}\begin{psm}1&&-u_1\\&1&-u_2\\&&1\end{psm}\left(C^{-1}D+\begin{psm}v_1&0&0\\v_2&0&0\\v_3&0&0\end{psm}\right)\begin{psm}p^{-m''}\\&1\\&&1\end{psm}\right).
\end{aligned}
\end{equation}
A direct computation by using the formula \eqref{eq:inv-F} shows that when $(\xi\tau)_\fp$ is ramified
\begin{equation}\label{eq:Eso-3}
\begin{aligned}
   &\sum_{u_1,u_2}\sum_{v_1,v_2,v_3}(\cF^{-1}\alpha_{\xi\tau,p})\left(\begin{psm}p^{-m-m'}\\&p^{-m-m'}\\&&p^{-m'}\end{psm}\begin{psm}1&&-u_1\\&1&-u_2\\&&1\end{psm}\left(C^{-1}D+\begin{psm}v_1&0&0\\v_2&0&0\\v_3&0&0\end{psm}\right)\begin{psm}p^{-m''}\\&1\\&&1\end{psm}\right)\\
   =&\,p^{2m}(\cF^{-1}\alpha_{\xi\tau,p})\left(C^{-1}D\begin{psm}1\\&p^{-m-m'}\\&&p^{-m-m'}\end{psm}\right).
\end{aligned}
\end{equation}
Combining \eqref{eq:Eso-2} and \eqref{eq:Eso-3}, we obtain

\begin{align*}
   \eqref{eq:Eso-1}
   =&\,p^{6m+6m'}(\xi\tau)_\fp(p)^{2m+3m'}\left((\xi\tau)_{\bar{\fp}}(p)p^k\right)^{m''}\cdot \left(\xi\tau)_{\bar{\fp}}(p)p^3\right)^{-2m-2m'}\\
    &\times f^{\text{big-cell}}_p\left(\frac{3-k}{2},(\xi_0\tau_0)^{-c}\right)\left(g\begin{psm}\bid_4\\&p^{-m-m'}\cdot\bid_2\end{psm}\right),\\
    =&\,(\xi\tau)_\fp(p)^{m'}\left((\xi\tau)_{\bar{\fp}}(p)p^k\right)^{m''}\cdot (\xi\tau)_{\bar{\fp}}(\xi\tau)^{-1}_{{\fp}}(p)^{-2m-2m'}\\
    &\times f^{\text{big-cell}}_p\left(\frac{3-k}{2},(\xi_0\tau_0)^{-c}\right)\left(g\begin{psm}\bid_4\\&p^{-m-m'}\cdot\bid_2\end{psm}\right)
\end{align*}
and the corresponding Siegel Eisenstein series paired with $\varphi\cdot (\xi_0\tau_0)^{-1}\circ\det$ over $\U(2)$ gives 
\begin{align*}
    (\xi\tau)_\fp(p)^{m'}\left((\xi\tau)_{\bar{\fp}}(p)p^k\right)^{m''}\cdot \text{the automorphic form given in \eqref{eq:Kling-int}}.
\end{align*}
Here we use the condition that $\varphi$ has trivial central character. Both $(\xi\tau)_\fp(p)$ and $(\xi\tau)_{\bar{\fp}}(p)p^k$ are $p$-adic units, so the form \eqref{eq:Kling-int} is an eigenvector for the action of  $(U^+_{p,2})^{m}(U^+_{p,3})^{m'}(U^-_{p,1})^{m''}$ with eigenvalue a $p$-adic unit. This shows that the evaluations of the Klingen Eisenstein family $\bfE^\Kling_\varphi$ at all $\tau$ satisfying the conditions in Theorem~\ref{thm:Sieg-F} with $(\xi\tau)_\fp$ ramified are semi-ordinary. Hence,  $\bfE^\Kling_\varphi$ is semi-ordinary. 
\end{proof}

\section{The degenerate Fourier--Jacobi coefficients of the Klingen family}\label{sec:degFJ}
This section is devoted to proving Theorem~\ref{thm:const}, which states that the degenerate Fourier--Jacobi coefficients of $\bfE^\Kling_\varphi$ are divisible by the $p$-adic $L$-function attached to $\mr{BC}(\pi)$.

\subsection{The divisibility of the degenerate Fourier--Jacobi coefficients by $p$-adic $L$-functions}

Given a cusp label $\clabel\in C(K^p_f)$, we have the map 
\[
   \Phi_g: \Meas\left(\Gamma_\cK,e_\so V_{\GU(3,1),\,\xi}\wh{\otimes}\,\hat{\cO}^\ur_L\right)^\natural\lra \Meas\left(\Gamma_\cK,M_{\GU(2)}\left(K^p_{f,g}K_{p,0};\hat{\cO}^\ur_L\right)\right),
\]
(which corresponds to the map $\Phi_\clabel$ in the fundamental exact sequence in part (4) of Theorem~\ref{prop:main}). Here $K^p_{f,g}$ is an open subgroup of $\GU(2)(\hat{\bZ}^p)$ depending on the cusp label $g\in C(K^p_f)$. Applying it to our Klingen Eisenstein family $\bfE^\Kling_\varphi$, we get
\[
   \Phi_{g}\left(\bfE^\Kling_\varphi\right)\in \Meas\left(\Gamma_\cK,M_{\GU(2)}\left(K^p_{f,g}K_{p,0};\hat{\cO}^\ur_L\right)\right)
\]
Evaluating it at $g'\in \GU(2)(\bA_{\bQ,f})$ gives
\[
   \Phi_{g}\left(\bfE^\Kling_\varphi\right)(g')\in\Meas\left(\Gamma_\cK,\hat{\cO}^\ur_L\right)\simeq \hat{\cO}^\ur_L\llb \Gamma_\cK\rrb.
\]

\vspace{.5em}

By the work of Kubota--Leopoldt on  $p$-adic $L$-functions for Dirichlet characters, there exists a (unique) $p$-adic $L$-function $\gls{Lxi}\in \hat{\cO}^\ur_L\llb \Gamma_\cK\rrb$ satisfying the interpolation property:
\begin{align}
   \label{eq:pL-xi} \cL^{\Sigma\,\cup\{\ell,\ell'\}}_{\xi,\bQ}(\tau_{p\adic})
   =&\,\frac{\Gamma(k-2)}{(2\pi i)^{k-2}}
   \cdot \gamma_p\left(3-k,(\xi^\bQ_0\tau^\bQ_{0})^{-1}\right) \cdot L^{\Sigma\,\cup\{\infty,p,\ell,\ell'\}}\left(k-2,\xi^\bQ_0\tau^\bQ_0\right)
\end{align}
for all algebraic Hecke character $\tau:\cK^\times\backslash\bA^\times_\cK\ra\bC^\times$ such that $\tau_{p\adic}$ factors through $\Gamma_\cK$ and $\tau^{\bQ}_\infty=\sgn^k|\cdot|^k_{\bR}$. 

By \cite{EisWan} (and the archimedean computation in \cite{AZIU}), there exists a (unique) $p$-adic $L$-function   
$\gls{Lpipartial}\in \hat{\cO}^\ur_L\llb \Gamma_\cK\rrb$ satisfying the interpolation property: for all algebraic Hecke character $\tau:\cK^\times\backslash\bA^\times_\cK\ra\bC^\times$ such that $\tau_{p\adic}$ factors through $\Gamma_\cK$ and $\xi\tau$ has $\infty$-type $\left(k_1,k_2\right)$ with $k_1,k_2\in\bZ$, $k_1\leq 0$, $k_2\geq 2-k_1$, 
\begin{equation}\label{eq:pL-pi} 
\begin{aligned}
   &\cL^{\Sigma\,\cup\{\ell,\ell'\}}_{\pi,\cK,\xi}(\tau_{p\adic})\\
   =&\,C^{\Sigma\,\cup\{\ell,\ell'\}}_{\pi,\xi}\cdot  \left(\frac{\Omega_p}{\Omega_\infty}\right)^{2(k_2-k_1)}\frac{\Gamma(k_2)\Gamma(k_2-1)}{(2\pi i)^{2k_2-1}}
   \cdot \gamma_p\left(\frac{3-(k_1+k_2)}{2},\pi^\vee_{p}\times (\xi_0\tau_0)^{-1}_{\bar{\fp}}\right)\\
    &\times L^{\Sigma\,\cup\{\infty,p,\ell,\ell'\}}\left(\frac{k_1+k_2-1}{2},\mr{BC}(\pi)\times\xi_0\tau_0\right),
\end{aligned}
\end{equation}
with $C^{\Sigma\,\cup\{\ell,\ell'\}}_{\pi,\xi}$ a nonzero number independent of $\tau_{p\adic}$.

\begin{thm}\label{thm:const}
Suppose that $\varphi$ belongs to the space \eqref{eq:M-varphi}. Then for all cusp labels $g\in C(K^p_f)$ and $g'\in \GU(2)(\bA_{\bQ,f})$,  we have
\[
    \left(\Phi_{g}\left(\bfE^\Kling_\varphi\right)(g')\right)\subset \left(\cL^{\Sigma\,\cup\{\ell,\ell'\}}_{\xi,\bQ}\cL^{\Sigma\,\cup\{\ell,\ell'\}}_{\pi,\cK,\xi}\right)
\]
in $\hat{\cO}^\ur_L\llb \Gamma_\cK\rrb$.
\end{thm}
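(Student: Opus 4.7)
The plan is to identify the degenerate Fourier--Jacobi coefficients $\Phi_g(\bfE^\Kling_\varphi)$ with constant terms of the Klingen Eisenstein series along the Klingen parabolic $P_{\GU(3,1)}$, and then use the standard constant-term formula to extract the $L$-factors. Concretely, the Siegel operator $\Phi_g$ is the geometric avatar of the constant term map along $P_{\GU(3,1)}$ at the cusp indexed by $g$, so for each classical specialization $\tau_{p\adic}$ at an arithmetic point (as in Theorem~\ref{thm:Sieg-F}), the value $\Phi_g(\bfE^\Kling_\varphi)(g')(\tau_{p\adic})$ equals the constant term along $P_{\GU(3,1)}$ of the normalized Klingen Eisenstein series $\bfE^\Kling_\varphi(\tau_{p\adic})$, evaluated at an element of $\GU(2)(\bA_\bQ)$ determined by $(g,g')$, twisted by $(\xi\tau)^{-1}_{p\adic}\circ\det$.

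Since these arithmetic points are Zariski-dense in $\mr{Spec}\,\hat{\cO}^\ur_L\llb\Gamma_\cK\rrb$, it suffices to establish the divisibility on each such specialization with $p$-adic bound on denominators. Using the standard constant-term computation for a cuspidal parabolically-induced Eisenstein series, one has
\[
  \left(E^\Kling(\,-\,;F(s,\xi_0\tau_0))\right)_{P_{\GU(3,1)}} = F(s,\xi_0\tau_0) + M(s)F(s,\xi_0\tau_0),
\]
where $M(s)$ is the standard intertwining operator attached to the longest Weyl element in $W^{P}$. Decomposing $M(s) = \bigotimes_v M_v(s)$ and using the explicit unramified calculation (Gindikin--Karpelevich), the unramified part of $M(s)F(s,\xi_0\tau_0)$ is the spherical section rescaled by the ratio
\[
  \frac{L^{\Sigma\cup\{\infty,p,\ell,\ell'\}}(s+\tfrac12,\mr{BC}(\pi)\times\xi_0\tau_0)\cdot L^{\Sigma\cup\{\infty,p,\ell,\ell'\}}(2s+1,\xi^\bQ_0\tau^\bQ_0)}
       {L^{\Sigma\cup\{\infty,p,\ell,\ell'\}}(s+\tfrac32,\mr{BC}(\pi)\times\xi_0\tau_0)\cdot L^{\Sigma\cup\{\infty,p,\ell,\ell'\}}(2s+2,\xi^\bQ_0\tau^\bQ_0\eta_{\cK/\bQ})}.
\]
Specializing at $s = (k-3)/2$, the numerator supplies exactly the critical $L$-values interpolated by $\cL^{\Sigma\cup\{\ell,\ell'\}}_{\pi,\cK,\xi}$ and $\cL^{\Sigma\cup\{\ell,\ell'\}}_{\xi,\bQ}$ (once multiplied by the archimedean gamma factor $\Gamma(k-2)\Gamma(k-1)\Gamma(k)/(2\pi i)^{\bullet}$ produced by the normalization factor in \eqref{eq:normalization}), while the denominator factors give abelian Euler factors that are $p$-adically interpolated by units in $\hat{\cO}^\ur_L\llb\Gamma_\cK\rrb$.

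The remaining work is to show that the ratio of the $\tau$-dependent specialization against the product of the interpolating formulas for $\cL^{\Sigma\cup\{\ell,\ell'\}}_{\pi,\cK,\xi}\cL^{\Sigma\cup\{\ell,\ell'\}}_{\xi,\bQ}$ is an element of $\hat{\cO}^\ur_L$ that in fact varies $p$-adic analytically on $\Gamma_\cK$. At $v \in \Sigma\cup\{\ell,\ell'\}$ and at the archimedean place, this is a matter of directly computing $M_v(s)F_v(s,\xi_0\tau_0)$ against $\varphi_v$ using the choice of local section in \eqref{eq:fv-ram} (which is supported on the big cell, so $M_v$ produces an integral against $\varphi$) and confirming that the resulting local zeta integrals are $p$-integral and depend only on the weight $k$ and the finite-order character $\tau_{\rmf}$ in a manner that interpolates $p$-adically; at $v = p$, the intertwining calculation $M_p(s)f^{\mr{big-cell}}_p$ is exactly the one built into the definition of $f_p(s,\xi_0\tau_0)$ in \eqref{eq:fp}, so the $p$-adic $L$-factor of $\xi^\bQ_0\tau^\bQ_0$ at $p$ appears as part of the normalizing gamma factors, matching the one interpolated by $\cL^{\Sigma\cup\{\ell,\ell'\}}_{\xi,\bQ}$ via \eqref{eq:pL-xi}; the ordinary projection $e_\so$ applied afterward ensures that the semi-ordinary Hecke eigenvalues furnish the modified $\gamma_p$-factor on the Galois side of \eqref{eq:pL-pi}.

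The main obstacle is the bookkeeping at the places $v \in \Sigma\cup\{\ell,\ell'\}$: the local sections $f_v(s,\xi_0\tau_0)$ are supported on the big cell, so the ``degenerate'' term $F(s,\xi_0\tau_0)$ vanishes locally (its support lies on the small cell), and only the intertwined term contributes. Computing this contribution amounts to a triple-product-style local zeta integral at each $v$ against $\varphi_v$, and showing it is $p$-integral requires the careful matching of levels and nebentypus built into our choice of $c_v$ in \S\ref{sec:aux} and the twist \eqref{eq:f-twist}. Putting everything together and verifying analytic interpolation across $\Gamma_\cK$ then gives the desired containment in $\hat{\cO}^\ur_L\llb\Gamma_\cK\rrb$.
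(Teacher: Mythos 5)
Your overall framing is right --- $\Phi_g$ is the constant term along $P_{\GU(3,1)}$ at the cusp indexed by $g$, one reduces to arithmetic specializations by Zariski density, and the constant term is $F(f(s,\xi_0\tau_0),\varphi)+M_{P_{\GU(3,1)}}(s,\xi_0\tau_0)F(f(s,\xi_0\tau_0),\varphi)$. But the heart of your computation attributes the $L$-values to the wrong term, and this is a genuine gap. You claim that the first term vanishes because the local Siegel sections at $v\in\Sigma\cup\{\ell,\ell'\}$ are big-cell supported, and that the $L$-values are then produced by the Gindikin--Karpelevich ratio in $M(s)F$. This conflates the Siegel section $f_v(s,\xi_0\tau_0)$ on $\GU(3,3)$ with the Klingen section $F(f(s,\xi_0\tau_0),\varphi)$ on $\GU(3,1)$: the latter is the doubling integral of the former against $\varphi$, and the big-cell support of $f_v$ does not make it vanish --- on the contrary, the doubling integral $Z_v\left(f_v(s,\xi_0\tau_0),k_v,\varphi_v,\varphi'_v\right)$ is a nonzero volume constant for $k_v$ in the appropriate double coset. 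It is precisely this first term, computed place by place via the local doubling zeta integrals (Lapid--Rallis at unramified places, the functional equation for the doubling integral at $p$), that produces $L^{\Sigma\cup\{\infty,p,\ell,\ell'\}}\left(s+1,\mr{BC}(\pi)\times\xi_0\tau_0\right)$ together with the abelian factor coming from the normalization $d_3^{\Sigma\cup\{\infty,p,\ell,\ell'\}}$, and hence the divisibility by $\cL^{\Sigma\,\cup\{\ell,\ell'\}}_{\pi,\cK,\xi}\cL^{\Sigma\,\cup\{\ell,\ell'\}}_{\xi,\bQ}$. Meanwhile the intertwining term, from which you propose to extract everything, is identically zero at $s=\frac{k-3}{2}$ for $k\geq 6$ by the archimedean computation (the holomorphic vector is killed by $M_\infty(s)$ at these points); this vanishing is exactly what the proof needs, not an extra contribution.

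A quick sanity check on evaluation points confirms the problem: the interpolation formula for $\cL_{\pi,\cK,\xi}$ at a character of $\infty$-type $(0,k)$ involves $L\left(\frac{k-1}{2},\mr{BC}(\pi)\times\xi_0\tau_0\right)$, which is $L(s+1,\cdot)$ at $s=\frac{k-3}{2}$ --- the value produced by the unramified doubling zeta integral --- whereas your Gindikin--Karpelevich numerator $L(s+\tfrac12,\cdot)$ sits at $\frac{k-2}{2}$ and does not match. To repair the argument you should discard the analysis of $M(s)F$ beyond citing its vanishing, and instead carry out the local doubling computations for $F(f(s,\xi_0\tau_0),\varphi)$ itself; the only genuinely delicate place is $p$, where (since $\pi_p$ is unramified and the sufficient-ramification hypotheses of the earlier literature fail) one must use the functional equation for the local doubling zeta integral to convert the intertwining operator built into $f_p(s,\xi_0\tau_0)$ into the $\gamma$-factors appearing in the interpolation formula.
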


\begin{proof}[Proof of Theorem~\ref{thm:const}.]
For $\tau$ as in Theorem~\ref{thm:Sieg-F}, let $E^\Kling_{\varphi,\xi\tau}$ be the classical Klingen Eisenstein series whose corresponding $p$-adic form is $\bfE^\Kling_\varphi(\tau_{p\adic})$. The theorem is proved by computing $\Phi_{g}\left(\bfE^\Kling_\varphi\right)(\tau_{p\adic})$ with 
\begin{align*}
   g&=k_{\Sigma,\ell,\ell'}=\bigotimes\limits_{v\in\Sigma\,\cup\{\ell,\ell'\}}k_v, &k_v\in \U(3,1)(\bZ_v),
\end{align*}
which is essentially the $0$-th Fourier--Jacobi coefficient of the $E^\Kling_{\varphi,\xi\tau}$ at 
\[
   x=\begin{psm}1\\&g'\\&&\nu(g')\end{psm}k_{\Sigma_f,\ell,\ell'}\,w_{3,p},
\] 
with $w_{3,p}\in\U(3,1)(\bQ_p)$ defined by
\[
   \varrho_{\fp}(w_{3,p})=\begin{pmatrix}&&1\\1\\&1\\&&&1\end{pmatrix}.
\] 
Note that $w_{3,p}\in\U(3,1)(\bQ_p)$ is also our fixed element in $P^\flat_{\tt D}(\bZ_p)$ for obtaining the isomorphism \eqref{eq:w-isom}).

By the description of $\bfE^\Kling_\varphi(\tau_{p\adic})$ in \eqref{eq:Kling-int} and the doubling method formula (Theorem~\ref{thm:doubling}), we have
\[
   E^\Kling_{\varphi,\xi\tau}(x)=\begin{array}{ll}\text{normalization factor}\\\text{on the RHS of \eqref{eq:normalization}}\end{array}\cdot E^\Kling\big(x;F(f(s,\xi_0\tau_0),\varphi)\big)\,\Big|_{s=\frac{k-3}{2}}
\]
with $F(f(s,\xi_0\tau_0),\varphi)$ the section in $I_{P_{\GU(3,1)}}(s,\xi_0\tau_0)$ given by
\begin{align*}
   F(f(s,\tau_0),\varphi)(x)&=\int_{\U(2)(\bA_\bQ)}f(s,\xi_0\tau_0)(\imath(x,g_1t))\varphi(g_1t)(\xi_0\tau_0)^{-1}(\det g_1t)\,dg_1\\
   &=\int_{\U(2)(\bA_\bQ)}f(s,\xi_0\tau_0)\left(\imath\left(k_{\Sigma_f,\ell,\ell'}\,w_{3,p},g_1\right)\right)\varphi(g'g_1)(\xi_0\tau_0)^{-1}(\det g_1)\,dg_1,
\end{align*}
where $t$ is an element in $\GU(2)(\bA_\bQ)$ with $\nu(t)=\nu(x)$. What we need to compute is
\begin{equation}\label{eq:const1}
  \int_{U_{P_{\GU(3,1)}}(\bQ)\big\backslash U_{P_{\GU(3,1)}}(\bA_\bQ)} E^\Kling\big(ux;F(f(s,\xi_0\tau_0),\varphi)\big)\,du.
\end{equation}
It follows from \cite[II.1.7]{MWEis} that 
\begin{align*}
   \eqref{eq:const1}=F(f(s,\xi_0\tau_0),\varphi)(x)+M_{P_{\GU(3,1)}}(s,\xi_0\tau_0)F(f(s,\xi_0\tau_0),\varphi)(x).
\end{align*}
The archimedean computation in \cite[Corollary 5.11]{WanL} shows that for our choice of $f_\infty(s,\xi_0\tau_0)$ in \S\ref{sec:sec-choice} and $k\geq 6$,
\begin{equation}\label{eq:van-M}
   \left.M_{P_{\GU(3,1)}}(s,\xi_0\tau_0)F(f(s,\xi_0\tau_0),\varphi)\right|_{s=\frac{k-3}{2}}=0.
\end{equation}

We reduce to compute $F(f(s,\xi_0\tau_0),\varphi)(x)$. Take arbitrary $\varphi'\in \bar{\pi}^{\GU(2)}$ spherical away from $\Sigma\,\cup\{\ell,\ell'\}$ and factorizable with respect to an isomorphism $\bar{\pi}^{\GU(2)}\simeq \bigotimes_v \bar{\pi}^{\GU(2)}_{v}$. Put
\begin{align*}
   &\left<F(f(s,\xi_0\tau_0),\varphi)(\bdot\, k_{\Sigma_f,\ell,\ell'}\,w_{3,p}),\varphi'\right>\\
   =&\, \int_{\GU(2)(\bQ)Z_{\GU(2)}(\bA_\bQ)\backslash\GU(2)(\bA_\bQ)}F(f(s,\xi_0\tau_0),\varphi)(g'\,k_{\Sigma,\ell,\ell'}\,w_{3,p})\,\varphi'(g')\,dg'.
\end{align*}
Then we have
\begin{align*}
   \left<F(f(s,\xi_0\tau_0),\varphi)(\bdot\,k_{\Sigma,\ell,\ell'}\,w_{3,p}),\,\varphi'\right>=&\,\prod_{v\not\in\Sigma\,\cup\{p,\ell,\ell'\}}  Z_v\left(f_v(s,\xi_0\tau_0),\bid_4,\varphi_{v},\varphi'_v\right)\\
   &\times \prod_{v\in\Sigma\,\cup\{\ell,\ell'\}} 
   Z_v\left(f_v(s,\xi_0\tau_0),k_v,\varphi_{v},\varphi'_v\right) 
   \cdot Z_p\left(f_p(s,\xi_0\tau_0),w_{3,p},\varphi_{p},\varphi'_p\right),
\end{align*}
where for $g_v\in\GU(3,1)(\bQ_v)$,
\begin{equation}\label{eq:Zv}
\begin{aligned}
   &Z_v\left(f_v(s,\xi_0\tau_0),g_v,\varphi_{v},\varphi'_v\right)\\
  =&\,\int_{\U(2)(\bQ_v)}f_v(s,\xi_0\tau_0)(\imath(g_v,g_1))(\xi_0\tau_0)^{-1}_{v}(\det g_1)\left<\pi^{\GU(2)}_v(g_1)\varphi_{v},\varphi'_v\right>\,dg_1.
\end{aligned}
\end{equation}

The restriction of $f_v(s,\xi_0\tau_0)(\bdot\,\imath(g_v,\bid_2))$ to $\U(2,2)(\bQ_v)$ is a section in the degenerate principal series of $\U(2,2)(\bQ_v)$ inducing $\xi_0\tau_0|\cdot|^{s+\frac{1}{2}}_{\bA_\cK}$. Hence the integral \eqref{eq:Zv} is essentially a doubling zeta integral. We have the following formulas.

\vspace{.5em}

\item[--] \emph{The archimedean place.}
\begin{align*}
   Z_\infty\left(f_\infty(s,\xi_0\tau_0),\bid_4,\varphi_\infty,\varphi'_\infty\right)
   =\left<\varphi_{\infty},\varphi'_\infty\right>,
\end{align*}
because $f_\infty(s,\xi_0\tau_0)(\imath(\bid_4,\bdot)$, $\varphi_{\infty}$, $\varphi'_\infty$ are all invariant under $\U(2)(\bR)$.

\vspace{.5em}

\item[--] $v\notin \Sigma\,\cup\{\ell,\ell',p\}$.
\begin{align*}
   Z_v\left(f_v(s,\xi_0\tau_0),\bid_4,\varphi_{v},\varphi'_v\right)
   =d_{2,v}\left(s+\frac{1}{2},\xi_0\tau_0\right)^{-1}\cdot L_v\left(s+1,\mr{BC}(\pi)\times\xi_0\tau_{0}\right)
\end{align*}
with
\[
   \gls{d2v}=\prod_{j=1}^2 L_v(2s+1+j,\xi^\bQ_0\tau^\bQ_{0}\eta^{2-j}_{\cK/\bQ}).
\]
This follows from the standard formula for unramified local doubling zeta integrals \cite[Proposition 3, Remark 3]{LapidRallis}.

\vspace{.5em}

\item[--] $v\in\Sigma\,\cup\{\ell,\ell'\}$. By \cite[Lemma 6.20]{WanU31} or an easy direct computation,  one can see that for $\varphi$ in the space \eqref{eq:M-varphi}, 
\[
   Z_v\left(f_v(s,\xi_0\tau_0),k_v,\varphi_v,\varphi'_v\right)=0, \quad\text{if }k_v\notin P_{\U(3,1)}(\bZ_v)\begin{pmatrix}&&1\\&\bid_2\\-1\end{pmatrix} P_{\U(3,1)}(\bZ_v),
\]
and for $k_v$ such that $Z_v\left(f_v(s,\xi_0\tau_0),k_v,\varphi_v,\varphi'_v\right)\neq 0$, 
\begin{align*}
   Z_v\left(f_v(s,\xi_0\tau_0),k_v,\varphi_{v},\varphi'_v\right)
   =\vol(\fY_{c_v})\cdot\left((\xi^\bQ_0\tau^\bQ_0)_v(q_v)|q_v|^{s+\frac{3}{2}}_v\right)^{-c_v}\left<\varphi_{v},\varphi'_v\right>,
\end{align*}
where $\fY_{c_v}$ is an open subgroup of $\{k'_v\in\U(2)(\bZ_v):k'_v\equiv \bid_2\mod q^{c_v}_v\}$, and $c_v$ is the positive integer in the definition of $f_v(s,\xi_0\tau_0)$ in \eqref{eq:fv-ram}.

\vspace{.5em}
\item[--] \emph{The place $p$.} 
\begin{equation}\label{eq:Zp-formula}
\begin{aligned}
    &Z_p\left(f_p(s,\xi_0\tau_0),w_{3,p},\varphi_p,\varphi'_p\right)\\
    =&\,\gamma_p\left(-2s,(\xi^\bQ_0\tau^\bQ_{0})^{-1}\right) 
   \gamma_p\left(-s,\pi_p\times(\xi_0\tau_0)^{-1}_{\bar{\fp}}\right)\left<\varphi_{p},\varphi'_p\right>
\end{aligned}
\end{equation}
See \S\ref{sec:Z_p} for the computation. Note that we only need the value at $w_{3,p}\in\GU(3,1)(\bQ_p)$ rather than a general $k_p\in \GU(3,1)(\bZ_p)$. This significantly simplifies the computation. The value at a general $k_p\in \GU(3,1)(\bZ_p)$ can be difficult to compute since intertwining operators are not easy to compute completely. 

\vspace{1em}

Combing the local formulas, since $\varphi'$ is arbitrary, we see that for $\tau$ as in Theorem~\ref{thm:Sieg-F},  $F(f(s,\xi_0\tau_0),\varphi)(\bdot\, k_{\Sigma,\ell,\ell'}\,w_{3,p})\big|_{s=\frac{k-3}{2}}$ is either $0$ or equals 
\begin{align*}
   &d^{\,\Sigma\,\cup\{\infty,p,\ell,\ell'\}}_3(s,\xi_0\tau_0)^{-1}
    \cdot C_{\Sigma\cup\{\ell,\ell'\}}\cdot  \gamma_p\left(-2s,(\xi^\bQ_0\tau^\bQ_{0})^{-1}\right) 
    L^{\Sigma\,\cup\{\infty,p,\ell,\ell'\}}\left(2s+1,\xi^\bQ_0\tau^\bQ_0\right)\\
    &\times \gamma_p\left(-s,\pi_p\times(\xi_0\tau_0)^{-1}_{\bar{\fp}}\right)
     L^{\Sigma\,\cup\{\infty,\ell,\ell',p\}}\left(s+1,\mr{BC}(\pi)\times\xi_0\tau_0\right)\Big|_{s=\frac{k-3}{2}}\cdot \varphi
\end{align*}
for some $C_{\Sigma\cup\{\ell,\ell'\}}\in \cO_L$ independent of $\tau$. Combining this formula with the normalization ~\eqref{eq:normalization}, we see that 
$\Phi_{g}\left(\bfE^\Kling_\varphi\right)(g')$, for all $g'\in\GU(2)(\bA_{\bQ,f})$, is divisible by an element in $\hat{\cO}^\ur_L\llb\Gamma_\cK\rrb$ whose value equals that of $\cL^{\Sigma\,\cup\{\ell,\ell'\}}_{\pi,\xi,\cK}\cL^{\Sigma\,\cup\{\ell,\ell'\}}_{\xi,\bQ}$ at  all $\tau_{p\adic}$ as in Theorem~\ref{thm:Sieg-F}. Evaluations at those $\tau_{p\adic}$'s  already uniquely determine elements in  $\hat{\cO}^\ur_L\llb\Gamma_\cK\rrb$. Therefore, it follows that $\Phi_{g}\left(\bfE^\Kling_\varphi\right)(g')$ is divisible by $\cL^{\Sigma\,\cup\{\ell,\ell'\}}_{\xi,\bQ}\cL^{\Sigma\,\cup\{\ell,\ell'\}}_{\pi,\cK,\xi}$.

\end{proof}

\subsection{The computation of local zeta integrals at $p$}\label{sec:Z_p}

The ramification conditions in \cite[Definition 6.30]{WanU31} on $\xi_{0,p}\tau_{0,p}$ and the characters from which $\pi_p$ is induced are not satisfied in our case here because our $\pi_p$ is unramified. Those conditions are used  {\it loc.cit} to simplify the computation involving the intertwining operators. Here, we use the functional equation for doubling zeta integrals to handle the intertwining operator and compute $Z_p\left(f_p(s,\xi_0\tau_0),w_{3,p},\varphi_p,\varphi'_p\right)$.

\begin{proof}[Proof of \eqref{eq:Zp-formula}.]
Inside the degenerated principal series on $\GU(2,2)(\bQ_p)$ inducing the character $(\xi_0\tau_0)^{-c}_{p}|\cdot|^{-s}$, we define the section $f_p^{\U(2,2)}(-s,(\xi_0\tau_0)^{-c})$  as
\begin{align*}
   f^{\U(2,2)}_{p}(-s,(\xi_0\tau_0)^{-c})\left(g_2=\begin{pmatrix}A&B\\C&D\end{pmatrix}\right)
   = |\det C\ltrans{\bar{C}}|^{s-1}_p(\xi_0\tau_0)_p\left(\det C\right)\cdot \cF^{-1}\alpha_{\xi\tau,p}\begin{pmatrix}0&C^{-1}D\\0&0\end{pmatrix},
\end{align*}
where the Schwartz function $\alpha_{\xi\tau,p}$ is defined in \eqref{eq:inv-F}. Then we have
\begin{equation}\label{eq:fU22}
\begin{aligned}
   &\int_{y_1\in\bQ_p,y_2\in\cK^2_p}f^{\text{big-cell}}_p(-s,(\xi_0\tau_0)^{-c})\left( 
   \begin{psm}&&-1\\&\bid_2\\1\\&&&1\end{psm}
   \begin{psm}1&-y_2&y_1&\\&\bid_2&\\&&\ltrans{\bar{y}}_2&1\end{psm}
    \jmath(g_2)\Upsilon^{-1}_p\imath(w_{3,p},\bid_2)\Upsilon_p
   \right)\,dy_1dy_2\\
   =&\,f^{\U(2,2)}_p\left(-s-\frac{1}{2},(\xi_0\tau_0)^{-c}\right)(g_2),
\end{aligned}
\end{equation}
where $\jmath:\U(2,2)\ra\U(3,3)$ is defined as 
\[
   \jmath\begin{pmatrix}A&B\\C&D\end{pmatrix}=\begin{pmatrix}1\\&A&&B\\&&1\\&C&&D\end{pmatrix}.
\]
To see \eqref{eq:fU22}, writing $g_2=\begin{pmatrix}A&B\\C&D\end{pmatrix}$, we have
\begin{align*}
    &\varrho_\fp\left(\begin{psm}&&-1\\&\bid_2\\1\\&&&1\end{psm}
   \begin{psm}1&-y_2&y_1&\\&\bid_2&\\&&\ltrans{\bar{y}}_2&1\end{psm}
    \jmath(g_2)\Upsilon^{-1}_p\imath(w_{3,p},\bid_2)\Upsilon_p\right)\\
    =&\,\varrho_\fp\begin{pmatrix}0&0&-1&0\\0&A&0&B\\1&-y_2A&y_1&-y_2B\\0&C&\ltrans{\bar{y}}_2&D\end{pmatrix}\begin{pmatrix}&1\\\bid_2\\&&1\\&&&\bid_2\end{pmatrix}
    =\varrho_\fp\begin{pmatrix}0&0&-1&0\\A&0&0&B\\-y_2A&1&y_1&-y_2B\\C&0&\ltrans{\bar{y}}_2&D\end{pmatrix}
\end{align*}
and  
\begin{align*}
   \varrho_\fp\left(\begin{pmatrix}-y_2A&1\\C&0\end{pmatrix}\begin{pmatrix}^{-1}y_1&-y_2B\\\ltrans{\bar{y}}_2&D\end{pmatrix}\right)
   =\varrho_\fp\begin{pmatrix}C^{-1}\ltrans{\bar{y}}_2&C^{-1}D\\y_1+y_2AC^{-1}\ltrans{\bar{y}}_2&y_2\ltrans{\bar{C}}^{-1}\end{pmatrix}.
\end{align*} 
It follows that the integrand of the left hand side of \eqref{eq:fU22} equals
\begin{align*}
   f^{\U(2,2)}_p\left(-s+\frac{1}{2},(\xi_0\tau_0)^{-c}\right)(g_2)\cdot\mathds{1}_{\bZ_p}(y_1)\cdot \mathds{1}_{\cO^2_{\cK,p}}(y_2\ltrans{\bar{C}}^{-1}),
\end{align*}
so 
\[  
   \text{LHS of \eqref{eq:fU22}}=f^{\U(2,2)}_p\left(-s+\frac{1}{2},(\xi_0\tau_0)^{-c}\right)(g_2)\cdot\det|C\ltrans{\bar{C}}|_p=f^{\U(2,2)}_p\left(-s-\frac{1}{2},(\xi_0\tau_0)^{-c}\right)(g_2).
\]
Put $\cS_0=\begin{pmatrix}1&-\frac{\zeta_0}{2}\\-1&-\frac{\zeta_0}{2}\end{pmatrix}$ and $\Upsilon_{0,p}$ to be the element in $\U(2,2)(\bQ_p)$ such that $\varrho_\fp(\Upsilon_{0,p})=\cS^{-1}_0$.  Set 
\begin{align}
   \label{eq:imath0}\imath_0:\U(2)\times\U(2)&\lra \U(2,2), &\imath_0(g_1,g'_1)=\cS^{-1}_0\begin{pmatrix}g_1\\&g'_1\end{pmatrix}.
\end{align} 
By the definition of the intertwining operator $M_p(-s,\xi\tau^{-1}_0(\xi_0\tau_0)^{-c})$ (in \eqref{eq:intw}) and \eqref{eq:fU22}, 
\begin{align*}
   &M_p(-s,(\xi_0\tau_0)^{-c})f^{\text{big-cell}}_p(-s,(\xi_0\tau_0)^{-c})\left(\imath(w_{3,p},g_1)\right)\\
   =&\int_{\begin{psm}y_1&y_2\\ \ltrans{\bar{y}}_2&y_4\end{psm}\in\Her_3(\cK_p)} 
   f^{\text{big-cell}}_p(-s,(\xi_0\tau_0)^{-c})\left(
   \begin{psm}&&-1\\&\bid_2\\1\\&&&1\end{psm}
   \begin{psm}1&-y_2&y_1&\\&\bid_2&\\&&\ltrans{\bar{y}}_2&1\end{psm}\right.\\
   &\hspace{12em}\left.\begin{psm}1\\&&&-\bid_2\\&&1\\&\bid_2\end{psm}\begin{psm}1\\&\bid_2&&y_4\\&&1\\&&&\bid_2\end{psm}
   \imath(w_{3,p},g_1)\Upsilon_p
   \right)\,dudy_4\\
   =&\,M^{\U(2,2)}_p\left(-s-\frac{1}{2},(\xi_0\tau_0)^{-c}\right) f^{\U(2,2)}_p(\left(-s-\frac{1}{2},(\xi_0\tau_0)^{-c}\right)\left(\imath_0(1,g_1)\Upsilon_{0,p}\right),
\end{align*}
and
\begin{align*}
   f_p(s,\xi_0\tau_0)\left(\imath(w_{3,p},g_1)\right)=&\,\gamma_p\left(-2s,(\xi^\bQ_0\tau^\bQ_{0})^{-1}\right)\gamma_p\left(-2s-1,(\xi^\bQ_0\tau^{\bQ}_{0})^{-1}\eta_{\cK/\bQ}\right)\gamma_p\left(-2s-2,(\xi^\bQ_0\tau^{\bQ}_{0})^{-1}\right)\\
   &\times M^{\U(2,2)}_p\left(-s-\frac{1}{2},(\xi_0\tau_0)^{-c}\right) f^{\U(2,2)}_p(\left(-s-\frac{1}{2},(\xi_0\tau_0)^{-c}\right)\left(\imath_0(1,g_1)\Upsilon_{0,p}\right).
\end{align*}
By the functional equation of the local doubling zeta integral \cite[(19)(25)]{LapidRallis}, 
\begin{align*}
   &\int_{\U(2)(\bQ_p)}M^{\U(2,2)}_p\left(-s-\frac{1}{2},(\xi_0\tau_0)^{-c}\right) f^{\U(2,2)}_p\left(-s-\frac{1}{2},(\xi_0\tau_0)^{-c}\right)\left(\imath_0(1,g_1)\Upsilon_{0,p}\right)\\
   &\hspace{15em}\times(\xi_0\tau_0)^c_{p}(\det g_1)\left<\pi^{\GU(2)}_p(g_1)\varphi_{p},\varphi'_p\right>\,dg_1\\
   =&\,c_{2,p}\left(-s-\frac{1}{2},(\xi_0\tau_0)^{-c},\begin{psm}&\bid_2\\\bid_2\end{psm}\right)\gamma_p\left(-s,\mr{BC}(\pi)\times (\xi_0\tau_0)^{-c}\right)\\
   &\times\int_{\U(2)(\bQ_p)} f^{\U(2,2)}_p\left(-s-\frac{1}{2},(\xi_0\tau_0)^{-c}\right)\left(\imath_0(1,g_1)\Upsilon_{0,p}\right)(\xi_0\tau_0)^{c}_{p}(\det g_1)\left<\pi^{\GU(2)}_p(g_1)\varphi_{p},\varphi'_p\right>\,dg_1,
\end{align*}
with the factor
\[
   c_{2,p}\left(-s-\frac{-1}{2},(\xi_0\tau_0)^{-c},\begin{psm}&\bid_2\\\bid_2\end{psm}\right)=\gamma_p\left(-2s-1,(\xi^\bQ_0\tau^{\bQ}_0)^{-1}\eta_{\cK/\bQ}\right)^{-1}\gamma_p\left(-2s-2,(\xi^\bQ_0\tau^{\bQ}_0)^{-1}\right)^{-1}
\]
as defined in \eqref{eq:cns}. Therefore,
\begin{equation}\label{eq:intU22}
\begin{aligned}
   Z_p\left(f_p(s,\xi_0\tau_0),w_{3,p},\varphi_p,\varphi'_p\right)
   =&\,\gamma_p\left(-2s,(\xi^\bQ_0\tau^\bQ_{0})^{-1}\right)\gamma_p\left(-s,\mr{BC}(\pi)\times (\xi_0\tau_0)^{-c}\right)\\
   &\hspace{-11em}\times\int_{\U(2)(\bQ_p)} f^{\U(2,2)}_p\left(-s-\frac{1}{2},(\xi_0\tau_0)^{-c}\right)\left(\imath_0(1,g_1)\Upsilon_{0,p}\right)(\xi_0\tau_0)^{c}_{p}(\det g_1)\left<\pi^{\GU(2)}_p(g_1)\varphi_{p},\varphi'_p\right>\,dg_1.
\end{aligned}
\end{equation}
By noting that
\[
  f^{\U(2,2)}_p\left(-s-\frac{1}{2},(\xi_0\tau_0)^{-c}\right)\left(\imath_0(1,g_1)\Upsilon_{0,p}\right)=\cF^{-1}\alpha_{\xi\tau,p}\begin{pmatrix}0&g_1\\0&0\end{pmatrix}
\]
and using the formula \eqref{eq:inv-F} for $\cF^{-1}\alpha_{\xi\tau,p}$ and that $\varphi_p$ is spherical, an easy computation shows that 
\begin{equation}\label{eq:Zp1}
   \text{the integral on the RHS of \eqref{eq:intU22}}
   =p^{-2ts-2t}\fg\left((\xi_0\tau_0)^{-1}_{\fp}\right)^2(\xi_0\tau_0)_\fp(p)^{2t}\cdot \left<\varphi_{p},\varphi'_p\right>,
\end{equation}   
where $p^t$, $t\geq 1$, is the conductor of $(\xi_0\tau_0)_\fp$. Since $\pi_p$ is unramified, we have
\begin{equation}\label{eq:Zp2}
   p^{-2ts-2t}\fg\left((\xi_0\tau_0)^{-1}_{\fp}\right)^2(\xi_0\tau_0)_\fp(p)^{2t}=\gamma_p\left(s+1,\pi_p\times(\xi_0\tau_0)_\fp\right)
\end{equation}
Combining \eqref{eq:intU22}, \eqref{eq:Zp1} and \eqref{eq:Zp2}, we get
\begin{align*}
   \eqref{eq:intU22}&=\gamma_p\left(-2s,(\xi^\bQ_0\tau^\bQ_{0})^{-1}\right)\gamma_p\left(-s,\mr{BC}(\pi)\times (\xi_0\tau_0)^{-c}\right)
   \gamma_p\left(s+1,\pi_p\times(\xi_0\tau_0)_\fp\right)\cdot \left<\varphi_{p},\varphi'_p\right>\\
   &=\gamma_p\left(-2s,(\xi^\bQ_0\tau^\bQ_{0})^{-1}\right) 
   \gamma_p\left(-s,\pi_p\times(\xi_0\tau_0)^{-1}_{\bar{\fp}}\right)\cdot \left<\varphi_{p},\varphi'_p\right>.
\end{align*}
\end{proof}


\section{The non-degenerate Fourier--Jacobi coefficients of the Klingen family}\label{sec:nondegFJ}

The map \eqref{eq:FJ-V} of taking the $\beta$-th Fourier--jacobi coefficient along the boundary stratum indexed by the cusp label $\bid_4\in C(K^p_fK^1_{p,n})_\ord$ induces
\[
   \FJ_{\beta}:\Meas\left(\Gamma_\cK, V_{\GU(3,1)}\right)\lra \Meas\left(\Gamma_\cK,V^{J,\beta}_{\GU(2)}\right).
\]
For $u\in \bigotimes_{v\in\Sigma_{\ns}}\U(1)(\bQ_v)$, let 
\begin{equation}\label{eq:E-betau}
   \gls{Ebeta}=\FJ_{\beta} \left(\begin{psm}u\\&\bid_2\\&&u\end{psm}\bfE^\Kling_\varphi\right)\in \Meas\left(\Gamma_\cK,V^{J,\beta}_{\GU(2)}\wh{\otimes}\,\hat{\cO}^\ur_L\right)
\end{equation}
(The group $\GU(3,1)(\bA^p_f)$ acts on the Igusa tower and acts on $V_{\GU(3,1)}$, and $\begin{psm}u\\&\bid_2\\&&u\end{psm}\bfE^\Kling_\varphi$ denotes the action of $\begin{psm}u\\&\bid_2\\&&u\end{psm}$ on $\bfE^\Kling_\varphi$.) The goal of this section is to prove Proposition~\ref{prop:E-Kling-nv} on the nonvanishing properties of the $\bfE^\Kling_{\varphi,\beta,u}$'s.

\vspace{.5em}
In \S\ref{sec:Schro}-\S\ref{sec:Jforms}, we briefly recall some basics we will use about Weil representations, theta series, Jacobi forms and $p$-adic Petersson inner products. In \S\ref{sec:unfolding} and \S\ref{sec:strategy}, we unfold the Siegel Eisenstein series on $\GU(3,3)$ to compute the non-degenerate Fourier--Jacobi coefficients of the Klingen Eisenstein series on $\GU(3,1)$, and sketch our strategy for relating them to certain $L$-values, for which we can apply mod $p$ nonvanishing results. Following that strategy, we need to choose an auxiliary Jacobi form on $\U(2)$ and two CM families on $\U(2)$. \S\ref{sec:theta_1J}-\S\ref{sec:ext-GU2} are about constructing the auxiliary Jacobi form and CM families and some other technical preparations. The desired nonvanishing property is proved in \S\ref{nv-EKling}.

\subsection{The Schr{\"o}dinger model of Weil representation and the intertwining maps for different polarizations}\label{sec:Schro}

The non-degenerate terms in the Fourier--Jacobi expansion of an automorphic form on $\GU(3,1)(\bA)$ are Jacobi forms on the Jacobi group associated $\GU(2)$. 

Let $V$ (resp. $V^-$) be the two dimensional skew-Hermitian space $(\cK^2,\zeta_0)$ (resp. $(\cK^2,-\zeta_0)$).  We write elements in $V,V^-$ as row vectors and fix the basis $v_1=(1,0),v_2=(0,1)$ for $V$ (resp. $v^-_1=(1,0),v^-_2=(0,1)$ for $V^-$). With respect to the fixed basis, the unitary groups $\U(V)$ and $\U(V^-)$ are naturally identified with our $\U(2)$ (defined in \eqref{eq:U(2)}).  

We fix the basis $v_1,v_2,v^-_1,v^-_2$ for $V\oplus V^-$ and identify $\U(V\oplus V^-)$ with the unitary group whose $R$-points are
\[
    \left\{g\in\GL_4(R\otimes_{\bZ}\cO_\cK):\ltrans{\ol{g}}\begin{pmatrix}\zeta_0\\&-\zeta_0\end{pmatrix}g=\begin{pmatrix}\zeta_0\\&-\zeta_0\end{pmatrix}\right\}.
\]
Define the unitary group $\U(2,2)$ as
\begin{align*}
   \U(2,2)(R)&=\left\{g\in\GL_4(R\otimes_{\bZ}\cO_\cK):\ltrans{\ol{g}}\begin{pmatrix}&\bid_2\\-\bid_2\end{pmatrix}g=\begin{pmatrix}&\bid_2\\-\bid_2\end{pmatrix}\right\}.
\end{align*}
We fix the isomorphism 
\begin{align*}
    \U(V\oplus V^-)&\lra \U(2,2), &g&\longmapsto \begin{pmatrix}\bid_2&-\frac{\zeta_0}{2}\\-\bid_2&-\frac{\zeta_0}{2}\end{pmatrix}^{-1}g \begin{pmatrix}\bid_2&-\frac{\zeta_0}{2}\\-\bid_2&-\frac{\zeta_0}{2}\end{pmatrix},
\end{align*}
which induces the embedding 
\begin{align*}
   \U(2)\times\U(2)&\lra\U(2,2), &(g_1,g_2)&\longmapsto  \begin{pmatrix}\bid_2&-\frac{\zeta_0}{2}\\-\bid_2&-\frac{\zeta_0}{2}\end{pmatrix}^{-1}\begin{pmatrix}g_1\\&g_2\end{pmatrix} \begin{pmatrix}\bid_2&-\frac{\zeta_0}{2}\\-\bid_2&-\frac{\zeta_0}{2}\end{pmatrix}.
\end{align*}
For a nonzero $\beta\in\Her_1(\cK)$, denote by $W_\beta$ the one dimensional Hermitian space $(\cK,\beta)$. Fix a basis $w_\beta=1$ for $W_\beta$, and we can identify $\U(W_\beta)$ with the group $\U(1)$ defined by
\[
    \U(1)(R)=\{g\in\GL_1(R\otimes_\bZ\cO_\cK):\bar{g}g=1\}.
\]

We consider the Weil representations for the dual pairs $\U(2)\times\U(1)$ and $\U(2,2)\times\U(1)$. The space $V\otimes_{\bQ} W_\beta$ is a four dimensional $\bQ$-vector space equipped with a non-degenerate symplectic pairing induced from the skew-Hermitian form on $V$ and the Hermitian form on $W_\beta$. The group $\U(V)\times \U(W_\beta)$ embeds into $\Sp(V\otimes_\bQ W_\beta)$ and forms a reductive dual pair. Let $\wt{\Sp}(V\otimes_\bQ W_\beta)$ denote the metaplectic group. Given a pair of Hecke characters $\lambda_V,\lambda_{W_\beta}:\cK^\times\backslash\bA^\times_\cK\ra\bC^\times$ such that $\lambda_V|_{\bA^\times_\bQ}=\eta_{\cK/\bQ}^{\dim_\cK V}$ and $\lambda_{W_\beta}|_{\bA^\times_\bQ}=\eta^{\dim_\cK W_\beta}_{\cK/\bQ}$, there is a splitting 
\begin{equation}\label{eq:splitting}
   \U(V_v)\times \U(W_{\beta,v})\lhra \wt{\Sp}(V_v\otimes_\bQ W_{\beta,v})
\end{equation}
for all places $v$ of $\bQ$. The Weil representation of $\wt{\Sp}(V_v\otimes_\bQ W_{\beta,v})$ (with respect to our fixed additive character $\be_{\bA_\bQ}:\bQ\backslash\bA_\bQ\ra\bC^\times$ in \eqref{eq:bev}) induces the Weil representation of $\U(V_v)\times \U(W_{\beta,v})$. Similarly, with respect to $\lambda_{V^-},\lambda_{V\oplus V^-}:\cK^\times\backslash\bA^\times_\cK\ra\bC^\times$ satisfying $\lambda_{V^-}|_{\bA^\times_\bQ}=\eta_{\cK/\bQ}^{\dim_\cK V^-}$, $\lambda_{V\oplus V^-} |_{\bA^\times_\bQ}=\eta_{\cK/\bQ}^{\dim_\cK V+\dim_\cK V^-}$, we have the Weil representations of $\U(V^-_v)\times U(W_{\beta,v})$ and  $\U(V\oplus V^-_v)\times U(W_{\beta,v})$.

\vspace{.5em}
We choose our $\lambda_V,\lambda_{V^-},\lambda_{W_\beta}$ as
\begin{equation}\label{eq:split-char}
\begin{aligned}
   \lambda_V=\lambda_{V^-}&=\lambda^2,
   &\lambda_{W_\beta}&=\lambda,
\end{aligned}
\end{equation}
where $\lambda$ is the Hecke character fixed in \S\ref{sec:construct-notation}.

\begin{rmk}
In some literature, instead of $\lambda_V=\lambda_{V^-}=\lambda^2$, $\lambda_V=\lambda_{V^-}=\triv$ is used.
\end{rmk}

Put
\begin{align*}
   e_1&=v_1\otimes w_\beta, &e_2&=v_2\otimes w_\beta,
   &e^-_1&=v^-_1\otimes w_\beta, &e^-_2&=v^-_2\otimes w_\beta.
\end{align*}
Then $\bQ e_1\oplus\bQ e_2$ is a maximal isotropic subspace of the symplectic space $V\otimes_\bQ W_\beta$. 
We have the polarizations
\begin{align}
   \label{eq:V-pol}V\otimes W_\beta&=\big(\bQ e_1\oplus\bQ e_2\big)\oplus \big(\bQ \delta e_1\oplus\bQ \delta e_2\big),
\end{align}
(Where $\delta$ is our fixed totally imaginary element in $\cK$.) The Schr{\"o}dinger  model of the Weil representation of $\U(V_v)\times\U(W_{\beta,v})$ is an action of $\U(V_v)\times\U(W_{\beta,v})$ on $\pzS(\bQ_v e_1\oplus\bQ_v e_2)$, the space of Schwartz functions on $\bQ_v e_1\oplus\bQ_v e_2$. We write this action as
\begin{align*}
   \omega_{\beta,v}(g,u): \pzS(\bQ_v e_1\oplus\bQ_v e_2)&\lra \pzS(\bQ_v e_1\oplus\bQ_v e_2),
   &g\in\U(V_v),\,u\in \U(W_{\beta,v}).
\end{align*}
Similarly, we have the polarization
\begin{align}
   \label{eq:V--pol}V^-\otimes W_\beta&=\big(\bQ e^-_1\oplus\bQ e^-_2\big)\oplus \big(\bQ \delta e^-_1\oplus\bQ \delta e^-_2\big).
\end{align}
and the Schr{\"o}dinger model
\begin{align*}
   \omega_{\beta,v}(g,u): \pzS(\bQ_v e^-_1\oplus\bQ_v e^-_2)&\lra \pzS(\bQ_v e^-_1\oplus\bQ_v e^-_2),
   &g\in\U(V^-_v),\,u\in \U(W_{\beta,v}).
\end{align*}
For a place $v=\fv\bar{\fv}$ split in $\cK$, we have $\cK_v\simeq \cK_\fv\times\cK_{\bar{\fv}}$ with $\cK_\fv\simeq\cK_{\bar{\fv}}\simeq \bQ_v$. In addition to the above polarizations, we also consider the polarizations
\begin{align*}
   V_v\otimes W_{\beta,v}&=\big(\cK_\fv e_1\oplus\cK_\fv e_2\big)\oplus \big(\cK_{\bar{\fv}} e_1\oplus\cK_{\bar{\fv}} e_2\big),\\
   V^-_v\otimes W_{\beta,v}&=\big(\cK_\fv e^-_1\oplus\cK_\fv e^-_2\big)\oplus \big(\cK_{\bar{\fv}} e^-_1\oplus\cK_{\bar{\fv}} e^-_2\big),
\end{align*} 
and the intertwining maps
\begin{align}
   \pzS\big(\cK_\fv e_1\oplus\cK_\fv e_2\big)&\lra \pzS\big(\bQ_v e_1+\bQ_ve_2\big),\\
  \label{eq:int-S} \pzS\big(\cK_\fv e^-_1\oplus\cK_\fv e^-_2\big)&\lra \pzS\big(\bQ_v e^-_1+\bQ_ve^-_2\big).
\end{align}

For $V\oplus V^-$, we have the polarization:
\[
   (V\oplus V^-)\otimes W_\beta=\big(\cK(e_1+e^-_1)\oplus \cK(e_2+e^-_2)\big)\oplus \big(\cK(e_1-e^-_1)\oplus \cK(e_2-e^-_2)\big).
\]
Writing an element in $\cK_v(e_1+e^-_1)\oplus \cK_v(e_2+e^-_2)$ as $X\in M_{1,2}(\cK_v)$, for the Schr{\"o}dinger model
\begin{align*}
   \omega_{\beta,v}(g,u): \pzS\big(\cK(e_1+e^-_1)\oplus \cK(e_2+e^-_2)\big)&\lra \pzS\big(\cK(e_1+e^-_1)\oplus \cK(e_2+e^-_2)\big),
   &g\in\U(2,2),\,u\in \U(W_{\beta,v}),
\end{align*}
we have the following formulas:
\begin{equation}\label{eq:Weil-fmla}
\begin{aligned}
   \omega_{\beta,v}(\bid_4,u)\,\Phi(X)&=\Phi(u^{-1}X),\\
    \omega_{\beta,v}\left(\begin{pmatrix}A\\&\ltrans{\bar{A}}\end{pmatrix},1\right)\Phi(X)&=\lambda_v(\det A)|\det A|^{1/2}_{\cK_v}\cdot \Phi(XA),\\
    \omega_{\beta,v}\left(\begin{pmatrix}\bid_2&B\\&\bid_2\end{pmatrix},1\right)\Phi(X)&=\be_v\left(\beta\cdot XB\ltrans{\bar{X}}\right)\cdot \Phi(X),\\
    \omega_{\beta,v}\left(\begin{pmatrix}&-\bid_2\\\bid_2\end{pmatrix},1\right)\Phi(X)&= |\beta|_{\bQ_v} \int \Phi(Y)\cdot \be_v\left(\Tr_{\cK_v/\bQ_v}(\beta \cdot Y\ltrans{\bar{X}})\right)\,dY.
\end{aligned}
\end{equation}

\subsection{The Heisenberg group and Jacobi forms}\label{sec:Jforms}
Jacobi forms on $\U(2)$ and $\U(2,2)$ show up in our computation of the non-degenerate Fourier--Jacobi coefficients of the evaluations at classical points of the Klingen family $\bfE^\Kling$.

\vspace{.5em}

First, we introduce Jacobi forms on $\U(2)$. Recall that $V$ is the skew Hermitian space $(\cK^2,\zeta_0)$. Denote by $\gls{HV}$ be the Heisenberg group associated to $V$. For a $\bQ$-algebra $R$
\[
   H(V)(R)=(V\otimes_{\bQ}R)\ltimes R,
\] 
and the multiplication is
\[
  (x_1,\sigma_1)(x_2,\sigma_2)=\left(x_1+x_2,\sigma_1+\sigma_2+\frac{x_1\zeta_0 \ltrans{\bar{x}_2}+\ol{x_1\zeta_0 \ltrans{\bar{x}_2}}}{2}\right).
\]
It is easy to see that $H(V)$ is isomorphic to $U_{P_{\GU(3,1)}}$ via
\begin{equation}\label{eq:31HU}
   (x,\sigma)\longmapsto u(x,\sigma)=\begin{pmatrix}
   1&x&\sigma+\frac{1}{2}x\zeta_0\ltrans{\bar{x}}\\&\bid_2&\zeta_0\ltrans{\bar{x}}\\&&1
   \end{pmatrix}.
\end{equation}

The Jacobi group associated to $V$ is the semi-direct product $H(V)\rtimes \U(V)$, which we identify with 
a subgroup of $P_{\GU(3,1)}$ by
\begin{equation}\label{eq:H-to-U31}
\begin{aligned}
   \big((x,\sigma),g_1\big)&\longmapsto u(x,\sigma)\,m(g_1),
   &&(x,\sigma)\in H(W_0),\, g_1\in\U(V),
\end{aligned}
\end{equation}
where for $g_1\in \U(V)$,
\[
   m(g_1)=\begin{pmatrix}1\\&g_1\\&&1\end{pmatrix}.
\]

A Jacobi form on $H(V)\rtimes\U(V)$ or a Jacobi form on $\U(V)$ of index $\beta$ is a smooth function on $H(V)(\bQ)\rtimes\U(V)(\bQ)\backslash H(V)(\bA_\bQ)\rtimes\U(V)(\bA_\bQ)$ such that
\begin{enumerate}
\item[--] for all $\sigma\in\bA_\bQ$, the left translation by $\big((0,\sigma),\bid_2\big)$ equals the multiplication by $\be_{\bA_\bQ}(\beta\sigma)$,
\item[--] the right translation of a maximal compact subgroup and the action of the center of the universal enveloping Lie algebra at $\infty$ satisfy finiteness conditions.
\end{enumerate}
It is easy to see that a Jacobi form of index $\beta$ holomorphic at $\infty$ of weight $(0,0)$ corresponds a section in $H^0\left(\sC,\cL(\beta)\right)$ with $\sC$ a torsor over the Shimura variety of $\GU(2)$ of  an abelian scheme isogeneous to the universal abelian scheme, and $\cL(\beta)$ the invertible sheaf over $\sC$ of $\beta$-homogeneous functions. 

\vspace{.5em}

We will also need Jacobi forms on $\U(2,2)$. Denote by $\bV$ the skew Hermitian space $\cK^4$ equipped with the skew Hermitian form $\begin{pmatrix}&\bid_2\\-\bid_2\end{pmatrix}$, and by $\gls{HVV}$ its associated Heisenberg group whose $R$-points for a $\bQ$-algebra $R$ are 
\[
   H(\bV)(R)=(\bV\otimes_\bQ R)\ltimes R.
\]
We write an element in $H(\bV)(R)$ as $(x,y,\sigma)$, $x,y\in R\otimes_\bQ\cK^2$, $\sigma\in R$. The multiplication is given by
\[
   (x_1,y_1,\sigma_1)(x_2,y_2,\sigma_2)=\left(x_1+x_2,y_1+y_2,,\sigma_1+\sigma_2+\frac{x_1\ltrans{\bar{y}_2}+\bar{x}_1\ltrans{y}_2-\ltrans{y_1\bar{x}_2}-\bar{y}_1\ltrans{x_2}}{2}\right).
\]
The Jacobi group associated to $\bV$ is the semi-direct product $H(\bV)\rtimes\U(\bV)$.

Denote by $P_{\GU(3,3)}$ the parabolic subgroup of $\GU(3,3)$ consisting of elements whose entries in the first column are all $0$ except the $(1,1)$ entry, and denote by  $U_{P_{\GU(3,3)}}$ its unipotent subgroup. We identify $H(\bV)$ with $U_{P_{\GU(3,3)}}$ by
\begin{align*}
   (x,y,\sigma)&\longmapsto u(x,y,\sigma)
   =\begin{pmatrix}1&x&\sigma+\frac{x\ltrans{\bar{y}}-y\ltrans{\bar{x}}}{2}&y\\&\bid_2&\ltrans{\bar{y}}&\\&&1\\&&-\ltrans{\bar{x}}&\bid_2\end{pmatrix},
\end{align*}
and identify $H(\bV)\rtimes\U(2,2)$ with a subgroup of $P_{\GU(3,3)}$ by
\begin{equation}\label{eq:H-to-U33}
\begin{aligned}
   \big((x,y,\sigma),g\big)&\longmapsto u(x,y,\sigma)\,m(g),
   &&(x,y,\sigma)\in H(\bW_0),\,g\in\U(2,2).
\end{aligned}
\end{equation}
where for $g=\begin{pmatrix}A&B\\C&D\end{pmatrix}\in\U(2,2)$,
\begin{equation}\label{eq:U22-U33}
    m(g)=\begin{pmatrix} 1\\&A&&B\\&&1\\&C&&D\end{pmatrix}.
\end{equation}
Similarly as above, we define Jacobi forms on $\U(\bV)$ of index $\beta$ to be smooth functions on $H(\bV)(\bQ)\rtimes\U(\bV)(\bQ)\backslash H(\bV)(\bA_\bQ)\rtimes\U(\bV)(\bA_\bQ)$ on which the left translation of $\big((0,0,\sigma),\bid_4\big)$ equal the multiplication of $\be_{\bA_\bQ}(\beta\sigma)$ plus finiteness conditions for the right translation of a maximal compact subgroup and the center of the universal enveloping Lie algebra at $\infty$. 

\vspace{1em}

We have the following embedding of skew Hermitian spaces 
\begin{align*}
   V&\lra \bV, &x&\longmapsto \left(x,-\frac{x\zeta_0}{2}\right).
\end{align*}
It induces the embedding of unitary groups
\begin{align*}
   \U(V)&\lra\U(\bV)=\U(2,2), &g_1&\lra \begin{pmatrix}\bid_2&-\frac{\zeta_0}{2}\\-\bid_2&-\frac{\zeta_0}{2}\end{pmatrix}^{-1}\begin{pmatrix}g_1\\&\bid_2\end{pmatrix}\begin{pmatrix}\bid_2&-\frac{\zeta_0}{2}\\-\bid_2&-\frac{\zeta_0}{2}\end{pmatrix}
\end{align*}
and induces an embedding of the Jacobi groups
\begin{equation}
\begin{aligned}
   H(V)\rtimes \U(V)&\lra H(\bV)\rtimes \U(\bV), \\
   \left((x,\sigma),\begin{pmatrix}a&b\\c&d\end{pmatrix}\right)&\longmapsto \left(\left(x,-\frac{x\zeta_0}{2},\sigma\right),\begin{pmatrix}\bid_2&-\frac{\zeta_0}{2}\\-\bid_2&-\frac{\zeta_0}{2}\end{pmatrix}^{-1}\begin{pmatrix}g_1\\&\bid_2\end{pmatrix}\begin{pmatrix}\bid_2&-\frac{\zeta_0}{2}\\-\bid_2&-\frac{\zeta_0}{2}\end{pmatrix}\right).
\end{aligned}
\end{equation}
From this embedding, we also get the embedding
\begin{equation}\label{eq:JGU(3,3)} 
   \begin{aligned}
   \big(H(V)\rtimes \U(V)\big)\times\U(V^-)&\lra H(\bV)\rtimes \U(\bV), \\
   \big(((x,\sigma),g_1),g_2\big)&\longmapsto \left(\left(x,-\frac{x\zeta_0}{2},\sigma\right),\begin{pmatrix}\bid_2&-\frac{\zeta_0}{2}\\-\bid_2&-\frac{\zeta_0}{2}\end{pmatrix}^{-1}\begin{pmatrix}g_1\\&g_2\end{pmatrix}\begin{pmatrix}\bid_2&-\frac{\zeta_0}{2}\\-\bid_2&-\frac{\zeta_0}{2}\end{pmatrix}\right).
\end{aligned}
\end{equation}

The embeddings \eqref{eq:H-to-U31}\eqref{eq:H-to-U33}\eqref{eq:JGU(3,3)} are compatible with the embedding $\imath$ in \eqref{eq:embedding} in the sense that the following diagram commutes:
\[\xymatrixcolsep{5pc}\xymatrix{
   \big(H(V)\rtimes\U(V)\big)\times\U(V^-) \ar@{^{(}->}[r]^-{\eqref{eq:H-to-U31}\times \id}\ar[d]^-{\eqref{eq:JGU(3,3)}} & \GU(3,1)\times_{\bG_m}\GU(2)\ar[d]^-{\imath}\\
   H(\bV)\rtimes \U(\bV) \ar@{^{(}->}[r]^-{\eqref{eq:H-to-U33}} & \GU(3,3).
}   
\]

\subsection{Theta series}

Given a Schwartz function $\phi\in \bigotimes'_v\pzS(\bQ_ve_1\oplus\bQ_ve_2)$ (with $\phi_v=\mathds{1}_{\bZ_v e_1+\bZ_v e_2}$ for all most all finite places $v$), we define the theta series $\theta(\phi,\bdot)$, which is an automorphic form  on $\U(V)(\bA_\bQ)$, as
\[
   \theta(\phi,g)=\sum_{x\in \bQ e_1\oplus\bQ e_2}\omega_{\beta}(g)\Phi(x).
\]
We can also define the Jacobi theta series $\theta^J(\phi,\bdot,\bdot)$, which is a Jacobi form on $H(V)\rtimes\U(2)$, as
\begin{equation}\label{eq:theta-J}
   \theta^J(\phi,(y,\sigma),g)=\be_{\bA_\bQ}\left(\beta(\sigma+\frac{1}{2}y\zeta_0\ltrans{\bar{x}})\right)\sum_{x\in \bQ e_1\oplus\bQ e_2}\omega_{\beta}(g)\Phi(x+y).
\end{equation}
Similarly, given $\phi\in \bigotimes'_v\pzS(\bQ_v e^-_1\oplus\bQ_v e^-_2)$, we can define the theta series $\theta_\beta(\phi,\bdot)$ on $\U(V^-)(\bA_\bQ)$.

\subsection{The unfolding}\label{sec:unfolding}

\vspace{1em}

Given a holomorphic automorphic form $\cF$ on $\GU(3,1)(\bA)$ and $g\in \GU(3,1)$ and $\beta\in\Her_1(\cK)_{>0}=\bQ_{>0}$, the $\beta$-th Fourier--Jacobi coefficient of $\cF$ at $g$ is defined as 
\begin{align*}
   a_\beta(g;\cF)=\int_{\sigma\in\Her_1(\cK)\backslash\Her_1(\bA_\cK)} \cF\left(\begin{pmatrix}1&&\sigma\\&\bid_2\\&&1\end{pmatrix} g\right)\be_{\bA}(-\beta\sigma)\,d\sigma.
\end{align*}
With $g\in\GU(3,1)$ fixed, the function
\begin{align*}
   H(V)(\bA_\bQ)\rtimes \U(V)(\bA_\bQ)&\lra \bC\\
   \big((x,\sigma),g_1\big)&\longmapsto a_\beta\big(u(x,\sigma)m(g_1);\cF\big)
\end{align*}
is a Jacobi form on $H(V)\ltimes \U(V)$ of index $\beta$.

\vspace{1em}
The Klingen Eisenstein family $\bfE^\Kling_\varphi$ interpolates a normalization of the Klingen Eisenstein series $E^\Kling\big(\bdot;F(f(s,\xi_0\tau_0),\varphi)\big)$, which can be expressed as the integral in \eqref{eq:Kling-int}. We can compute its $\beta$-th Fourier--Jacobi coefficients by first computing 
\begin{equation}\label{eq:E-Sieg-beta}
    E^{\Sieg}_\beta(g;f(s,\xi_0\tau_0))
    =\int_{\Her_1(\cK)\backslash\Her_1(\bA_\cK)}E^\Sieg\left(\begin{pmatrix}1&&\sigma\\&\bid_2\\&&1\\&&&\bid_2\end{pmatrix}g;f(s,\xi_0\tau_0)\right)\be_{\bA_\bQ}(-\beta\sigma)\,d\sigma,
\end{equation}
and then pairing its restriction to $\GU(3,1)\times_{\bG_m} \GU(2)$ with $\varphi$ on $\GU(2)$. By unfolding the Siegel Eisenstein series in the integrand on the right hand side of \eqref{eq:E-Sieg-beta}, it's not difficult to show that
\begin{equation}\label{eq:Esi-beta}
\begin{aligned}
      &E^{\Sieg}_\beta(h;f(s,\xi_0\tau_0))=\sum_{\gamma\in Q_{\U(2,2)}(\bQ)\backslash \U(2,2)(\bQ)}\sum_{x\in\cK^2}\\
      &\hspace{2em}\int_{\Her_1(\bA_\cK)}f(s,\xi_0\tau_0)\left(\begin{pmatrix}&\bid_3\\-\bid_3\end{pmatrix}\begin{pmatrix}1&&\varsigma&x\\ &\bid_2&\ltrans{\bar{x}}\\&&1\\&&&\bid_2\end{pmatrix}m\left(\begin{pmatrix}&\bid_2\\-\bid_2\end{pmatrix}\gamma\right) h\right)\be_{\bA_\bQ}(-\beta\sigma)\,d\varsigma,
\end{aligned}
\end{equation}
where $m(\bdot)$ is the embedding of $\U(2,2)$ into $\U(3,3)$ as defined in \eqref{eq:U22-U33}. (cf \cite[Section 3.3.1]{WanL})

\begin{defn}\label{def:FJ}
For $g_v\in\GU(3,3)(\bQ_v)$, $x_v\in\cK^2_v$ and $f_v(s,\xi_0\tau_0)\in I_{Q_{\GU(3,3)},v}(s,\xi_0\tau_0)$, define
\begin{align*}
   &\glsuseri{FJbetav}\\
   =&\,\int_{\Her_1(\cK_v)} f_v(s,\xi_0\tau_{0})\left(\begin{pmatrix}&\bid_3\\-\bid_3\end{pmatrix}\begin{pmatrix}1&&\varsigma&x_v\\ &\bid_2&\ltrans{\bar{x}_v}\\&&1\\&&&\bid_2\end{pmatrix}m\begin{pmatrix}&\bid_2\\ -\bid_2\end{pmatrix} g_v\right)\be_\bA(-\beta\sigma)\,d\sigma.
\end{align*}
For $g\in\GU(3,3)(\bA_\bQ)$, $x\in\bA^2_\cK$ and $f(s,\xi_0\tau_0) =\otimes_v f_v(s,\xi_0\tau_{0})$, let
\[
   \FJ_\beta\big(g,x;f(s,\xi_0\tau_0)\big)=\otimes_v \FJ_{\beta,v}\big(g_v,x_v;f_v(s,\xi_0\tau_0)\big).
\]
\end{defn}

The unfolding result \eqref{eq:Esi-beta} gives the following proposition.
\begin{prop}\label{prop:FJ}
Let $\beta\in\Her_1(\cK)_{>0}$ and $f(s,\xi_0\tau_0)\in I_{Q_{\GU(3,3)}}(s,\xi_0\tau_0)$. The $\beta$-th Fourier--Jacobi coefficient of the Siegel Eisenstein series $E^\Sieg\big(\bdot\,;f(s,\xi_0\tau_0)\big)$ at $g\in \U(3,3)(\bA_\bQ)$ equals
\[
    E^{\Sieg}_\beta(g;f(s,\xi_0\tau_0))=\sum_{\gamma\in Q_{\U(2,2)}(\bQ)\backslash \U(2,2)(\bQ)}\sum_{x\in\cK^2} \FJ_\beta\big(m(\gamma)g,x;f(s,\xi_0\tau_0)\big),
\]
with $\FJ_{\beta}$ defined as in Definition~\ref{def:FJ} and $m(\bdot)$ the embedding \eqref{eq:U22-U33}.
\end{prop}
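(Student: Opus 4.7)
The plan is to unfold the Siegel Eisenstein series directly inside the $\sigma$-integral \eqref{eq:E-Sieg-beta} defining $E^\Sieg_\beta$. Substituting the automorphic sum $E^\Sieg(u(\sigma)g;f(s,\xi_0\tau_0))=\sum_{\gamma} f(s,\xi_0\tau_0)(\gamma u(\sigma)g)$ with $\gamma$ running over $Q_{\GU(3,3)}(\bQ)\backslash\GU(3,3)(\bQ)$, and interchanging the sum with the integral (legitimate in the convergence range and preserved by meromorphic continuation in $s$), I would organize the cosets by their orbits under right multiplication by the one-parameter unipotent $N_1(\bQ)=\{u(\sigma):\sigma\in\Her_1(\cK)\}$ carrying the $\beta$-character.

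For each orbit $\cO$ with chosen representative $\gamma_\cO$, write $N_{1,\cO}(\bQ)=\{n\in N_1(\bQ):\gamma_\cO n\gamma_\cO^{-1}\in Q_{\GU(3,3)}(\bQ)\}$ for its stabilizer. Standard unfolding then collapses the contribution of $\cO$ into
$$\int_{N_{1,\cO}(\bQ)\backslash N_1(\bA_\bQ)} f(s,\xi_0\tau_0)(\gamma_\cO u(\sigma)g)\,\be_{\bA_\bQ}(-\beta\sigma)\,d\sigma,$$
which vanishes unless the character $\sigma\mapsto\be_{\bA_\bQ}(-\beta\sigma)$ is trivial on $N_{1,\cO}(\bQ)$; since $\beta\in\Her_1(\cK)_{>0}$ is nonzero, this forces $N_{1,\cO}(\bQ)=\{e\}$, so only the ``open'' orbits survive.

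It then remains to parametrize the surviving orbits, and I expect a Bruhat-style computation with the Siegel parabolic to show that a complete set of $Q_{\GU(3,3)}(\bQ)$-coset representatives is given by
$$\begin{pmatrix}&\bid_3\\-\bid_3\end{pmatrix}\begin{pmatrix}1&&0&x\\ &\bid_2&\ltrans{\bar x}&\\ &&1&\\ &&&\bid_2\end{pmatrix} m\!\left(\begin{psm}&\bid_2\\-\bid_2\end{psm}\gamma\right)$$
as $\gamma$ ranges over $Q_{\U(2,2)}(\bQ)\backslash\U(2,2)(\bQ)$ and $x$ over $\cK^2$; the residual parameter $x$ arises from the unipotent direction in the Klingen-type radical of $\GU(3,3)$ not covered by the embedding $m$ of $\U(2,2)$. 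Substituting this parametrization into the surviving orbit integral and invoking the factorizability $f(s,\xi_0\tau_0)=\bigotimes_v f_v(s,\xi_0\tau_0)$ identifies the contribution of each pair $(\gamma,x)$ with the Euler product $\FJ_\beta(m(\gamma)g,x;f(s,\xi_0\tau_0))$ of Definition~\ref{def:FJ}, which yields the asserted double sum.

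The principal obstacle will be this orbit parametrization itself: one must verify both that every open-orbit $Q_{\GU(3,3)}(\bQ)$-coset admits a representative of the displayed shape and that distinct pairs $(\gamma,x)$ give rise to distinct cosets. This is finite-dimensional linear algebra in $\GU(3,3)$, but it requires simultaneously tracking the Siegel parabolic $Q_{\GU(3,3)}$, the embedded $\U(2,2)$ via $m$, and the long Weyl element $\begin{psm}&\bid_3\\-\bid_3\end{psm}$; pinpointing precisely how the $\cK^2$-parameter emerges from the ``missing'' piece of the unipotent radical relative to $m(\U(2,2))$ is the delicate bookkeeping step. Once this is in place, the remainder is a direct rewriting of the collapsed adelic integral into the local product in Definition~\ref{def:FJ}.
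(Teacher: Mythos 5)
Your proposal is correct and follows essentially the same route as the paper: the paper's entire proof consists of asserting the unfolding identity \eqref{eq:Esi-beta} (with the remark that ``it's not difficult to show,'' citing \cite[Section 3.3.1]{WanL}) and observing that the proposition is a restatement of it in terms of Definition~\ref{def:FJ}. Your coset representatives $\begin{psm}&\bid_3\\-\bid_3\end{psm}n_x\,m\big(\begin{psm}&\bid_2\\-\bid_2\end{psm}\gamma\big)$ agree exactly with those appearing in \eqref{eq:Esi-beta}, and the bookkeeping you flag as the remaining step is precisely what the paper delegates to the cited reference.
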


One can easily check that $\FJ_{\beta,v}$ satisfies
\begin{align*}
   \FJ_{\beta,v}\left(u(x,y,\sigma)g,x_0;f_v(s,\xi_0\tau_{0})\right)
   &=\be_v\left(\beta(\sigma+\frac{x\ltrans{\bar{y}}-y\ltrans{\bar{x}}}{2})\right)\,\FJ_{\beta,v}\big(g,x_0+x;f_v(s,\xi_0\tau_{0})\big),\\
   \FJ_{\beta,v}\left(m\begin{pmatrix}A&B\\0&\ltrans{\bar{A}}^{-1}\end{pmatrix}g,x_0;f_v(s,\xi_0\tau_{0})\right)
   &=|\det A\ltrans{\bar{A}}|^{s+1}_v\xi_0\tau_{0}(\det A)\cdot |\det A\ltrans{\bar{A}}|^{\frac{1}{2}}_v\be_v(\beta xB\ltrans{\bar{A}}\ltrans{\bar{x}})\\
   &\quad\times\FJ_{\beta,v}\big(g,x_0 A;f_v(s,\xi_0\tau_{0})\big).
\end{align*} 
Comparing them with the formulas in \eqref{eq:Weil-fmla} for the Schr{\"o}dinger model of Weil representations, we see that for a fixed $g_v\in \U(3,3)(\bQ_v)$, $\FJ_{\beta,v}\big(u(x_v,y_v,\sigma_v)\,m(g_{1,v})g_v,x_0;f_v(s,\xi_0\tau_{0})\big)$ is essentially (a finite sum of) the product
\[
   f_{2,v}\left(s,\xi_0\tau_{0}\lambda^{-1}\right)(g_{1,v})\cdot \omega_{\beta,v}\big(u(x_v,y_v,\sigma_v)\,m(g_{1,v})\big)\Phi_v(x_0),
\]
with $f_{2,v}\left(s,\xi_0\tau_{0}\lambda^{-1}\right)$ a section in the degenerate principal series $I_{Q_{\U(2,2),v}}\left(s,\xi_0\tau_{0}\lambda^{-1}\right)$, $\Phi_v$ a Schwartz function on $\cK^2_v$ and $\omega_{\beta,v}$ the Weil representation of $\U(2,2)$ briefly recalled in \S\ref{sec:Schro}. Both $f_{2,v}\left(s,\xi_0\tau_0\lambda^{-1}\right)$ and $\Phi_v$ are determined by $f_v(s,\xi_0\tau_{0})\in I_{Q_{\GU(3,3)},v}(s,\xi_0\tau_0)$ and $g_v\in\GU(3,3)(\bQ_v)$. The computation at an unramified place $v$ gives
\begin{align*}
   &\FJ_{\beta,v}\big(u(x,y,\sigma) m(g_{1,v}),x_0;f^\sph_v(s,\xi_0\tau_{0})\big)\\
   =&\,L_v\left(2s+3,\xi^\bQ_0\tau^\bQ_0\right)^{-1}\cdot f^\sph_{2,v}(s,\xi_0\tau_0\lambda^{-1})(g_{1,v})\cdot \omega_{\beta,v}\big(u(x,y,\sigma) m(g_{1,v})\big)\mathds{1}_{\cO^2_{\cK,v}}(x_0),
\end{align*} Therefore, the Jacobi form on $H(\bV)\rtimes\U(2,2)$ given by
\begin{align*}
   \big((x,y,\sigma),g\big)&\longmapsto E^{\Sieg}_\beta(u(x,y,\sigma)m(g);f(s,\xi_0\tau_0))
\end{align*}
is essentially (a finite sum of) the product 
\[  
   L\left(2s+3,\xi^\bQ_0\tau^\bQ_0\right)^{-1}\cdot E^\Sieg_2\left(s,\xi_0\tau_{0}\lambda^{-1}\right)\cdot \Theta^J,
\] 
with $E^{\Sieg}_2\left(s,\xi_0\tau_{0}\lambda^{-1}\right)$ a Siegel Eisenstein series on $\U(2,2)$ and $\Theta^J$  a Jacobi theta function on the $H(\bV)\rtimes\U(2,2)$.  The restriction of $\Theta^J$ from $H(\bV)\rtimes\U(2,2)$ to $\big(H(V)\rtimes \U(2)\big)\times\U(2)$ via \eqref{eq:H-to-U33} is essentially (a finite sum of) $\theta^J_3\boxtimes\theta_2$, with $\theta^J_3$ (resp. $\theta_2$) a Jacobi theta function (resp. theta function) on $H(V)\rtimes \U(2)$ (resp. $\U(2)$) attached to a Schwartz function $\phi_3$ (resp. $\phi_2$) on $\bA_{\cK}$. Therefore,
\begin{equation}\label{eq:E-beta}
    \left.E^{\Sieg}_\beta(\bdot;f(s,\xi_0\tau_0))\right|_{\big(H(V)\rtimes \U(2)\big)\times\U(2)}\approx L\left(2s+3,\xi^\bQ_0\tau^\bQ_0\right)^{-1}\cdot \theta^J_3\boxtimes\theta_2 \cdot  \left. E^\Sieg_2\left(s,\xi_0\tau_{0}\right)\right|_{\U(2)\times \U(2)}.
\end{equation}
Here $\approx$ means equal up to normalizations and more precise formulas for local sections are needed to be an actual identity. We will also use the notation $\approx$ several times in the next section. All the identities with  $\approx$ are only for the purpose of illustrating the idea of relating the non-degenerate Fourier--Jacobi coefficients with $L$-values, and will not be used for our rigorous analysis of $\bfE^\Kling_{\varphi,\beta,u}$ starting from \S\ref{sec:theta_1J}.

\subsection{Our strategy of analyzing the non-degenerate Fourier--Jacobi coefficients of the Klingen Eisenstein family}\label{sec:strategy}
Before we move on to the involved computations, we give a brief explanation of how we choose the auxiliary data to study the pairing of \eqref{eq:E-beta} with $\varphi$. The description of this section should also explain why the choices of the auxiliary data in \S\ref{sec:aux} are made for constructing $\bfE^\Kling_\varphi$ and guaranteeing that its non-degenerate Fourier--Jacobi coefficients satisfy the nonvanishing properties needed for applications.

\subsubsection{Choosing the auxiliary $\theta^J_1$}
The first step to analyze \eqref{eq:E-beta} is to pick a suitable Schwartz function $\phi_1$ on $\bA_\cK$, let $\gls{thetaJ1}$ be the associated Jacobi theta function on $H(V)\rtimes\U(2)$ (defined as in \eqref{eq:theta-J}), and define the linear functional
\begin{align*}
   l_{\theta^J_1}:\{\text{Jacobi forms on $H(V)\rtimes\U(2)$}\}&\lra \{\text{automorphic forms on $\U(2)$}\}
\end{align*}
by
\begin{equation}\label{eq:FJ-fcnl}
   l_{\theta^J_1}(\varphi^J)(g_1)=\int_{H(V)(\bQ)\backslash H(V)(\bA)} \theta^J_1\big((x,\sigma),g_1\big)\varphi^J\big((x,\sigma),g_1\big)\,dxd\sigma.
\end{equation}
Applying $l_{\theta^J_1}$ to \eqref{eq:E-beta}, we obtain an automorphic form on $\U(2)\times \U(2)$ whose value at $(g_1,g_2)$ equals
\begin{equation}\label{eq:int-H}
   \int_{H(V)(\bQ)\backslash H(V)(\bA)} \theta^J_1\big((w,\sigma),g_1\big)\cdot E^{\Sieg}_\beta\big(\imath(u(x,\sigma)m(g_1),g_2);f(s,\xi_0\tau_0)\big)\,dxd\sigma.
\end{equation}
It is a standard fact that for two Schwartz functions $\phi,\phi'$ on $\bA_{\cK}$,
\[
    \int_{H(V)(\bQ)\backslash H(V)(\bA)} \theta^J_\phi\big((x,\sigma),g_1\big)\ol{\theta^J_{\phi'}\big((x,\sigma),g_1\big)}\,dxd\sigma=\left<\phi,\ol{\phi'}\right>=\int_{\cK\backslash\bA_\cK} \phi(x)\ol{\phi'(x)}\,dx,
\]
independent of $g_1\in \U(2)$. Thanks to this fact, from \eqref{eq:int-H} we obtain
\begin{equation}
\begin{aligned}
   &l_{\theta^J_1}\left( \left.E^{\Sieg}_\beta(\bdot\,;f(s,\xi_0\tau_0))\right|_{\big(H(V)\rtimes \U(2)\big)\times\U(2)}\right)\\
   \approx &\frac{\left<\phi_1,\phi_3\right>}{L\left(2s+3,\xi^\bQ_0\tau^\bQ_0\right)} \cdot (1\boxtimes\theta_2)\cdot \left.E^\Sieg_2\left(s,\xi_0\tau_{0}\right)\right|_{\U(2)\times\U(2)}.
\end{aligned}
\end{equation}
Here $1\boxtimes\theta_2$ denotes the automorphic form on $\U(2)\times\U(2)$ which is the constant function $1$ on the first factor and $\theta_2$ on the second factor.

For an automorphic form on ${\tt F}$ on $\U(2)$ and a Hecke character $\chi:\cK^\times\backslash\bA^\times_\cK\ra\bC^\times$, we use the notation ${\tt F}^\chi$ to denote the form
\begin{equation}\label{eq:U2-twist}
    {\tt F}^\chi(g)={\tt F}(g)\cdot \chi(\det g),\quad g\in\U(2)(\bA_\bQ).
\end{equation}  
Our Klingen Eisenstein series $E^\Kling\big(\bdot\,;F(f(s,\xi_0\tau_0),\varphi)\big)$ is obtained by pairing the restriction of the Siegel Eisenstein series with $\varphi^{\ol{\xi_0\tau}_0}$, so we have
\begin{equation}
\begin{aligned}
   &l_{\theta^J_1}\left(E^\Kling_\beta\big(\bdot\,\,;F(f(s,\xi_0\tau_0),\varphi)\big)\right)\\
   \approx &\, \frac{\left<\phi_1,\phi_3\right>}{L\left(2s+3,\xi_0^\bQ\tau^\bQ_0\right)} \cdot \left<\theta_2\varphi^{\ol{\xi_0\tau}_0},\left.E^\Sieg_2\left(s,\xi_0\tau_{0}\lambda^{-1}\right)\right|_{\U(2)\times\U(2)}\right>_{1\times\U(2)},
\end{aligned}
\end{equation}
where $\left<\cdot,\cdot\right>_{1\times\U(2)}$ means integration over the second copy of $\U(2)$. 
\vspace{.5em}

The standard doubling method formula \cite{LapidRallis} implies
\[
   \left.E^\Sieg_2\left(s,\xi_0\tau_{0}\lambda^{-1}\right)\right|_{\U(2)\times\U(2)}\approx \sum_h \frac{L\left(s+\frac{1}{2},\mr{BC}(\pi_{h})\times\xi_0\tau_0\lambda^{-1}\right)}{L\left(2s+1,\xi^\bQ_0\tau^\bQ_0\right)\,L\left(2s+2,\xi^\bQ_0\tau^\bQ_0\eta_{\cK/\bQ}\right)}\frac{\ol{h} \boxtimes h^{\xi_0\tau_0\ol{\lambda}}}{\left<\ol{h},h\right>},
\]
where $h$ runs over a certain orthonormal basis of automorphic forms on $\U(2)$ of certain level. By picking an $h$, we have
\begin{equation}\label{eq:lEh1}
\begin{aligned}
   &\left<l_{\theta^J_1}\left(E^\Kling_\beta\big(-;F(f(s,\xi_0\tau_0),\varphi)\big)\right),h\right>\\
   \approx &\,\left<\phi_1,\phi_3\right>\frac{L\left(s+\frac{1}{2},\mr{BC}(\pi_h)\times\xi_0\tau_0\lambda^{-1}\right)}{d_3(s,\xi_0\tau_0)}\left<\theta_2\varphi^{\ol{\xi_0\tau_0}},h^{\xi_0\tau_0\ol{\lambda}}\right>\\
   =&\,\left<\phi_1,\phi_3\right>\frac{L\left(s+\frac{1}{2},\mr{BC}(\pi_h)\times\xi_0\tau_0\lambda^{-1}\right)}{d_3(s,\xi_0\tau_0)}\left<\theta_2^{\ol{\lambda}}\varphi,h\right>,
\end{aligned}
\end{equation}
where $d_3(s,\xi_0\tau_0)$ is as in \eqref{eq:d3}. The Petersson inner product $\left<\theta_2^{\ol{\lambda}}\varphi,h\right>$ is related to the central value of a triple product $L$-function by Ichino's formula.

\subsubsection{Choosing auxiliary $h$ and $\theta$ and relating to $L$-values}\label{sec:hthetaL}
We will choose suitable CM forms $h$ and $\theta$ on $\U(2)$ such that we can apply the available mod $p$ nonvanishing results to study $L\left(s+\frac{1}{2},\mr{BC}(\pi_h)\times\xi_0\tau_0\lambda^{-1}\right)$ and $\left<\theta_2^{\ol{\lambda}}\varphi,h\right>$ in \eqref{eq:lEh1}.

Let $\chi_h$ and $\chi_\theta$ be two unitary Hecke characters of $\cK^\times\backslash\bA^\times_\cK$ such that $\chi_h\chi^c_\theta|_{\bA^\times_{\bQ}}=\triv$ (as chosen in \S\ref{sec:aux}). Let $h_0$ be the theta lift of $\chi^{-1}_h|_{\U(1)}$ to $\U(2)$, and $h$ be the automorphic form on $\U(2)$ obtained from $h_0$ by
\begin{equation}\label{eq:h-twist}
\begin{aligned}
   h(g)&=\chi_h\chi^c_\theta(a)\cdot h_0(g), &g\in\U(2),\,a\in\bA^\times_\cK,\,\det g=a\bar{a}^{-1}.
\end{aligned}
\end{equation}
Thanks to the condition $\chi_h\chi^c_\theta|_{\bA^\times_{\bQ}}=\triv$, the definition of $h(g)$ does not depend on the choice of $a\in\bA^\times_\cK$. 

The form $\theta_2$ does not generate an irreducible representation of $\U(2)$. Let $\theta$ be the projection of $\theta_2$ to its $\chi_\theta\lambda^2|_{\U(1)}$-eigenspace for the action of the center of $\U(2)$. Then $\theta$ is a theta lift of $\chi_\theta\lambda^2|_{\U(1)}$. Since our $\varphi$ is assumed to have the trivial central character and $h$ has the central character $\chi^{-1}_\theta|_{\U(1)}$, we have
\begin{equation}\label{eq:theta'}
   \left<\theta_2^{\ol{\lambda}}\varphi,h\right>=\left<\theta^{\ol{\lambda}}\varphi,h\right>.
\end{equation}

By \cite[Theorem C.5]{GI-FD}, with the choice of the splitting characters for the theta correspondence between $\U(2)$ and $\U(1)$ as in \eqref{eq:split-char}, we have
\begin{equation}\label{eq:L-h}
\begin{aligned}
    \mr{BC}\left(\pi_{h}\right)&=\mr{BC}\left(\pi_{h_0}\right)\otimes\chi_h\chi^c_\theta=\left(\mr{BC}\left(\chi^{-1}_h|_{\U(1)}\right)\lambda^{-1}\oplus \lambda\right)\otimes\chi_h\chi^c_\theta\\
    &=\left(\chi^{-1}_h\chi^c_h\lambda^{-1}\oplus\lambda\right)\otimes\chi_h\chi^c_\theta=\chi^c_h\chi^c_\theta\lambda^{-1}\oplus \chi_h\chi^c_\theta\lambda,
\end{aligned}
\end{equation}
and  
\begin{equation}\label{eq:L-theta}
    \mr{BC}\big(\pi_{\theta^{\ol{\lambda}}}\big)=\big(\mr{BC}\left(\chi_\theta\lambda^2|_{\U(1)}\big)\lambda^{-1}\oplus\lambda\right)\bar{\lambda}\bar{\lambda}^{-c}=\left(\chi_\theta\chi^{-c}_\theta\lambda^3\oplus\lambda\right)\lambda^{-2}=\chi_\theta\chi^{-c}_\theta\lambda\oplus\lambda^{-1}.
\end{equation}
Thus, we have 
\begin{equation}\label{eq:L12}
\begin{aligned}
    L\left(s+\frac{1}{2},\mr{BC}(\pi_h)\times\lambda^{-1}\xi_0\tau_0\right)
    &=L\left(s+\frac{1}{2},\lambda^{-2}\chi^c_h\chi^c_\theta\xi_0\tau_0\right)\,L\left(s+\frac{1}{2},\chi_h\chi^c_\theta\xi_0\tau_0\right)\\
    &=L\left(s+\frac{1}{2},\lambda^2\chi_h\chi_\theta\xi^c_0\tau^c_0\right)\,L\left(s+\frac{1}{2},\chi_h\chi^c_\theta\xi_0\tau_0\right).
\end{aligned}
\end{equation}
By the triple product formula
\begin{equation}\label{eq:triple-1}
\begin{aligned}
   \left<\theta^{\bar{\lambda}}\varphi,h\right>\left<\tilde{\theta}^\lambda_3\tilde{\varphi},\tilde{h}_3\right>
   &\approx \frac{\big<\varphi,\tilde{\varphi}\big>}{L(1,\pi,\mr{Ad})}\,
   \frac{\big<h,\tilde{h}_3\big>}{\zeta_\cK(1)\,L(1,\chi_h\chi^{-1}_h\lambda^2)}\,
   \frac{\big<\theta,\tilde{\theta}_3\big>}{\zeta_\cK(1)\, L(1,\chi_\theta\chi^{-c}_\theta\lambda^2)}\\
   &\quad\times L\left(\frac{1}{2},\mr{BC}(\pi)\times\chi_h\chi_\theta\lambda^2\right)L\left(\frac{1}{2},\mr{BC}(\pi)\times \chi_h\chi^c_\theta\right),
\end{aligned}
\end{equation}
where $\tilde{\varphi}$ (resp. $\tilde{h}_3$, $\tilde{\theta}_3$) are suitable forms taken from the dual representation of $\pi^{\U(2)}=\pi_\varphi$ (resp. $\pi_{h}$, $\pi_{\theta}$). (In our case here, the triple product $L$-function for $\pi^D\times\pi^D_\theta\times\pi^D_h$  factorizes as the product of the two $L$-functions in \eqref{eq:triple-1}. Also, note that Ichino's triple product formula is for automorphic forms on $D^\times$, we actually need to relate the integral on $\U(2)$ for  $\left<\theta^{\bar{\lambda}}\varphi,h\right>\left<\tilde{\theta}^\lambda_3\tilde{\varphi},\tilde{h}_3\right>$ to an integral on $D^\times$. In \S\ref{sec:ext-GU2}, we discuss extending automorphic forms on $\U(2)$ to $\GU(2)$. Thanks to \eqref{eq:DGU}, the integral over $\GU(2)$ is the same as the integral over $D^\times$.)

Combining \eqref{eq:lEh1}\eqref{eq:theta'}\eqref{eq:L12}\eqref{eq:triple-1}, we get
\begin{equation}\label{eq:FJ-L1256}
\begin{aligned}
   &d_3(s,\xi_0\tau_0)\cdot \left<l_{\theta^J_1}\left(E^\Kling_\beta\big(\bdot\,;F(f(s,\xi_0\tau_0),\varphi)\big)\right),h\right>\left<\tilde{\theta}^\lambda_3\tilde{\varphi},\tilde{h}_3\right>\\
   \approx&\,\left<\phi_1,\phi_3\right> \frac{\big<\varphi,\tilde{\varphi}\big>}{L(1,\pi,\mr{Ad})} \cdot \frac{\big<h,\tilde{h}_3\big>}{\zeta_\cK(1)\,L(1,\chi_h\chi^{-1}_h\lambda^2)}\,
   \frac{\big<\theta,\tilde{\theta}_3\big>}{\zeta_\cK(1)\, L(1,\chi_\theta\chi^{-c}_\theta\lambda^2)}\\
   &\times  L\left(s+\frac{1}{2},\lambda^2\chi_h\chi_\theta\xi^c_0\tau^c_0\right)\,L\left(s+\frac{1}{2},\chi_h\chi^c_\theta\xi_0\tau_0\right)
   \cdot  L\left(\frac{1}{2},\mr{BC}(\pi)\times\chi_h\chi_\theta\lambda^2\right)L\left(\frac{1}{2},\mr{BC}(\pi)\times \chi_h\chi^c_\theta\right).
\end{aligned}
\end{equation}

\vspace{.5em}
The relation between $E^\Kling_\beta\big(-;F(f(s,\xi_0\tau_0),\varphi)\big)$ and $L$-values illustrated in \eqref{eq:FJ-L1256} explains the nonvanishing conditions on the $L$-values in \eqref{eq:L-1}\eqref{eq:L2}\eqref{eq:L-5}\eqref{eq:L-6} in our choice of the auxiliary Hecke characters $\chi_\theta$ and $\chi_h$ in \S\ref{sec:aux}.

\vspace{.5em} 
In the following, guided by the strategy described in this section, we carry out the necessary local computations and the construction of the auxiliary objects to prove the the desired property of $\bfE^\Kling_{\varphi,\beta,u}$ with $\beta=1$. The computation is place by place, and we have the following cases:
\begin{enumerate}
\item[--] For the archimedean place, we only need to consider the cases for which there are standard choices of local sections and  computations are easy.
\item[--] For unramified places, we have the standard formulas for spherical sections.
\item[--] For the place $p$ and the places in $\Sigma_\rms\cup\{\ell\}$, we compute precise formulas.
\item[--] For the places in $\Sigma_\ns\cup\{\ell'\}$, we do not attempt to compute precise formulas. At these places, the local data actually do not change when $\tau$ varies in a $p$-adic family, and we only make sure that local integrals contribute a nonzero scalar.
\end{enumerate}

\subsection{The auxiliary Jacobi form $\theta^J_1$}\label{sec:theta_1J}
Given a Schwartz function $\phi$ on $\bA^2_\bQ$ such that $\phi_{\infty}$ is the standard Gaussian function, let $\theta^J$ be the associated Jacobi form  on $\U(2)$ (as defined in \eqref{eq:theta-J}). The map attached to this $\theta^J$ defined in \eqref{eq:FJ-fcnl} extends to $V^{J,\beta}_{\GU(2)}$ and induces 
\begin{equation}\label{eq:l-JF}
   \gls{lthetaJ}:\Meas\left(\Gamma_\cK,V^{J,\beta}_{\GU(2)}\right)\lra  \Meas\left(\Gamma_\cK,V_{\U(2)}\right).
\end{equation}
Apply it to $\bfE^\Kling_{\varphi,\beta,u}$ (defined in \eqref{eq:E-betau}), we get
\[
   l_{\theta^J}\left(\bfE^\Kling_{\varphi,\beta,u}\right)\in \Meas\left(\Gamma_\cK,V_{\U(2),\,\xi}\wh{\otimes}\,\hat{\cO}^\ur_L\right)^\natural.
\]

\begin{prop}\label{prop:phi2f2}
Let $\beta=1\in \Her_1(\cK)$. There exists a Schwartz function $\phi_1$ on $\bA^2_\bQ$ with $\phi_{1,\infty}$ the standard Gaussian function, elements $u_1,\dots,u_r\in\bigotimes\limits_{v\in\Sigma_\ns\cup\{\ell'\}}\U(1)\left(\bQ_v\right)$, and constants $b_1,\dots,b_r\in \cO_L$,  (all independent of $\tau$), such that 
\[
   l_{\theta^J_1}\left(\sum_i b_i\bfE^\Kling_{\varphi,\beta,u_i}\right)\in \Meas\left(\Gamma_\cK,V_{\U(2),\,\xi}\right)^\natural,
\]
the image of $\sum\limits_i b_i\bfE^\Kling_{\varphi,\beta,u_i}$ under the map \eqref{eq:l-JF} attached to the Jacobi form $\gls{thetaJ1}$ corresponding to $\phi_1$, satisfies the following interpolation properties: for all $\tau_{p\adic}$ as in Theorem~\ref{thm:Sieg-F},
\begin{equation}\label{eq:lE}
\begin{aligned}
   l_{\theta^J_1}\left(\sum\nolimits_i b_i\bfE^\Kling_{\varphi,\beta,u_i}\right)(\tau_{p\adic})
   =&\, d^{\Sigma\,\cup\{p,\ell,\ell'\}}_2\left(s,\xi_0\tau_0\lambda^{-1}\right)\\
  &\hspace{-7em}\times \left<\theta(\phi_{2,\tau})\left(\bdot\,\begin{pmatrix}&-1\\1\end{pmatrix}\right)\varphi^{\ol{\xi_0\tau}_0},\left.E^\Sieg_2\left(-\,;f_2(s,\xi_0\tau_{0}\lambda^{-1})\right)\right|_{\U(2)\times\U(2)}\right>_{1\times\U(2)},
\end{aligned}
\end{equation}
with $\phi_{2,\xi\tau}\in\pzS(\bA^2_{\bQ})$ (independent of $\tau$ except $v=p$) and $f_2(s,\xi_0\tau_0\lambda^{-1})\in\bigotimes_vI_{Q_{\U(2,2)},v}(s,\xi_0\tau_0\lambda^{-1})$ described as follows:
\vspace{.5em}
-- $v=\infty$. 
\begin{align*}
   \phi_{2,\xi\tau,\infty}(y_1,y_2)&=\frac{\fs^{1/4}\Nm(\delta)^{1/4}}{2} e^{-2\pi\sqrt{\Nm(\delta)}\,\begin{psm}y_1&y_2\end{psm}\zeta_0\begin{psm}y_1\\y_2\end{psm}},\\
   f_{2,\infty}(s,\xi_0\tau_0\lambda^{-1})\left(g'=\begin{pmatrix}A&B\\C&D\end{pmatrix}\right)&= \frac{(-2\pi i)^k}{\Gamma(k)}\cdot \det\left(C\frac{\zeta_0}{2}+D\right)^{-k}\left|C\frac{\zeta_0}{2}+D\right|^{-s+1+\frac{k}{2}}_\bC
\end{align*}

\vspace{1em}
-- $v\notin\Sigma\cup\{\ell,\ell',p\}$.
\begin{align*}
    \phi_{2,\xi\tau,v}&=\mathds{1}_{\bZ_v}\times\mathds{1}_{\bZ_v},\\
    f_{2,v}(s,\xi_0\tau_0\lambda^{-1})&=L_v\left(2s+3,\xi^\bQ_0\tau^\bQ_0\right)^{-1}\cdot f^\sph_{2,v}(s,\xi_0\tau_0\lambda^{-1})
\end{align*} 

\vspace{.5em}
-- $v=\fv\ol{\fv}\in\Sigma_\rms\cup\{\ell\}$. 
\begin{align*}
   \phi_{2,\xi\tau,v}= \text{Image of \eqref{eq:int-S} of } \Big((y_1,y_2)&\longmapsto \mathds{1}_{\cO_{\cK,\fv}}(y_1)\cdot \mathds{1}_{\cO^\times_{\cK,\bar{\fv}}}(y_2)\chi_{\theta,\bar{\fv}}\lambda_{\bar{\fv}}(y_2)\Big)
\end{align*}
\begin{equation}\label{eq:f2-ramv}
\begin{aligned}
   f_{2,v}(s,\xi_0\tau_0\lambda^{-1})\left(g'=\begin{pmatrix}A&B\\C&D\end{pmatrix}\right)
   =&\,|\det C\ltrans{\bar{C}}|^{-s-1}(\xi_0\tau_{0}\lambda^{-1})_v\left(\det\bar{C}^{-1}\right)\\
   &\times\mathds{1}_{\Her_2(\cO_{\cK,v})}\left(C^{-1}D+q^{-c_v}_v\cdot\bid_2\right)
\end{aligned}
\end{equation}

\vspace{.5em}  
-- $v\in\Sigma_\ns\cup\{\ell'\}$. 
\begin{align*}
  \phi_{2,\xi\tau,v}=&\,\text{a nonzero Schwartz function on $\bQ^2_v$ inside the $\chi_\theta\lambda^2|_{\U(1)(\bQ_v)}$-eigenspace}\\
  &\,\text{for the action of $\U(1)(\bQ_v)$, invariant under $K_v$, independent of $\tau$,}\\
   f_{2,v}(s,\xi_0\tau_0\lambda^{-1})=&\,\text{same as \eqref{eq:f2-ramv}}.
\end{align*}

\vspace{.5em}
-- The place $p$. 
\begin{equation}\label{eq:phi2p}
\begin{aligned}
   \phi_{2,\xi\tau,p}=&\text{Image of \eqref{eq:int-S} of }\\&\Big((y_1,y_2)\longmapsto p^{-2t}\fg\left((\xi_0\tau_0)^{-1}_{\fp}\right)\cdot \mathds{1}_{\bZ^\times_p}(p^ty_1)(\xi_0\tau_0)_{\fp}(-p^ty_1)  \cdot\mathds{1}_{\bZ_p}(p^ty_2)\Big),
\end{aligned}
\end{equation}
\begin{align*}
   f_{2,p}(s,\xi_0\tau_0\lambda^{-1})=&\,\gamma_p\left(-2s,(\xi^\bQ_0\tau^\bQ_0)^{-1}\eta_{\cK/\bQ}\right)\gamma_p\left(-2s-1,(\xi^\bQ_0\tau^\bQ_0)^{-1}\right)\\
   &\times M'_p\left(-s,(\xi_0\tau_0\lambda^{-1})^{-c}\right)f^{\textnormal{big-cell}}_{2,p}\left(-s,(\xi_0\tau_0\lambda)^{-c}\right)(\bdot\,\Upsilon'_p),
\end{align*}
\quad\quad where 
\begin{itemize}
\item $M'_p\left(-s,(\xi_0\tau_0\lambda)^{-c}\right):I_{Q_{\U(2,2)},p}\left(-s,(\xi_0\tau_0\lambda)^{-c}\right)\ra I_{Q_{\U(2,2)},p}(s,\xi_0\tau_0\lambda^{-1})$ is the intertwining operator,
\item $\Upsilon'_p$ is the element in $\U(2,2)(\bQ_p)$ with $\varrho_\fp(\Upsilon'_p)=\begin{pmatrix}\bid_1&-\frac{\zeta_0}{2}\\-\bid_2&-\frac{\zeta_0}{2}\end{pmatrix}^{-1}$,
\item \begin{align*}
   f^{\textnormal{big-cell}}_{2,p}\left(-s,(\xi_0\tau_0\lambda)^{-c}\right)\begin{pmatrix}A&B\\C&D\end{pmatrix}
   =&\,|\det C\ltrans{\bar{C}}|^{s-1}_p(\xi_0\tau_{0}\lambda^{-1})_p(\det C)\cdot \alpha'_{\xi\tau,p}(C^{-1}D).
\end{align*}
with $\alpha'_{\xi\tau,p}$ the Schwartz function on $\Her_2(\cK_p)$ defined as
\[
   \alpha'_{\xi\tau,p}(x)=\fg\left((\xi_0\tau_0)^{-1}_{\fp}\right)\cdot \mathds{1}_{\bZ^3_p}(x_{11},x_{21},x_{22})\cdot \mathds{1}_{\bZ^\times_p}(p^t x_{12})(\xi_0\tau_0)_{\fp}(p^tx_{12}),\quad \varrho_\fp(x)=\begin{pmatrix}x_{11}&x_{12}\\x_{21}&x_{22}\end{pmatrix},
\]
\end{itemize}

\end{prop}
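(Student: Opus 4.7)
The plan is to use Proposition~\ref{prop:FJ} to unfold the $\beta$-th Fourier--Jacobi coefficient of the Siegel Eisenstein series on $\GU(3,3)$ as a sum $\sum_{\gamma,x}\FJ_\beta(m(\gamma)g,x;f(s,\xi_0\tau_0))$, and then to carry out a place-by-place analysis identifying each local Fourier--Jacobi integral with the product of a Schwartz function against a section in the degenerate principal series $I_{Q_{\U(2,2)},v}(s,\xi_0\tau_0\lambda^{-1})$, as sketched in \eqref{eq:E-beta}. Once this is done, applying the functional $l_{\theta^J_1}$ for a test function $\phi_1$ collapses the Heisenberg integral via the identity $\int \theta^J_{\phi_1}\,\ol{\theta^J_{\phi}}=\langle\phi_1,\phi\rangle$; this produces the Petersson-type pairing on the right-hand side of \eqref{eq:lE}, with $\phi_{2,\xi\tau,v}$ and $f_{2,v}$ read off directly from the local FJ integrals and the choice of $\phi_{1,v}$.

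The place-by-place analysis splits naturally into four cases. At the archimedean and unramified finite places, the computation is standard: $\phi_{2,\xi\tau,v}$ is forced on us by the shape of $f_v(s,\xi_0\tau_0)$, and one chooses $\phi_{1,v}$ matching the standard Gaussian or the characteristic function of $\cO^2_{\cK,v}$; the spherical formula then produces the factor $L_v(2s+3,\xi^\bQ_0\tau^\bQ_0)^{-1}$ built into $d_2^{\Sigma\cup\{p,\ell,\ell'\}}(s,\xi_0\tau_0\lambda^{-1})$. At the finite places in $\Sigma_\rms\cup\{\ell\}$, a direct calculation using the definition of $f_v(s,\xi_0\tau_0)$ in \eqref{eq:fv-ram} and the explicit formulas \eqref{eq:Weil-fmla} for $\omega_{\beta,v}$ yields the prescribed section \eqref{eq:f2-ramv} and the Schwartz function twisted by $\chi_{\theta,\bar\fv}\lambda_{\bar\fv}$; here the crucial input is the split-Weil intertwining map \eqref{eq:int-S}, which converts between the two polarizations.

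The main obstacle is the computation at $p$, and to a lesser extent the places in $\Sigma_\ns\cup\{\ell'\}$. At $p$, rather than computing the intertwining operator on $\GU(3,3)$ directly, I will mimic the approach of \S\ref{sec:Z_p}: first show that the $\varsigma$-integral defining $\FJ_{\beta,p}$ for $f^{\textrm{big-cell}}_p$ descends to a section $f^{\U(2,2),\textrm{big-cell}}_{2,p}$ (analogous to \eqref{eq:fU22}), so that after applying the intertwining operator $M_p$ and collecting the gamma factors from \eqref{eq:fp}, one obtains precisely the section $f_{2,p}(s,\xi_0\tau_0\lambda^{-1})$ via $M'_p f^{\textrm{big-cell}}_{2,p}$; the two gamma factors $\gamma_p(-2s,(\xi_0^\bQ\tau_0^\bQ)^{-1}\eta_{\cK/\bQ})\gamma_p(-2s-1,(\xi_0^\bQ\tau_0^\bQ)^{-1})$ correspond to the ``missing'' two factors of $c_{2,p}$ relative to $c_{3,p}$ in \eqref{eq:cns}. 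The Schwartz function $\phi_{2,\xi\tau,p}$ in \eqref{eq:phi2p} is then determined by computing the inverse Fourier transform of $\alpha_{\xi\tau,p}$ in the partial variables coming from the Heisenberg integration, using \eqref{eq:inv-F}; a suitable choice of $\phi_{1,p}$ (essentially a multiple of $\mathds{1}_{p^{-t}\bZ_p^\times}\times\mathds{1}_{p^{-t}\bZ_p}$) pairs with it correctly.

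At places $v\in\Sigma_\ns\cup\{\ell'\}$, since the action of the central element $u\in\U(1)(\bQ_v)$ on $\bfE^\Kling_\varphi$ enters through the Weil representation, Proposition~\ref{prop:nschi} (applied to $\chi_v=\chi_{\theta,v}$ as fixed in condition (iii) of \S\ref{sec:aux}) provides constants $b_j$ and elements $u_j\in\U(1)(\bQ_v)$ such that the linear combination $\sum_j b_j \bfE^\Kling_{\varphi,\beta,u_j}$ projects the local Schwartz function onto the $\chi_\theta\lambda^2|_{\U(1)(\bQ_v)}$-eigenspace while remaining nonzero and $K_v$-invariant; this is exactly what is needed for the CM form $\theta$ appearing in \S\ref{sec:hthetaL} to arise from the resulting Schwartz data. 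The choice of $\phi_{1,v}$ for $v\in\Sigma_\ns\cup\{\ell'\}$ is then made compatibly so that the Heisenberg pairing is nonzero. Finally, the fact that each $\phi_{2,\xi\tau,v}$ is independent of $\tau$ away from $p$, together with the $p$-adic interpolation of $(\xi\tau)_\fp$ at $p$, confirms that the whole construction yields an element of $\Meas(\Gamma_\cK,V_{\U(2),\xi})^\natural$, and the interpolation formula \eqref{eq:lE} then follows by assembling the local factors and comparing with the normalization \eqref{eq:normalization} of $\bfE^\Sieg$.
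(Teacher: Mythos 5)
Your proposal follows essentially the same route as the paper: unfold via Proposition~\ref{prop:FJ}, compute the local Fourier--Jacobi integrals place by place (the paper simply cites \cite[\S\S6E--6H]{WanU31} for these computations), use Proposition~\ref{prop:nschi} to supply the $u_j,b_j$ at $v\in\Sigma_\ns\cup\{\ell'\}$, and treat $p$ by descending the big-cell section to $\U(2,2)$ and tracking the intertwining operator and gamma factors, exactly as the shape of $f_{2,p}$ suggests. One caveat: at $v=p$ the pairing of $\phi_{1,p}$ against $\FJ_{\beta,p}$ does \emph{not} factor cleanly as $\phi_{2,\xi\tau,p}(y)\cdot f_{2,p}(s,\xi_0\tau_0\lambda^{-1})(g)$, but only as an average of translates of this product by lower-triangular unipotents $\begin{psm}1&0\\n&1\end{psm}$, $n\in\bZ_p$; the identity \eqref{eq:lE} as stated then holds only because $\varphi$ is spherical at $p$, so the averaging is invisible after the Petersson pairing over the second copy of $\U(2)$ --- your phrase ``pairs with it correctly'' should be understood with this adjustment.
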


\begin{proof}
This proposition is proved in \cite[sections 6.E-6.H]{WanU31} by computing $\FJ_{\beta,v}\big(g,x;f_v(s,\xi_0\tau_0)\big)$ place by place. The place $\infty$ is done in Lemmas 6.9 and 6.11 and the unramified places are done in Lemma 6.18. (Note that at the unramified places, we have $d_{2,v}(s,\xi_0\tau_0\lambda^{-1})=\frac{d_{3,v}(s,\xi_0\tau_0)}{L_v(2s+3,\xi^\bQ_0\tau^\bQ_0)}$.) The places in $\Sigma_\rms\cup\{\ell\}$ are done in Lemmas 6.22 and 6.24, and the places in $\Sigma_\ns\cup\{\ell'\}$ are done in Lemmas 6.22 and 6.26. Note that Lemma 6.26 {\it loc.cit} is Proposition~\ref{prop:nschi} and in \S\ref{sec:aux} we choose $\chi_\theta$ such that at $v\in\Sigma_\ns\cup\{\ell'\}$ we have nonzero $\phi_{2,\xi\tau,v}$ described in this proposition. The place $p$ is done from Lemmas 6.35 to the end of the section 6.H. The Schwartz function $\phi_{2,\tau_0,p}$ is the $\phi'_{2,p}$ given below Definition 6.38. Note that our $\FJ_{\beta,v}\big(g,x;f_v(s,\xi_0\tau_0)\big)$ defined in Definition 4.5.1. equals to $\FJ_\beta(f_{sieg,v};z,x,g\eta^{-1},1)$ in \cite{WanU31} with $z=s$ and $\eta=\begin{pmatrix}&\bid_2\\-\bid_2\end{pmatrix}$, so the results in {\it loc.cit} all contain the extra $\eta$ but the formulas in this proposition do not.

At the place $p$, in fact the pairing of $\phi_{1,p}(x)$ and  $\FJ_{\beta,p}\big(g,(x,y)_p;f_p(s,\xi_0\tau_0\big)$ along $x$ does not directly equal $\phi_{2,\tau,p}(y)\cdot f_{2,p}(s,\xi_0\tau_0\lambda^{-1})(g)$, but equals the average
\begin{equation}\label{eq:real-p}
    \int_{\bZ_p}\omega_{\beta,p}\left(\varrho_{\fp}^{-1}\begin{pmatrix}1&0\\n&1\end{pmatrix},1\right)\phi_{2,\tau,p}(y)\cdot f_{2,p}(s,\tau_0\lambda^{-1})\left(g\Upsilon'_p\begin{pmatrix}1&\\&1\\&&1&\\&&n&1\end{pmatrix}\right)\,dn.
\end{equation}
Meanwhile, $\varphi$ is spherical at $p$, in particular invariant under the right translation by $\begin{pmatrix}1&\\\bZ_p&1\end{pmatrix}$, so by replacing \eqref{eq:real-p} $\phi_{2,\tau,p}(y)\cdot f_{2,p}(s,\xi_0\tau_0\lambda^{-1})(g)$ does not change the resulting Petersson inner product with $\varphi$ over $\U(V^-)=\U(2)$. 

The Schwartz function \eqref{eq:phi2p} is the the $(v_3,v_4)$-part of $\Phi_{\cD,p}$ where $\Phi_{\cD,p}$ is the Schwartz function on $\bQ^4_p$ defined and appearing in \cite[Lemma 6.37, Definition 6.36]{WanU31}. As mentioned above, our formula  in the Proposition corresponds to replacing $g$ by $g\eta^{-1}$ in the formula in {\it loc.cit}, so there is a translation by $\begin{pmatrix}&-1\\1\end{pmatrix}$ in  the theta series in \eqref{eq:lE}.
\end{proof}

\subsection{The construction of the auxiliary CM families {\boldmath $ h,\tilde{h}_3,\theta,\tilde{\theta}_3$}}\label{sec:CM-families}
Besides $\theta^J_1$, our strategy described in \S\ref{sec:strategy} also includes auxiliary CM forms. We want to construct a CM family $\bfh$ on $\U(V)=\U(2)$ and pair it with $l_{\theta^J_{1}}\left(\sum_i b_i\bfE^\Kling_{\varphi,\beta,u_i}\right)$. The splitting characters for the theta correspondence between $\U(2)$ and $\U(1)$ are chosen as in \eqref{eq:split-char}.

\subsubsection{The auxiliary group $U_{\cK,p}$}\label{sec:UKp}

First, we note that for $\tau_{p\adic}$ as in Theorem~\ref{thm:Sieg-F}, the nebentypus of $l_{\theta^J_{1}}\left(\sum_i b_i\bfE^\Kling_{\varphi,\beta,u_i}\right)$ at $p$ is $\left(\triv,(\xi_0\tau_0)_\fp|_{\bZ^\times_p}\right)$ ({\it i.e.} $\varrho^{-1}_\fp\begin{pmatrix}\ast&\ast\\0&a_2\end{pmatrix}\in \U(2)(\bZ_p)$ acts by $(\xi_0\tau_0)_\fp(a_2)$). In order to get a CM family $\bfh$ with the correct nebentypus to pair with $l_{\theta^J_{1}}\left(\sum_i b_i\bfE^\Kling_{\varphi,\beta,u_i}\right)$, we want Hecke characters of $\cK^\times\backslash\bA^\times_\cK$ unramified away from $p\infty$ with restriction  $\left(\tau_{0,\fp}|_{\bZ^\times_p},\triv\right)$ on $\cO^\times_{\cK,p}$. However, in general there is no canonical way to extend local characters $\left(\tau_{0,\fp}|_{\bZ^\times_p},\triv\right)$ of $\cO^\times_{\cK,p}$  to global Hecke characters. Hence, we need to consider an auxiliary group $U_{\cK,p}$ and the non-standard type of $p$-adic measures defined in \eqref{eq:MU2'}.

\vspace{.5em}

Let $\glsuseri{UKp}=1+p\cO_{\cK,p}$ and $\glsuserii{UKp}=1+p\cO_{\cK,\fp}$, $\glsuseriii{UKp}=1+p\cO_{\cK,\bar{\fp}}$. Then the natural map
\[
   U_{\cK,p}\lra \cK^\times\backslash\bA^\times_{\cK,f}\lra \Gamma_\cK
\]
is an injection. We can pick $\gls{wtp}$, a non-negative power of $p$, such that raising to the $\wtp$-th power maps $\Gamma_\cK$ into $U_{\cK,p}$. Define
\[ 
\begin{tikzcd}[column sep = huge]
   \gls{PN}:\Gamma_\cK   \arrow[r,hook,"\text{$\wtp$-th power}"] &U_{\cK,p} \arrow[r,hook,"\text{natural proj.}"] & U_{\cK,\fp}\simeq 1+p\bZ_p.
\end{tikzcd}
\] 
Then given a (local) $p$-adic character $\epsilon:U_{\cK,\fp}\ra\ol{\bQ}_p^\times$, the composition $\epsilon\circ \pP_\wtp$ is a  (global) $p$-adic character of $\Gamma_\cK$ whose restriction back to $U_{\cK,p}=U_{\cK,\fp}\times U_{\cK,\bar{\fp}}$ is $\left(\epsilon^{\wtp},\triv\right)$.  

In particular, for an algebraic Hecke character $\tau:\cK^\times\backslash\bA^\times_\cK\ra\bC^\times$ of $\infty$-type $\left(0,k\right)$ whose $p$-adic avatar $\tau_{p\adic}$ factors through $\Gamma_\cK$, we can define
\[
  \gls{tauP}=\left.\tau_{p\adic}\right|_{U_{\cK,\fp}}\circ \pP_N.
\]
Then $\tau_{\fp,\pP_\wtp}$ is a $p$-adic Hecke character of $\Gamma_\cK$ with
\[
    \tau_{\fp,\pP_\wtp}|_{\cO^\times_{\cK,p}}=\left(\tau^{\wtp}_{\fp}|_{\bZ^\times_p},\triv\right)=\left(\tau^\wtp_{0,\fp}|_{\bZ^\times_p},\triv\right).
\]
Because $\tau$ has $\infty$-type $(0,k)$, the local character $\tau_{p\adic}|_{U_{\cK,\fp}}$ is of finite order, and it follows that the (global) character $\tau_{\fp,\pP_\wtp}:\Gamma_\cK\ra\ol{\bQ}^\times_p$ is also of finite order. Hence, it takes values in $\ol{\bQ}^\times$ and is also an algebraic Hecke character of $\cK^\times\backslash\bA^\times_\cK$ of $\infty$-type $\left(0,0\right)$. (From the definition in \eqref{eq:chip}, we can see that the $p$-adic avatar of a Hecke character of $\infty$-type $(0,0)$ is itself.)

\vspace{.5em}
The map $\pP_\wtp$ induces a map
\begin{align}
   \label{eq:PN*}\gls{PNast}:\Meas\left(\Gamma_\cK,V_{\U(2)}\right)^\natural
   &\lra \Meas\left(U_{\cK,p},V_{\U(2)}\right)^\natural.
\end{align}
(See \S\ref{sec:U2-family} for the definition of these spaces of $p$-adic measures.) We have
\[
   \pP_{\wtp,\ast}\left(l_{\theta^J_{1}}\left(\sum\nolimits_i b_i\bfE^\Kling_{\varphi,\beta,u_i}\right)\right)\left(\tau_{p\adic}|_{U_{\cK,_p}}\right)= l_{\theta^J_{1}}\left(\sum\nolimits_i b_i\bfE^\Kling_{\varphi,\beta,u_i}\right)(\tau^\wtp_{p\adic})
\]
for every $\tau_{p\adic}\in\Hom_\cont\left(\Gamma_\cK,\ol{\bQ}^\times_p\right)$. In order to show the nonvanishing property of $l_{\theta^J_{1}}\left(\sum_i b_i\bfE^\Kling_{\varphi,\beta,u_i}\right)$ in Proposition~\ref{prop:E-Kling-nv}, it suffices to show the nonvanishing for $\pP_{\wtp,\ast}\left(l_{\theta^J_{1}}\left(\sum_i b_i\bfE^\Kling_{\varphi,\beta,u_i}\right)\right)$.

\vspace{.5em}

Next, we construct auxiliary CM families $\bfh$, $\tilde{\bfh}_3$, $\bmtheta$, $\tilde{\bmtheta}_3$ on $\U(2)$ as $p$-adic measures on $U_{\cK,p}$.

\vspace{.5em}

\subsubsection{The auxiliary CM families $\bfh$ and $\tilde{\bfh}_3$}\label{sec:Fh}

\begin{prop}\label{prop:h}
There exist a CM family $\glsuseri{h0}\in \Meas\left(U_{\cK,p},e_\ord V'_{\U(2),\,\xi^{-1}}\,\hat{\otimes}\,\hat{\cO}^\ur_L\right)^{\natural}$ and a CM family $\glsuserii{h0}\in \Meas\left(U_{\cK,p},e_\ord V_{\U(2),\,\xi}\,\hat{\otimes}\,\hat{\cO}^\ur_L\right)^{\natural}$ such that for every $\tau_{p\adic}$ as in Theorem~\ref{thm:Sieg-F} sufficiently ramified at $p$,
\begin{align*}
    \bfh_0\left(\tau_{p\adic}|_{U_{\cK,p}}\right)(g)&=\int_{\U(1)(\bQ)\backslash\U(1)(\bA_\bQ)} \theta\left(\phi_{h,\tau}\right)(g,u)\left(\chi_h\tau_{\fp,\pP_\wtp}\right)(u)\,du,\\
    \bfh'_0\left(\tau_{p\adic}|_{U_{\cK,p}}\right)(g)&=\int_{\U(1)(\bQ)\backslash\U(1)(\bA_\bQ)} \theta\left(\phi'_{h,\tau}\right)(g,u)\left(\chi^{-1}_h\tau^{-1}_{\fp,\pP_\wtp}\right)(u)\,du,
\end{align*} 
the theta lift of $\chi^{-1}_h\tau^{-1}_{\fp,\pP_\wtp}\Big|_{\U(1)}$ (resp. $\chi_h\tau_{\fp,\pP_\wtp}\Big|_{\U(1)}$) to $\U(2)$) with respect to the Schwartz function $\phi_{h,\tau}$ (resp. $\phi'_{h,\tau}$) on $\bA^2_\bQ$ described as follows:
\begin{enumerate}
\item[--] $v\notin \Sigma\cup \{\ell,\ell',p\}$. 
\[
    \phi_{h,\tau,v}=\phi'_{h,\tau,v}=\phi_{2,\xi\tau^\wtp,v} \text{ in Proposition~\ref{prop:phi2f2}}.
\] 
\item[--] $v=\fv\bar{\fv}\in \Sigma_\rms\cup\{\ell\}$. We have $\chi_h\tau_{\fp,\pP_\wtp}|_{\cO^\times_{\cK,v}}=\chi_h|_{\cO^\times_{\cK,v}}$.
\begin{align*}
     \phi_{h,\tau,v}&=\text{Image of \eqref{eq:int-S} of }\Big((y_1,y_2)\longmapsto \mathds{1}_{\bZ_v}(y_1 )\cdot\mathds{1}_{\bZ^\times_v}(y_2)\chi_{h,\fv}\chi^{-1}_{h,\bar{\fv}}\lambda^{-1}_{\bar{\fv}}(y_2)\Big),\\
     \phi'_{h,\tau,v}&=\text{Image of \eqref{eq:int-S} of }\Big((y_1,y_2)\longmapsto \mathds{1}_{\bZ_v}(y_1 )\cdot\mathds{1}_{\bZ^\times_v}(y_2)\chi^{-1}_{h,\fv}\chi_{h,\bar{\fv}}\lambda_{\bar{\fv}}(y_2)\Big).
\end{align*}
\item[--] $v\in \Sigma_\ns\cup\{\ell'\}$. We have $\left.\chi_h\tau_{\fp,\pP_\wtp}\right|_{\U(1)(\bQ_v)}=\left.\chi_h\right|_{\U(1)(\bQ_v)}$.
\begin{align*}
    \phi_{h,\tau,v}=&\,\text{a Schwartz function on $\bQ^2_v$ invariant under $K_v$, independent of $\tau$, }\\
   &\,\text{belonging to the $\chi^{-1}_{h,v}|_{\U(1)(\bQ_v)}$-eigenspace for the action of $\U(1)(\bQ_v)$},\\
   \phi'_{h,\tau,v}=&\,\text{a Schwartz function on $\bQ^2_v$ invariant under $K_v$,, independent of $\tau$, }\\
   &\,\text{belonging to the $\chi_{h,v}|_{\U(1)(\bQ_v)}$-eigenspace for the action of $\U(1)(\bQ_v)$,}\\
   &\,\text{and }\int_{\bQ^2_v}\phi_{h,\tau,v}(y)\phi'_{h,\tau,v}(y)\,dy\neq 0.
\end{align*}

\item[--] $v=p$. We have $(\chi_h\tau_{\fp,\pP_\wtp})_\fp=(\xi_0\tau^\wtp_0)_\fp$. Let $t=\ord_p\left(\cond\left((\xi_0\tau^\wtp_0)_{\fp}\right)\right)$. 
\begin{align*}
    \phi_{h,\tau,p}&=\text{Im of \eqref{eq:int-S} of }\\
    &\quad\Big((y_1,y_2)\longmapsto p^{-t}\fg\left((\xi_0\tau^\wtp_0)^{-1}_{\fp}\right))\cdot \mathds{1}_{p^{-t}\bZ^\times_p}(y_2)(\xi_0\tau^{\wtp}_0)_{\fp}(-p^ty_2)\cdot \mathds{1}_{\bZ_p}(y_1)\Big),\\
    \phi'_{h,\tau,p}&=\text{Image of \eqref{eq:int-S} of }\Big((y_1,y_2)\longmapsto  \mathds{1}_{\bZ^\times_p}(y_1)(\xi_0\tau^{\wtp}_0)_{\fp}(y_1)\cdot \mathds{1}_{\bZ_p}(y_2)\Big).
\end{align*}

\end{enumerate}

\end{prop}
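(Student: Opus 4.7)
\medskip

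\noindent\textbf{Proof proposal.} The plan is to build $\bfh_0$ and $\bfh_0'$ by $p$-adically interpolating the \emph{classical} theta lifts of the characters $\chi_h^{-1}\tau^{-1}_{\fp,\pP_{\wtp}}|_{\U(1)}$ (resp.\ $\chi_h\tau_{\fp,\pP_{\wtp}}|_{\U(1)}$) with the prescribed Schwartz data. Since at every place $v\neq p$ the local Schwartz component $\phi_{h,\tau,v}$ is independent of $\tau$ (only $\chi_h$ and the auxiliary fixed data enter), all the $\tau$-dependence is concentrated at $p$ and in the character being lifted. The strategy is therefore: (i) produce a measure-valued Schwartz function at $p$ whose specializations are the prescribed $\phi_{h,\tau,p}$, (ii) form the corresponding theta kernel and integrate against $\chi_h\tau_{\fp,\pP_{\wtp}}|_{\U(1)}$, and (iii) check that the resulting $p$-adic object lies in the semi-ordinary space and has the right nebentypus at $p$. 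I will write out the argument only for $\bfh_0$; the construction of $\bfh_0'$ is entirely parallel.

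First, I would rewrite the classical $p$-component $\phi_{h,\tau,p}$ uniformly in $\tau$. Using the isomorphism $\cK_p \simeq \cK_{\fp}\times \cK_{\bar\fp}$ and the intertwining map \eqref{eq:int-S}, the factor $p^{-t}\fg\bigl((\xi_0\tau^{\wtp}_0)_\fp^{-1}\bigr)\mathds{1}_{p^{-t}\bZ_p^\times}(y_2)(\xi_0\tau^{\wtp}_0)_\fp(-p^ty_2)$ can be written as the Gauss-sum-normalized Fourier transform (along the split $\bar\fp$-direction) of the characteristic function $\mathds{1}_{\bZ_p^\times}$ twisted by $(\xi_0\tau^{\wtp}_0)_\fp$. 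Since $(\xi_0\tau^{\wtp}_0)_\fp|_{\bZ_p^\times}$ varies continuously with $\tau_{p\adic}|_{U_{\cK,p}}$, this gives a continuous (in fact locally constant on compact-open sets, hence measurable) family of Schwartz functions, which I package into an element of $\Meas\bigl(U_{\cK,p},\pzS(\bQ_p^2)\bigr)$. Convolving with the fixed Schwartz data at the remaining places produces a measure $\bm{\phi}_h \in \Meas\bigl(U_{\cK,p},\pzS(\bA_\bQ^2)\bigr)$ whose evaluation at $\tau_{p\adic}|_{U_{\cK,p}}$ recovers $\phi_{h,\tau}$.

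Next, I form the ``family of theta lifts''
\[
   \bfh_0 := \int_{\U(1)(\bQ)\backslash \U(1)(\bA_\bQ)} \theta(\bm{\phi}_h)(\,\cdot\,, u)\,\bigl(\chi_h\cdot\delta_{\pP_{\wtp}(u)}\bigr)\,du,
\]
where $\delta_{\pP_{\wtp}(u)}$ is the delta measure on $U_{\cK,p}$ at the image of $u$ under the composition $\U(1)(\bA_\bQ) \to \cK^\times\backslash\bA_\cK^\times \twoheadrightarrow \Gamma_\cK \xrightarrow{\wtp} U_{\cK,p}$, and the outer integral is interpreted as a convolution of measures (see \eqref{eq:clv-meas1}). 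The interpolation property \eqref{eq:clv-meas2} then yields
\[
   \bfh_0(\tau_{p\adic}|_{U_{\cK,p}})(g) \;=\; \int_{\U(1)(\bQ)\backslash \U(1)(\bA_\bQ)} \theta(\phi_{h,\tau})(g,u)\,\bigl(\chi_h\tau_{\fp,\pP_{\wtp}}\bigr)(u)\,du
\]
as required, provided the object on the left is well-defined as an element of $\Meas\bigl(U_{\cK,p}, V'_{\U(2),\,\xi^{-1}}\wh{\otimes}\hat\cO_L^{\ur}\bigr)$. Membership in $V'_{\U(2),\,\xi^{-1}}$ and the $^\natural$-property amount to verifying: (a) the classical specializations are invariant under $K^p_f \cap \U(2)(\wh\bZ^p)$ and under $N'_p(\bZ_p)$ at level matching the conductor of the central character, (b) the specialization has nebentypus $\bigl((\xi\tau^{\wtp})_{p\adic,\fp}^{-1},\triv\bigr)$, and (c) the two actions of $\bZ_p^\times\times\bZ_p^\times$ on the value are compatible. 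Claim (a) follows from the choice of $\phi_{h,\tau,v}$ at ramified places together with standard invariance of theta kernels; claim (b) is a direct computation using \eqref{eq:Weil-fmla} and the formula for $\phi_{h,\tau,p}$; claim (c) is forced by construction since the $\tau$-dependence at $p$ is built via the character $\tau_{\fp,\pP_\wtp}$.

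Finally, to land in $e_\ord V'_{\U(2),\,\xi^{-1}}$, I would verify that the specializations $\bfh_0(\tau_{p\adic}|_{U_{\cK,p}})$ are ordinary $U_p$-eigenforms and then apply $e_\ord$ termwise; because $e_\ord$ is continuous and the family is bounded $p$-adically, this produces a measure in $\Meas\bigl(U_{\cK,p}, e_\ord V'_{\U(2),\,\xi^{-1}}\wh{\otimes}\hat\cO_L^{\ur}\bigr)^\natural$. Ordinarity of the classical specialization reduces, via the explicit $q$-expansion formula for theta series from a split unitary group with a $p$-ramified character, to checking that the $U_p$-eigenvalue equals the $p$-adic unit $\chi_{h,\fp}(p)\tau^{\wtp}_\fp(p)$, a standard calculation; this is the step where the specific form of $\phi_{h,\tau,p}$ (supported on $p^{-t}\bZ_p^\times$ in the $\bar\fp$-direction) is essential.

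The main obstacle I expect is the last step: producing an \emph{honest} $p$-adic measure valued in $e_\ord V'_{\U(2),\,\xi^{-1}}$ rather than merely a compatible system of classical specializations. The difficulty is that continuity of $\tau_{p\adic}|_{U_{\cK,p}} \mapsto \phi_{h,\tau,p}$ in the Schwartz space must be transferred, via the nonalgebraic theta integral, to continuity of the specializations in the $p$-adic topology of $V'_{\U(2),\,\xi^{-1}}$; the cleanest route is to identify the theta series specializations with classical CM forms whose $q$-expansion coefficients are explicit values of $\chi_h\tau_{\fp,\pP_{\wtp}}$ together with Gauss sums, then invoke the $q$-expansion principle to package them into a measure. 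Once this is done, the interpolation formula and the compatibility properties follow formally from the construction.
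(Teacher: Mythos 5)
Your overall plan (isolate the $\tau$-dependence at $p$, package the Schwartz data into a measure, realize the theta lift as a convolution of measures, then project by $e_\ord$) is reasonable in spirit, but the step you yourself flag as the main obstacle is resolved by a tool that is not available here, and this is a genuine gap. Your proposed fix is to ``identify the theta series specializations with classical CM forms whose $q$-expansion coefficients are explicit values of $\chi_h\tau_{\fp,\pP_\wtp}$ together with Gauss sums, then invoke the $q$-expansion principle to package them into a measure.'' But the group $\U(2)=\U(V)$ in this paper is the unitary group of a skew-Hermitian form $\zeta_0$ with $\delta\zeta_0$ positive definite; it is an inner form attached to the definite quaternion algebra $D$ via \eqref{eq:DGU}, so $\U(2)(\bR)$ is compact, the associated Shimura variety is zero-dimensional, and automorphic forms on it are functions on a finite set. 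There is no $q$-expansion, and hence no $q$-expansion principle, on $\U(2)$ itself. Without it, you are left with a compatible system of classical specializations and no mechanism to certify that the values at each $g\in\U(2)(\bA_{\bQ,f})$ vary as a $p$-adic measure in $\tau$; the archimedean theta sum at a point of a definite group is not visibly a finite algebraic expression that interpolates.

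The paper's proof (following \cite[\S8B]{WanU31}) circumvents exactly this by a seesaw through the quasi-split group $\U(2,2)$: one first constructs a two-variable family of theta series on $\U(2,2)$ by interpolating their genuine Fourier coefficients (which are explicit character values and Gauss sums, so the $q$-expansion principle on $\U(2,2)$ applies), then restricts the family to $\U(V)\times\U(V^-)$ via \eqref{eq:imath0} and evaluates one variable at a fixed auxiliary point $u_{aux}$ to obtain the CM family on the other factor; the integration against $\chi_h\tau_{\fp,\pP_\wtp}|_{\U(1)}$ is absorbed into the construction because the local Schwartz data are chosen to be $\U(1)$-eigenvectors. If you replace your final packaging step by this doubling construction, the rest of your argument (the identification of the local data, the nebentypus computation via \eqref{eq:Weil-fmla}, and the ordinarity of the specializations) goes through as you describe.
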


\begin{proof}
See \cite[\S8B]{WanU31}.  The idea of constructing $\bfh_0$ (resp. $\bfh'_0$) is to first use the chosen Schwartz function to construct a family of theta series on $\U(2,2)$ by interpolating the $q$-expansions, and then restrict the family to $\U(V)\times\U(V^-)$ and evaluate at a suitable point $u_{aux}\in\U(V)(\bA_{\bQ})$ (resp. $u'_{aux}\in\U(V^-)(\bA_{\bQ})$) to get the desired family on $\U(V^-)$ (resp. $\U(V)$). 
\end{proof}

\begin{defn}
The map \eqref{eq:h-twist} between automorphic forms on $\U(2)$ defined in terms of the tame character $\chi_h\chi^c_\theta$ extends to a map from $V_{\U(2)}$ to $V_{\U(2)}$. Applying this twisting map to $\bfh_0$ in Proposition~\ref{prop:h} defines the CM family $\glsuseri{h}\in \Meas\left(U_{\cK,p},e_\ord V'_{\U(2),\,\xi^{-1}}\right)^{\natural}$. 

Replacing  $\chi_h\chi^c_\theta$ by $\chi^{-1}_h\chi^{-c}_\theta$ and applying the corresponding twisting map map to $\bfh'_0$ in Proposition~\ref{prop:h} defines the CM family $\glsuserii{h}\in \Meas\left(U_{\cK,p},e_\ord V_{\U(2),\,\xi}\right)^{\natural}$. 
\end{defn}

\subsubsection{The auxiliary CM families $\bmtheta$ and $\tilde{\bmtheta}_3$}\label{sec:Ftheta}

We construct a CM family $\bmtheta$ whose specializations are closely related to the theta series $\theta(\phi_{2,\tau})$ in Proposition~\ref{prop:phi2f2}. (The theta series $\theta(\phi_{2,\tau})$ is not an eigenform and does not belong to the theta lift of one character of $\U(1)$. The specialization of  $\bmtheta$ at $\tau_{p\adic}|_{U_{\cK,p}}$ is essentially the projection of $\theta(\phi_{2,\tau^\wtp})$ to the theta lift of $\lambda^2\chi_\theta\tau^{-c}_{\fp,\pP_\wtp}|_{\U(1)}$.) We also construct a CM family $\tilde{\bmtheta}_3$ dual to $\bmtheta$. 

\begin{prop}\leavevmode

\noindent (1) There exist a CM family $\glsuseri{theta}\in\Meas\left(U_{\cK,p},e_\ord V_{\U(2),\,\xi}\,\hat{\otimes}\,\hat{\cO}^\ur_L\right)^\natural$ and also a CM family $\glsuserii{theta}\in\Meas\left(U_{\cK,p},e_\ord V'_{\U(2),\,\xi^{-1}}\,\hat{\otimes}\,\hat{\cO}^\ur_L\right)^{\natural}$ such that for every $\tau_{p\adic}$ as in Theorem~\ref{thm:Sieg-F} ramified at $p$, 
\begin{equation}\label{eq:bftheta}
\begin{aligned}
   \bmtheta\left(\tau_{p\adic}|_{U_{\cK,p}}\right)(g)&= \int_{\U(1)(\bQ)\backslash\U(1)(\bA_\bQ)}  \theta\left(\phi_{\theta,\tau}\right)\left(g,u\right)\left(\lambda^2\chi_\theta\tau^{-c}_{\fp,\pP_\wtp}\right)^{-1}(u)\,du,\\
   \tilde{\bmtheta}_3\left(\tau_{p\adic}|_{U_{\cK,p}}\right)(g)&= \int_{\U(1)(\bQ)\backslash\U(1)(\bA_\bQ)}  \theta\left(\phi'_{\theta,\tau}\right)\left(g,u\right)\,(\lambda^2\chi_\theta\tau^{-c}_{\fp,\pP_\wtp})(u)\,du,
\end{aligned}
\end{equation}
the theta lift of $\lambda^2\chi_\theta\tau^{-c}_{\fp,\pP_\wtp}\Big|_{\U(1)}$ (resp. $\lambda^{-2}\chi^{-1}_\theta\tau^{c}_{\fp,\pP_\wtp}\Big|_{\U(1)}$) to $\U(2)$ with respect to the Schwartz function $\phi_{\theta,\tau}$ (resp. $\phi'_{\theta,\tau}$) on $\bA^2_\bQ$ described as follows:
\vspace{.5em}

\begin{enumerate}
\item[--] $v\notin\Sigma\,\cup\,\{\ell,\ell',p\}$. 
\begin{align*}
   \phi_{\theta,\tau,v}=\phi'_{\theta,\tau,v}=\phi_{2,\xi\tau^\wtp,v} \text{in Proposition~\ref{prop:phi2f2}}.
\end{align*}

\item[--] $v=\fv\bar{\fv}\in\Sigma_\rms\,\cup\,\{\ell'\}$. We have $\chi_{\theta}\tau^{-c}_{\fp,\pP_\wtp}|_{\cO^\times_{\cK,v}}=\chi_{\theta}|_{\cO^\times_{\cK,v}}$ and $\chi_{\theta,\fv}$ unramified.
\begin{align*}
   \phi_{\theta,\tau,v}&=\text{Image of \eqref{eq:int-S} of } \Big((y_1,y_2)\longmapsto \mathds{1}_{\cO_{\cK,\fv}}(y_1)\cdot \mathds{1}_{\cO^\times_{\cK,\bar{\fv}}}(y_2)\chi_{\theta,\bar{\fv}}\lambda_{\bar{\fv}}(y_2)\Big)\\
   &=\phi_{2,\tau^\wtp,v} \text{in Proposition~\ref{prop:phi2f2}},\\
   \phi'_{\theta,\tau,v}&=\text{Image of \eqref{eq:int-S} of } \Big((y_1,y_2)\longmapsto \mathds{1}_{\cO_{\cK,\fv}}(y_1)\cdot \mathds{1}_{\cO^\times_{\cK,\bar{\fv}}}(y_2)(\chi_{\theta,\bar{\fv}}\lambda_{\bar{\fv}})^{-1}(y_2)\Big)
\end{align*}

\item[--] $v\in \Sigma_\ns\,\cup\,\{\ell'\}$. We have $\chi_{\theta}\tau^{-c}_{\fp,\pP_\wtp}|_{\U(1)(\bQ_v)}=\chi_{\theta}|_{\U(1)(\bQ_v)}$.
\begin{align*}
   \phi_{\theta,\tau,v}&=\phi_{2,\xi\tau^\wtp,v} \text{in Proposition~\ref{prop:phi2f2}},\\
   \phi'_{\theta,\tau,v}&=\text{a Schwartz function on $\bQ^2_v$ invariant under $K_v$, independent of $\tau$,}\\
  &\quad\text{belonging to the $\chi^{-1}_\theta\lambda^{-2}|_{\U(1)(\bQ_v)}$-eigenspace for the action of $\U(1)(\bQ_v)$, }\\
  &\quad\text{and }\int_{\bQ^2_v}\phi_{\theta,\tau,v}(y) \phi'_{\theta,\tau,v}(y)\,dy\neq 0
\end{align*}

\item[--] $v=p$. We have $(\chi_\theta\tau^{-c}_{\fp,\pP_\wtp})_{\bar{\fp}}=(\xi_0\tau^\wtp_0)^{-1}_\fp$. Let  $t=\ord_p\left(\cond\left((\xi_0\tau^\wtp_0)_\fp\right)\right)$.
\begin{align*}
    \phi_{\theta,\tau,p}&=\text{Image of \eqref{eq:int-S} of }\\
    &\quad\Big((y_1,y_2)\longmapsto p^{-t}\fg\left((\xi_0\tau^{\wtp}_0)^{-1}_{\fp}\right)\cdot \mathds{1}_{p^{-t}\bZ^\times_p}(y_1)(\xi_0\tau^{\wtp}_0)_{\fp}(-p^ty_1)\cdot \mathds{1}_{\bZ_p}(y_2)\Big),\\
    &=\sum_{\bZ/p^t\bZ}\omega_{\beta,p}\left(\begin{pmatrix}1&0\\n&1\end{pmatrix},1\right)\phi_{2,\tau^\wtp,p},\\
   \phi'_{\theta,\tau,p}&=\text{Image of \eqref{eq:int-S} of }\Big((y_1,y_2)\longmapsto  \mathds{1}_{\bZ_p}(y_1)\cdot \mathds{1}_{\bZ^\times_p}(y_2)(\xi_0\tau^\wtp_0)_{\fp}(y_2)  \cdot\Big),
\end{align*}
where $\phi_{2,\tau^\wtp,p}$ is as in Proposition~\ref{prop:phi2f2}.
\end{enumerate}

\noindent (2) The specialization $\bmtheta\left(\tau_{p\adic}|_{U_{\cK,p}}\right)$ equals the projection of $\sum\limits_{n\in\bZ/p^t\bZ}\theta(\phi_{2,\tau^\wtp})\left(g\begin{pmatrix}1&0\\n&1\end{pmatrix}\right)$ to the $\left.\lambda^2\chi_\theta\tau^{-c}_{\fp,\pP_\wtp}\right|_{\U(1)}$-eigenspace for the action of the center of $\U(2)$. (Note that the projection of a theta series to the $\left.\lambda^2\chi_\theta\tau^{-c}_{\fp,\pP_\wtp}\right|_{\U(1)}$-eigenspace for the action of the center is a theta lift of $\left.\lambda^2\chi_\theta\tau^{-c}_{\fp,\pP_\wtp}\right|_{\U(1)}$.)

\end{prop}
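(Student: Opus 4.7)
The approach is to run the construction of Proposition~\ref{prop:h} in parallel, with the roles of $\chi_h$ and $\chi_\theta\lambda^2$ swapped. More precisely, I would first assemble a $p$-adic family of theta series on $\U(2,2)$ by interpolating their $q$-expansions (using the seesaw pair $(\U(2,2),\U(1))$ and splitting characters chosen in \eqref{eq:split-char}), then restrict along $\U(V)\times\U(V^-)\hookrightarrow\U(2,2)$ and evaluate at an appropriate auxiliary point in one factor to extract families on the remaining $\U(2)$--factor. This produces $\bmtheta$ living on $\U(V^-)\cong\U(2)$ and $\tilde{\bmtheta}_3$ living on $\U(V)\cong\U(2)$, both as measures on $U_{\cK,p}$ with coefficients in ordinary $p$-adic forms.

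The interpolation is straightforward outside $p$: at every place $v\neq p$ the Schwartz components $\phi_{\theta,\tau,v}$ and $\phi'_{\theta,\tau,v}$ depend only on the tame characters $\chi_\theta,\lambda$ and on the conductor exponents $c_v$ fixed in \S\ref{sec:aux}, so they furnish constant Schwartz data along the family. All the variation is concentrated at $p$, where the explicit formulas in the statement present $\phi_{\theta,\tau,p}$ and $\phi'_{\theta,\tau,p}$ as Schwartz functions whose coefficients are Gauss sums $\fg((\xi_0\tau_0^\wtp)^{-1}_\fp)$ and values of the character $(\xi_0\tau_0^\wtp)_\fp$ on $p^{-t}\bZ^\times_p$. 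These two ingredients are interpolated by standard Kubota--Leopoldt--style measures on $1+p\bZ_p$ pulled back along the map $\pP_\wtp$, so their convolution (as in \eqref{eq:delta-meas}--\eqref{eq:clv-meas2}) with the $\delta$-measure carrying the character data yields an $\hat{\cO}^\ur_L$-valued measure on $U_{\cK,p}$ whose image under the theta-series construction lands in the desired space after applying $e_\ord$.

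Part (2) is then almost tautological given Part (1). The $v=p$ formula in the statement exhibits $\phi_{\theta,\tau,p}$ explicitly as the sum
\[
\phi_{\theta,\tau,p}=\sum_{n\in\bZ/p^t\bZ}\omega_{\beta,p}\!\left(\begin{pmatrix}1&0\\ n&1\end{pmatrix},1\right)\phi_{2,\tau^\wtp,p},
\]
so the associated theta series satisfies
\[
\theta(\phi_{\theta,\tau})(g,u)=\sum_{n\in\bZ/p^t\bZ}\theta(\phi_{2,\tau^\wtp})\!\left(g\begin{pmatrix}1&0\\ n&1\end{pmatrix},u\right).
\]
Integrating this identity against $(\lambda^2\chi_\theta\tau_{\fp,\pP_\wtp}^{-c})^{-1}$ over $\U(1)(\bQ)\backslash\U(1)(\bA_\bQ)$ extracts the component of $\sum_n \theta(\phi_{2,\tau^\wtp})(g\,{\begin{psm}1&0\\n&1\end{psm}})$ on which the center of $\U(2)$ acts via $\lambda^2\chi_\theta\tau_{\fp,\pP_\wtp}^{-c}|_{\U(1)}$, which is the assertion of (2).

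The main obstacle is book-keeping at $p$: tracking the Gauss sum normalizations together with the intertwining maps \eqref{eq:int-S} so that the measure produced really takes values in the integral lattice $e_\ord V_{\U(2),\,\xi}\,\wh{\otimes}\,\hat{\cO}^\ur_L$ (rather than merely after inverting $p$), and simultaneously ensuring that its nebentype matches the one enforced by the $^\natural$-compatibility with the $A=\bZ^\times_p\times\bZ^\times_p$--action defined in \S\ref{sec:U2-family}. This is the same technical issue that underlies the $p$-stabilization step in Hida's CM theta construction and is handled exactly as in the proof of Proposition~\ref{prop:h}.
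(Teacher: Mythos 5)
Your proposal follows essentially the same route as the paper: the paper's proof simply defers to \cite[\S8B]{WanU31} and notes that the construction is identical to that of $\bfh_0,\bfh'_0$ in Proposition~\ref{prop:h} (interpolate $q$-expansions of theta series on $\U(2,2)$, restrict to $\U(V)\times\U(V^-)$, evaluate at an auxiliary point), with (2) following immediately from (1). Your extra detail on interpolating the Gauss sums at $p$ and on deducing (2) from the explicit sum formula for $\phi_{\theta,\tau,p}$ is consistent with what the cited construction does.
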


\begin{proof}
See \cite[\S8B]{WanU31}. The idea is the same as the construction of $\bfh_0$ and $\bfh'_0$, and (2) follows immediately from (1).
\end{proof}

\subsection{$p$-adic Petersson inner product on $\U(2)$}\label{sec:p-inner}

We need $p$-adic Petersson inner product for families on $\U(2)$. Let $V_{\U(2)}$, $e_\ord V'_{\U(2)}$ be the space of $p$-adic forms on $\U(2)$ defined in \S\ref{sec:U2-family}. 

Following \cite[p.33]{hsieh-triple}, we can define a $p$-adic Petersson inner product
\begin{equation}\label{eq:p-inner}
   \left<\bdot,\bdot\right>_{p\adic}: V'_{\U(2),\ord}\times V_{\U(2)}\lra \cO_L
\end{equation}
such that for all $\phi\in V_{\U(2)}$ (resp. $\phi'\in e_\ord V'_{\U(2)}$) of with $K_{p,0}(p^n)$-nebentypus $(\triv,\chi)$ (resp. $(\chi',\triv)$) 
\[
  \glsuseri{Petersson} =\left\{\begin{array}{ll}
     \sum\limits_{g}\, (U^{-n}_p\phi')\left(g\varrho^{-1}_\fp\begin{pmatrix}&1\\-p^n\end{pmatrix}\right)\phi\left(g\right), &\chi\chi'=\triv,\\[1.5em]
     0, &\chi\chi'\neq\triv,
   \end{array}\right.
\]
where $K_{p,0}(p^n)=\left\{g\in\U(2)(\bZ_p):\varrho_\fp(g)\equiv\begin{pmatrix}\ast&\ast\\0&\ast\end{pmatrix}\mod p^n\right\}$, and in the sum, $g$ runs over the finite set $\U(2)(\bQ)\backslash \U(2)(\bA^p_{\bQ,f})/K^p_f K_{p,0}(p^n)$.

A useful property of this $p$-adic Petersson inner product is that
\[
   \left<\phi',U_p\phi\right>_{p\adic}=\omega_{\phi',p}(\varrho^{-1}_\fp(p))^{-1}\left<U_p \phi, \phi'\right>_{p\adic}.
\]
if $\phi'$ has central character $\omega_{\phi'}$. Here $\varrho^{-1}_\fp$ is the isomorphism from $\bQ^\times_p$ to $\U(1)(\bQ_p)$.

\vspace{.5em}
The $p$-adic Petersson inner product in \eqref{eq:p-inner} induces the $p$-adic Petersson inner product for $p$-adic families  on $\U(2)$:
\begin{equation}\label{eq:Petersson-family}
   \left<\bdot,\bdot\right>_{p\adic}: \Meas\big(U_{\cK,p},e_\ord V'_{\U(2),\,\xi^{-1}}\big)^\natural\times\Meas\left(U_{\cK,p},V_{\U(2),\,\xi}\right)^\natural
    \lra \Meas\big(U_{\cK,p},\cO_L\big)\simeq \cO_L\llb U_{\cK,p}\rrb.
\end{equation}

\subsection{The Rallis inner product formulas for $\big<\boldmath \theta,\tilde{\theta}_3\big>_{p\adic}$ and $\big<\boldmath h,\tilde{h}_3\big>_{p\adic}$.}\label{sec:Rallis-inner prod}

From the description of our strategy in \S\ref{sec:strategy}, in particular \eqref{eq:FJ-L1256}, we see that we need a formula relating $\big<\bmtheta,\tilde{\bmtheta}_2\big>_{p\adic}$ and $\big<\bfh,\tilde{\bfh}_3\big>_{p\adic}$ to certain  $p$-adic $L$-functions.

\vspace{.5em}

Let $\gls{L3}\in \Meas\left(U_{\cK,p},\hat{\cO}^\ur_L\right)\otimes_\bZ\bQ\simeq\hat{\cO}^\ur_L\llb U_{\cK,p}\rrb\otimes_\bZ\bQ$ be the $L$-function interpolating the special value at $s=1$ of $L\left(s,\pi^{\GL_2}_\theta,\mr{Ad}\right)$, $L\left(s,\pi^{\GL_2}_h,\mr{Ad}\right)$, which factorize as 
\begin{align*}
   L\left(s,\pi^{\GL_2}_\theta,\mr{Ad}\right)&=L\left(s,\eta_{\cK/\bQ}\right)L\left(s,\lambda^2\chi_\theta\chi^{-c}_\theta\tau_{\fp,\pP_\wtp}\tau^{-c}_{\fp,\pP_\wtp}\right),\\
   L\left(s,\pi^{\GL_2}_h,\mr{Ad}\right)&= L\left(s,\eta_{\cK/\bQ}\right)L\left(s,\lambda^{-2}\chi^{-1}_h\chi^{c}_h\tau^{-1}_{\fp,\pP_\wtp}\tau^{c}_{\fp,\pP_\wtp}\right)\\
   &=L\left(s,\eta_{\cK/\bQ}\right)L\left(s,\lambda^2\chi_h\chi^{-c}_h\tau_{\fp,\pP_\wtp}\tau^{-c}_{\fp,\pP_\wtp}\right).
\end{align*}
More precisely, for $\tau_{p\adic}$ as in Theorem~\ref{thm:Sieg-F},
\begin{align*}
   \cL_3\left(\tau_{p\adic}|_{U_{\cK,p}}\right)
   &=\pi^{-1}\cdot L^\infty\left(1,\eta_{\cK/\bQ}\right)\\
   &\quad \times \left(\frac{\Omega_p}{\Omega_\infty}\right)^2\pi^{-2}\cdot \gamma_{\bar{\fp}}\left(0,\lambda^2\chi_\theta\chi^{-c}_\theta\tau_{\fp,\pP_\wtp}\tau^{-c}_{\fp,\pP_\wtp}\right)^{-1}\cdot L^{p\infty}\left(1,\lambda^2\chi_\theta\chi^{-c}_\theta\tau_{\fp,\pP_\wtp}\tau^{-c}_{\fp,\pP_\wtp}\right),\\
   \cL_4\left(\tau_{p\adic}|_{U_{\cK,p}}\right)
   &=\pi^{-1}\cdot L^\infty\left(1,\eta_{\cK/\bQ}\right)\\
   &\quad \times \left(\frac{\Omega_p}{\Omega_\infty}\right)^2\pi^{-2}\cdot \gamma_{\bar{\fp}}\left(0,\lambda^2\chi_h\chi^{-c}_h\tau_{\fp,\pP_\wtp}\tau^{-c}_{\fp,\pP_\wtp}\right)^{-1}\cdot L^{p\infty}\left(1,\lambda^2\chi_h\chi^{-c}_h\tau_{\fp,\pP_\wtp}\tau^{-c}_{\fp,\pP_\wtp}\right).
\end{align*}
The existence of $\cL_3,\cL_4$ follows from Katz $p$-adic $L$-functions.

\begin{prop}\label{prop:Rallis}
There exists $C_\theta,C_h\in L^\times$ such that 
\begin{align*}
   \big<\tilde{\bmtheta}_3,\bmtheta\big>_{p\adic}&=C_\theta\cdot \cL_3,
   &\big<\bfh,\tilde{\bfh}_3\big>_{p\adic}&=C_h\cdot \cL_4.
\end{align*}
(See \eqref{eq:Petersson-family} for the definition of the $p$-adic Petersson inner product for families.)
\end{prop}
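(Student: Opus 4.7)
The plan is to verify the identities pointwise at a Zariski-dense set of arithmetic points $\tau_{p\adic}|_{U_{\cK,p}}$ (those coming from $\tau$ as in Theorem~\ref{thm:Sieg-F} with $(\xi\tau)_\fp$ sufficiently ramified), and then conclude by the density of such points in $U_{\cK,p}$ that the two sides agree as elements of $\hat{\cO}^\ur_L\llb U_{\cK,p}\rrb\otimes\bQ$. Since both sides are measures valued in $L$-algebras and the interpolation factors defining $\cL_3,\cL_4$ are nonvanishing on a dense set, it suffices to exhibit, for each such $\tau$, an equality of complex numbers
\[
\big\langle \tilde{\bmtheta}_3(\tau_{p\adic}|_{U_{\cK,p}}),\bmtheta(\tau_{p\adic}|_{U_{\cK,p}})\big\rangle_{p\adic}
= C_\theta\cdot \cL_3(\tau_{p\adic}|_{U_{\cK,p}}),
\]
with $C_\theta\in L^\times$ independent of $\tau$, and similarly for $\bfh,\tilde{\bfh}_3$.

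The main input is the classical Rallis inner product formula for the see-saw pair $(\U(2),\U(1))$. Given two theta lifts $\theta(\phi,\chi)$ and $\theta(\phi',\chi^{-1})$ of a unitary Hecke character $\chi$ of $\U(1)$, their Petersson pairing on $\U(2)$ unfolds via Siegel--Weil to
\[
\big\langle \theta(\phi,\chi),\theta(\phi',\chi^{-1})\big\rangle_{\U(2)}
= \frac{L^{S}(1,\eta_{\cK/\bQ})\,L^{S}(1,\chi/\chi^{c}\cdot\lambda_V/\lambda_V^{c})}{\text{normalization}}\cdot\prod_{v}Z_v(\phi_v,\phi'_v),
\]
where $Z_v$ are explicit local zeta integrals that, at good $v$, compute the missing Euler factor. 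Applied to the theta lifts defining $\bmtheta(\tau_{p\adic}|_{U_{\cK,p}})$ and $\tilde{\bmtheta}_3(\tau_{p\adic}|_{U_{\cK,p}})$ of $\lambda^{2}\chi_\theta\tau^{-c}_{\fp,\pP_\wtp}|_{\U(1)}$ and its inverse (and similarly with $\chi_h\tau_{\fp,\pP_\wtp}$ for $\bfh,\tilde{\bfh}_3$), this recovers $L^{p\infty}(1,\lambda^2\chi_\theta\chi^{-c}_\theta\tau_{\fp,\pP_\wtp}\tau^{-c}_{\fp,\pP_\wtp})\cdot L^\infty(1,\eta_{\cK/\bQ})$ up to local factors that can be computed using the explicit Schwartz data listed in \S\ref{sec:Ftheta} and \S\ref{sec:Fh}. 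At archimedean and unramified finite places the local factors are standard constants (independent of $\tau$). At places in $\Sigma\cup\{\ell,\ell'\}$ the conditions on $\chi_\theta,\chi_h$ and the matching of Schwartz functions at $v\in\Sigma_\ns\cup\{\ell'\}$ (where $\int\phi_v\phi'_v\,dy\neq 0$) ensure a nonzero constant independent of $\tau$, producing the overall scalar $C_\theta$ (resp.\ $C_h$).

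The delicate step is matching the local factor at $p$ and the archimedean factor with the $p$-adic period $(\Omega_p/\Omega_\infty)^2\pi^{-2}$ and the $p$-adic modification $\gamma_{\bar\fp}(0,\cdot)^{-1}$ that appear in the definition of $\cL_3,\cL_4$. The $p$-adic Petersson pairing of \S\ref{sec:p-inner} differs from the classical adelic pairing by the normalizing factor $\omega_{\tilde{\bmtheta}_3,p}\!\left(\varrho_\fp^{-1}(p)\right)^{-n}$ together with the action of $\varrho_\fp^{-1}\!\begin{psm}&1\\-p^n\end{psm}$; unfolding the theta kernel at $p$ and using that $\phi_{\theta,\tau,p}$ (resp.\ $\phi'_{\theta,\tau,p}$) is supported on $p^{-t}\bZ_p^\times\times\bZ_p$ (resp.\ $\bZ_p\times\bZ_p^\times$) with a Gauss-sum normalization reproduces exactly the inverse of the local Euler factor at $\fp$ together with the Gauss sum encoding the $p$-adic period ratio, i.e.\ $\gamma_{\bar\fp}(0,\lambda^2\chi_\theta\chi^{-c}_\theta\tau_{\fp,\pP_\wtp}\tau^{-c}_{\fp,\pP_\wtp})^{-1}$. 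The hard part will be this $p$-adic local computation: one must show the Gauss-sum ratio and the $U_p$-normalization combine to give precisely the $p$-adic modulation appearing in the interpolation formula of $\cL_3$, with the remaining discrepancy being a fixed unit in $L^\times$. Identical bookkeeping works for $\bfh,\tilde{\bfh}_3$, after replacing $\chi_\theta$ by $\chi_h^{-1}$ and using the corresponding Schwartz data from Proposition~\ref{prop:h}. Having verified the pointwise identity at all sufficiently ramified arithmetic points with uniform constants $C_\theta,C_h\in L^\times$, Zariski density of these points in $U_{\cK,p}$ yields the equality of measures.
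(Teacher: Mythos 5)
Your proposal follows essentially the same route as the paper: the paper's proof is a one-line citation to \cite[Proposition 8.9]{WanU31}, which proceeds exactly as you describe --- compute the specializations at a dense set of arithmetic points via the Rallis inner product formula for the pair $(\U(2),\U(1))$, evaluate the local zeta integrals using the explicit Schwartz data, and match the factor at $p$ and the periods with the interpolation formula for $\cL_3,\cL_4$. The only caveat is that your sketch, like the paper's, defers the actual local computation at $p$ (the Gauss-sum/$U_p$-normalization bookkeeping) to the cited reference rather than carrying it out.
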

\begin{proof}
The proof is the same as \cite[Proposition 8.9]{WanU31}, which computes the specializations of the left hand side by using the Rallis inner product formula. 
\end{proof}

\subsection{Extending CM forms on $\U(2)$ to $\GU(2)$}\label{sec:ext-GU2}

Let $H=\GU(2)(\bQ)Z_{\GU(2)}(\bA_\bQ)\U(2)(\bA_\bQ)$. Then $H$ is a subgroup of  $\GU(\bA_\bQ)$ of index $2$ consisting of $g\in\GU(2)(\bA)$ with $\nu(g)\in \bQ^\times\Nm(\bA^\times_\cK)$. Suppose that $\phi$ is an automorphic form on $\U(2)$ and generates an irreducible automorphic representation $\pi_\phi$ of $\U(2)(\bA_\bQ)$. Let $\omega:\cK^\times\backslash\bA^\times_\cK\ra\bC^\times$ be a Hecke character extending the central character of $\pi_\phi$. We can first extend $\phi$ to a function on $H$ by
\begin{align*}
   \gamma a g_1&\longmapsto \omega(a)\phi(g_1), &\gamma\in\GU(2)(\bQ),\,a\in Z_{\GU(2)}(\bA_\bQ),\,g_1\in\U(2)(\bA_\bQ),
\end{align*}
and then extend this function by zero from $H$ to $\GU(2)(\bA_\bQ)$. We denote this extension of $\phi$ to $\GU(2)(\bA_\bQ)$ by $\breve{\phi}$. This $\breve{\phi}$ is an automorphic form on $\GU(2)(\bA_\bQ)$.

\begin{prop}\label{prop:ext-CM}
Suppose that $\pi_\phi$ is of CM type, {\it i.e.} if $\mr{BC}(\pi_\phi)\simeq \mr{BC}(\chi_1)\lambda\oplus\mr{BC}(\chi_2)\lambda$ for some characters $\chi_1,\chi_2$ of $\U(1)(\bQ)\backslash\U(1)(\bA_\bQ)$. Then $\breve{\phi}$ generates an irreducible automorphic representation $\pi_{\breve{\phi}}$ of  $\GU(2)(\bA)$ and $\pi_{\breve{\phi}}|_{\U(2)}\simeq \pi_\phi\oplus\pi'_\phi$, where $\pi'_\phi$ is isomorphic to the conjugation of $\pi_\phi$ by an element in $\GU(2)(\bA_\bQ)$ outside $H$. Moreover, if $\phi\in\pi_\phi$ is a pure tensor in $\pi_\phi\simeq\bigotimes_v\pi_{\phi,v}$, then $\breve{\phi}$ is also a pure tensor in $\pi_{\breve{\phi}}\simeq\bigotimes_v\pi_{\breve{\phi},v}$.
\end{prop}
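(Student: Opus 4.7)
The plan proceeds in four steps: (i) $\breve\phi$ is well-defined and automorphic, (ii) $\Pi:=\pi_{\breve\phi}$ restricted to $\U(2)(\bA_\bQ)$ is $\pi_\phi\oplus\pi'_\phi$, (iii) $\Pi$ is irreducible thanks to the CM hypothesis, and (iv) $\breve\phi$ is factorizable.

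For (i), I would check that in writing $g=\gamma a g_1\in H$ the only ambiguity lives on the overlap $\GU(2)(\bQ)\cap Z_{\GU(2)}(\bA_\bQ)\U(2)(\bA_\bQ)$; on this overlap the recipe $\gamma a g_1\mapsto\omega(a)\phi(g_1)$ is consistent because $\omega$ extends the central character of $\pi_\phi$ and $\phi$ is $\U(2)(\bQ)$-invariant, while extension by zero off $H$ is $\GU(2)(\bQ)$-invariant since $\GU(2)(\bQ)\subset H$. For (ii), fix $g_0\in\GU(2)(\bA_\bQ)\setminus H$; because $H$ is normal of index $2$, the support of $R(g_0)\breve\phi$ is $Hg_0=g_0H$, disjoint from $\mathrm{supp}\,\breve\phi=H$. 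The $\U(2)(\bA_\bQ)$-submodule of $\Pi$ generated by $\breve\phi$ identifies with $\pi_\phi$, and the one generated by $R(g_0)\breve\phi$ identifies with the $g_0$-conjugate $\pi'_\phi$ (well-defined because $\U(2)$ is normal in $\GU(2)$); these sum directly because their supports are disjoint, giving $\Pi|_{\U(2)(\bA_\bQ)}=\pi_\phi\oplus\pi'_\phi$. In fact $\Pi$ is canonically identified with $\Ind_H^{\GU(2)(\bA_\bQ)}\widetilde\pi_\phi$, where $\widetilde\pi_\phi$ is the extension of $\pi_\phi$ to $H$ by $\omega$, and $\breve\phi$ corresponds to the section supported on $H$ with value $\phi$.

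Step (iii) is the main obstacle. By Mackey's irreducibility criterion for an index-two subgroup, the induction $\Ind_H^{\GU(2)(\bA_\bQ)}\widetilde\pi_\phi$ is irreducible iff the inducing representation is not isomorphic to its $g_0$-conjugate, i.e.\ iff $\pi_\phi\not\simeq\pi'_\phi$ as $\U(2)(\bA_\bQ)$-representations. Here the CM hypothesis $\mathrm{BC}(\pi_\phi)\simeq \mathrm{BC}(\chi_1)\lambda\oplus\mathrm{BC}(\chi_2)\lambda$ enters crucially: conjugation by $g_0$ has a controlled effect on the base change parameters (amounting to the Galois conjugation $c$ combined with a scalar twist), so $\mathrm{BC}(\pi'_\phi)$ is the $c$-conjugate of $\mathrm{BC}(\pi_\phi)$, and the hypothesis that $\chi_1,\chi_2$ are genuinely distinct unitary Hecke characters of $\U(1)$ (implicit in ``CM type'') prevents this $c$-conjugate from coinciding with $\mathrm{BC}(\pi_\phi)$. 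The desired non-isomorphism $\pi_\phi\not\simeq\pi'_\phi$ then follows from strong multiplicity one for $\GL_2(\bA_\cK)$.

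Finally (iv) follows from the multiplicativity of the defining recipe: if $\phi=\otimes_v\phi_v$ under an isomorphism $\pi_\phi\simeq\bigotimes_v\pi_{\phi,v}$ and $\omega=\prod_v\omega_v$, then $\breve\phi|_H$ factors componentwise, and via the identification of $\Pi$ with the induced representation, the global $\Pi$ decomposes as a restricted tensor product of local pieces $\pi_{\breve\phi,v}$, with $\breve\phi=\otimes_v\breve\phi_v$ as a pure tensor; each local $\pi_{\breve\phi,v}$ is either the local induction from $H_v$ to $\GU(2)(\bQ_v)$, or, at places where $H_v=\GU(2)(\bQ_v)$, simply the local extension of $\pi_{\phi,v}$ via $\omega_v$.
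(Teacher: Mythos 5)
Your steps (i), (ii) and (iv) are fine and agree in substance with the paper: the paper likewise identifies $\pi_{\breve{\phi}}$ with the induction of $\tilde{\pi}_\phi$ from the index-two subgroup $H$, reads off $\pi_{\breve{\phi}}|_{\U(2)}\simeq\pi_\phi\oplus\pi'_\phi$, and deduces the pure-tensor statement from the fact that $\breve{\phi}$ lies in the summand $\pi_\phi$. Your reduction of irreducibility, via Mackey, to the non-isomorphism $\pi_\phi\not\simeq\pi'_\phi$ is also a correct reformulation of the dichotomy the paper invokes.

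The gap is in your justification of $\pi_\phi\not\simeq\pi'_\phi$ in step (iii). The claim that $\mr{BC}(\pi'_\phi)$ is the $c$-conjugate of $\mr{BC}(\pi_\phi)$ and hence differs from it is false. At a place $v$ split in $\cK$ one has $\GU(2)(\bQ_v)\cong\GL_2(\bQ_v)\times\GL_1(\bQ_v)$ with $\U(2)(\bQ_v)$ the first factor, so conjugation by any $g_0\in\GU(2)(\bA_\bQ)$ acts by an \emph{inner} automorphism on $\U(2)(\bQ_v)$ and $\pi'_{\phi,v}\simeq\pi_{\phi,v}$ at every split place; since the weak base change is determined by the Satake parameters at almost all split places, $\mr{BC}(\pi'_\phi)\simeq\mr{BC}(\pi_\phi)$. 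This is exactly the statement that $\pi_\phi$ and $\pi'_\phi$ lie in the same global $L$-packet, and for the same reason strong multiplicity one for $\GL_2(\bA_\cK)$ is powerless here: equality of base changes does not distinguish members of a packet, which is the whole content of endoscopy for $\U(2)$. So your step (iii) both asserts a false intermediate statement and, once corrected, leaves the key non-isomorphism unproved. The correct mechanism (and the paper's) lives on the $\GU(2)$ side: if $\pi_{\breve{\phi}}$ were reducible, $\pi_{\breve{\phi}}\simeq\sigma_1\oplus\sigma_2$ with $\sigma_1\not\simeq\sigma_2\simeq\sigma_1\otimes(\eta_{\cK/\bQ}\circ\nu)$; but via the isomorphism \eqref{eq:DGU} the $D^\times$-component of $\sigma_1$ is dihedral, induced from $\cK$ (this is what the CM hypothesis on $\mr{BC}(\pi_\phi)$ gives), hence invariant under twisting by $\eta_{\cK/\bQ}$, so $\sigma_1\simeq\sigma_1\otimes(\eta_{\cK/\bQ}\circ\nu)$ --- a contradiction. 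You should replace the base-change argument in (iii) by this twist-invariance argument (or an equivalent appeal to the $L$-packet structure for endoscopic parameters of $\U(2)$).
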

\begin{proof}
Let $\sigma$ be the automorphic representation of $\GU(2)(\bA_\bQ)$ generated by $\breve{\phi}$. Then either $\sigma$ is irreducible and $\sigma\simeq \sigma\otimes\eta_{\cK/\bQ}\circ \nu$ or $\sigma\simeq \sigma_1\oplus \sigma_2$ with $\sigma_1\not\simeq\sigma_2$ and $\sigma_2\simeq \sigma_1\otimes\eta_{\cK/\bQ}\circ\nu$. If $\pi_\phi$ is of CM type and $\sigma_1$ is a direct summand of $\sigma$, we know that $\sigma_1\simeq\sigma_1\otimes\eta_{\cK/\bQ}\circ\nu$. Hence, $\sigma$ must be irreducible. (See also \cite[Lemma 8.1]{WanU31}.) We have $\pi_{\breve{\phi}}\simeq \pi_\phi$. Since $\breve{\phi}$ is obtained by extension by zero, it belongs to the direct summand $\pi_\phi$ in $\pi_{\breve{\phi}}|_{\U(2)}\simeq \pi_\phi\oplus\pi'_\phi$. Hence, $\phi$ being a pure tensor in $\pi_\phi$ implies that $\breve{\phi}$ is a pure tensor in $\pi_{\breve{\phi}}$.
\end{proof}

The above extension from $\U(2)$ to $\GU(2)$ also works for $p$-adic automorphic forms. We denote by $\glsuseri{hext}$  the extension of $\bfh$  whose specialization at $\tau_{p\adic}$ is the extension by zero of $\bfh(\tau_{p\adic})$ to $\GU(2)$ with respect to the character $\chi^{-1}_\theta\tau^{-1}_{\fp,\pP_\wtp}$. Similarly, We denote by $\glsuserii{hext}$ (resp. $\glsuseri{thetaext},\glsuserii{thetaext}$) the extension of $\tilde{\bfh}_3$  whose specialization at $\tau_{p\adic}$ is the extension by zero of $\tilde{\bfh}_3(\tau_{p\adic})$ to $\GU(2)$ with respect to the character $\chi_\theta\tau_{\fp,\pP_\wtp}$ (resp. $\chi_\theta\tau_{\fp,\pP_\wtp},\chi^{-1}_\theta\tau^{-1}_{\fp,\pP_\wtp}$).

\subsection{The nonvanishing property of the degenerate Fourier--Jacobi coefficients of the Klingen Eisenstein family}\label{nv-EKling}

Let $\gls{L5}\in \Meas\left(U_{\cK,p},\hat{\cO}^\ur_L\right)\simeq\hat{\cO}^\ur_L\llb U_{\cK,p}\rrb$ be the Katz $p$-adic $L$-functions interpolating special values of $L\left(s,\chi_h\chi^c_\theta\xi_0\tau^\wtp_0\right)$, $L\left(s,\lambda^2\chi_h\chi_\theta\,\tau^{-1}_{\fp,\pP_\wtp}\tau^{c}_{\fp,\pP_\wtp}(\xi_0\tau^\wtp_0)^c\right)$. In particular, for a Hecke character $\tau:\cK^\times\backslash\bA^\times_\cK\ra\bC^\times$ unramified away from $p\infty$ of $\infty$-type $\left(0,k\right)$ with $k\geq 6$,
\begin{align*}
   \cL_5\left(\tau_{p\adic}|_{U_{\cK,p}}\right)&=\left(\frac{\Omega_p}{\Omega_\infty}\right)^{\wtp k}\frac{\Gamma(\wtp k-1)}{(2\pi i)^{\wtp k-1}}
   \cdot \gamma_{\bar{\fp}}\left(\frac{4-\wtp k}{2},(\chi_h\chi^{c}_\theta\xi_0\tau^{\wtp}_0)^{-1}\right) \\
   &\quad\times L^{p\infty}\left(\frac{\wtp k-2}{2},\chi_h\chi^c_\theta\xi_0\tau^\wtp_0\right),\\
   \cL_6\left(\tau_{p\adic}|_{U_{\cK,p}}\right)&=\left(\frac{\Omega_p}{\Omega_\infty}\right)^{\wtp k-2}\frac{\Gamma(\wtp k-2)}{(2\pi i)^{\wtp k-2}}
   \cdot  L_{\fp}\left(\frac{\wtp k-2}{2},\lambda^2\chi_h\chi_\theta\tau_{\fp,\pP_\wtp}\tau^{-c}_{\fp,\pP_\wtp}(\xi_0\tau^\wtp_0)^c\right),\\
   &\quad\times L^{p\infty}\left(\frac{\wtp k-2}{2},\lambda^2\chi_h\chi_\theta\tau_{\fp,\pP_\wtp}\tau^{-c}_{\fp,\pP_\wtp}(\xi_0\tau^\wtp_0)^c\right).
\end{align*}
The product of $\cL_5\cL_6$ interpolates the special values of $L(s,\mr{BC}(\pi_h)\times\xi_0\tau_0\lambda^{-1})$ where $\pi_h$ is the automorphic representation associated to specializations of the CM family $\bfh$. Note that the restriction of $\lambda^2\chi_h\chi_\theta\tau_{\fp,\pP_\wtp}\tau^{-c}_{\fp,\pP_\wtp}(\xi_0\tau^\wtp_0)^c$ to $\cO^\times_{\cK,p}$ is $\left(\xi_{0,\fp}\xi_{0,\bar{\fp}}\tau^\wtp_{0,\fp}\tau^\wtp_{0,\bar{\fp}},\triv\right)$, so $\cL_6$ is essentially of one-variable. In fact $\cL_6$ is the so-called ``improved'' $p$-adic $L$-function, for which the local factor at $p$ in the interpolation formula is a partial local $L$-factor instead of a partial local $\gamma$-factor.

\begin{prop}\label{prop:pair-h}
For $\sT_\ns\in \bigotimes\limits_{v\in\Sigma_\ns\cup\{\ell'\}}\cO_L\left[\GU(2)\big(\bQ_v\big)\right]$ such that $\sT_\ns\bfh$ is still invariant under the tame level group $K^p_f$ (defined \eqref{eq:k^p_f}), $\varphi$ inside the space \eqref{eq:M-varphi}, and $u_1,\dots,u_r\in \bigotimes\limits_{v\in\Sigma_\ns\cup\{\ell'\}}\U(1)(\bQ_v)$, $b_1,\dots,b_r\in\cO_L$ as in Proposition~\ref{prop:phi2f2},

\begin{equation}\label{eq:FJ-h}
   \left<\left(\sT_\ns\breve{\bfh}\right)|_{\U(2)},\pP_{N,\ast}\left(l_{\theta^J_1}\left(\sum\nolimits_i b_i\bfE^\Kling_{\varphi,\beta,u_i}\right)\right)\right>_{p\adic}
   =\cC_1\cdot\cL_5\cL_6\cdot\left<\left(\sT_\ns\breve{\bfh}\right)|_{\U(2)},\bmtheta^{\bar{\lambda}}  \varphi\right>_{p\adic},
\end{equation}
where $\bmtheta^{\bar{\lambda}}=\bmtheta\cdot(\bar{\lambda}\circ \det)$ and $\cC_1\in \left(\hat{\cO}^\ur_L\llb U_{\cK,p}\rrb\otimes_\bZ\bQ\right)^\times$.
\end{prop}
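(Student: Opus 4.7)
The plan is to promote the heuristic identity \eqref{eq:FJ-L1256} sketched in the strategy section to an exact identity of $p$-adic measures on $U_{\cK,p}$, by evaluating both sides at a Zariski-dense set of arithmetic specializations $\tau_{p\adic}|_{U_{\cK,p}}$ with $\tau$ as in Theorem~\ref{thm:Sieg-F}. Since both sides are elements of $\Meas(U_{\cK,p},\hat{\cO}^\ur_L\wh\otimes\bQ)$, and the classical $\tau$ are dense in the weight space, it suffices to match the values at all such $\tau$.

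First, I would apply the unfolding of Proposition~\ref{prop:FJ} together with Proposition~\ref{prop:phi2f2} to rewrite
\[
\pP_{\wtp,\ast}\bigl(l_{\theta^J_1}(\sum\nolimits_i b_i\bfE^\Kling_{\varphi,\beta,u_i})\bigr)(\tau_{p\adic}|_{U_{\cK,p}})
\]
as a normalized pairing over the second copy of $\U(2)$ of $\theta(\phi_{2,\tau^\wtp})(\,\cdot\,(-1,1;1,0))\cdot \varphi^{\overline{\xi_0\tau^\wtp_0}}$ against the restriction of a Siegel Eisenstein series $E^{\Sieg}_2(s,\xi_0\tau_0^\wtp\lambda^{-1})$ on $\U(2,2)$ to $\U(2)\times\U(2)$. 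Then I would take the $p$-adic Petersson inner product with $(\sT_\ns\breve\bfh)|_{\U(2)}$ against the first copy of $\U(2)$ and interchange the two integrations.

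The core step is the doubling method for $\U(2)\times\U(2)\hookrightarrow\U(2,2)$: integrating $E^{\Sieg}_2(\,\cdot\,;f_2(s,\xi_0\tau_0^\wtp\lambda^{-1}))$ against $(\sT_\ns\breve\bfh)|_{\U(2)}\otimes\theta(\phi_{2,\tau^\wtp})^{\overline{\lambda}}\varphi$ produces the standard $L$-function $L(s+\tfrac12,\mr{BC}(\pi_h)\times\xi_0\tau_0^\wtp\lambda^{-1})$ divided by $d^{\Sigma\cup\{p,\ell,\ell'\}}_2(s,\xi_0\tau_0^\wtp\lambda^{-1})$, times a product of local doubling zeta integrals. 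Using the base-change identity \eqref{eq:L-h} in the form
\[
L(s+\tfrac12,\mr{BC}(\pi_h)\times\xi_0\tau_0^\wtp\lambda^{-1})
=L(s+\tfrac12,\chi_h\chi_\theta^c\xi_0\tau_0^\wtp)\cdot L(s+\tfrac12,\lambda^2\chi_h\chi_\theta\xi_0^c(\tau_0^\wtp)^c),
\]
and specializing at $s=(\wtp k-3)/2$, the two factors are interpolated respectively by $\cL_5$ and by $\cL_6$ (the second being the ``improved'' $p$-adic $L$-function, which arises because at $p$ we do not apply the functional equation on the $\lambda^2\chi_h\chi_\theta$-side; this is forced upon us by our choice of $f_{2,p}(s,\xi_0\tau_0\lambda^{-1})$ in Proposition~\ref{prop:phi2f2} and the projection defining $\bmtheta$). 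Combined with the interpolation of $\theta(\phi_{2,\tau^\wtp})^{\overline\lambda}\varphi$ projected to the $\lambda^2\chi_\theta\tau^{-c}_{\fp,\pP_\wtp}|_{\U(1)}$-eigenspace by $\bmtheta^{\bar\lambda}\varphi$, and absorbing the archimedean $\Gamma$-factors and the $p$-adic CM period ratios into the periods comparing the complex and $p$-adic Petersson products, this yields the claimed identity up to a scalar $\cC_1(\tau_{p\adic})$ coming from the local integrals at $v\in\Sigma\cup\{\ell,\ell'\}$.

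Finally, I would verify that $\cC_1$ is an element of the fraction field of $\hat{\cO}^\ur_L\llb U_{\cK,p}\rrb$ which is actually a unit. At archimedean and unramified places this is immediate from standard formulas (the normalization factor $d^{\Sigma\cup\{p,\ell,\ell'\}}_2$ cancels against the matching $L$-factors). At ramified split places $v\in\Sigma_\rms\cup\{\ell\}$ the local characters $\chi_{\theta,v},\chi_{h,v}$ are \emph{independent} of $\tau$ and the local Schwartz data were chosen so that the integrals are nonzero constants in $L^\times$. At non-split places $v\in\Sigma_\ns\cup\{\ell'\}$, the condition $\int\phi_{h,\tau,v}\phi'_{h,\tau,v}\ne 0$ together with the nonvanishing in Proposition~\ref{prop:nschi} guarantees a nonzero constant. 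At $p$, the local doubling integral was computed essentially in \S\ref{sec:Z_p} (now for $\U(2)\times\U(2)\hookrightarrow\U(2,2)$ rather than $\U(2)\times\U(2)\hookrightarrow\U(3,3)$), and gives a product of gamma factors that interpolates to a unit in $\hat{\cO}^\ur_L\llb U_{\cK,p}\rrb\otimes\bQ$; this is the main obstacle, since one must keep track of the intertwining operator applied to the big-cell section and show its contribution combines with the gamma factors built into $f_{2,p}$ to give exactly the correction that converts the naive interpolation into $\cL_5\cL_6$. I expect to handle this by the functional-equation technique of \S\ref{sec:Z_p}, reducing the computation at $p$ to a single explicit Gauss-sum identity and the known formula for $\gamma_p(s,\pi_p\times\chi)$ with $\pi_p$ unramified.
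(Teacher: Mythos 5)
Your proposal is correct and follows essentially the same route as the paper: evaluate both sides at the classical specializations of Theorem~\ref{thm:Sieg-F}, use Proposition~\ref{prop:phi2f2} to express the left side as a doubling integral of $E^\Sieg_2$ against $(\sT_\ns\breve{\bfh})|_{\U(2)}$ and $\theta(\phi_{2,\tau^\wtp})^{\bar\lambda}\varphi$, reuse the local zeta-integral computations from the proof of Theorem~\ref{thm:const} away from $p$, and recognize the factorization of $L(s+\tfrac12,\mr{BC}(\pi_{h_\tau})\times\xi_0\tau_0^\wtp\lambda^{-1})$ as $\cL_5\cL_6$ (with $\cL_6$ the improved $p$-adic $L$-function, exactly because $\alpha'_{\xi\tau,p}$ only constrains $x_{12}$). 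The only deviation is at the place $p$, where the paper cites the explicit computation of \cite[\S6I]{WanU31} rather than rederiving it via the functional equation as you propose; also note that in your base-change factorization the second factor should carry the extra twist $\tau_{\fp,\pP_\wtp}\tau^{-c}_{\fp,\pP_\wtp}$ coming from the family $\bfh$, matching the interpolation formula for $\cL_6$.
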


\begin{proof}
The identity is proved by comparing the evaluations of both sides at $\tau_{p\adic}$ as in Theorem~\ref{thm:Sieg-F}. For such a $\tau_{p\adic}$, put $\theta_\tau=\bmtheta(\tau_{p\adic}|_{U_{\cK,p}})$ and $h_\tau=\bfh(\tau_{p\adic}|_{U_{\cK,p}})$, which are classical ordinary CM forms. The description of $l_{\theta^J_1}\left(\sum_i b_i\bfE^\Kling_{\varphi,\beta,u_i}\right)$ in Proposition~\ref{prop:phi2f2} and the definition of the $p$-adic Petersson inner product in \S\ref{sec:p-inner} imply that 
\begin{equation}\label{eq:h-L5L6}
\begin{aligned}
   &\left(\frac{\Omega_p}{\Omega_\infty}\right)^{-2\wtp k+2}\cdot\text{LHS of \eqref{eq:FJ-h} evaluated at $\tau_{p\adic}$}\\
   =&\,d^{\Sigma\,\cup\{p,\ell,\ell'\}}_2\left(s,\xi_0\tau^\wtp_0\lambda^{-1}\right)\int_{\U(2)(\bQ)\backslash \U(2)(\bA_\bQ)}\theta_\tau\left(g_2\varrho^{-1}_\fp\begin{pmatrix}&1\\-1\end{pmatrix}\right)\varphi^{\bar{\xi}_0\bar{\tau}^\wtp_0}(g_2) \int_{\U(2)(\bQ)\backslash \U(2)(\bA_\bQ)}\\
   &\left. E^\Sieg_2\left(\imath_0(g_1,g_2);f_2(s,\xi_0\tau^\wtp_0\lambda^{-1}\right)\xi^{-1}_0\tau^{-\wtp}_0\lambda(\det g_1)\cdot(U^{-m}_p \sT_\ns \breve{h}_\tau)\left(g_1\begin{pmatrix}&1&\\-p^m\end{pmatrix}_p\right)\,dg_1dg_2\,\right|_{s=\frac{\wtp k-3}{2}}\\
    =&\,\int_{\U(2)(\bQ)\backslash \U(2)(\bA_\bQ)}\theta^{\bar{\lambda}}_\tau\left(g_2\right)\varphi(g_2)\, h'_\tau(g_2)\,dg_2
\end{aligned}
\end{equation}
with $\imath_0$ defined in \eqref{eq:imath0} and
\begin{equation*}
\begin{aligned}
    h'_\tau(g_2)&= d^{\Sigma\,\cup\{p,\ell,\ell'\}}_2\left(s,\xi_0\tau^\wtp_0\lambda^{-1}\right)^{-1}\\
    &\hspace{-3em}\times\left.\int_{\U(2)(\bA_\bQ)}f_{2,v}\left(s,\xi_0\tau^\wtp_0\lambda^{-1}\right)\left(\imath_0\left(\begin{pmatrix}&1\\-1\end{pmatrix}_pg,1\right)\right)\cdot (U^{-m}_p \sT_\ns\breve{h}_\tau)\left(g_2g\begin{pmatrix}&1&\\-p^m\end{pmatrix}_p\right)\,dg\,\right|_{s=\frac{\wtp k-3}{2}},
\end{aligned}
\end{equation*}
where $m$ is any sufficiently large integer. The integral over $g$ is essentially doubling local zeta integral. 

\vspace{.5em}

By using the same computation results in the proof of Theorem~\ref{thm:const} for all $v\neq p$, we obtain  
\begin{equation}\label{eq:h-L5L6-2} 
\begin{aligned}
   h'_\tau(g_2)&=\cC_1(\tau_{p\adic})\cdot \frac{\Gamma(\wtp k-1)\Gamma(\wtp k-2)}{(2\pi i)^{2\wtp k-3}}\cdot L^{p\infty}\left(\frac{\wtp k-2}{2},\mr{BC}(\pi_{h_\tau})\times \xi_0\tau^\wtp_0\lambda^{-1}\right)\\
   &\hspace{-2em} \times \left.\int_{\U(2)(\bQ_p)}f_{2,p}\left(s,\xi_0\tau^\wtp_0\lambda^{-1}\right)\left(\imath_0\left(\begin{pmatrix}&1\\-1\end{pmatrix}_p g,1\right)\right)(U^{-m}_p \sT_\ns\breve{h}_\tau)\left(g_2g\begin{pmatrix}&1&\\-p^m\end{pmatrix}_p\right)\,dg\,\right|_{s=\frac{\wtp k-3}{2}},
\end{aligned}
\end{equation}
where $\cC_1$ is the product of  
\begin{itemize}
\item the local doubling zeta integrals at $v\in\Sigma\,\cup\{\ell,\ell'\}$, which is a nonzero constant in $L$ by our choice, and
\item the element in $\hat{\cO}^\ur_L\llb U_{\cK,p}\rrb$ interpolating $L_q\left(\frac{\wtp k-2}{2},\chi_h\chi^c_\theta\xi_0\tau^\wtp_0\right)^{-1}$, which is a unit by our assumption.
\end{itemize}
Our choice of $\chi_h$, $\chi_\theta$ implies that $L_v\left(s,\mr{BC}(\pi_{h_\tau})\times \xi_0\tau_0\lambda^{-1}\right)=1$ for $v\in\Sigma\,\cup\{\ell,\ell'\}$. The computation at $p$ is slightly different from that in the  construction of $p$-adic $L$-functions because the section $f_{2,p}(s,\xi_0\tau_0\lambda^{-1})$ is slightly different from the one for $p$-adic $L$-functions. The Schwartz function $\alpha'_{\xi\tau,p}$ in the description of $f_{2,p}(s,\xi_0\tau_0\lambda^{-1})$ in Proposition~\ref{prop:phi2f2} only requires $x_{12}$ to be supported on $p^{-t}\bZ^\times_p$. Unlike the section for constructing $p$-adic $L$-functions, $\alpha_{\xi\tau,p}$ does not require the whole $\begin{pmatrix}x_{11}&x_{12}\\x_{21}&x_{22}\end{pmatrix}$ to be supported on $p^{-t}\GL_2(\bZ_p)$. (In fact, $f_{2,p}(s,\tau_0\lambda^{-1})$ is essentially of the same type as the section in \cite[Table 2 on p.210]{LRtz} for constructing the ``improved'' $p$-adic $L$-function.) The computation at $p$ is done in \cite[6I]{WanU31}, and plugging it into \eqref{eq:h-L5L6-2}, we get
\begin{align*}
   h'_\tau(g_2)=&\,\cC_1(\tau_{p\adic})\cdot\frac{\Gamma(\wtp k-1)\Gamma(\wtp k-2)}{(2\pi i)^{2\wtp k-3}}\cdot  L^{p\infty}\left(\frac{\wtp k-2}{2},\mr{BC}(\pi_{h_\tau})\times \xi_0\tau_0\lambda^{-1}\right)\\
    &\times\gamma_{\bar{\fp}}\left(\frac{4-\wtp k}{2},(\chi_h\chi^{c}_\theta\xi_0\tau^{\wtp}_0)^{-1}\right)
    L_{\fp}\left(\frac{\wtp k-2}{2},\lambda^2\chi_h\chi_\theta\tau_{\fp,\pP_\wtp}\tau^{-c}_{\fp,\pP_\wtp}(\xi_0\tau^\wtp_0)^c\right)\\
    &\times (U^{-m}_p\sT_\ns \breve{h}_\tau)\left(g_2\begin{pmatrix}&1&\\-p^m\end{pmatrix}_p\right).
\end{align*}
Plugging this into \eqref{eq:h-L5L6}, we see that the evaluations at $\tau_{p\adic}$ of the two sides of \eqref{eq:FJ-h} are equal.

\end{proof}

We introduce some more $p$-adic $L$-functions. Let $\glsuseri{L1}\in \Meas\left(U_{\cK,p},\hat{\cO}^\ur_L\right)\otimes_\bZ\bQ\simeq\hat{\cO}^\ur_L\llb U_{\cK,p}\rrb\otimes_\bZ\bQ$ be the $p$-adic $L$-function interpolating the central values of $L\left(s,\mr{BC}(\pi)\times \lambda^2\chi_h\chi_\theta\tau^{-1}_{\fp,\pP_\wtp}\tau^{c}_{\fp,\pP_\wtp}\right)$, {\it i.e.} for $\tau_{p\adic}$ as in Theorem~\ref{thm:Sieg-F},
\begin{align*}
   \cL_1\left(\tau_{p\adic}|_{U_{\cK,p}}\right)
   &=\left(\frac{\Omega_p}{\Omega_\infty}\right)^4\pi^{-3}  \cdot \gamma_p\left(\frac{1}{2},\pi_{f,p}\times\left(\lambda^2\chi_h\chi_\theta\tau_{\fp,\pP_\wtp}\tau^{-c}_{\fp,\pP_\wtp}\right)_{\bar{\fp}}\right)^{-1}\\
   &\quad\times L^{p\infty}\left(\frac{1}{2},\mr{BC}(\pi)\times \lambda^2\chi_h\chi_\theta\tau_{\fp,\pP_\wtp}\tau^{-c}_{\fp,\pP_\wtp}\right).
\end{align*}
The existence of such a $p$-adic $L$-function $\cL_1$ follows from \cite{HsiehRankin}. Let 
\[
   \glsuserii{L1}=\pi^{-2}\left<f,\bar{f}\right>^{-1}\cdot L^\infty\left(\frac{1}{2},\mr{BC}(\pi)\times \chi_h\chi^c_\theta\right)\in L^{\ur}.
\]
The product of $\cL_1\cL_2$ interpolates the central values of $L\left(s,\pi\times\pi^{\GL_2}_h\times\pi^{\GL_2}_{\theta^{\bar{\lambda}}}\right)$ with $h$, $\theta$ specializations of the CM families $\bfh$, $\bmtheta$.

\vspace{.5em}

Note that because $\pi$ has trivial central character and $\chi_h\chi^c_\theta|_{\bA^\times_\bQ}=\triv$, when $\cond\left(\xi_{0,p}\tau^\wtp_{0,\fp}\right)=p^t$,
\begin{equation}\label{eq:triple-gamma} 
\begin{aligned}
   &\quad\gamma_p\left(\frac{1}{2},\pi_{p}\times\left(\lambda^2\chi_h\chi_\theta\tau_{\fp,\pP_\wtp}\tau^{-c}_{\fp,\pP_\wtp}\right)_{\bar{\fp}}\right)^{-1}\\
   &=\fg\left((\xi_0\tau^\wtp_0)^{-1}_\fp\right)^2\cdot \left(\lambda^2_{\bar{\fp}}\chi_{h,\bar{\fp}}\chi_{\theta,\bar{\fp}}\tau^{-\wtp}_{0,\fp}(p)p^{1/2}\right)^{-2t}\\
   &=p^{-t}\cdot \fg\left((\xi_0\tau^\wtp_0)^{-1}_\fp\right)\left(\lambda_{\bar{\fp}}^2\chi_{\theta,\bar{\fp}}\chi^{-1}_{\theta,\fp}\tau^{-\wtp}_{0,\fp}(p)\right)^{-t}\cdot \fg\left((\xi_0\tau^\wtp_0)^{-1}_\fp\right)\left(\lambda_{\bar{\fp}}^2\chi_{h,\bar{\fp}}\chi^{-1}_{h,\fp}\tau^{-\wtp}_{0,\fp}(p)\right)^{-t}\\
   &=p^{-t}\cdot \gamma_{\bar{\fp}}\left(0,\lambda^2\chi_\theta\chi^{-c}_\theta\tau_{\fp,\pP_\wtp}\tau^{-c}_{\fp,\pP_\wtp}\right)^{-1}\cdot \gamma_{\bar{\fp}}\left(0,\lambda^2\chi_h\chi^{-c}_h\tau_{\fp,\pP_\wtp}\tau^{-c}_{\fp,\pP_\wtp}\right)^{-1}.
\end{aligned}
\end{equation}

Combining \eqref{eq:triple-gamma}, the above interpolation formulas for $\cL_1,\cL_2$, and the interpolation formulas for $\cL_3,\cL_4$ in \S\ref{sec:Rallis-inner prod}, we see that when $\cond\left(\xi_{0,p}\tau^\wtp_{0,\fp}\right)=p^t$,
\begin{equation}\label{eq:L1234}
\begin{aligned}
   \left(\frac{\cL_1\cL_2}{\cL_3\cL_4}\right)\left(\tau_{p\adic}|_{U_{\cK,p}}\right)
   =&\,\pi\cdot p^{-t}L_p\left(\frac{1}{2},\mr{BC}(\pi)\times\chi_h\chi^c_\theta\right)\\
   &\hspace{-2em}\times\frac{L^{p\infty}\left(\frac{1}{2},\mr{BC}(\pi)\times \lambda^2\chi_h\chi_\theta\tau_{\fp,\pP_\wtp}\tau^{-c}_{\fp,\pP_\wtp}\right)\cdot L^{p\infty}\left(\frac{1}{2},\mr{BC}(\pi)\times \chi_h\chi^c_\theta\right)}{L^\infty(1,\eta_{\cK,\bQ})^2 \cdot L^{p\infty}\left(1,\lambda^2\chi_\theta\chi^{-c}_\theta\tau_{\fp,\pP_\wtp}\tau^{-c}_{\fp,\pP_\wtp}\right)  \cdot L^{p\infty}\left(1,\lambda^2\chi_h\chi^{-c}_h\tau_{\fp,\pP_\wtp}\tau^{-c}_{\fp,\pP_\wtp}\right)}
\\
    =&\,\pi\frac{L^\infty\left(1,\pi,\mr{Ad}\right)}{\left<f,\bar{f}\right>}   \cdot  p^{-t}\,\frac{L_p\left(\frac{1}{2},\mr{BC}(\pi)\times\chi_h\chi^c_\theta\right)}{L_p\left(1,\eta_{\cK/\bQ}\right)^2\cdot L_p\left(1,\pi,\mr{Ad}\right)}\\
    &\times \frac{L^{p\infty}\left(\frac{1}{2},\pi\times\pi^{\GL_2}_h\times\pi^{\GL_2}_{\theta^{\bar{\lambda}}}\right)}{L^{p\infty}\left(1,\pi,\mr{Ad}\right)\cdot L^{p\infty}\left(1,\pi^{\GL_2}_\theta,\mr{Ad}\right)\cdot L^{p\infty}\left(1,\pi^{\GL_2}_h,\mr{Ad}\right)}.
\end{aligned}
\end{equation}

\begin{prop}\label{prop:triple-L}
There exist operators $\sT_\ns,\sT'_\ns \in \bigotimes\limits_{v\in\Sigma_\ns\cup\{\ell'\}}\cO_L\left[\GU(2)\big(\bQ_v\big)\right]$, and $\varphi$ inside the space \eqref{eq:M-varphi}, $\varphi'$ inside the space \eqref{eq:M-varphi'} such that $\sT_\ns\bfh$ is invariant under the tame level group $K^p_f$ (defined in \eqref{eq:k^p_f}) and
\begin{equation}\label{eq:triple-L}
\begin{aligned}
   &\left<\left(\sT_\ns\breve{\bfh}\right)|_{\U(2)},\bmtheta^{\bar{\lambda}}  \varphi\right>_{p\adic}
   \left<\Big(\sT'_\ns\breve{\tilde{\bfh}}_3\Big)|_{\U(2)},\tilde{\bmtheta}^{\lambda}_3  \varphi'\right>_{p\adic}
   =C_2\cdot \big<\bmtheta,\tilde{\bmtheta}_3\big>_{p\adic}\big<\bfh,\tilde{\bfh}_3\big>_{p\adic} \cdot \frac{\cL_1\cL_2}{\cL_3\cL_4} 
\end{aligned}
\end{equation}
with $C_2\in L^\times$. 
\end{prop}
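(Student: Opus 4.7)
The plan is to establish the identity \eqref{eq:triple-L} by verifying it at a Zariski-dense set of arithmetic points $\tau_{p\adic}$ (those satisfying the conditions of Theorem~\ref{thm:Sieg-F}) and then invoking the fact that elements of $\hat{\cO}^{\ur}_L\llb U_{\cK,p}\rrb\otimes_{\bZ}\bQ$ are determined by their values at such points. At each such $\tau_{p\adic}$ the specializations $\theta_\tau=\bmtheta(\tau_{p\adic}|_{U_{\cK,p}})$ and $h_\tau=\bfh(\tau_{p\adic}|_{U_{\cK,p}})$ are classical CM forms on $\U(2)$, and similarly for $\tilde{\bmtheta}_3,\tilde{\bfh}_3$; using Proposition~\ref{prop:ext-CM} one extends each to $\GU(2)$ as pure tensors $\breve{\theta}_\tau,\breve{h}_\tau,\breve{\tilde\theta}_{3,\tau},\breve{\tilde h}_{3,\tau}$ in irreducible representations, and the $p$-adic Petersson pairing \eqref{eq:Petersson-family} specializes (by the construction in \S\ref{sec:p-inner}) to an integer multiple of the classical $\GU(2)$-Petersson inner product times an explicit local factor at $p$ involving the $U_p$-eigenvalues on the CM families.

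The next step is to rewrite each classical specialization of the LHS as an integral on $D^\times$ via the isomorphism \eqref{eq:DGU}, and to apply Ichino's triple product formula to $\pi^D\boxtimes\pi_{\breve{\theta}^{\bar{\lambda}}_\tau}^D\boxtimes\pi_{\breve{h}_\tau}^D$ (and its dual version with $\tilde{\bmtheta}_3,\tilde{\bfh}_3,\varphi'$). The triple product $L$-function factors, by \eqref{eq:L-h}--\eqref{eq:L-theta}, as
\begin{align*}
L\bigl(\tfrac{1}{2},\pi\times\pi^{\GL_2}_{\theta^{\bar\lambda}_\tau}\times\pi^{\GL_2}_{h_\tau}\bigr)
&=L\bigl(\tfrac{1}{2},\mr{BC}(\pi)\times\lambda^2\chi_h\chi_\theta\tau_{\fp,\pP_\wtp}\tau^{-c}_{\fp,\pP_\wtp}\bigr)\\
&\quad\cdot L\bigl(\tfrac{1}{2},\mr{BC}(\pi)\times\chi_h\chi^c_\theta\bigr),
\end{align*}
and the three adjoint $L$-functions appearing in Ichino's formula are $L(s,\pi,\mr{Ad})$, $L(s,\pi_\theta^{\GL_2},\mr{Ad})$, $L(s,\pi_h^{\GL_2},\mr{Ad})$; these match exactly the numerator and denominator in \eqref{eq:L1234} after using Proposition~\ref{prop:Rallis} to rewrite $\langle\bmtheta,\tilde{\bmtheta}_3\rangle_{p\adic}$ and $\langle\bfh,\tilde{\bfh}_3\rangle_{p\adic}$ as $C_\theta\cL_3$ and $C_h\cL_4$.

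For this to produce a genuine identity of $p$-adic measures rather than just an identity at each point, one has to choose the local test vectors so that the local Ichino integral at every place is either an explicitly computable nonzero constant or interpolated correctly. At the archimedean place, unramified places, split ramified places $v\in\Sigma_{\rms}\cup\{\ell\}$, and the prime $q$, the local integrals are determined (and nonzero) by the explicit local components of $\varphi,\varphi',\bmtheta,\bfh,\tilde{\bmtheta}_3,\tilde{\bfh}_3$ already fixed in \S\ref{sec:aux}--\S\ref{sec:Ftheta}; the nonvanishing of the central $L$-value $L_p(\frac{1}{2},\mr{BC}(\pi)\times\chi_h\chi^c_\theta)$ and the $L$-factor at $q$ are guaranteed by condition (6) of \S\ref{sec:aux} and the choice at $q$ respectively. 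At $v\in\Sigma_{\ns}\cup\{\ell'\}$ one chooses $\sT_\ns,\sT'_\ns$ and $\varphi_v,\varphi'_v$ so that the local trilinear integral is a nonzero scalar independent of $\tau$, using the non-vanishing of local trilinear forms on tempered unitary representations (Prasad's theorem) once the epsilon dichotomy has been matched by the conductor conditions in (iii)--(v) of \S\ref{sec:aux}. At the place $p$, the $p$-adic Petersson normalization \S\ref{sec:p-inner} is specifically designed so that the local integrals produce the ratio of local Euler factors in \eqref{eq:L1234}, with the $p^{-t}$ factor arising from \eqref{eq:triple-gamma}.

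The main obstacle is the $p$-adic computation in the third step: one must show that, even though the LHS and RHS specialize to classical triple product integrals with subtly different normalizations (the LHS uses the $p$-adic Petersson pairing normalized by $U_p^{-n}$-action on the ordinary CM forms, while the RHS involves the Rallis inner products $\cL_3,\cL_4$ together with the $p$-ordinary triple product $L$-function $\cL_1\cL_2$), the ratio of these normalizations at $p$ depends only on the tame conductor data and not on $\tau$. Concretely, after unwinding the archimedean period $\Omega_\infty^{4-4\wtp k}\Omega_p^{4\wtp k-4}$ via the CM periods in both Rallis and triple product formulas, and after accounting for the Gauss sum identity \eqref{eq:triple-gamma} relating the local gamma factor of $\pi_p\times(\lambda^2\chi_h\chi_\theta\cdots)_{\bar\fp}$ to the product of two local gamma factors appearing in $\cL_3$ and $\cL_4$, one obtains the constant $C_2\in L^\times$ as the product of the Ichino local constants at $\Sigma\cup\{\ell,\ell'\}$ together with $C_\theta^{-1}C_h^{-1}$ and the $p$-unit $\pi L^\infty(1,\pi,\mr{Ad})/\langle f,\bar f\rangle$ appearing in \eqref{eq:L1234}.
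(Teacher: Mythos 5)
Your proposal follows essentially the same route as the paper's proof: specialize at the dense set of arithmetic points, extend the CM forms to $\GU(2)$ and transfer the integrals to $D^\times$ via \eqref{eq:DGU}, apply Ichino's triple product formula with the factorization \eqref{eq:L-h}--\eqref{eq:L-theta}, control the local integrals place by place (Prasad's theorem at $\Sigma_\ns\cup\{\ell'\}$, the explicit computation producing the $p^{-t}$ and local Euler factors at $p$), and match against \eqref{eq:L1234}. The only cosmetic difference is that you route the adjoint $L$-values through Proposition~\ref{prop:Rallis}, whereas the paper keeps $\big<\bmtheta,\tilde{\bmtheta}_3\big>_{p\adic}\big<\bfh,\tilde{\bfh}_3\big>_{p\adic}$ as explicit factors and defers that proposition to the nonvanishing argument; this merely reshuffles constants into $C_2$.
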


\begin{proof}
For  $\sT_\ns,\sT'_\ns \in \bigotimes\limits_{v\in\Sigma_\ns\cup\{\ell'\}}\cO_L\left[\GU(2)\big(\bQ_v\big)\right]$ and $\varphi,\varphi'$ in \eqref{eq:M-varphi} corresponding to pure tensors in $\bigotimes_v\pi^{\GU(2)}_v$, we consider evaluations at $\tau_{p\adic}$ as in Theorem~\ref{thm:Sieg-F} with $\cond(\tau_\fp)=p^t$. Put $\theta_\tau=\bmtheta(\tau_{p\adic})$ and $h_\tau=\bfh(\tau_{p\adic})$.
\begin{equation}\label{eq:triple-eval}
\begin{aligned}
     \begin{array}{ll}\text{LHS of \eqref{eq:triple-L}}\\\text{valued at $\tau_{p\adic}$}\end{array}
   =& \int_{\U(2)(\bQ)\backslash\U(2)(\bA_\bQ)} \theta^{\bar{\lambda}}_\tau(g) \,\varphi(g) \,\left(\sT_\ns\breve{h}_\tau\right)\left(g\begin{psm}&1\\-p^t\end{psm}_{\scriptscriptstyle p}\right)\,dg\\
  &\times \int_{\U(2)(\bQ)\backslash\U(2)(\bA_\bQ)} \tilde{\theta}^{\lambda}_\tau(g) \, \varphi'(g)  \,\left(\sT'_\ns\breve{\tilde{h}}_\tau\right)\left(g\begin{psm}&1\\-p^t\end{psm}_{\scriptscriptstyle p}\right)\,dg.
\end{aligned}
\end{equation}
We can replace $\theta^{\bar{\lambda}}_\tau$ (resp. $\tilde{\theta}^\lambda_\tau$) in the integrand by $\breve{\theta}^{\bar{\lambda}}_\tau$ (resp. $\breve{\tilde{\theta}}^\lambda_\tau$), its extension by zero to $\GU(2)$ with respect the character $\chi_\theta\tau_{\fp,\tau_\wtp}$  (resp. $\chi^{-1}_\theta\tau^{-1}_{\fp,\tau_\wtp}$) (as described at the beginning of \S\ref{sec:ext-GU2}), and replace the domain of integration by $\GU(2)(\bQ)Z_{\GU(2)}(\bA_\bQ)\backslash\GU(2)(\bA_\bQ)$. Thanks to \eqref{eq:DGU}, we can further replace $\GU(2)(\bQ)Z_{\GU(2)}(\bA_\bQ)\backslash\GU(2)(\bA_\bQ)$ by $D^\times(\bQ)Z_D(\bA_\bQ)\backslash D^\times(\bA_\bQ)$. Then for $\sT_\ns,\sT'_\ns\in \bigotimes\limits_{v\in\Sigma_\ns\cup\{\ell'\}}\cO_L\left[D^\times(\bQ_v)\right]\subset \bigotimes\limits_{v\in\Sigma_\ns\cup\{\ell'\}}\cO_L\left[\GU(2)(\bQ_v)\right]$, we can rewrite \eqref{eq:triple-eval}
\begin{equation}\label{eq:tri-2}
\begin{aligned}
     \begin{array}{ll}\text{LHS of \eqref{eq:triple-L}}\\\text{valued at $\tau_{p\adic}$}\end{array}
   =&\int_{\bA^\times_\bQ\backslash D^\times(\bA_\bQ)} 
      \breve{\theta}^{\bar{\lambda}}_\tau(g) \,\varphi(g) \,(\sT_\ns\breve{h}_\tau)\left(g\begin{psm}&1\\-p^t\end{psm}_{\scriptscriptstyle p}\right)\,dg\\
      &\times\int_{\bA^\times_\bQ\backslash D^\times(\bA_\bQ)} 
      \breve{\tilde{\theta}}^{\lambda}_\tau(g) \,\varphi'(g) \,(\sT'_\ns\breve{\tilde{h}}_\tau)\left(g\begin{psm}&1\\-p^t\end{psm}_{\scriptscriptstyle p}\right)\,dg.
\end{aligned}
\end{equation}
This is the triple product integral.

Let 
\begin{align*}
   F_\tau&=\left(\breve{\theta}^{\bar{\lambda}}_\tau\otimes\varphi\otimes \sT_\ns \breve{h}_\tau\right)|_{D^\times(\bA)},
   &F'_\tau&=\left(\breve{\tilde{\theta}}^{\lambda}_{3,\tau}\otimes\varphi'\otimes \sT'_{\ns}\breve{\tilde{h}}_{3,\tau}\right)|_{D^\times(\bA)}.
\end{align*}
Then $F_\tau$ generates $\Pi_\tau=\pi^D_{\theta^{\bar{\lambda}}_\tau}\otimes\pi^D_{h_\tau}\otimes\pi^D$, where $\pi^D_{\theta^{\bar{\lambda}}_\tau}$ (resp. $\pi^D_{h_\tau}$, $\pi^D$) is the automorphic representation of $D^\times(\bA_\bQ)$ generated by the restriction of $\breve{\theta}^{\bar{\lambda}}_\tau$ (resp. $\breve{h}_\tau$, $\varphi$) to $D^\times$, and $F'_\tau$ generates $\tilde{\Pi}_\tau$, the contragredient representation of $\Pi_\tau$. 
By the isomorphism \eqref{eq:DGU} and Proposition~\ref{prop:ext-CM}, we know that $\pi^D_{\theta^{\bar{\lambda}}_\tau}$, $\pi^D_{h_\tau}$ are irreducible, and $\breve{\theta}^{\bar{\lambda}}_\tau$, $\breve{h}_\tau$ are pure tensors. By the definition of $\varphi$, we know that $\pi^D_\varphi=\pi^D$. Hence, $\Pi_\tau$ (resp. $\tilde{\Pi}_\tau$) is irreducible, and the image of $F_\tau$ (resp. $F'_\tau$) under $\Pi_\tau\simeq \bigotimes_v \Pi_{\tau,v}$  ($\tilde{\Pi}_\tau\simeq \bigotimes_v\tilde{\Pi}_{\tau,v}$) is a pure tensor $\bigotimes_v F_{\tau,v}$ (resp. $\bigotimes_v F'_{\tau,v}$). By Ichino's triple product formula \cite{Ichino-triple}, \eqref{eq:tri-2} becomes
\begin{equation}\label{eq:tri-3}
\begin{aligned}
    \begin{array}{ll}\text{LHS of \eqref{eq:triple-L}}\\\text{valued at $\tau_{p\adic}$}\end{array}
   =&\,\prod_v \int_{\bA^\times_\bQ\backslash D^\times(\bA_\bQ)} \big<\Pi_v(g_v)F_{\tau,v},F'_{\tau,v}\big>_v\,dg_v\\
   =&\,\big<f^D,\bar{f}^D\big>\big<\theta_\tau,\tilde{\theta}_{3,\tau}\big>_{p\adic}\big<h_\tau,\tilde{h}_{3,\tau}\big>_{p\adic}
   \cdot \frac{\zeta^{\Sigma\,\cup\{\infty,p,\ell,\ell'\}}(2)^2\cdot L^{\Sigma\,\cup\{\infty,p,\ell,\ell'\}}\left(\frac{1}{2},\Pi\right)}{L^{\Sigma\,\cup\{\infty,p,\ell,\ell'\}}\left(1,\Pi,\mr{Ad}\right)}\\
   &\times \prod_{v\in  \Sigma\,\cup\{\infty,\ell,\ell'\}}  \frac{\int_{\bQ^\times_v\backslash D^\times(\bQ_v)}\big<\Pi_v(g)F_{\tau,v},F'_{\tau,v}\big>_v\,dg}{\big<f^D_v,\bar{f}^D_v\big>_v\big<\theta_{\tau,v},\tilde{\theta}_{3,\tau,v}\big>_v\big<h_{\tau,v},\tilde{h}_{3,\tau,v}\big>_v}\\
   &\times  \frac{\int_{\bQ^\times_p\backslash D^\times(\bQ_p)}\big<\Pi_p(g)F_{\tau,p},F'_{\tau,p}\big>_p\,dg}{\big<f^D_p,\bar{f}^D_p\big>_p\big<\theta_{\tau,p},\begin{psm}&1\\-p^t\end{psm}\tilde{\theta}_{3,\tau,v}\big>_p\big<h_{\tau,p},\begin{psm}&1\\-p^t\end{psm}\tilde{h}_{3,\tau,p}\big>_p} 
\end{aligned}
\end{equation}
We have the following results for the local integrals at $v\in \Sigma\,\cup\{\infty,p,\ell,\ell'\}$:
\begin{enumerate}
\item[--] $v\in\Sigma_\rms\cup\{\infty,\ell\}$. 
\[
   \frac{\int_{\bQ^\times_v\backslash D^\times(\bQ_v)} \big<\Pi_v(g)\phi_v,\phi'_v\big>_v\,dg}{\big<f^D_v,\bar{f}^D_v\big>_v\big<\theta_{\tau,v},\tilde{\theta}_{3,\tau,v}\big>_v\big<h_{\tau,v},\tilde{h}_{3,\tau,v}\big>_v}=\text{a nonzero constant independent of $\tau$}
\]
For $v=\infty$, the integral is $1$ because $\Pi_\infty$ is the trivial representation. For $v=\Sigma_\rms\cup\{\ell\}$, this is \cite[Lemmas 8.13-8.17]{WanU31}. Note that the twist at $v\in\Sigma_\rms\cup\{\ell\}$ in \eqref{eq:f-twist} guarantees that the local integrals are nonzero constants. 

\vspace{.5em}
\item[--] $v\in \Sigma_\ns\cup\{\ell'\}$. There exist $\sT_v,\sT'_v\in\cO_L\left[D^\times(\bQ_v)\right]\subset\cO_L \left[\GU(2)(\bQ_v)\right]$ and $\varphi_v,\varphi'_v\in \pi^{\GU(2)}_v$ such that $\varphi_v$ and $\sT_v\bfh$ are invariant under the level group $K_v$ (defined in \eqref{eq:K_v}) and
\[
   \frac{\int_{\bQ^\times_v\backslash D^\times(\bQ_v)} \big<\Pi_v(g)F_{\tau,v},F'_{\tau,v}\big>_v\,dg}{\big<f^D_v,\bar{f}^D_v\big>_v\big<\theta_{\tau,v},\tilde{\theta}_{3,\tau,v}\big>_v\big<h_{\tau,v},\tilde{h}_{3,\tau,v}\big>_v}=\text{a nonzero constant independent of $\tau$}
\]
This is proved in the following \S\ref{sec:tri-ns}. 

\vspace{.5em}

\item[--] $v=p$. 
\begin{align*}
   &\frac{\int_{\bQ^\times_p\backslash D^\times(\bQ_p)}\big<\Pi_p(g)F_{\tau,p},F'_{\tau,p}\big>_p\,dg}{\big<f^D_p,\bar{f}^D_p\big>_p\big<\theta_{\tau,p},\begin{psm}&1\\-p^t\end{psm}\tilde{\theta}_{3,\tau,v}\big>_p\big<h_{\tau,p},\begin{psm}&1\\-p^t\end{psm}\tilde{h}_{3,\tau,p}\big>_p}
   =\, p^{-t}\,\frac{\zeta_p(2)^2\cdot L_p\left(\frac{1}{2},\mr{BC}(\pi)\times\chi_h\chi^c_\theta\right)}{L_p\left(1,\eta_{\cK/\bQ}\right)^2\cdot L_p\left(1,\pi,\mr{Ad}\right)}.
\end{align*}
This is a direct consequence of  \cite[Proposition 6.3]{hsieh-triple}. We also use the formulas for local norms of the chosen vectors in the proof of Lemma 6.4 of {\it loc.cit.}.
\end{enumerate} 
Plug these results on local integrals into \eqref{eq:tri-3} and compare with \eqref{eq:L1234}, we see that
\begin{align*}
    \begin{array}{ll}\text{LHS of \eqref{eq:triple-L} }\text{valued at $\tau_{p\adic}$}\end{array}
    =C_2\cdot \big<\theta_\tau,\tilde{\theta}_{3,\tau}\big>_{p\adic}\big<h_\tau,\tilde{h}_{3,\tau}\big>_{p\adic}\cdot  \left(\frac{\cL_1\cL_2}{\cL_3\cL_4}\right)\left(\tau_{p\adic}|_{U_{\cK,p}}\right)
\end{align*}
with $C_2\in L^\times$ the product of 
\begin{itemize}
\item the local triple product integrals divided by the norms at $v\in\Sigma\cup\{\ell,\ell'\}$, which are nonzero constants, 
\vspace{.5em}
\item $\prod\limits_{v\in \Sigma\,\cup\{\ell,\ell'\}} \Big(\frac{\zeta_v(2)^2 L_v\left(\frac{1}{2},\Pi\right)}{L_v\left(1,\Pi,\mr{Ad}\right)}\Big)^{-1}$, which, by our choice of the Hecke characters $\chi_\theta,\chi_h$, equals $L_q\left(\frac{1}{2},\mr{BC}(\pi)\times\chi_h\chi^c_\theta\right)^{-1} \prod\limits_{v\in \Sigma\,\cup\{\ell,\ell'\}} \zeta_v(2)^{-2}L_v\left(1,\eta_{\cK/\bQ}\right)^2 L_v\left(1,\pi,\mr{Ad}\right)$, a nonzero constant,
\vspace{.5em}

\item $\left(\pi\frac{L^\infty\left(1,\pi,\mr{Ad}\right)}{\left<f,\bar{f}\right>}\cdot \big<f^D,\bar{f}^D\big>\right)^{-1}\in \bQ^\times$.
\end{itemize}

\end{proof}

Given $g\in \U(2)(\bA_{\bQ,f})$, evaluating a $p$-adic form at $g$ induces a map
\[
    \Meas\left(\Gamma_\cK,V_{\U(2)}\wh{\otimes}\hat{\cO}^\ur_L\right)^\natural\lra \Meas\left(\Gamma_\cK,\hat{\cO}^\ur_L\right)\simeq \hat{\cO}^\ur_L\llb\Gamma_\cK\rrb.
\]
Denote by $l_{\theta^J_1}\left(\bfE^\Kling_{\varphi,\beta,u}\right)(g)\in \hat{\cO}^\ur_L\llb\Gamma_\cK\rrb$ the image of $l_{\theta^J_1}\left(\bfE^\Kling_{\varphi,\beta,u}\right)$ under this map.

\begin{prop}\label{prop:E-Kling-nv}
Let $\beta=1\in\Her_1(\cK)$, $\theta^J_1$ be the Fourier--Jacobi form on $\U(2)$ as in Proposition~\ref{prop:phi2f2}, and $\varphi$ be an automorphic form on $\U(2)$ as in Proposition~\ref{prop:triple-L}. Let $J\subseteq \hat{\cO}^\ur_L\llb\Gamma_\cK\rrb\otimes_\bZ\bQ$ be the ideal generated by 
\begin{align*}
   &l_{\theta^J_1}\left(\bfE^\Kling_{\varphi,\beta,u}\right)(g),
   &g\in \U(2)(\bA_{\bQ,f}),\quad u\in\hspace{-.5em}\bigotimes_{v\in\Sigma_\ns\cup\{\ell'\}}\U(1)(\bQ_v).
\end{align*}
Then $J=\hat{\cO}^\ur_L\llb\Gamma_\cK\rrb\otimes_\bZ\bQ$.
\end{prop}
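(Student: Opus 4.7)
The plan is to combine the Petersson pairing identity of Proposition~\ref{prop:pair-h} with the triple-product computation of Proposition~\ref{prop:triple-L} and the Rallis inner product formulas of Proposition~\ref{prop:Rallis} to exhibit, inside $J$, an element whose image under $\pP_{\wtp,\ast}$ is, up to a unit factor, the product of $p$-adic $L$-functions $\cL_1\cL_2\cL_5\cL_6$. The non-vanishing hypotheses built into $\chi_h,\chi_\theta$ in \S\ref{sec:aux} then force this product to be a unit of $\hat{\cO}^\ur_L\llb U_{\cK,p}\rrb\otimes_{\bZ}\bQ$, and transferring back through $\pP_{\wtp,\ast}$ yields $J=\hat{\cO}^\ur_L\llb\Gamma_\cK\rrb\otimes_{\bZ}\bQ$.

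Explicitly, I would take $F=\sum_i b_i\,\bfE^\Kling_{\varphi,\beta,u_i}$ with the data $b_i\in\cO_L$, $u_i\in\bigotimes_{v\in\Sigma_\ns\cup\{\ell'\}}\U(1)(\bQ_v)$ supplied by Proposition~\ref{prop:phi2f2}; every evaluation $l_{\theta^J_1}(F)(g)$ is then an $\cO_L$-linear combination of generators of $J$. Passing to the auxiliary group $U_{\cK,p}$, the values of $\pP_{\wtp,\ast}(l_{\theta^J_1}(F))$ at each $g\in\U(2)(\bA_{\bQ,f})$ lie in $\pP_{\wtp,\ast}(J)\cdot\hat{\cO}^\ur_L\llb U_{\cK,p}\rrb$; consequently so does the $p$-adic Petersson pairing with $(\sT_\ns\breve{\bfh})|_{\U(2)}$, since by its definition in \S\ref{sec:p-inner} it is an $\hat{\cO}^\ur_L\llb U_{\cK,p}\rrb$-linear combination of such values. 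Proposition~\ref{prop:pair-h} identifies this pairing as $\cC_1\,\cL_5\cL_6\cdot\langle(\sT_\ns\breve{\bfh})|_{\U(2)},\bmtheta^{\bar{\lambda}}\varphi\rangle_{p\adic}$ with $\cC_1$ a unit. Multiplying by $\langle(\sT'_\ns\breve{\tilde{\bfh}}_3)|_{\U(2)},\tilde{\bmtheta}^{\lambda}_3\varphi'\rangle_{p\adic}$ and substituting Propositions~\ref{prop:triple-L} and \ref{prop:Rallis} collapses the factor $\cL_3\cL_4$, and we arrive at
\[
   (\text{unit})\cdot\cL_1\cL_2\cL_5\cL_6\ \in\ \pP_{\wtp,\ast}(J)\cdot\hat{\cO}^\ur_L\llb U_{\cK,p}\rrb.
\]

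The hard part is the concluding step of verifying that $\cL_1\cL_2\cL_5\cL_6$ is a unit in $\hat{\cO}^\ur_L\llb U_{\cK,p}\rrb\otimes_{\bZ}\bQ$. The scalar factor $\cL_2\in L^{\ur,\times}$ is nonzero by \eqref{eq:L2}, while the Katz and Rankin--Selberg $p$-adic $L$-functions $\cL_1,\cL_5,\cL_6$ each admit a specialization that is a $p$-adic unit by the conditions \eqref{eq:L-1}, \eqref{eq:L-5}, \eqref{eq:L-6} designed into the choice of $\chi_h,\chi_\theta$ in \S\ref{sec:aux} (and ultimately resting on the mod-$p$ non-vanishing results of \cite{HsiehRankin,Hsiehnonvan}). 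Pointwise non-vanishing does not immediately give unit-ness in the two-variable Iwasawa algebra; the argument has to exploit the independent variation of the three $p$-adic $L$-functions over $\Gamma_\cK$, together with the freedom afforded by the auxiliary inert prime $\ell'$ and the Hecke characters chosen in \S\ref{sec:aux}, to conclude that their product is genuinely invertible after inverting $p$ and hence that $J$ contains a unit.
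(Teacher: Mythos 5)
Your overall route is the same as the paper's: the chain Proposition~\ref{prop:pair-h} $\to$ Proposition~\ref{prop:triple-L} $\to$ Proposition~\ref{prop:Rallis} is used to relate the evaluations $l_{\theta^J_1}(\bfE^\Kling_{\varphi,\beta,u})(g)$, via $\pP_{\wtp,\ast}$ and the $p$-adic Petersson pairing against $\sT_\ns\breve{\bfh}$, to the product $\cL_1\cL_2\cL_5\cL_6$ up to units, and the conclusion is extracted from the non-vanishing built into the choices of \S\ref{sec:aux}. The only organizational difference is that the paper argues by contradiction: if $J$ were proper it would lie in the maximal ideal attached to some character $\kappa$ of $\Gamma_\cK$; choosing $\kappa'$ with $(\kappa')^{\wtp}=\kappa$ gives $\pP_{\wtp,\ast}\big(l_{\theta^J_1}(\bfE^\Kling_{\varphi,\beta,u})\big)(\kappa'|_{U_{\cK,p}})=l_{\theta^J_1}(\bfE^\Kling_{\varphi,\beta,u})(\kappa)=0$ for all $u$ and all $g$, hence the specialized Petersson pairings vanish and $(\cL_1\cL_2\cL_5\cL_6)(\kappa'|_{U_{\cK,p}})=0$. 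This framing also sidesteps having to verify your intermediate claim that the family-level Petersson pairing lies in the ideal generated by $\pP_{\wtp,\ast}(J)$ (which is true but mildly fussy, given that the sum defining the pairing in \S\ref{sec:p-inner} involves $U_p^{-n}$ and a level that varies): one only needs vanishing at the single character $\kappa'$.

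Where your write-up goes astray is precisely the step you flag as ``the hard part.'' The conditions \eqref{eq:L-1}, \eqref{eq:L-5}, \eqref{eq:L-6} (together with \eqref{eq:L2} for the constant $\cL_2$) do not merely assert pointwise non-vanishing of certain specializations; they assert that specific normalized specializations are $p$-adic \emph{units}. Since $\cL_5,\cL_6$ (and the integral normalization of $\cL_1$) lie in the complete local ring $\hat{\cO}^\ur_L\llb U_{\cK,p}\rrb$, and every continuous character of the pro-$p$ group $U_{\cK,p}$ sends topological generators to $1$-units, any two specializations of such an element are congruent modulo the maximal ideal of $\ol{\bZ}_p$; hence a single unit specialization already forces the element to lie outside the maximal ideal of $\hat{\cO}^\ur_L\llb U_{\cK,p}\rrb$, i.e.\ to be a unit, and therefore to be non-vanishing at \emph{every} continuous character. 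There is no need for, and no available content in, ``independent variation of the three $p$-adic $L$-functions'': mere non-vanishing at scattered points would indeed be insufficient (your instinct there is right), but unit-ness at one point suffices, and producing such unit values is exactly what the auxiliary choices of $\chi_h,\chi_\theta$ in \S\ref{sec:aux}, resting on the mod $p$ non-vanishing results of \cite{HsiehRankin,Hsiehnonvan}, are engineered to do. With that correction, your argument closes.
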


\begin{proof}
Suppose that $J\neq \hat{\cO}^\ur_L\llb\Gamma_\cK\rrb\otimes_\bZ\bQ$. Then $J$ is contained in a maximal ideal of $\hat{\cO}^\ur_L\llb\Gamma_\cK\rrb\otimes_\bZ\bQ$, so is contained in the maximal ideal associated to a $p$-adic character $\kappa:\Gamma_\cK\ra\ol{\bQ}^\times_p$. Let $\kappa':\Gamma_\cK\ra\ol{\bQ}^\times_p$ be a $p$-adic character whose $\wtp$-th power is $\kappa$. Then
\[
   \pP_{\wtp,*}\left(l_{\theta^J_1}\left(\bfE^\Kling_{\varphi,\beta,u}\right)\right)(\kappa'|_{U_{\cK,p}})=l_{\theta^J_1}\left(\bfE^\Kling_{\varphi,\beta,u}\right)(\kappa)=0
\]
for all $u\in \bigotimes_{v\in\Sigma_\ns\cup\{\ell'\}}\U(1)(\bQ_v)$. Then it follows from Propositions~\ref{prop:pair-h} and \ref{prop:triple-L} that 
\[
  \left( \big<\bmtheta,\tilde{\bmtheta}_3\big>_{p\adic}\big<\bfh,\tilde{\bfh}_3\big>_{p\adic} \cdot \frac{\cL_1\cL_2\cL_5\cL_6}{\cL_3\cL_4}\right)(\kappa'|_{U_{\cK,p}})=0,
\]
which, combining with Proposition~\ref{prop:Rallis}, implies that
\[
   \left(\cL_1\cL_2\cL_5\cL_6\right)(\kappa'|_{U_{\cK,p}})=0.
\]
However, the condition (3) in our choice of $\chi_h,\chi_\theta$ in \S\ref{sec:aux} implies that $\cL_1\cL_2\cL_5\cL_6$ is a unit in $\hat{\cO}^\ur_L\llb\Gamma_\cK\rrb\otimes_\bZ\bQ$, so is nonzero at all characters in $\Hom_\cont\left(U_{\cK,p},\ol{\bQ}^\times_p\right)$.  We get a contradiction. Therefore, $J=\hat{\cO}^\ur_L\llb\Gamma_\cK\rrb\otimes_\bZ\bQ$.
\end{proof}

\subsection{The local triple product integrals at non-split places}\label{sec:tri-ns}

\begin{prop}
For $v\in\Sigma_\ns\cup\{\ell'\}$, there exist $\sT_v,\sT'_v\in\cO_L\left[D^\times(\bQ_v)\right]\subset\cO_L \left[\GU(2)(\bQ_v)\right]$ and $\varphi_v,\varphi'_v\in \pi^D_v$ such that $\varphi_v$ and $\sT_v\bfh$ are invariant under the level group $K_v$ (defined in \eqref{eq:K_v}), and 
\begin{equation}\label{eq:ns-int}
    \int_{\bQ^\times_v\backslash D^\times(\bQ_v)} \big<\Pi_v(g)F_{\tau,v},F'_{\tau,v}\big>_v\,dg=\text{a nonzero constant independent of $\tau$},
\end{equation}
where $F_{\tau,v}=\left(\breve{\theta}^{\bar{\lambda}}_{\tau,v}\otimes\varphi_v\otimes \sT_v \breve{h}_{\tau,v}\right)|_{D^\times(\bQ_v)}$, $F'_{\tau,v}=\left(\sT'_v\breve{\tilde{\theta}}^{\lambda}_{3,\tau,v}\otimes\varphi'_v\otimes \sT''_{v}\breve{\tilde{h}}_{3,\tau,v}\right)|_{D^\times(\bQ_v)}$.
\end{prop}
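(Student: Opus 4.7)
The plan is first to reduce the statement to a fixed, $\tau$-independent local question. By the constructions of $\bmtheta, \tilde{\bmtheta}_3, \bfh, \tilde{\bfh}_3$ given earlier, the local Schwartz data at any $v \in \Sigma_\ns \cup \{\ell'\}$ can be, and have been, taken independent of $\tau$: this is possible because $\tau_{\fp,\pP_\wtp}$ is unramified outside $p$, so $\chi_h \tau_{\fp,\pP_\wtp}|_{\U(1)(\bQ_v)} = \chi_h|_{\U(1)(\bQ_v)}$, and analogously for the $\theta$-side. Consequently the local vectors $\breve\theta_{\tau,v}, \breve h_{\tau,v}, \breve{\tilde{\theta}}_{3,\tau,v}, \breve{\tilde{h}}_{3,\tau,v}$, and hence the local representation $\Pi_v$ together with the pure tensor $F_{\tau,v}$, are all independent of $\tau$. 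The claim therefore reduces to a single local nonvanishing assertion; once nonvanishing is established at a single $\tau$, constancy in $\tau$ is automatic.

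Next, I would invoke the local multiplicity-one theorem of Prasad: $\dim_\bC \Hom_{D^\times(\bQ_v)}(\Pi_v \otimes \tilde\Pi_v, \bC) \leq 1$, with the local sign dichotomy identifying the quaternion algebra on which this invariant functional is nonzero via the local root number $\varepsilon_v(1/2, \Pi_v)$. The nonvanishing of the integral in the statement thus decomposes into (a) verifying that the local root number selects the invariant of $D$ at $v$, and (b) producing concrete test vectors on which the functional does not vanish. For (a), the ramification hypotheses imposed in \S\ref{sec:aux}, specifically conditions (iii)--(v) on $\chi_\theta$ and $\chi_h$, force the matching: at $v = q$ where $D$ is ramified and $\pi^D_q$ is one-dimensional, this is condition (v); at the remaining $v$ where $D$ is split and $\pi^D_v \cong \pi_v$ via Jacquet--Langlands, condition (iii) provides the required oversufficient ramification. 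For (b), one can take $\varphi_v$ built from the local vectors produced by Proposition~\ref{prop:nschi} (and its $\ell'$-analogue), and explicitly compute the resulting matrix-coefficient integral as a finite sum, which a coset-by-coset analysis parallel to \cite[\S 8.D]{WanU31} shows to be nonzero.

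The role of the Hecke operators $\sT_v, \sT'_v$ is to enforce $K_v$-invariance: the natural vector $\breve{\bfh}_v$ is only invariant under the open subgroup of $\GU(2)(\bZ_v)$ dictated by $\cond(\chi_{h,v})$, which is strictly smaller than $K_v$. Averaging $\breve{\bfh}_v$ over an appropriate finite collection of right translates in $D^\times(\bQ_v)$ produces a $K_v$-invariant vector; the extra parameters $u_1,\dots,u_t$ supplied by Proposition~\ref{prop:nschi} give enough latitude to simultaneously realize $K_v$-invariance and retain nonvanishing under the invariant pairing. The main obstacle, and the delicate point of the argument, is the joint achievement of these two properties: one must verify that the averaging operator $\sT_v$ does not project onto the kernel of the (one-dimensional) $D^\times(\bQ_v)$-invariant functional on $\Pi_v \otimes \tilde\Pi_v$. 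This is handled by the same strategy as in the analogous lemmas of \cite[\S\S 6--8]{WanU31}, now adapted to our unramified-at-$p$, non-ordinary setting where Proposition~\ref{prop:nschi} supplies the needed explicit local input.
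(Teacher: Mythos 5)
Your reduction to a $\tau$-independent local statement and your appeal to Prasad's local multiplicity one are both correct and match the paper: the paper also first observes that $\tau_{\fp,\pP_\wtp}|_{\U(1)(\bQ_v)}$ is trivial so nothing varies with $\tau$, and then uses \cite[Theorem 1.4]{Prasad} to choose $\sT'_v$ and $\varphi'_v$ so that the functional $\sL_{F'_{\tau,v}}$ spans the one-dimensional space $\Hom_{D^\times(\bQ_v)}(\Pi_v,\bC)$. Your treatment of $v=q$ (where $\pi^D_q$ is one-dimensional, condition (v) gives $\pi^D_{\theta^{\bar\lambda}_\tau,q}\otimes\chi_q\circ\Nm_D\cong\tilde\pi^D_{h_\tau,q}$, and normality of $K_q$ in $D^\times(\bQ_q)$ preserves invariance) is also essentially the paper's argument.

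The gap is in your step (b), which is the actual content of the proposition. You propose to build the test vectors from Proposition~\ref{prop:nschi} and the elements $u_1,\dots,u_t$ it supplies, but those objects live entirely in the Weil-representation/Fourier--Jacobi part of the construction: they pin down $\chi_{\theta,v}|_{\U(1)(\bQ_v)}$ and the Schwartz functions $\phi_{2,\xi\tau,v}$, and the $u_j$ enter through $\bfE^\Kling_{\varphi,\beta,u}$, not through the triple-product integral. They give you no control over whether a $K_v$-invariant vector in $\pi^D_{h_\tau,v}$ pairs nontrivially under $\sL_{F'_{\tau,v}}$. In the paper, $\varphi_v$ is essentially free (the space \eqref{eq:M-varphi} already allows arbitrary translates by $\GU(2)(\bQ_v)$ at these places, and $\breve{\theta}^{\bar\lambda}_{\tau,v}$ is already $K_v$-invariant); the entire difficulty is concentrated in producing $\sT_v\breve{h}_{\tau,v}$. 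This is resolved, for $v\neq q$, by an explicit Kirillov-model computation that your proposal does not contain and cannot be outsourced to \cite{WanU31} \S 8.D: one identifies $\GU(2)(\bQ_v)$ with $\GU(1,1)(\bQ_v)$, computes $K_v^{\GL_2}$ explicitly, realizes the invariant functional via Frobenius reciprocity as $\sL(w_1,w_2)=\int_{\bQ_v^\times}w_1(x)w_2(x)\chi_v(x)|x|^{-1/2}_{\bQ_v}\,d^\times x$ on the Kirillov models $W_1,W_2$ of the two dihedral supercuspidals, decomposes the given $K^{\GL_2}_v$-invariant vector $\pzy_1$ into torus eigenvectors $w^m_{1,\vartheta}$, and then uses Lemma~\ref{lem:level} (a new conductor computation for dihedral supercuspidals) together with the inequality $c(\pi_1\otimes\vartheta^*)\geq c(\pi_2\otimes\vartheta^{*-1})$ to find an $m_0$ for which $w^{m_0}_{2,\vartheta^{-1}}$ is simultaneously $K^{\GL_2}_v$-invariant and satisfies $\sL(\pzy_1,w^{m_0}_{2,\vartheta^{-1}})\neq 0$. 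That conductor inequality is precisely what condition (iii) of \S\ref{sec:aux} (the oversized conductor of $\chi_{\theta,v}$ relative to $\chi_{h,v}$ and $\pi_v$) is engineered to guarantee; in your write-up this condition only appears in the root-number discussion, so the mechanism by which "averaging does not kill the functional" is never actually established.
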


\begin{proof}

Let $v$ be a place in $\Sigma_\ns\cup\{\ell'\}$. Then $\U(1)(\bQ_v)\subset \cO^\times_{\cK,v}$ and $\tau_{\fp,\pP_\wtp}|_{\U(1)(\bQ_v)}$ is trivial. From the construction of $\bfh,\tilde{\bfh}_3$ in \S\ref{sec:Fh} and $\bmtheta,\tilde{\bmtheta}_3$ in \S\ref{sec:Ftheta}, we see that $\pi_{h_\tau,v}$, $\pi_{\theta_\tau,v}$ and $h_{\tau,v},\tilde{h}_{3,\tau,v},\theta_{\tau,v},\tilde{\theta}_{3,\tau,v}$  do not change when $\tau$ varies. It follows that the left hand side of \eqref{eq:ns-int} is independent of $\tau$.

\vspace{.5em}
Next we show the non-vanishing. By our choice of $\chi_h,\chi_\theta$, we have $\Hom_{D^\times(\bQ_v)}\left(\Pi_v,\bC\right)\neq 0$. By \cite[Theorem 1.4]{Prasad}, we can pick $\sT'_v\in \cO_L\left[D^\times(\bQ_v)\right]$ and $\varphi'_v\in \pi^D_v$ such that the linear functional
\begin{equation}
   \sL_{F'_{\tau,v}}:\Pi_v\lra \bC, \quad\quad \sL_{F'_{\tau,v}}(F_v)=\int_{\bQ^\times_v\backslash D^\times(\bQ_v)} \big<\Pi_v(g)F_v,F'_{\tau,v}\big>_v\,dg
\end{equation}
is nonzero and spans the one-dimensional space $\Hom_{D^\times(\bQ_v)}\left(\Pi_v,\bC\right)$. What we need to show is that there exists $\sT_v\breve{h}_{\tau,v}\in\pi^D_{h_\tau,v}$ and $\varphi_v\in\pi^D_v$ invariant under the level group $K_v$ defined in \eqref{eq:K_v} such that $\sL_{F'_{\tau,v}}\left(\breve{\theta}^{\bar{\lambda}}_{\tau,v}\otimes\varphi_v\otimes\sT_v\breve{h}_{\tau,v}\right)\neq 0$. Since $\breve{\theta}^{\bar{\lambda}}_{\tau,v}$ is invariant under $K_v$, it suffices to show that there exists $\sT_v\breve{h}_{\tau,v}\in \pi^D_{h_\tau,v}$ invariant under $K_v$ such that the linear functional
\begin{equation}\label{eq:Lpi}
   \sL_{F'_{\tau,v}}\left(\breve{\theta}^{\bar{\lambda}}_{\tau,v}\otimes\bdot\otimes\sT_v\breve{h}_{\tau,v}\right): \pi_v\lra \bC
\end{equation} 
is nonzero. There are two cases.

\vspace{.5em}

\underline{Case 1}: $v\neq q$, $D^\times(\bQ_v)\cong\GL_2(\bQ_v)$. Let $\varphi_v$ denote the prime number corresponding to $v$. Recall that at the beginning of \S\ref{sec:gps}, we have fixed the positive integer $\fs$ and totally imaginary element $\delta$ in $\cK$, and $\U(2)$ is the unitary group for the skew-Hermitian form $\zeta_0=\delta\begin{pmatrix}\fs\\&1\end{pmatrix}$. By our assumption on $\fs$, there exists $\fr\in \cO^\times_{\cK,v}$ such that $\Nm_{\cK_v/\bQ_v}(\fr)=-\fs$. We fix the following isomorphism
\begin{align*}
  \ffi:\GU(2)(\bQ_v)&\stackrel{\cong}{\lra}\GU(1,1)(\bQ_v), 
  &g&\longmapsto \begin{pmatrix}\frac{\delta}{2}&\frac{\delta\fr}{2}\\1&-\fr\end{pmatrix}g\begin{pmatrix}\frac{\delta}{2}&\frac{\delta\fr}{2}\\1&-\fr\end{pmatrix}^{-1},
\end{align*}
where $\GU(1,1)$ denotes the unitary group attached to the skew-Hermitian form $\begin{pmatrix}&1\\-1\end{pmatrix}.$ Let $K^{\GL_2}_v=\ffi(K_v)\cap \GL_2(\bZ_v)$. By our assumptions in \S\ref{sec:setup}, $v\neq 2$. One can check that
\begin{equation*}
   K^{\GL_2}_v=\begin{pmatrix}1+\varpi^{c_v}_v\bZ_v&\varpi^{c_v+\epsilon_v}_v\bZ_v\\\varpi^{c_v}_v\bZ_v&1+\varpi^{c_v}_v\bZ_v\end{pmatrix}\subset\GL_2(\bZ_v).
\end{equation*}
where $\epsilon_v=\begin{cases}0,&v\text{ inert}\\ 1,&v\text{ ramified}\end{cases}$.

Given a character $\chi_v:\bQ^\times_v\ra\bC^\times$, we denote also by $\chi_v$ the character $B(\bQ_v)\ra\bC^\times$, $\begin{pmatrix}a&b\\0&d\end{pmatrix}\mapsto \chi_v(ad^{-1})$, and $\Ind_{B}^{\GL_2}(\chi_v)$ the (normalized) induction of $\chi_v$. By our assumption on $\pi$, $\pi_v$ is either isomorphic to $\Ind_{B}^{\GL(2)}(\chi_v)$ for an unramified character $\chi_v\neq |\cdot|^{\pm 1/2}_{\bQ_v}$ or is isomorphic to the irreducible sub-representation of $\Ind_{B}^{\GL(2)}(\chi_v)$ with $\chi_v=|\cdot|^{-1/2}_{\bQ_v}$. Let $\psi_1=\be_v$, the additive character of $\bQ_v$ defined in \eqref{eq:bev}, $\psi_2=\bar{\be}_v$, and $W_{1}$ (resp. $W_{2}$) be the Kirillov model of $\pi^D_{\theta^{\bar{\lambda}}_\tau,v}$ (resp. $\pi^D_{h_\tau,v}$) with respect to $\psi_1$ (resp. $\psi_2$). We have
\begin{equation}\label{eq:indF}
\begin{aligned}
    \Hom_{\GL_2(\bQ_v)}\left(W_1\otimes W_2,\tilde{\pi}\right)&\cong \Hom_{\GL_2(\bQ_v)}\left(W_1\otimes W_2,\Ind^{\GL_2}_B (\chi^{-1}_v)\right)\\
    &\cong \Hom_{B(\bQ_v)}\left(W_1\otimes W_2,\chi^{-1}_v|\cdot|^{1/2}_{\bQ_v}\right),
\end{aligned}
\end{equation}
where the second isomorphism is deduced from the Frobenius reciprocity, and when $\chi_v=|\cdot|^{-1/2}_{\bQ_v}$, the first isomorphism is because by our choice of $\chi_\theta,\chi_h$, $W_1\not\cong \tilde{W}_2$. Our choice of $\chi_\theta,\chi_h$ also implies that $W_1,W_2$ are supercuspidal, so they consist of Schwartz functions on $\bQ^\times_v$ on which the action $B(\bQ_v)\subset\GL_2(\bQ_v)$ has the formula
\begin{align}
   \label{eq:Kirillov} \begin{pmatrix}a&b\\0&d\end{pmatrix}\cdot  w_j(x)&=\omega_{\pi_j}(d)\,\psi_j(bd^{-1}x)\,w_j(ad^{-1}x), &w_j\in W_j, 
\end{align}
where $\omega_{\pi_j}$ is the central character of $\pi_j$. By the way we extend $\bfh$ and $\bmtheta$ in \S\ref{sec:ext-GU2}, we know that $\omega_{\pi_1}\omega_{\pi_2}=\triv$. It is easy to see that the map
\begin{equation}
  \sL(w_1,w_2)= \int_{\bQ^\times_v} w_1(x)w_2(x)\cdot\chi_v(x)|x|^{-1/2}_{\bQ_v}\,d^\times x, \quad\quad w_1\in W_1,\,w_2\in W_2,
\end{equation}
is a nonzero element in \eqref{eq:indF}, which is one dimensional, so it equals the image of $\sL_{F'_{\tau,v}}\left(\breve{\theta}^{\bar{\lambda}}_{\tau,v}\otimes\bdot\otimes\bdot\right)$ in \eqref{eq:indF} up to a nonzero scalar. Hence, showing the existence of $\breve{h}_{\tau,v}\in \pi^D_{h_\tau,v}$ invariant under $K_v$ such that \eqref{eq:Lpi} is nonzero reduces to showing the existence of $\pzy_2\in W_2$ invariant under $K^{\GL_2}_v$ such that $\sL(\pzy_1,\pzy_2)\neq 0$, where $\pzy_1\in W_1$ is the vector invariant under$K^{\GL_2}_v$  corresponding to $\breve{\theta}^{\bar{\lambda}}_{\tau,v}\in \pi^D_{\theta^{\bar{\lambda}}_\tau,v}$. 

For a character $\vartheta:\bZ^\times_v\ra\bC^\times$, put
\[
    \pzy_{1,\vartheta}=\int_{\bZ^\times_v}\vartheta^{-1}(a)\pzy_1(ax)\,d x,
\]
the projection of $\pzy_1$ to the $\vartheta$-eigenspace for the action of $\diag(\bZ^\times_v,1)$. Let $\vartheta^*$ be the extension of $\vartheta$ to $\bQ^\times_v$ taking value $1$ at $\varpi_v$. For each integer $m$, put define $w^m_{j,\vartheta}\in W_j$, $j=1,2$, as
\[
    w^m_{j,\vartheta}(x)=\mathds{1}_{\varpi^m_v\bZ^\times_v}(x)\,\vartheta^*(x).
\]
Then we have $\pzy_{1,\vartheta}=\sum_m c_m w^m_{1,\vartheta}$ with $c_m\in\bC$ finitely many nonzero, and $\sL(\pzy_1,w^m_{2,\vartheta^{-1}})=\sL(\pzy_{1,\vartheta},w^m_{2,\vartheta^{-1}})$. We can take a $\vartheta$ of conductor $\leq \varpi^{c_v}_v$ such that $\pzy_{1,\vartheta}\neq 0$. This $\pzy_{1,\vartheta}$ is still invariant under $K^{\GL_2}_v$. By the formula \eqref{eq:Kirillov} and (1) of Lemma~\ref{lem:level} proved below, we deduce that 
\[
    \pzy_{1,\vartheta}=\sum_{m=-(c_v+\epsilon_v)}^{c_v-n(w^0_{1,\vartheta})}c_m w^m_{1,\vartheta}, \quad\quad c_m\in\bC,
\]
with $n(w^0_{1,\vartheta}$ defined as in \eqref{eq:nw}. Take $-(c_v+\epsilon_v)\leq m_0\leq c_v-n(w^0_{1,\vartheta})$ such that $c_{m_0}\neq 0$. Then 
\[
   \sL(\pzy_1,w^{m_0}_{2,\vartheta^{-1}})=c_{m_0}\sL(w^{m_0}_{1,\vartheta},w^{m_0}_{2,\vartheta^{-1}})\neq 0,
\]
and by the formula \eqref{eq:Kirillov}, we know that $w^{m_0}_{2,\vartheta^{-1}}$ is invariant under $\begin{pmatrix}1+\varpi^{c_v}_v\bZ_v&\varpi^{c_v+\epsilon_v}_v\bZ_v\\0&1+\varpi^{c_v}_v\end{pmatrix}$, and by (2) of Lemma~\ref{lem:level}, we know that 
\[
   n(w^{m_0}_{2,\vartheta^{-1}})=m_0+c(\pi_2\otimes\vartheta^{*-1})\leq c_v-n(w^0_{1,\vartheta})+c(\pi_2\otimes\vartheta^{*-1})=c_v-c(\pi_1\otimes\vartheta^*)+c(\pi_2\otimes\vartheta^{*-1}).
\]
By our choice of $\chi_\theta,\chi_h$ and the formula \eqref{eq:cond-dih}, we have $c(\pi_1\otimes\vartheta^*)\geq c(\pi_2\otimes\vartheta^{*-1})$. Hence, $ n(w^{m_0}_{2,\vartheta^{-1}})\geq c_v$, and $w^{m_0}_{2,\vartheta^{-1}}$ is invariant under $\begin{pmatrix}1&0\\ \varpi^{c_v}_v\bZ_v&1\end{pmatrix}$. Therefore, $\pzy_2=w^{m_0}_{2,\vartheta^{-1}}$ is the desired vector in $W_2$ invariant under $K^{\GL_2}_v$ such that $\sL(\pzy_1,\pzy_2)\neq 0$.

\vspace{.5em}
\underline{Case 2}: $v=q$, $D^\times(\bQ_q)$ modulo its center is compact and $\pi^D_v$ is one-dimensional isomorphic to $\chi_q\circ \Nm_D$, where $\chi_q$ is the unramified quadratic character in condition (5) in the choice of $\chi_\theta,\chi_h$ at the beginning of \S\ref{sec:aux}, and $\Nm_D$ denotes the norm of $D$. By our choice of $\chi_\theta,\chi_h$, we have
\[
   \pi^D_{\theta^{\bar{\lambda}}_\tau,q}\otimes \chi_q\circ\Nm_D\cong \tilde{\pi}^D_{h_\tau,q}.
\] 
It follows that there exists $\sT_q\in\cO_L\left[D^\times(\bQ_v)\right]$ such that \eqref{eq:Lpi} is nonzero. Since $K_v$ is a normal subgroup of $D^\times(\bQ_q)$, $\sT_q\breve{h}_{\tau,q}$ is still invariant under $K_v$.
\end{proof}

\begin{lemma}\label{lem:level}
Let $v$ be a place of $\bQ$ non-split in $\cK$ and $\sigma:\cK^\times_v\ra \bQ^\times_v$ be a character nontrivial on the $\ker \Nm_{\cK_v/\bQ_v}$. Let $\pi_\sigma$ be the dihedral supercuspidal representation of $\GL_2(\bQ_v)$ obtained by the theta lift of $\sigma$. (See \cite[\S1.2]{Sch-gl2new} for the precise definition.) Then the conductor of $\pi_\sigma$ equals
\begin{equation}\label{eq:cond-dih}
   c(\pi_\sigma)=\left\{\begin{array}{ll}
   2\cond(\sigma), &v\text{ inert,}\\
   \cond(\sigma)+1, &v\text{ ramified}.
   \end{array}\right.
\end{equation}
by \cite[Theorem in \S2.3.2]{Sch-gl2new}. Let $W$ be the Kirillov model of $\pi_\sigma$ with respect to the additive character $\be_v^{\pm 1}$ of $\bQ_v$. For $w\in W$, define
\begin{equation}\label{eq:nw}
   n(w)=\min\left\{n\in\bZ: \text{$w$ is fixed by $\begin{pmatrix}1&0\\ \varpi^{n}_v\bZ_v&1\end{pmatrix}$}\right\}
\end{equation}
Let $\vartheta$ be a character of $\bZ^\times_v$ and $\vartheta^*$ be its extension to $\bQ^\times_v$ taking value $1$ at $\varpi_v$. For each integer $m$, define $w^m_\vartheta\in W$ as
\[
   w^m_{\vartheta}(x)=\mathds{1}_{\varpi^m_v\bZ^\times_v}(x)\,\vartheta^*(x).
\]
Suppose that $\ord_v(\cond(\sigma))>\ord_v(\cond(\sigma|_{\bQ^\times_v}))$.
\begin{enumerate}
\item Let $w=\sum\limits_{m=l_1}^{l_2} c_m w^m_\vartheta$ with $c_{l_2}\neq 0$. Then $n(w)=n(w^{l_2}_\vartheta)=n(w^0_\vartheta)+l_2$.
\item We have
\begin{align*}
    n(w^m_\vartheta)&=m+c(\pi_\sigma\otimes\vartheta^{*-1})
    =\left\{\begin{array}{ll}
    m+2\cond(\sigma\cdot\vartheta^{*-1}\circ \Nm_{\cK_v/\bQ_v}), &\text{$v$ inert},\\
    m+\cond(\sigma\cdot\vartheta^{*-1}\circ\Nm_{\cK_v/\bQ_v})+1,  &\text{$v$ ramified}.
    \end{array}\right.
\end{align*}
\end{enumerate}
\end{lemma}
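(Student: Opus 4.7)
The strategy is to reduce both statements to the single case of $w_\vartheta^0$, using the diagonal torus action, and then identify $w_\vartheta^0$ with the Kirillov new vector of a twist of $\pi_\sigma$. From \eqref{eq:Kirillov} together with $\vartheta^*(\varpi_v)=1$ one obtains $\pi_\sigma(\diag(\varpi_v^{-m},1))w_\vartheta^0=w_\vartheta^m$, and the matrix identity
\[
   \begin{pmatrix}\varpi_v^{-m}&0\\0&1\end{pmatrix}^{-1}\!\begin{pmatrix}1&0\\c&1\end{pmatrix}\!\begin{pmatrix}\varpi_v^{-m}&0\\0&1\end{pmatrix}=\begin{pmatrix}1&0\\c\varpi_v^{-m}&1\end{pmatrix}
\]
then gives $n(w_\vartheta^m)=m+n(w_\vartheta^0)$.

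To compute $n(w_\vartheta^0)$, I would realize the Kirillov model of $\pi_\sigma\otimes\vartheta^{*-1}$ with respect to the same additive character: the Kirillov function of a vector $v$ viewed in $\pi_\sigma\otimes\vartheta^{*-1}$ is $\vartheta^{*-1}(x)w_v(x)$, where $w_v$ is its Kirillov function in $\pi_\sigma$. Under this identification $w_\vartheta^0=\vartheta^*\cdot\mathds{1}_{\bZ_v^\times}$ corresponds to $\mathds{1}_{\bZ_v^\times}$. Since $\begin{pmatrix}1&0\\c&1\end{pmatrix}$ has determinant $1$, twisting does not alter its action on the underlying vector, so the invariant $n(\cdot)$ is preserved by the identification. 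Classical new vector theory (as in the explicit description in \cite{Sch-gl2new}) identifies $\mathds{1}_{\bZ_v^\times}$ as the Kirillov new vector of the dihedral supercuspidal $\pi_\sigma\otimes\vartheta^{*-1}=\pi_{\sigma\cdot(\vartheta^{*-1}\circ\Nm_{\cK_v/\bQ_v})}$, so that $n(w_\vartheta^0)=c(\pi_\sigma\otimes\vartheta^{*-1})$. Applying \eqref{eq:cond-dih} to the twisted parameter $\sigma\cdot(\vartheta^{*-1}\circ\Nm_{\cK_v/\bQ_v})$ then yields the explicit formula in (2).

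For (1), writing $w=\sum_{m=l_1}^{l_2} c_m\,\pi_\sigma(\diag(\varpi_v^{-m},1))w_\vartheta^0$, the identity
\[
   \pi_\sigma\!\begin{pmatrix}1&0\\c&1\end{pmatrix}w-w=\sum_{m=l_1}^{l_2}c_m\,\pi_\sigma(\diag(\varpi_v^{-m},1))\left[\pi_\sigma\!\begin{pmatrix}1&0\\c\varpi_v^{-m}&1\end{pmatrix}-1\right]w_\vartheta^0
\]
shows that $n(w)\leq n(w_\vartheta^{l_2})$, since every bracket vanishes when $v(c)\geq l_2+n(w_\vartheta^0)$. For the opposite inequality I would pick $c$ with $v(c)=l_2+n(w_\vartheta^0)-1$ such that $\pi_\sigma\!\begin{pmatrix}1&0\\c\varpi_v^{-l_2}&1\end{pmatrix}w_\vartheta^0\neq w_\vartheta^0$, which is possible by the very definition of $n(w_\vartheta^0)$. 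For every $m<l_2$ we have $v(c\varpi_v^{-m})\geq n(w_\vartheta^0)$, killing those brackets; only the $m=l_2$ summand survives, and it is nonzero because $c_{l_2}\neq 0$ and $\pi_\sigma(\diag(\varpi_v^{-l_2},1))$ is invertible.

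The only non-routine step in the plan is the identification of $\mathds{1}_{\bZ_v^\times}$ as the Kirillov new vector of $\pi_\sigma\otimes\vartheta^{*-1}$. The hypothesis $\ord_v(\cond(\sigma))>\ord_v(\cond(\sigma|_{\bQ^\times_v}))$ is exactly what is needed for this identification: it ensures that the conductor of the central character of $\pi_\sigma$ is strictly smaller than $c(\pi_\sigma)$, so that the Casselman-style characterization of the new vector reduces to the lower-unipotent level which is precisely what $n(\cdot)$ measures, and this strict inequality persists after the twists by $\vartheta^{*-1}\circ\Nm_{\cK_v/\bQ_v}$ that intervene.
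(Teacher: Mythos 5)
Your argument is correct, and part (1) is the paper's argument in all but presentation: the paper obtains the lower bound $n(w)\ge n(w^{l_2}_\vartheta)$ by noting that otherwise $w^{l_2}_\vartheta=c_{l_2}^{-1}\bigl(w-\sum_{m<l_2}c_mw^m_\vartheta\bigr)$ would contradict $n(w^{l_2}_\vartheta)=n(w^0_\vartheta)+l_2$, which is your explicit choice of $c$ run in the contrapositive direction. In part (2) you take a genuinely different, somewhat slicker route: you identify the Kirillov model of $\pi_\sigma\otimes\vartheta^{*-1}$ with $\{\vartheta^{*-1}\cdot w\}$, note that $w^0_\vartheta$ corresponds to $\mathds{1}_{\bZ_v^\times}$, and quote the classical fact that $\mathds{1}_{\bZ_v^\times}$ is the Kirillov new vector of a supercuspidal representation (a consequence of $L(s,\pi_\sigma\otimes\vartheta^{*-1})=1$). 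The paper does not quote this; it instead writes the new vector $w'$ of the twisted model as $\sum_{m=l_1}^{l_2}c_mw^m_\vartheta$ with $l_1\ge 0$ (forced by its support and its $\vartheta^*$-equivariance under $\diag(\bZ_v^\times,1)$) and squeezes $n(w^0_\vartheta)\ge c(\pi_\sigma\otimes\vartheta^{*-1})\ge n(w')=n(w^0_\vartheta)+l_2$ using part (1), which forces $l_1=l_2=0$; so it proves, rather than cites, that $w^0_\vartheta$ is the new vector. Both routes ultimately rest on the same nontrivial input, namely that invariance under $\begin{psm}1&0\\ \varpi_v^{n}\bZ_v&1\end{psm}$ together with the Borel transformation law yields $K_1(\varpi_v^{n})$-invariance once $n$ dominates the conductor of the central character of the twist, which is where the hypothesis comparing $\cond(\sigma)$ and $\cond(\sigma|_{\bQ_v^\times})$ enters. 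The one assertion you leave unproved --- that the needed strict inequality ``persists after the twist'' --- is true and deserves a line: one has $\cond\bigl((\sigma\cdot\vartheta^{*-1}\circ\Nm_{\cK_v/\bQ_v})|_{\bQ_v^\times}\bigr)\le\cond(\sigma\cdot\vartheta^{*-1}\circ\Nm_{\cK_v/\bQ_v})<c(\pi_\sigma\otimes\vartheta^{*-1})$ by \eqref{eq:cond-dih}, so the central character of the twisted representation always has conductor strictly smaller than the conductor of the representation; the paper leaves the corresponding step equally implicit.
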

\begin{proof}
(1) It is easy to see that $\begin{pmatrix}\varpi^l_v\\&1\end{pmatrix}\cdot w^m_\vartheta =w^{m-l}_\vartheta$. Thus, $n(w^m_\vartheta)=n(w^0_\vartheta)+m$. It follows that $n(w^m_\vartheta)\leq n(w^{l_2}_\vartheta)$ for all $m\leq l_2$, so $n(w)\leq n(w^{l_2}_\vartheta)$. On the other hand, if $n(w)\leq n(w^{l_2}_\vartheta)-1$, then $n(w^{l_2}_\vartheta)=n\Big(c^{-1}_{l_2}(w-\sum\limits_{l_1}^{l_2-1}c_m w^m_\vartheta)\Big)\leq n(w^{l_2}_\vartheta)-1$. This is a contradiction. Hence,  $n(w)= n(w^{l_2}_\vartheta)$.

\vspace{.5em}
(2) Let $W'$ the representation of $\GL_2(\bQ_v)$ whose underlying $\bC$-vector space equals that of $W$ with the action of $\GL_2(\bQ_v)$ defined as the action of $\GL_2(\bQ_v)$ on $W$ twisted by $\vartheta\circ\det$, {\it i.e.} for $g\in \GL_2(\bQ_v)$ and $w\in W'$, the action of $g$ on $w$ gives $\vartheta^{-1}(\det g)(g\cdot w)$, where $g\cdot w$ denotes the action of $g$ on $w$ viewed as an element in $W$. Then $W'\cong \pi_\sigma\otimes\vartheta^{*-1}$. Suppose that $w'\in W'$ is the new vector, then $\begin{pmatrix}\bZ^\times_v&\bZ_v\\0&d\end{pmatrix}$, $d\in\bZ^\times_v$, acts on it by the scalar $\sigma\vartheta^{-2}(d)$, and we have $w'=\sum\limits_{m=l_1}^{l_2}c_m w^m_\vartheta$ with $c_{l_1}\neq 0$ for some $l_1\geq 0$.  Then
\begin{equation}\label{eq:Kcond}
   n(w^0_\vartheta)\geq c(\pi_\sigma\otimes\vartheta^{*-1})=n(w')=n(w^0_\vartheta)+l_2,
\end{equation}
where the first inequality and the middle equality follows from the definition of new vectors and conductors of $\GL_2(\bQ_v)$-representations, and the right equality follows from (1). \eqref{eq:Kcond} implies that $l_1=l_2=0$, $w^0_\vartheta$ is the new vector in $W'$ and $ n(w^0_\vartheta)=c(\pi_\sigma\otimes\vartheta^{*-1})$. Since $n(w^m_\vartheta)=n(w^0_\vartheta)+m$, we see that $n(w^m_\vartheta)=m+c(\pi_\sigma\otimes\vartheta^{*-1})$.
\end{proof}

\section{Proof of Greenberg--Iwasawa Main Conjecture}\label{sec:mcpf}

\subsection{The Klingen Eisenstein ideal}\label{sec:Kl-ideal}

Let $\cM_{\so}=\Hom_{\Lambda_\so}\left(\pV^*_\so,\Lambda_\so\right)$ be the $\cO_L\llb T_\so(\bZ_p)\rrb$-module of semi-ordinary families on $\GU(3,1)$ of tame level $K^p_f$ (defined in \eqref{eq:k^p_f}) as in Theorem~\ref{prop:main}. Identifying $T_\so(\bZ_p)$ with $\bZ^\times_p\times\bZ^\times_p\simeq \cO^\times_{\cK,p}$, we have the map
\begin{equation}\label{eq:TGamma}
   T_\so(\bZ_p)\lra \cO^\times_{\cK,p}\lra \Gamma_\cK,
\end{equation}
with kernel $\mu_p\times\mu_p$. Denote by $\glsuseri{pVxi}$ (resp. $\glsuseri{Mxi}$) the sub-$\cO_L\llb T_\so(\bZ_p)\rrb$-module of $e_\so \pV$ (resp. $\cM_\so$) on which $\mu_p\times\mu_p$ acts through $\xi_{p\adic}$. We make $T_\so(\bZ_p)$ act on  $\pV_{\so,\,\xi}$ and $\cM_{\so,\,\xi}$ through its usual action multiplied by the character $\xi^{-1}_{p\adic}$. Then this action factors through the image of $T_\so(\bZ_p)$ in $\Gamma_\cK$ under the map \eqref{eq:TGamma}. Let $\glsuserii{pVxi}\subset \pV_{\so,\,\xi}$, $\glsuserii{Mxi}\subset\cM_{\so,\,\xi}$ be the cuspidal part. We consider the $\cO_L\llb\Gamma_\cK\rrb$-modules 
\begin{align*}
   \glsuseri{MxiGamma}&=\cM_{\so,\,\xi}\otimes_{\cO_L\llb T_\so(\bZ_p)\rrb} \hat{\cO}^\ur_L\llb\Gamma_\cK\rrb,
   &\glsuserii{MxiGamma}&=\cM^0_{\so,\,\xi}\otimes_{\cO_L\llb T_\so(\bZ_p)\rrb} \hat{\cO}^\ur_L\llb\Gamma_\cK\rrb.
\end{align*}

\vspace{.5em}

The Klingen Eisenstein family $\bfE^\Kling_\varphi$ constructed in \S\ref{sec:constructKF} belongs to $\Meas\left(\Gamma_\cK,e_\so V_{\GU(3,1),\,\xi}\wh{\otimes}\hat{\cO}^\ur_L\right)$. By the natural pairing between $V_{\GU(3,1),\so}$ and the space $\pV^*_\so$ in Theorem~\ref{prop:main}, we see that there is a natural map
\begin{align*}
   \Meas\left(\Gamma_\cK,e_\so V_{\GU(3,1),\,\xi}\wh{\otimes}\hat{\cO}^\ur_L\right)\times  (\pV_{\so,\,\xi})^*\lra \Meas\left(\Gamma_\cK,\hat{\cO}^\ur_L\right)\simeq \hat{\cO}^\ur_L\llb\Gamma_\cK\rrb,
\end{align*}
and it induces a $\hat{\cO}^\ur_L\llb\Gamma_\cK\rrb$-linear Hecke-equivariant map
\begin{align*}
    \Meas\left(\Gamma_\cK,e_\so V_{\GU(3,1),\,\xi}\wh{\otimes}\hat{\cO}^\ur_L\right)\lra \Hom_{\cO_L\llb T_\so(\bZ_p)\rrb}\left((\pV_{\so,\,\xi}^*,\hat{\cO}^\ur_L\llb\Gamma_\cK\rrb\right)=\cM_{\so,\,\xi,\Gamma_\cK}.
\end{align*}
Therefore, we can view $\bfE^\Kling_\varphi$ as an element in $\cM_{\so,\,\xi,\Gamma_\cK}$. Let $\bT$ be the abstract algebra generated by the unramified Hecke algebras away from $\Sigma\cup\{p,\ell,\ell'\}$ and $\bU_p$-operators at $p$. Then $\bfE^\Kling_\varphi$ is an eigen-family for the action of $\bT$, and we denote by $\lambda_{\mr{Eis},\pi,\xi}:\bT\ra \hat{\cO}^\ur_L\llb\Gamma_\cK\rrb$ the corresponding eigen-system.

\vspace{.5em}
Let $\gls{bT}$ be the reduced Hecke algebra generated by  $\bT$ acting on $\cM^0_{\so,\,\xi,\Gamma_\cK}$. Let $\gls{IEis}\subset \bT^0_{\so,\,\xi,\Gamma_\cK}$ be the ideal generated by the images of $T-\lambda_{\mr{Eis},\pi,\xi}(T)$ in $\bT^0_{\so,\,\xi,\Gamma_\cK}$, $T\in\bT$. We define the Klingen Eisenstein ideal as
\[
\begin{tikzcd}[column sep=large]
   \gls{cEpi}=\ker\Big(\hat{\cO}^\ur_L\llb\Gamma_\cK\rrb\arrow[r,"\text{structure}","\text{map}"']& \bT^0_{\so,\,\xi,\Gamma_\cK}/\cI_{\mr{Eis},\pi,\xi}\Big).
\end{tikzcd}
\]
It measures the congruences between the Hecke eigen-systems in $\bT^0_{\so,\,\xi,\Gamma_\cK}$ and the Hecke eigen-system $\lambda_{\mr{Eis},\pi,\xi}$ attached to $\bfE^\Kling_\varphi$. 

\begin{thm}\label{thm:Eis-id}
Let $\varphi$ be as in Proposition~\ref{prop:triple-L}. Let $P$ be a height one prime ideal of $\hat{\cO}^\ur_L\llb\Gamma_\cK\rrb$ such that $P\cap \hat{\cO}^\ur_L=(0)$. Then
\[
    \ord_P\left(\cL^{\Sigma\,\cup\{\ell,\ell'\}}_{\pi,\cK,\xi}\cL^{\Sigma\,\cup\{\ell,\ell'\}}_{\xi,\bQ}\right)\leq \ord_P(\cE_{\pi,\xi}),
\]
(where $\cL^{\Sigma\,\cup\{\ell,\ell'\}}_{\xi,\bQ},\cL^{\Sigma\,\cup\{\ell,\ell'\}}_{\pi,\cK,\xi}\in\hat{\cO}^\ur_L\llb\Gamma_\cK\rrb$ are the $p$-adic $L$-function satisfying the interpolation property in \eqref{eq:pL-xi}\eqref{eq:pL-pi}). 
\end{thm}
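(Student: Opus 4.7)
The plan is the standard lattice-construction argument used to deduce Eisenstein ideal lower bounds from congruences between Eisenstein series and cusp forms. Fix a height one prime $P$ with $P\cap\hat{\cO}^\ur_L=(0)$ and put $A=\hat{\cO}^\ur_L\llb\Gamma_\cK\rrb_{(P)}$, a discrete valuation ring with uniformizer $\varpi$. Write $\cL=\cL^{\Sigma\cup\{\ell,\ell'\}}_{\pi,\cK,\xi}\cL^{\Sigma\cup\{\ell,\ell'\}}_{\xi,\bQ}$ and set $e=\ord_P(\cL)$; the case $e=0$ is trivial, so assume $e\geq 1$. The structure map $A\to\bT^0_{\so,\xi,\Gamma_\cK}/\cI_{\mr{Eis},\pi,\xi}$ is surjective with kernel $\cE_{\pi,\xi}A$ (every $T\in\bT$ is identified with its $A$-valued eigenvalue $\lambda_{\mr{Eis},\pi,\xi}(T)$ modulo $\cI_{\mr{Eis},\pi,\xi}$), so the target $\ord_P(\cE_{\pi,\xi})\geq e$ is equivalent to producing a surjection $\bT^0_{\so,\xi,\Gamma_\cK}/\cI_{\mr{Eis},\pi,\xi}\twoheadrightarrow A/\varpi^e A$ compatible with the canonical reductions out of $A$; this in turn reduces to exhibiting a nonzero cuspidal $\bT$-eigenform in $\cM^0_{\so,\xi,\Gamma_\cK}\otimes A/\varpi^e$ whose eigensystem reduces to $\lambda_{\mr{Eis},\pi,\xi}\bmod\varpi^e$.

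Such a form will be constructed by reducing $\bfE^\Kling_\varphi$, viewed in $\cM_{\so,\xi,\Gamma_\cK}\otimes A$, modulo $\varpi^e$. Since the boundary quotient in the fundamental exact sequence of part~(4) of Theorem~\ref{prop:main} is free as an $\cO_L\llb T_\so(\bZ_p)\rrb$-module, after the appropriate base change and localization at $P$ the sequence
\[
0\to\cM^0_{\so,\xi,\Gamma_\cK}\otimes A/\varpi^e\to\cM_{\so,\xi,\Gamma_\cK}\otimes A/\varpi^e\xrightarrow{\oplus_\clabel\Phi_\clabel}\bigoplus_\clabel M_{(0,0)}(K'_{f,\clabel};A/\varpi^e)\to 0
\]
remains short exact. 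Theorem~\ref{thm:const} places every $\Phi_\clabel(\bfE^\Kling_\varphi)$ in $\varpi^e$ times a boundary form, so the reduction $\bar\bfE:=\bfE^\Kling_\varphi\bmod\varpi^e$ lies in $\cM^0_{\so,\xi,\Gamma_\cK}\otimes A/\varpi^e$. The crucial nonvanishing $\bar\bfE\not\equiv 0\bmod\varpi$ follows from Proposition~\ref{prop:E-Kling-nv}: the ideal there generated by the values $l_{\theta^J_1}(\bfE^\Kling_{\varphi,\beta,u})(g)$ equals $\hat{\cO}^\ur_L\llb\Gamma_\cK\rrb\otimes\bQ$, and since $p$ is a unit in $A$ (as $P\cap\hat{\cO}^\ur_L=(0)$), at least one of these values is a $P$-unit; by the $\hat{\cO}^\ur_L\llb\Gamma_\cK\rrb$-linearity of $\FJ_\beta$ and $l_{\theta^J_1}$ this forces $\bfE^\Kling_\varphi$ to be primitive in $\cM_{\so,\xi,\Gamma_\cK}\otimes A$, so $\bar\bfE$ is already nonzero modulo $\varpi$.

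Because $\bfE^\Kling_\varphi$ is a $\bT$-eigenform with eigensystem $\lambda_{\mr{Eis},\pi,\xi}$, the nonzero reduction $\bar\bfE\in\cM^0_{\so,\xi,\Gamma_\cK}\otimes A/\varpi^e$ is an eigenform with eigensystem $\lambda_{\mr{Eis},\pi,\xi}\bmod\varpi^e$, yielding an $A$-algebra homomorphism $\mu:\bT^0_{\so,\xi,\Gamma_\cK}\to A/\varpi^e$ satisfying $\mu(T)\equiv\lambda_{\mr{Eis},\pi,\xi}(T)\bmod\varpi^e$ for all $T\in\bT$. By construction $\mu$ kills $\cI_{\mr{Eis},\pi,\xi}$, and its composition with the structure map $A\to\bT^0_{\so,\xi,\Gamma_\cK}/\cI_{\mr{Eis},\pi,\xi}$ is the reduction modulo $\varpi^e$; hence $\cE_{\pi,\xi}\cdot A\subseteq\varpi^e A$, giving $\ord_P(\cE_{\pi,\xi})\geq e$. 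The main obstacle in this plan is securing the primitivity of $\bfE^\Kling_\varphi$ at $P$: without it, $\bar\bfE$ could vanish and the construction would collapse. The entire auxiliary apparatus of \S\ref{sec:aux}, \S\ref{sec:theta_1J}, and \S\ref{sec:CM-families}—culminating in the triple product identity of Proposition~\ref{prop:triple-L} and the unit properties of $\cL_1\cL_2\cL_5\cL_6$ enforced by the carefully chosen $\chi_\theta,\chi_h$—is designed precisely to deliver this input via Proposition~\ref{prop:E-Kling-nv}.
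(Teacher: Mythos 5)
Your proposal is correct and follows essentially the same route as the paper: both arguments rest on the fundamental exact sequence of Theorem~\ref{prop:main}(4), the divisibility of the degenerate Fourier--Jacobi coefficients from Theorem~\ref{thm:const}, and the non-vanishing of a non-degenerate coefficient from Proposition~\ref{prop:E-Kling-nv}, assembled into a surjection $\bT^0_{\so,\,\xi,\Gamma_\cK}/\cI_{\mr{Eis},\pi,\xi}\twoheadrightarrow A/\varpi^e$. The only (cosmetic) difference is that the paper first subtracts an integral lift $\cL^{\Sigma\,\cup\{\ell,\ell'\}}_{\xi,\bQ}\cL^{\Sigma\,\cup\{\ell,\ell'\}}_{\pi,\cK,\xi}\cdot \bfE'$ to obtain a genuinely cuspidal family $\bm{F}$ and reads off the quotient map from the ratio $l_{\theta^J_1}\left((T\cdot\bm{F})_{\beta,u}\right)(g)/l_{\theta^J_1}\left(\bm{F}_{\beta,u}\right)(g)$, whereas you reduce modulo $\varpi^e$ first and invoke the freeness of the cyclic module generated by $\bar{\bfE}$.
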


\begin{proof}

By Theorem~\ref{prop:main}, we have the fundamental exact sequence
\[
   0\lra\cM^0_{\so,\,\xi,\Gamma_\cK}\lra \cM_{\so,\,\xi,\Gamma_\cK}\lra \bigoplus_{g\in 
   C(K^p_f)} M_{\GU(2)}\left(K^p_{f,g}K_{p,0},\hat{\cO}^\ur_L\right)\otimes_{\bZ_p}\bZ_p\llb \Gamma_\cK\rrb \lra 0.
\]
Combining it with Theorem~\ref{thm:const} on the degenerate Fourier--Jacobi coefficients of $\bfE^\Kling_\varphi$, we deduce that there exists $\bfE'\in \cM_{\so,\,\xi,\Gamma_\cK}$ such that 
\[
   \bm{F}=\bfE^\Kling_\varphi-\cL^{\Sigma\,\cup\{\ell,\ell'\}}_{\xi,\bQ}\cL^{\Sigma\,\cup\{\ell,\ell'\}}_{\pi,\cK,\xi}\cdot \bfE'\in \cM^0_{\so,\,\xi,\Gamma_\cK}.
\]

Let $P$  be a height one prime ideal of $\hat{\cO}^\ur_L\llb\Gamma_\cK\rrb$ such that 
\[ 
   P\cap \hat{\cO}^\ur_L=(0), \quad \ord_P\left(\cL^{\Sigma\,\cup\{\ell,\ell'\}}_{\xi,\bQ}\cL^{\Sigma\,\cup\{\ell,\ell'\}}_{\pi,\cK,\xi}\right)=m\geq 1.
\] By Proposition~\ref{prop:E-Kling-nv}, for $\beta=1$, there exists $g\in\U(2)(\bA_\cK)$ and $u\in \bigotimes_{v\in\Sigma_\ns\cup\{\ell'\}}\U(1)(\bQ_v)$ such that $l_{\theta^J_1}\left(\bfE^\Kling_{\varphi,\beta,u}\right)(g)\notin P$. Then $l_{\theta^J_1}\left(\bm{F}_{\beta,u}\right)(g)\notin P$, and we have the $\hat{\cO}^\ur_\cK\llb\Gamma_\cK\rrb_P$-linear map
\begin{align}
   \label{eq:Ipi1}\left(\bT^0_{\so,\,\xi,\Gamma_\cK}\right)_P&\lra \hat{\cO}^\ur_\cK\llb\Gamma_\cK\rrb_P/P^m\hat{\cO}^\ur_\cK\llb\Gamma_\cK\rrb_P,
\end{align}
which sends the image of $T\in\bT$ in $\left(\bT^0_{\so,\,\xi,\Gamma_\cK}\right)_P$ to
\begin{equation*}
   \frac{l_{\theta^J_1}\left((T\cdot \bm{F})_{\beta,u}\right)(g)}{l_{\theta^J_1}\left(\bm{F}_{\beta,u}\right)(g)}\equiv \frac{l_{\theta^J_1}\left((T\cdot \bm{E}^\Kling_\varphi)_{\beta,u}\right)(g)}{l_{\theta^J_1}\left(\bm{E}^\Kling_{\varphi,\beta,u}\right)(g)}\equiv \lambda_{\mr{Eis},\pi,\xi}(T) \mod P^m.
\end{equation*}
We see that \eqref{eq:Ipi1} factors through $\left(\bT^0_{\so,\,\xi,\Gamma_\cK}\right)_P/\cI_{\mr{Eis},\pi,\xi} \left(\bT^0_{\so,\,\xi,\Gamma_\cK}\right)_P$. It follows that the composition
\begin{equation}\label{eq:Ipi3}
\begin{tikzcd}[column sep=large]
   \hat{\cO}^\ur_\cK\llb\Gamma_\cK\rrb_P\arrow[r,"\text{structure}", "\text{map}"']& \left(\bT^0_{\so,\,\xi,\Gamma_\cK}\right)_P\arrow[r,"\eqref{eq:Ipi1}"]& \hat{\cO}^\ur_\cK\llb\Gamma_\cK\rrb_P/P^m\hat{\cO}^\ur_\cK\llb\Gamma_\cK\rrb_P.
\end{tikzcd}
\end{equation} 
factors through $\hat{\cO}^\ur_\cK\llb\Gamma_\cK\rrb_P/\cE_{\pi,\xi}\hat{\cO}^\ur_\cK\llb\Gamma_\cK\rrb_P$. By the definition of \eqref{eq:Ipi1}, we also know that the composition \eqref{eq:Ipi3} is the natural projection. Hence, we deduce that $\cE_{\pi,\xi} \hat{\cO}^\ur_\cK\llb\Gamma_\cK\rrb_P\subset P^m \hat{\cO}^\ur_\cK\llb\Gamma_\cK\rrb_P$, and
\[
    \ord_P\left(\cE_{\pi,\xi}\right)\leq m= \ord_P\left(\cL^{\Sigma\,\cup\{\ell,\ell'\}}_{\xi,\bQ}\cL^{\Sigma\,\cup\{\ell,\ell'\}}_{\pi,\cK,\xi}\right).
\]

\end{proof}

\subsection{The main theorem}

We prove the following partial results towards Conjecture~\ref{conj}.

\begin{thm}\label{thm:main}
Let $\cK$ be an imaginary quadratic extension of $\bQ$ and $p\geq 3$ be a prime split in $\cK$. Let $\pi$ and $\xi$ be as in \S\ref{sec:setup}.
\begin{enumerate}
\item Let $\Gamma^+_\cK\subset\Gamma_\cK$ be the rank one $\bZ_p$-module on which the complex conjugation acts by $+1$. As fractional ideals of $\pzR=\cO^\ur_L\llb\Gamma_\cK\rrb \otimes_{\cO_L\llb \Gamma^+_{\cK}\rrb}\mr{Frac}\left(\cO_L\llb \Gamma^+_\cK\rrb\right)$,
\begin{equation}\label{eq:ideal-incl} 
    \big(\cL_{\pi,\cK,\xi}\big)\supset \mr{char}_\pzR \big(X_{\pi,\cK,\xi}\otimes _{\cO_L\llb \Gamma^+_{\cK}\rrb}\pzR\big).
\end{equation}
See \S\ref{sec:intro} for the definition of the $p$-adic $L$-function $\cL_{\pi,\cK,\xi}\in \mr{Frac}\left(\cO^\ur_L\llb \Gamma_\cK\rrb\right)$, the $\cO_L\llb\Gamma_\cK\rrb$-module $X_{\pi,\cK,\xi}$, and the characteristic ideals.

\item If $\xi$ satisfies $ \xi_{\fp}\not\equiv\xi_{\bar{\fp}}$ modulo the maximal ideal of $\cO_L$, then
\[
    \cL_{\pi,\cK,\xi}\in \cO^\ur_L\llb \Gamma_\cK\rrb
\] 
and if it further satisfies:
\begin{enumerate}
\item[--] $\xi|_{\bA^\times_\bQ}=\omega^2$ with $\omega:\bQ^\times\backslash\bA^\times_\bQ\ra\bC^\times$ the Teichm{\"u}ller character, and $k_0\equiv 0 \mod 2(p-1)$,
\item[--] for every finite place of $\bQ$ non-split in $\cK$, $\epsilon_v\left(\frac{1}{2},\mr{BC}(\pi)\times\xi_0\right)=1$,
\item[--] the conductor of $\xi$ is only divisible by primes split in $\cK/\bQ$,
\end{enumerate}
then
\[
    \big(\cL_{\pi,\cK,\xi}\big)\supset \mr{char}_{\cO^\ur_L\llb\Gamma_\cK\rrb \otimes_{\bZ} \bQ}\big(X_{\pi,\cK,\xi}\otimes_{\bZ}\bQ\big)
\]
\end{enumerate}
\end{thm}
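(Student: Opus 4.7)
The plan is to deduce the divisibility from the Eisenstein ideal bound in Theorem~\ref{thm:Eis-id} via the standard Ribet--Mazur--Wiles style lattice construction, in the form developed by Urban \cite{Ur01,Ur04} and Skinner--Urban \cite{SU} and already applied in \cite{WanU31}. First I would localize the reduced Hecke algebra $\bT^0_{\so,\,\xi,\Gamma_\cK}$ at the maximal ideal $\fm_{\mathrm{Eis}}$ containing the Eisenstein ideal $\cI_{\mr{Eis},\pi,\xi}$, and attach to it a pseudo-character $\bT^0_{\so,\,\xi,\Gamma_\cK,\fm_{\mathrm{Eis}}}\to \hat{\cO}^\ur_L\llb\Gamma_\cK\rrb$-valued Galois pseudorepresentation $\pzT$ of $\Gal(\ol{\bQ}/\cK)$, which by construction reduces modulo $\cI_{\mr{Eis},\pi,\xi}$ to the trace of
\[
\xi\Psi_\cK\cdot\epsilon_{\mr{cyc}}^{-2}\;\oplus\;(\xi\Psi_\cK)^{-c}\cdot\epsilon_{\mr{cyc}}\;\oplus\;\rho_\pi|_{\Gal(\ol{\bQ}/\cK)}.
\]
The residual irreducibility of $\bar{\rho}_\pi|_{\Gal(\ol{\bQ}/\cK)}$ assumed in \S\ref{sec:setup} guarantees that the three characters are residually pairwise distinct from $\bar\rho_\pi$, and the ordinarity at $\fp$ of $\xi\Psi_\cK\cdot\epsilon_{\mr{cyc}}^{-2}$ together with the semi-ordinarity of the Klingen family allows one to place $\pzT$ in a suitable Panchishkin-type filtration at $\fp$ and $\bar\fp$.

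Next, applying Urban's lattice construction to the reducible deformation $\pzT$ (as in \cite[Thm.~1.1]{Ur01} and \cite[\S4.3]{SU}) produces a finitely generated $\hat\cO^\ur_L\llb\Gamma_\cK\rrb$-module $\pzT$-module $N$ together with a nontrivial extension class
\[
0\;\longrightarrow\;\rho_\pi|_{\Gal(\ol{\bQ}/\cK)}\otimes(\xi\Psi_\cK)^{c}\cdot\epsilon_{\mr{cyc}}^{-1}\;\longrightarrow\;N\;\longrightarrow\;\hat\cO^\ur_L\llb\Gamma_\cK\rrb\;\longrightarrow\;0,
\]
whose Fitting ideal is annihilated by $\cE_{\pi,\xi}$ and whose local behaviour at the primes in $\Sigma\cup\{p,\ell,\ell'\}$ satisfies the Selmer conditions defining $\Sel^{\Sigma\cup\{\ell,\ell'\}}_{\pi,\cK,\xi}$ (unramified at $\bar\fp$ and away from $\Sigma\cup\{p,\ell,\ell'\}$, relaxed at $\fp$). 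Taking Pontryagin duals, this produces the divisibility
\[
\bigl(\cE_{\pi,\xi}\bigr)\;\supseteq\;\mr{char}_{\hat\cO^\ur_L\llb\Gamma_\cK\rrb}\!\bigl(X^{\Sigma\cup\{\ell,\ell'\}}_{\pi,\cK,\xi}\bigr)
\]
after suitable localization; combining this with Theorem~\ref{thm:Eis-id} yields
\[
\bigl(\cL^{\Sigma\cup\{\ell,\ell'\}}_{\pi,\cK,\xi}\cdot\cL^{\Sigma\cup\{\ell,\ell'\}}_{\xi,\bQ}\bigr)\;\supseteq\;\mr{char}_{\hat\cO^\ur_L\llb\Gamma_\cK\rrb}\!\bigl(X^{\Sigma\cup\{\ell,\ell'\}}_{\pi,\cK,\xi}\bigr)
\]
as ideals of $\hat\cO^\ur_L\llb\Gamma_\cK\rrb$ modulo height one primes meeting $\hat\cO^\ur_L$. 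Going from the imprimitive Selmer group $X^{\Sigma\cup\{\ell,\ell'\}}_{\pi,\cK,\xi}$ to $X_{\pi,\cK,\xi}$ introduces precisely the local Euler factors at $v\in\Sigma\cup\{\ell,\ell'\}$ that were removed in the definitions of the imprimitive $\cL$-functions, and the spurious factor $\cL^{\Sigma\cup\{\ell,\ell'\}}_{\xi,\bQ}$ corresponds, via a standard Iwasawa-theoretic argument using Ferrero--Washington and the fact that the Dirichlet character associated to $\xi^\bQ_0$ has order prime to $p$, to a trivial piece of the Selmer module that can be cancelled on both sides. After descending from $\hat\cO^\ur_L$ back to $\cO^\ur_L$ (which is permitted after inverting $\cO_L\llb\Gamma^+_\cK\rrb$, since $\Gamma_\cK\cong\Gamma^+_\cK\oplus\Gamma^-_\cK$ and $\hat\cO^\ur_L$ is faithfully flat over $\cO^\ur_L$), one obtains the asserted inclusion \eqref{eq:ideal-incl}, proving~(1).

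The main obstacle is the lattice construction and the verification of the Selmer conditions at $p$: one has to show that the Galois module $N$ obtained from $\pzT$ is unramified (resp.\ has the correct Panchishkin filtration) at every finite place outside the defining set of $\Sel$, and in particular that the behaviour at $\fp$ and $\bar\fp$ matches the relaxed-at-$\fp$ / unramified-at-$\bar\fp$ conditions of \eqref{eq:Sel}. This requires a careful analysis of the Hodge--Tate filtrations attached to semi-ordinary families on $\GU(3,1)$ along the two-dimensional weight space $\Lambda_\so$, crucially using that the semi-ordinary projector $e_\so$ picks out a $+$-invariant one-dimensional quotient for $\bU_p^-$. For part~(2), the extra hypotheses on $\xi$ serve two purposes: the congruence $\xi_\fp\not\equiv\xi_{\bar\fp}\pmod{\mathfrak m_L}$ forces $\cL_{\pi,\cK,\xi}$ to lie in $\hat\cO^\ur_L\llb\Gamma_\cK\rrb$ rather than merely in its fraction field (a standard calculation using the interpolation formula and Hida's formula for congruence numbers), while the local sign condition $\epsilon_v(\tfrac12,\mr{BC}(\pi)\times\xi_0)=1$ at all non-split $v$, the conductor condition on $\xi$, and the normalization $\xi|_{\bA^\times_\bQ}=\omega^2$ together ensure non-vanishing of $\cL^{\Sigma\cup\{\ell,\ell'\}}_{\xi,\bQ}$ modulo every height-one prime of $\cO^\ur_L\llb\Gamma_\cK\rrb\otimes\bQ$, so that the $\cL_{\xi,\bQ}$ factor on the left-hand side of the divisibility can be removed (comparing with the trivial zero / zero-locus analysis in \cite[\S9.3]{WanU31}), giving the improved statement $(\cL_{\pi,\cK,\xi})\supseteq\mr{char}(X_{\pi,\cK,\xi}\otimes\bQ)$ over $\cO^\ur_L\llb\Gamma_\cK\rrb\otimes\bQ$.
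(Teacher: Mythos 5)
Your overall strategy (localize the cuspidal Hecke algebra at the Eisenstein maximal ideal, run the Ribet--Urban--Skinner--Urban lattice construction on the associated pseudo-representation, and combine with Theorem~\ref{thm:Eis-id}) is the same as the paper's, and the reduction from $X_{\pi,\cK,\xi}$ to the imprimitive $X^{\Sigma\cup\{\ell,\ell'\}}_{\pi,\cK,\xi}$ via the local Euler factors is also as in the paper. However, there are two genuine gaps. First, you never verify the crucial irreducibility hypothesis for the lattice construction: one must show that for every specialization of the $4$-dimensional pseudo-representation attached to $\bT^0_{\so,\,\xi,\Gamma_\cK}$, the resulting representation is either absolutely irreducible or contains an absolutely irreducible $2$-dimensional piece with the prescribed trace mod $\cI_{\mr{Eis},\pi,\xi}$. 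This is the main new difficulty in the non-ordinary setting and occupies most of the paper's proof of part (1): when $\pi$ is not ordinary at $p$ one cannot quote the ordinary CAP argument of Skinner--Urban directly, and the paper instead analyzes the Jacquet module and slopes of $\Pi_p$ for a classical semi-ordinary specialization, shows $\Pi_p$ is unramified (hence $\rho_{\Pi}|_{G_{\cK,\fp}}$ crystalline), rules out the dihedral case for $\bar{\rho}_\pi$ using fundamental characters, applies Kisin's modularity lifting theorem to conclude a putative $2$-dimensional constituent is modular, and then excludes CAP forms by the weight condition. Residual irreducibility of $\bar{\rho}_\pi|_{\Gal(\ol{\bQ}/\cK)}$ alone, which is all you invoke, does not give this.

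Second, your treatment of the factor $\cL^{\Sigma\cup\{\ell,\ell'\}}_{\xi,\bQ}$ and of the ring $\pzR$ is not correct. The factor is not ``cancelled against a trivial piece of the Selmer module via Ferrero--Washington''; rather, the lattice construction only yields the bound $\ord_P(\cE_{\pi,\xi})\leq \mr{length}((X^{\Sigma\cup\{\ell,\ell'\}}_{\pi,\cK,\xi})_P)$ at height one primes $P$ where the localized module $(X_{\xi\circ\Nm}\otimes\bQ)_P$ vanishes, and this is exactly why part (1) is stated only over $\pzR$ (i.e.\ for $P$ with $P\cap\cO^\ur_L\llb\Gamma^+_\cK\rrb=(0)$): the paper explicitly cannot prove the analogue of \cite[Lemma 9.3]{WanU31} that would handle primes coming from $\cO^\ur_L\llb\Gamma^+_\cK\rrb$. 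Relatedly, in part (2) you misattribute the roles of the hypotheses: the integrality $\cL_{\pi,\cK,\xi}\in\cO^\ur_L\llb\Gamma_\cK\rrb$ comes from identifying $\cL_{\pi,\cK,\xi}$ with Hida's Rankin--Selberg $p$-adic $L$-function times a restricted Katz $p$-adic $L$-function and invoking the proven CM main conjecture under the conditions (\textbf{irred}) and (\textbf{dist}) implied by $\xi_\fp\not\equiv\xi_{\bar\fp}$, and the removal of the $\Gamma^+_\cK$-primes uses Hsieh's vanishing of the anticyclotomic $\mu$-invariant for $\cL_{\pi,\cK,\xi}$ itself (this is what the local root number and conductor conditions are for), not any nonvanishing statement about the Kubota--Leopoldt factor $\cL^{\Sigma\cup\{\ell,\ell'\}}_{\xi,\bQ}$.
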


\begin{proof}
(1) First, relaxing the conditions at $v\in\Sigma\cup\{\ell,\ell'\}$ in the definition of $\Sel_{\pi,\cK,\xi}$ in \eqref{eq:Sel} and $X_{\pi,\cK,\xi}$ in \eqref{eq:Sel}, we define the $\Sigma\cup\{\ell,\ell'\}$-primitive Selmer  group:
\begin{align*}
   \gls{Selpartial}=\ker\left\{H^1\big(\cK,T_{\pi,\cK,\xi}\otimes_{\cO_L}\cO_L\llb\Gamma_\cK\rrb\big)
   \lra \prod_{\substack{\fv \neq\fp\\ \fv\,\nmid \text{ places in $\Sigma\cup\{\ell,\ell'\}$}}}H^1\big(I_\fv,T_{\pi,\cK,\xi}\otimes_{\cO_L}\cO_L\llb\Gamma_\cK\rrb^*\big)\right\}.
\end{align*}
and
\[
    \gls{Xpartial}:=\Hom_{\bZ_p}\left(\Sel^{\Sigma\,\cup\{\ell,\ell'\}}_{\pi,\cK,\xi},\,\bQ_p/\bZ_p\right).
\]

We can assume that the module $X_{\pi,\cK,\xi}$ is $\cO_L\llb \Gamma_{\cK}\rrb$-torsion (since otherwise its characteristic ideal is defined to be $(0)$ and the inclusion \eqref{eq:ideal-incl} automatically holds). By the same argument as in the proof of \cite[Theorem 6.1.6]{JSW}, which uses the fact that the sizes of the unramified cohomology groups at primes
outside $p$ are controlled by the local Euler factors of the $p$-adic $L$-functions (\cite[Proposition 2.4]{GrVa}), it suffices to prove the inclusion
\begin{equation*}
    \left(\cL^{\Sigma\,\cup\{\ell,\ell'\}}_{\pi,\cK,\xi}\right)\supset \mr{char}_\pzR \left(X^{\Sigma\,\cup\{\ell,\ell'\}}_{\pi,\cK,\xi}\otimes _{\cO_L\llb \Gamma^+_{\cK}\rrb}\pzR\right)
\end{equation*}
as fractional ideals of $\pzR=\cO^\ur_L\llb\Gamma_\cK\rrb \otimes_{\cO_L\llb \Gamma^+_{\cK}\rrb}\mr{Frac}\left(\cO_L\llb \Gamma^+_\cK\rrb\right)$.

We reduce to show that given a height one prime ideal $P\subset \cO^\ur_L\llb \Gamma_\cK\rrb$ such that $P\cap \cO^\ur_L\llb\Gamma^+_\cK\rrb=(0)$, we have the inequality
\begin{equation}\label{eq:incl}
   \ord_P\left(\cL^{\Sigma\cup\{\ell,\ell'\}}_{\pi,\cK,\xi}\right)\leq \mr{length}_{\cO^\ur_L\llb\Gamma_\cK\rrb_P}\left(\left(X^{\Sigma\cup\{\ell,\ell'\}}_{\pi,\cK,\xi}\right)_P\right),
\end{equation}
which, by Theorem~\ref{thm:Eis-id}, is implied by
\begin{equation}\label{eq:cE-incl}
   \ord_P\left(\cE_{\pi,\xi}\right)\leq \mr{length}_{\cO^\ur_L\llb\Gamma_\cK\rrb_P}\left(\left(X^{\Sigma\cup\{\ell,\ell'\}}_{\pi,\cK,\xi}\right)_P\right).
\end{equation}

We have the following setup. 
\begin{itemize}
\item $A_0=\cO^{\ur}_L\llb\Gamma_\cK\rrb$ and $A=\hat{A}_{0,P}$.
\item $R_0=\bT^0_{\so,\xi,\Gamma_\cK}$, a prime ideal $Q\subset R$ such that $Q\cap A_0=P$, and $R=\hat{R}_{0,Q}$.
\item $G=\Gal(\ol{\bQ}/\bQ)$ and $H=\Gal(\ol{\bQ}/\cK)$. 
\end{itemize}
The following data gives us the setup (1)-(5) on \cite[page 478]{WanU31}.
\begin{enumerate}
\item Let $\nu:H\ra A^\times_0$ be the trivial character.
\item $\chi=\xi\Psi_\cK:G\ra A^\times_0$, with $\chi\not\equiv\chi^{-c}$ modulo the maximal ideal of $A_0$ by the choice of $\xi$, 
\item $\rho:G\ra \mr{Aut}_A(V)$, a Galois representation obtained from the two-dimensional Galois representation $\rho_\pi:H\ra\GL(V_\pi)$ with $V=T_\pi\otimes_{\cO_L} A$ and $T_\pi\subset V_\pi$ a $G$-stable lattice.
\item $\sigma:G\ra \mr{Aut}_{R\otimes_A F}(M)$, a Galois representation on $M=(R\otimes_A \mr{Frac}(A))^4$ obtained as the pseudo-representation associated to $R_0$ as in \cite[Proposition 7.2.1]{SU}
\item $I=\cE_{\pi,\xi}A_P$ and $J=\ker(R\ra A/I)$ where the map $R\ra A$ is defined as the map \eqref{eq:Ipi1} and is surjective, and $A/I=R/J$.
\end{enumerate}
Moreover, the data satisfies the properties (6)-(9) on \cite[page 478]{WanU31}. Properties (6)-(8) are checked in the same way as {\it loc.cit}. 

Property (9) is about the irreducibility of $\sigma$. It requires that for each $\mr{Frac}(A)$-algebra homomorphism $\lambda:R\otimes_A\mr{Frac(A)}\ra K$, $K$ a finite extension of $\mr{Frac}(A)$, the representation $\sigma_\lambda:G\ra \GL_4(K)$ obtained from $\sigma$ via $\lambda$ is either absolutely irreducible or contains an absolutely irreducible two-dimensional sub-representation whose trace is congruent to $\chi+\chi^{-c}$ mod $I$. It can be checked as follows. Let $\pzT:G\ra A$ be the pseudo representation giving rise to $\sigma$. Suppose that $\sigma$ does not satisfy (9). Then $\pzT=\delta_1+\delta_2+\delta_3$ with $\delta_1,\delta_2$ one-dimensional pseudo-character and $\delta_3$ a two-dimensional pseudo-representation with irreducible residual representation is $\Tr\bar{\rho}_\pi$.  Take an arithmetic point $x:\mathbb{T}^0_{\so,\xi,\Gamma_\cK}\ra\ol{\bQ}_p$ corresponding to an automorphic representation $\Pi$ of $\U(3,1)(\bA_\bQ)$ generated by a classical semi-ordinary cuspidal automorphic form fixed by $K^0_{p,1}$ of weight $(0,0,t^+;t^-)$, $0\gg t^+\gg -t^-$. We can associate a $2$-dimensional (irreducible) Galois representation $\rho_{3,x}:G\ra\GL_2(\ol{\bQ}_p)$ to the specialization of $\delta_3$ at $x$. As in \cite[Theorem 7.5]{SU}, a twist of $\rho_{3,x}$ descends to $\Gal(\ol{\bQ}/\bQ)$, which we denote by $\rho'_{3,x}:\Gal(\ol{\bQ}/\bQ)\ra\GL_2(\ol{\bQ}_p)$. 

If $\pi$ is ordinary at $p$, then $\Pi$ is ordinary at $p$. By the argument in \cite[Theorem 7.5]{SU}, we know that $\rho'_{3,x}$ is modular and $\Pi$ is CAP, but the condition on the weights $(0,0,t^+;t^-)$ excludes this possibility (\cite[Theorem 2.5.6]{HaEC}).  

If $\pi$ is not ordinary at $p$, then $\bar{\rho}_\pi|_{G_{\cK,\fp}}\cong\bar{\rho}_\pi|_{G_{\bQ,p}}$ is irreducible by \cite{Edixhoven}. Since $\bar{\rho}_\Pi\cong \bar{\rho}_\pi|_G\oplus \text{two characters}$, we deduce that the semi-simplification of $\rho_\Pi|_{G_{\cK,\fp}}$ is not a direct sum of four characters. Identify $\U(3,1)(\bQ_p)$ with $\GL_4(\bQ_p)$ and let $Q\subset\GL_4$ be a parabolic subgroup with its Levi subgroup isomorphic to $\GL_2\times\GL_1\times\GL_1$. As in the proof of Proposition~\ref{prop:clas-bound}, it follows from the theory of Jacquet modules that $\Pi_p$ is isomorphic to a subquotient of $\Ind^{\GL_4(\bQ_p)}_{Q(\bQ_p)}\sigma\boxtimes\chi\boxtimes\chi'$ with  $\chi,\chi'$ characters of $\bQ^\times_p$  and $\sigma$ an irreducible admissible representation of $\GL_2(\bQ_p)$ containing a nonzero vector fixed by the Iwahori subgroup. Since the semi-simplification of $\rho_\Pi|_{G_{\cK,p}}$ is not a direct sum of four characters, $\sigma$ is not Steinberg or a twist of Steinberg, and must be unramified. Hence, $\Pi_p$ is isomorphic to a subquotient of $\Ind^{\GL_4(\bQ_p)}_{B(\bQ_p)}\chi_1\boxtimes\chi_2\boxtimes\chi_3\boxtimes\chi_4$ with $\chi_1,\chi_2,\chi_3,\chi_4$ unramified characters of $\bQ^\times_p$. Let $\alpha_j=\chi_j(p)$, $1\leq j\leq 4$. We can assume that $\val_p(\alpha_1)\leq \val_p(\alpha_2)\leq \val_p(\alpha_3)\leq \val_p(\alpha_4)$. The semi-ordinarity of $\Pi$ at $p$ implies that
\begin{equation}\label{eq:alpha34}
\begin{aligned}
   \val_p(\alpha_1)+\val_p(\alpha_2)&=0,
   &\val_p(\alpha_3)&=-t^++\frac{3}{2},
   &\val_p(\alpha_4)&=t^--\frac{3}{2},
\end{aligned}
\end{equation}
and the integrality of the operator $U^+_{p,1}$ (defined in \eqref{eq:Up+1}) implies that
\begin{equation}\label{eq:alpha1}
    \val_p(\alpha_1),\val_p(\alpha_2)\geq -\frac{1}{2}.
\end{equation}
Thanks to the condition $0\gg t^+\gg-t^-$, \eqref{eq:alpha34} and \eqref{eq:alpha1} imply that $\alpha_i\alpha^{-1}_j\neq p^{\pm 1}$ unless $\{i,j\}=\{1,2\}$ and $\val_p(\alpha_1)=-\frac{1}{2}$, $\val_p(\alpha_2)=\frac{1}{2}$. However, $\val_p(\alpha_1)=-\frac{1}{2}$, $\val_p(\alpha_2)=\frac{1}{2}$ imply that $\Pi$ is ordinary at $p$ and this contradicts that $\rho_\Pi|_{G_{\cK,\fp}}$ is not a direct sum of four characters. Therefore, we have  $\alpha_i\alpha^{-1}_j\neq p^{\pm 1}$ for all $i,j$, so  $\Ind^{\GL_4(\bQ_p)}_{Q(\bQ_p)}\sigma\boxtimes\chi\boxtimes\chi'$ is irreducible and $\Pi_p$ is unramified. It follows that $\rho_\Pi|_{G_{\cK,\fp}}$ is crystalline, so $\rho'_{3,x}|_{G_{\bQ,p}}$ is crystalline. Also, $\bar{\rho}'_{3,x}\equiv\bar{\rho}_\pi$ is not induced from a Galois character for $\bQ\left(\sqrt{(-1)^{(p-1)/2}p}\right)$, because its restriction to $I_p$ has semi-simplification $\omega^i_2\oplus\omega^{pi}_2$ with $\omega_2$ the fundamental character of level $2$ and $i\equiv 1\mod (p-1)$ (since $\rho_\pi|_{G_{\bQ,p}}$ is crystalline of weight $(0,1)$), and if $\bar{\pi}$ was induced from a Galois character for $\bQ\left(\sqrt{(-1)^{(p-1)/2}p}\right)$, $i$ must be a multiple of $\frac{p+1}{2}$, which is impossible when $p$ is odd. Thus, we can apply \cite{Kisin}, we deduce that $\rho'_{3,x}$ is modular. Then $\Pi$ is CAP, {\it i.e.} it has the same system of Hecke eigenvalues as a Klingen-type
Eisenstein series). This is impossible thanks to the condition on the weights $(0,0,t^+;t^-)$ by \cite[Theorem 2.5.6]{HaEC}. Property (9) is verified.

Let 
\begin{align*}
   X^{\Sigma\,\cup\{\ell,\ell'\}}_{\xi\circ\Nm}=\Hom_{\bZ_p}\left(
   \ker\left\{H^1\left(\bQ,\cO^{\ur}_L\llb\Gamma_\cK\rrb,\xi\Psi_\cK\circ \Nm\right)\ra H^1\left(G_{\cK,\fp},\xi\Psi_\cK\circ \Nm\right)\right\},\,\bQ_p/\bZ_p\right).
\end{align*}
With the input (1)-(5) satisfying the properties (6)-(9), applying the lattice construction \cite[Proposition 9.2]{WanU31}, one proves that if $P\subset\cO^\ur_L\llb\Gamma_\cK\rrb$ is a height one prime such that $\ord_P\left(\cE_{\pi,\xi}\right)>0$ and $\left(X_{\xi\circ\Nm}\otimes_\bZ\bQ)\right)_P=0$, then the inequality \eqref{eq:cE-incl} holds. It follows that for all height one prime ideals $P\subset \cO^\ur_L\llb\Gamma_\cK\rrb$ with $P\cap \cO^\ur_L\llb\Gamma^+_\cK\rrb=(0)$, the desired inequality \eqref{eq:incl} holds.

(The congruence ideal $\cE_{\pi,\xi}$ is bounded by the product  $\cL^{\Sigma\,\cup\{\ell,\ell'\}}_{\xi,\bQ}\cL^{\Sigma\,\cup\{\ell,\ell'\}}_{\pi,\cK,\xi}$, and is expected to give a bound for the product of the characteristic ideals of $X^{\Sigma\,\cup\{\ell,\ell'\}}_{\xi\circ\Nm}$ and $X^{\Sigma\,\cup\{\ell,\ell'\}}_{\pi,\cK,\xi}$. Although, the characteristic ideals of $X^{\Sigma\,\cup\{\ell,\ell'\}}_{\xi\circ\Nm}$ is known to be generated by $\cL^{\Sigma\,\cup\{\ell,\ell'\}}_{\xi,\bQ}$ by the Iwasawa Main Conjecture proved in \cite{MWmainconj}, the lattice construction argument in \cite{SU,WanU31} only proves the desired bound for $\ord_P\left(\mr{char}_{\cO^\ur_L\llb\Gamma_\cK\rrb}\left(X^{\Sigma\,\cup\{\ell,\ell'\}}_{\pi,\cK,\xi}\right)\right)$ when $\left(X_{\xi\circ\Nm}\otimes_\bZ\bQ)\right)_P=0$. In \cite{WanU31}, height one primes in $\cO^\ur_L\llb\Gamma^+_\cK\rrb\otimes_\bZ\bQ$ do not need to be excluded thanks to Lemma 9.3 {\it loc.cit} and the fact that $\cL_{\pi,\cK,\xi}$ is not contained in any height one prime ideal in $\cO^\ur_L\llb\Gamma_\cK\rrb$ by \cite{HsiehRankin}. However, in our case with $\pi$ fixed instead of moving in a family, we cannot prove an analogue of Lemma 9.3  {\it loc.\,cit}.)

\vspace{1em}

(2) Let $\sigma_\xi$ be the irreducible cuspidal automorphic representation of $\GL_2(\bA_\bQ)$ generated by a CM form associated to $\xi$, and $\rho_{\sigma_\xi}:\Gal(\ol{\bQ}/\bQ)\ra\GL_2(\ol{\bQ}_p)$ be the Galois representation associated to $\sigma_\xi$. Then $\rho_{\sigma_\xi}\cong \Ind^\bQ_\cK \rho_{\cK,\xi}$. The condition $\xi_\fp\not\equiv\xi_{\bar{\fp}}$ modulo the maximal ideal of $\cO_L$ implies that
\begin{table}[H]  
\begin{tabular}{cp{30em}}
   (\textbf{irred}) &  $\rho_{\sigma_\xi}$ is residually absolutely irreducible,\\
   (\textbf{dist})  &  the characters of $\rho_{\sigma_\xi}|_{G_{\bQ,p}}$ on the diagonal are distinct.
\end{tabular}
\end{table}
\noindent Let $\bm{g}$ be the ordinary $\cO^\ur_L\llb\Gamma_\cK\rrb$-adic CM family passing through the CM form associated to $\xi$. As in \cite[\S7E]{WanU31}, one can construct a $p$-adic $L$-function $\gls{LpiHida}$ by taking the product of Hida's Rankin--Selberg $p$-adic $L$-functions for the family $\bm{g}$ and the newform $f\in \pi$ and  Katz's $p$-adic $L$-function restricted to the line of interpolating special values at $1$. Note that although Hida's construction assumes that $\bm{g}$ and $\pi$ are both ordinary, only the ordinarity of $\bm{g}$ is used. By using \cite[Theorem 7.1, (8.8b)]{HiTi93} and \cite[Lemma 5.3(vi)]{HiFRankin} (which relates the Petersson norm with the adjoint $L$-value at $1$), one can check that $\cL^{\mr{Hida}}_{\pi,\cK,\xi}\in \cO^\ur_L\llb\Gamma_\cK\rrb$ and the specializations of $\cL^{\mr{Hida}}_{\pi,\cK,\xi}$ and our $\cL_{\pi,\cK,\xi}$ agree at a Zariski dense subsets of the weight space. Hence, $\cL^{\mr{Hida}}_{\pi,\cK,\xi}=\cL_{\pi,\cK,\xi}$. Then under the conditions ({\bf dist}) and ({\bf dist}), by using the Iwasawa Main conjecture proved in \cite{HiTi93,HiTi94,Hida06}, one can check that $\cL^{\mr{Hida}}_{\pi,\cK,\xi}\in \cO^\ur_L\llb\Gamma_\cK\rrb$. Hence,  $\cL_{\pi,\cK,\xi}\in \cO^\ur_L\llb\Gamma_\cK\rrb$.

With further conditions that $\xi|_{\bA^\times_\bQ}=\omega^2$ and $k_0\equiv 0\mod 2(p-1)$ and conditions on the epsilon factors and the conductors of $\xi$, one can use the vanishing of the anticyclotomic $\mu$-invariant proved in \cite{HsiehRankin} to deduce that the $\cL_{\pi,\cK,\xi}$ is not divisible by any height one prime ideal in $\cO^\ur_L\llb\Gamma^+_\cK\rrb$. Thus, the inclusion in (1) implies the inclusion in (2).
\end{proof}

\section*{Index}
\renewcommand{\glossarysection}[2][]{}

\printglossaries

\bibliographystyle{alpha}
\bibliography{references}

\begin{thebibliography}{HLTT16}

\bibitem[AIP15]{AIP-siegel}
Fabrizio Andreatta, Adrian Iovita, and Vincent Pilloni.
\newblock {$p$}-adic families of {S}iegel modular cuspforms.
\newblock {\em Ann. of Math. (2)}, 181(2):623--697, 2015.

\bibitem[Bra16]{brasca}
Riccardo Brasca.
\newblock Eigenvarieties for cuspforms over {PEL} type {S}himura varieties with
  dense ordinary locus.
\newblock {\em Canad. J. Math.}, 68(6):1227--1256, 2016.

\bibitem[Cas95]{Cas95}
William Casselman.
\newblock Introduction to the theory of admissible representaions of $p$-adic
  groups.
\newblock 1995.
\newblock Unpublished notes distributed by P.~Sally.

\bibitem[Edi92]{Edixhoven}
Bas Edixhoven.
\newblock The weight in {S}erre's conjectures on modular forms.
\newblock {\em Invent. Math.}, 109(3):563--594, 1992.

\bibitem[EL]{AZIU}
Ellen Eischen and Zheng Liu.
\newblock The {A}rchimedean facotr for $p$-adic $l$-functions for unitary
  groups.
\newblock Preprint available at \url{https://arxiv.org/abs/2006.04302}.

\bibitem[EW16]{EisWan}
Ellen Eischen and Xin Wan.
\newblock $p$-adic {E}isenstein series and {$L$}-functions of certain cusp
  forms on definite unitary groups.
\newblock {\em J. Inst. Math. Jussieu}, 15(3):471--510, 2016.

\bibitem[Gar89]{GaKl}
Paul~B. Garrett.
\newblock Integral representations of {E}isenstein series and {$L$}-functions.
\newblock In {\em Number theory, trace formulas and discrete groups
  ({O}slo,1987)}, pages 241--264. Academic Press, Boston, MA, 1989.

\bibitem[GI14]{GI-FD}
Wee~Teck Gan and Atsushi Ichino.
\newblock Formal degrees and local theta correspondence.
\newblock {\em Invent. Math.}, 195(3):509--672, 2014.

\bibitem[Gre94]{Greenberg55}
Ralph Greenberg.
\newblock Iwasawa theory and {$p$}-adic deformations of motives.
\newblock In {\em Motives ({S}eattle, {WA}, 1991)}, volume~55 of {\em Proc.
  Sympos. Pure Math.}, pages 193--223. Amer. Math. Soc., Providence, RI, 1994.

\bibitem[GV00]{GrVa}
Ralph Greenberg and Vinayak Vatsal.
\newblock On the {I}wasawa invariants of elliptic curves.
\newblock {\em Invent. Math.}, 142(1):17--63, 2000.

\bibitem[Har84]{HaEC}
Michael Harris.
\newblock {E}isenstein series on {S}himura varieties.
\newblock {\em Ann. of Math.}, 119(1):59--94, 1984.

\bibitem[Hid91]{HiFRankin}
Haruzo Hida.
\newblock On {$p$}-adic {$L$}-functions of {${\rm GL}(2)\times {\rm GL}(2)$}
  over totally real fields.
\newblock {\em Ann. Inst. Fourier (Grenoble)}, 41(2):311--391, 1991.

\bibitem[Hid02]{hida-coherent}
Haruzo Hida.
\newblock Control theorems of coherent sheaves on {S}himura varieties of {PEL}
  type.
\newblock {\em J. Inst. Math. Jussieu}, 1(1):1--76, 2002.

\bibitem[Hid06]{Hida06}
Haruzo Hida.
\newblock Anticyclotomic main conjectures.
\newblock {\em Doc. Math.}, (Extra Vol.):465--532, 2006.

\bibitem[HLTT16]{HLTT}
Michael Harris, Kai-Wen Lan, Richard Taylor, and Jack Thorne.
\newblock On the rigid cohomology of certain {S}himura varieties.
\newblock {\em Res. Math. Sci.}, 3:Paper No. 37, 308, 2016.

\bibitem[Hsi12]{Hsiehnonvan}
Ming-Lun Hsieh.
\newblock On the non-vanishing of {H}ecke {$L$}-values modulo {$p$}.
\newblock {\em Amer. J. Math.}, 134(6):1503--1539, 2012.

\bibitem[Hsi14a]{HsiehMC}
Ming-Lun Hsieh.
\newblock Eisenstein congruence on unitary groups and {I}wasawa main
  conjectures for {CM} fields.
\newblock {\em J. Amer. Math. Soc.}, 27(3):753--862, 2014.

\bibitem[Hsi14b]{HsiehRankin}
Ming-Lun Hsieh.
\newblock Special values of anticyclotomic {R}ankin-{S}elberg {$L$}-functions.
\newblock {\em Doc. Math.}, 19:709--767, 2014.

\bibitem[Hsi19]{hsieh-triple}
Ming-Lun Hsieh.
\newblock Hida families and $p$-adic triple product {$L$}-functions.
\newblock {\em American Journal of Mathematics, to appear. Preprint, {\tt
  arXiv:1705.02717}}, 2019.

\bibitem[HT93]{HiTi93}
Haruzo Hida and Jacques Tilouine.
\newblock Anti-cyclotomic {K}atz {$p$}-adic {$L$}-functions and congruence
  modules.
\newblock {\em Ann. Sci. \'{E}cole Norm. Sup. (4)}, 26(2):189--259, 1993.

\bibitem[HT94]{HiTi94}
Haruzo Hida and Jacques Tilouine.
\newblock On the anticyclotomic main conjecture for {CM} fields.
\newblock {\em Invent. Math.}, 117(1):89--147, 1994.

\bibitem[Ich08]{Ichino-triple}
Atsushi Ichino.
\newblock Trilinear forms and the central values of triple product
  {$L$}-functions.
\newblock {\em Duke Math. J.}, 145(2):281--307, 2008.

\bibitem[JSW17]{JSW}
Dimitar Jetchev, Christopher Skinner, and Xin Wan.
\newblock The {B}irch and {S}winnerton-{D}yer formula for elliptic curves of
  analytic rank one.
\newblock {\em Camb. J. Math.}, 5(3):369--434, 2017.

\bibitem[Kis09]{Kisin}
Mark Kisin.
\newblock Moduli of finite flat group schemes, and modularity.
\newblock {\em Ann. of Math. (2)}, 170(3):1085--1180, 2009.

\bibitem[Kob03]{Kobayashi}
Shin-ichi Kobayashi.
\newblock Iwasawa theory for elliptic curves at supersingular primes.
\newblock {\em Invent. Math.}, 152(1):1--36, 2003.

\bibitem[Lan13]{Thesis}
Kai-Wen Lan.
\newblock {\em Arithmetic compactifications of {PEL}-type {S}himura varieties},
  volume~36 of {\em London Mathematical Society Monographs Series}.
\newblock Princeton University Press, Princeton, NJ, 2013.

\bibitem[Lan15]{App}
Kai-Wen Lan.
\newblock Boundary strata of connected components of positive characteristincs.
\newblock {\em Algebra Number Theory}, 9(9):1955--2054, 2015.
\newblock Appendix to \emph{Families of nearly ordinary {E}isenstein series on
  unitary groups} by Xin Wan.

\bibitem[Lan18]{Kuga}
Kai-Wen Lan.
\newblock {\em Compactifications of {PEL}-type {S}himura varieties and {K}uga
  families with ordinary loci}.
\newblock World Scientific Publishing Co. Pte. Ltd., Hackensack, NJ, 2018.

\bibitem[LR05]{LapidRallis}
Erez~M. Lapid and Stephen Rallis.
\newblock {O}n the local factors of representations of classical groups.
\newblock In {\em Automorphic representations, {$L$}-functions and
  applications: progress and prospects}, volume~11 of {\em Ohio State Univ.
  Math. Res. Inst. Publ.}, pages 309--359. de Gruyter, Berlin, 2005.

\bibitem[LR20]{LRtz}
Zheng Liu and Giovanni Rosso.
\newblock Non-cuspidal {H}ida theory for {S}iegel modular forms and trivial
  zeros of $p$-adic ${L}$-functions.
\newblock {\em Math. Ann.}, 378(1):153--231, 2020.

\bibitem[MgW94]{MWEis}
Colette M\oe~glin and Jean-Loup Waldspurger.
\newblock {\em D\'ecomposition spectrale et s\'eries d'{E}isenstein}, volume
  113 of {\em Progress in Mathematics}.
\newblock Birkh\"auser Verlag, Basel, 1994.
\newblock Une paraphrase de l'\'Ecriture. [A paraphrase of Scripture].

\bibitem[MW84]{MWmainconj}
B.~Mazur and A.~Wiles.
\newblock Class fields of abelian extensions of {${\bf Q}$}.
\newblock {\em Invent. Math.}, 76(2):179--330, 1984.

\bibitem[Pra90]{Prasad}
Dipendra Prasad.
\newblock Trilinear forms for representations of $\mathrm {GL}(2)$ and local
  $\epsilon $-factors.
\newblock {\em Compositio Mathematica}, 75(1):1--46, 1990.

\bibitem[Sch02]{Sch-gl2new}
Ralf Schmidt.
\newblock Some remarks on local newforms for {$\rm GL(2)$}.
\newblock {\em J. Ramanujan Math. Soc.}, 17(2):115--147, 2002.

\bibitem[SU14]{SU}
Christopher Skinner and Eric Urban.
\newblock The {I}wasawa {M}ain {C}onjectures for {$GL\sb 2$}.
\newblock {\em Invent. Math.}, 195(1):1--277, 2014.

\bibitem[Urb01]{Ur01}
Eric Urban.
\newblock Selmer groups and the {E}isenstein-{K}lingen ideal.
\newblock {\em Duke Math. J.}, 106(3):485--525, 2001.

\bibitem[Urb06]{Ur04}
Eric Urban.
\newblock {G}roupes de {S}elmer et fonctions {$L$} $p$-adiques pour les
  repr\'esentations modulaires adjointes.
\newblock {\em preprint}, 2006.

\bibitem[Wan14]{Wan-super}
Xin Wan.
\newblock Iwasawa main conjecture for supersingular elliptic curves and bsd
  conjecture.
\newblock Preprint available at \url{https://arxiv.org/abs/1411.6352}, 2014.

\bibitem[Wan15]{WanL}
Xin Wan.
\newblock {F}amilies of nearly ordinary {E}isenstein series on unitary groups.
\newblock {\em Algebra Number Theory}, 9(9):1955--2054, 2015.
\newblock With an appendix by Kai-Wen Lan.

\bibitem[Wan20]{WanU31}
Xin Wan.
\newblock {I}wasawa main conjecture for {R}ankin–{S}elberg $p$-adic
  {$L$}-functions.
\newblock {\em Algebra \& Number Theory}, 14(2):383--483, 2020.

\end{thebibliography}

\end{document}